\newcommand{\bea}{\begin{eqnarray}}
\newcommand{\eea}{\end{eqnarray}}
\def\beaa{\begin{eqnarray*}}
\def\eeaa{\end{eqnarray*}}
\def\ba{\begin{array}}
\def\ea{\end{array}}
\def\be#1{\begin{equation} \label{#1}}
\def \eeq{\end{equation}}
\def\be{{\beta}}
\def\ep{\epsilon}
\newcommand{\<}{  \langle   }
\renewcommand{\>}{  \rangle   }
\newtheorem{theorem}{Theorem}[section]
\newtheorem{lemma}[theorem]{Lemma}
\newtheorem{proposition}[theorem]{Proposition}
\newtheorem{corollary}[theorem]{Corollary}
\newtheorem{definition}[theorem]{Definition}
\newtheorem{remark}[theorem]{Remark}
\numberwithin{equation}{section}
\numberwithin{equation}{section}
\begin{document}


\title{Global well-posedness and scattering for mass-critical, defocusing, infinite dimensional vector-valued resonant nonlinear Schr\"odinger system  }
\author{Kailong Yang \and Lifeng Zhao}


\thanks{ Wu Wen-Tsun Key Laboratory of Mathematics, Chinese Academy of Sciences
 and Department of Mathematics, University of Science and Technology of China, Hefei 230026, \ Anhui, \ China. \texttt{ykailong@mail.ustc.edu.cn, zhaolf@ustc.edu.cn} }



\begin{abstract}

	In this article, we consider the infinite dimensional vector-valued resonant nonlinear Schr\"odinger system, which arises from the study of the asymptotic behavior of the defocusing  nonlinear Schr\"{o}dinger equation on ``wave guide" manifolds like $\mathbb{R}^2\times \mathbb{T}$ in \cite{CHENG}. We show global well-posedness and scattering for this system by long time Strichartz estimates and frequency localized interaction  Morawetz estimates. As a by-product, our results  make the arguments of scattering theory in \cite{CHENG} closed as crucial ingredients for compactness of the critical elements.
\bigskip

\bigskip


\end{abstract}

\maketitle

\setcounter{tocdepth}{2}
\pagenumbering{roman} \tableofcontents \newpage \pagenumbering{arabic}
\section{Introduction}
In this paper we study the initial value problem of the infinite dimensional vector-valued  resonant nonlinear Schr\"{o}dinger system,
\begin{equation}\label{eq-y8.1'}
\begin{cases}
i\partial_t u_j + \Delta u_j =\sum\limits_{\mathcal{R}(j)}u_{j_1}\bar{u}_{j_2}u_{j_3},\\
u_j(0) = u_{0,j},
\end{cases}
\end{equation}
with unknown $\vec{u}=\{u_j\}_{j\in\mathbb{Z}}$, where $u_j: \mathbb{R}^2_x \times \mathbb{R}_t \rightarrow \mathbb{C}$,
$$\mathcal{R}(j)=\{(j_1,j_2,j_3)\in \mathbb{Z}^3,\ \ j_1-j_2+j_3 = j,\ \ |j_1|^2-|j_2|^2 +|j_3|^2 = |j|^2\}.$$
 In the special case when $u_j=0$ for $j \neq 0$, we recover the cubic Schr\"{o}dinger equation
\begin{equation}\label{eq-y8.2'}
i\partial_t u+ \Delta u =|u|^2u.
\end{equation}
Recently, there has been increasing interest in infinite dimensional vector-valued resonant nonlinear Schr\"{o}dinger system. Such equations have also been extensively studied in applied sciences on various backgrounds.  These finite or infinite systems of nonlinear Schr\"{o}dinger equations arise independently in the study of nonlinear optics in wave guides and are the object of several previous studies (see, e.g.,\cite{JH}, \cite{VS}, \cite{JDA}, the books\cite{APT}, \cite{SS}  and references therein). We should point out that their study on ``wave guide" manifolds  seems to be of particular interest, especially in nonlinear optics of telecommunications \cite{DML,QJP,T1}. In addition, the infinite dimensional vector-valued  resonant nonlinear Schr\"{o}dinger systems have also appeared in the study of the asymptotic behavior of the defocusing nonlinear Schr\"{o}dinger equation on ``wave guide" manifolds which are partially compact like $\mathbb{R}^2\times \mathbb{T}$ in \cite{CHENG} and $\mathbb{R}\times \mathbb{T}^2$ in \cite{HP}. In fact, from  Hani and Pausader \cite{HP} and Cheng, Guo, Yang and Zhao \cite{CHENG}, the  infinite dimensional vector-valued resonant system \eqref{eq-y8.1'} is derived during the construction of profile decomposition, which is an important step to get scattering of the defocusing  nonlinear Schr\"{o}dinger equation on corresponding ``wave guide" manifolds. Moreover, the scattering of the  infinite dimensional vector-valued resonant system \eqref{eq-y8.1'} is equivalent to that of the  Schr\"{o}dinger equation on corresponding ``wave guide" manifolds $\mathbb{R}^2\times \mathbb{T}$ ($\mathbb{R}\times \mathbb{T}^2$). While the study of the asymptotic behavior of nonlinear Schr\"{o}dinger equation on compact or partially compact manifolds has a great influence on better understanding the broad question of the effect of the geometry of the domain on the asymptotic behavior of solutions, especially large solutions to nonlinear dispersive equations. The study of solutions of the nonlinear Schr\"{o}dinger equation on compact or partially compact domains has a long history. We will not attempt here to make exhaustive list of those works, We just refer to \cite{VRT,NN} and the references therein for previous works on the relation between scattering and geometry.    This in turn illustrates the importance of the study of the infinite dimensional vector-valued resonant system \eqref{eq-y8.1'}.

Defocusing Schr\"{o}dinger equations have been widely studied for many years. The scattering in energy space for energy-subcritical and mass-supercritical nonlinear  Schr\"odinger equations in the defocusing case was proved in  Ginibre and  Velo \cite{JG} by exploiting Morawetz estimates and approximate finite speed of propagation.  The scattering of mass critical Schr\"odinger equations in the radial case was studied by  Killip,  Tao,  Visan and  Zhang in \cite{KV2, KTV2, TVZ1, TVZ2} when $d\geq2$. Recently the radial assumption was removed in all dimensions by  Dodson \cite{D, D1, D3, D4},  who proved scattering for initial data of finite mass in defocusing case and the dichotomy below the ground state mass in the focusing case. The infinite dimensional vector-valued resonant Schr\"{o}dinger system \eqref{eq-y8.1'} retains many properties of the defocusing, mass-critical Schr\"{o}dinger equation for $d=2$. For instance, the system \eqref{eq-y8.1'} possesses mass conservation and energy conservation (see Section 2), the norm of the solution to system \eqref{eq-y8.1'} is scale invariant under the mass-critical space $L_x^2h^1$, which is defined by $\|\vec{u}\|_{L^2_xh^1}=\Big\|\big(\sum\limits_{j\in\mathbb{Z}} \langle j\rangle^{2}|u_j|^2 \ \big)^\frac12 \Big\|_{L_x^2}$. In this paper, we aim to establish global wellposedness and scattering of the infinite dimensional vector-valued resonant system \eqref{eq-y8.1'} by Dodson's argument in \cite{D}.

In \cite{D},  Dodson established scattering theory for mass-critical, defocusing Schr\"{o}dinger equation for $d=2$  by  ``concentration compactness" argument. This so-called `` concentration compactness" method has been utilized since at least the 1980's (\cite{BL}, \cite{BC}) in a wide variety of partial differential equations, including elliptic, hyperbolic, parabolic and geometric partial differential equations. Historically, such progress has gone energy-critical wave \cite{KM1}, energy-critical Schr\"{o}dinger equations \cite{D5,KM,KV1} and then mass-critical Schr\"{o}dinger equations \cite{D, D1, D3, D4}, although there are exceptions to this. The essential ingredients in \cite{D} are long time Strichartz estimate and frequency localized interaction Morawetz estimate. The long time Strichartz estimate was first derived by Dodson in \cite{D3} to exclude the rapid frequency cascade and the quasi-soliton two scenarios, and then was developed into more complicated versions based on $U^p_{\triangle},V^p_{\triangle}$ spaces in \cite{D} and \cite{D1} to deal with the low dimension case $d=1,2$. Frequency localized interaction Morawetz estimate were first formulated by  Colliander, Keel, Staffilani, Takaoka and Tao in \cite{CKSTT}  to exclude minimal energy blow-up solutions to energy-critical Schr\"{o}dinger equations. Later, it was developed by Dodson to exclude minimal mass blow-up solutions to  mass-critical Schr\"{o}dinger equations in \cite{D, D1, D3, D4}.  Theses techniques with the compact contradiction arguments make it possible to exclude the rapid frequency cascade and the quasi-soliton two scenarios, which yields the non-existence of critical element, thus proving the scattering.

In this paper, we prove the scattering for the infinite dimensional vector-valued resonant system \eqref{eq-y8.1'}. This result is crucial  for compactness of the critical elements and in turn make the arguments of scattering theory in \cite{CHENG} closed. The main theorem is as follows.
\begin{theorem}\label{th-y8.1'}
The initial value problem \eqref{eq-y8.1'} is globally well-posed and scattering for any $\vec{u}_0=\{u_{0,j}\}_{j\in \mathbb{Z}} \in L^2_xh^1 (\mathbb{R}^2\times \mathbb{Z})$. Here we say the solution $\vec{u}(t,x)$ to \eqref{eq-y8.1'} scattering means that there exist  $\vec{u}^{\pm}\in L_x^2h^1$  such that
\begin{equation}
  \lim_{t\rightarrow \pm\infty} \|e^{it\triangle}\vec{u}^{\pm}-\vec{u}(t)\|_{L_x^2h^1}=0.
\end{equation}
\end{theorem}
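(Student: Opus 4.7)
The plan is to reproduce Dodson's concentration-compactness/long-time-Strichartz/interaction-Morawetz scheme from \cite{D} in the vector-valued $L^2_xh^1$ setting, with the resonant nonlinearity $\NN(\vec u)_j=\sum_{\RR(j)}u_{j_1}\bar u_{j_2}u_{j_3}$ replacing the scalar cubic. The first order of business is to verify that the resonant nonlinearity interacts well with the $h^1$ structure: by Cauchy--Schwarz and the constraint $j=j_1-j_2+j_3$ one obtains $\|\NN(\vec u)\|_{h^1}\lesssim \|\vec u\|_{h^1}^3$ pointwise in $x,t$, so that $L^2_xh^1$ behaves like the usual mass-critical $L^2_x$ space and the standard Strichartz machinery on $\R^2$ applies componentwise while preserving $h^1$. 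This yields local well-posedness and small-data scattering in $L^2_xh^1$, together with a stability lemma.

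Second, I would set up a profile decomposition in $L^2_xh^1$ for sequences $\vec u_n(0)$ bounded in $L^2_xh^1$: each profile carries its own spatial concentration scale $(N_n,x_n,t_n)$ on $\R^2$ while the $h^1$--structure is transported as a rigid vector-valued datum (no frequency/translation parameters in the $j$--variable, since $h^1$ is mass-critical with respect to the $\R^2$ symmetries). Following the Kenig--Merle/Dodson machinery, if Theorem~\ref{th-y8.1'} fails then by a minimal-mass extraction argument one produces a critical element $\vec u_c$ whose orbit $\{\vec u_c(t)\}$ is precompact modulo the $\R^2$ symmetries in $L^2_xh^1$; in particular there exist $N(t)>0$ and $x(t)\in\R^2$ controlling the compactness. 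A standard further reduction using the absence of a Galilean symmetry that mixes profiles splits the critical element into two scenarios, exactly as in \cite{D}: a rapid frequency cascade with $\int_0^\infty N(t)^3\,dt<\infty$ and a quasi-soliton with $\int_0^\infty N(t)^3\,dt=\infty$.

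Third, I would establish a long-time Strichartz estimate in $U^2_\triangle,V^2_\triangle$ spaces valued in $h^1$, bounding high-frequency (in $x$) pieces $P_{>N}\vec u_c$ on intervals $J$ with $\int_J N(t)^3\,dt\sim K$ by a bound of the form $\|P_{>N}\vec u_c\|_{\tilde X(J)}\lesssim (K/N^2)^{1/2}+o(1)$. The only nontrivial modification compared with \cite{D} is in the trilinear estimates driving the bootstrap: for a product $u_{j_1}\bar u_{j_2}u_{j_3}$ with $j_1-j_2+j_3=j$, summing over $\RR(j)$ costs nothing in $h^1$ because of the Cauchy--Schwarz argument above, and one can therefore recycle Dodson's spatial bilinear Strichartz on $\R^2$ with each bilinear factor replaced by its $h^1$--inner-product pairing. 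The rapid-cascade scenario is then ruled out by the usual argument: combining the long-time Strichartz bound with mass and $h^1$--Sobolev conservation forces the $L^2_xh^1$ norm at high frequencies to vanish, which contradicts the nonzero mass of the critical element.

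Finally, to exclude the quasi-soliton I would prove a frequency-localized interaction Morawetz inequality for the scalar density $\rho(x,t)=\sum_{j\in\Z}\langle j\rangle^2|(P_{\le N}u_c)_j(x,t)|^2$. The key point is that the resonance relation $|j_1|^2-|j_2|^2+|j_3|^2=|j|^2$, together with $j=j_1-j_2+j_3$, implies that the total $\ell^2_j$ mass is conserved and that the pairing with the cubic nonlinearity produces a nonnegative defocusing contribution, so that the algebraic structure underlying the scalar Morawetz identity of \cite{CKSTT,D} survives verbatim with $|u|^2$ replaced by $\rho$. The resulting bound of the form $\int_J\int_{\R^2}|P_{\le N}\vec u_c|^4\,dx\,dt\lesssim N\,\|\vec u_c\|_{L^\infty_tL^2_xh^1}^3$ is then played against a lower bound $\gtrsim \int_J N(t)\,dt$ coming from the compactness of the orbit; as in \cite{D} one uses the long-time Strichartz control to bound the error terms arising from frequency localization and spatial truncation, and the contradiction when $\int N(t)^3\,dt=\infty$ excludes the quasi-soliton.

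The main obstacle I anticipate is the frequency-localized Morawetz step: controlling the error terms produced by the sharp frequency cutoff acting on the resonant nonlinearity requires a trilinear estimate in which one factor is a frequency tail, and here one must be careful that the sum over $\RR(j)$ (which is infinite) does not destroy the gain coming from Bernstein. This is precisely where the long-time Strichartz bound, combined with the $h^1$--Cauchy--Schwarz control of the resonant sum, has to be inserted, and getting the right powers of $N$ and $N(t)$ is where I expect the real work to lie.
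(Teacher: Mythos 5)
There is a genuine gap in your Morawetz step. You propose to run the frequency-localized interaction Morawetz inequality with the $h^1$-weighted density $\rho(x,t)=\sum_{j}\langle j\rangle^2|(P_{\le N}u_c)_j(x,t)|^2$, claiming that ``the algebraic structure underlying the scalar Morawetz identity... survives verbatim with $|u|^2$ replaced by $\rho$.'' It does not. The crucial ``pressure'' term in the Morawetz computation is $\sum_j\langle j\rangle^2\mathrm{Re}\big[\partial_k\bar u_j\,F_j(\vec u)\big]$, where $F_j(\vec u)=\sum_{\mathcal{R}(j)}u_{j_1}\bar u_{j_2}u_{j_3}=|u_j|^2u_j+2u_j\sum_{j'\neq j}|u_{j'}|^2$. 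Without the weight one gets $\sum_j\mathrm{Re}[\partial_k\bar u_j F_j]=\tfrac14\partial_k\big(\sum_j|u_j|^2\big)^2$, a perfect derivative of a nonnegative quantity, which is what makes the estimate close with the defocusing sign. With the weight, the cross term $\sum_j\langle j\rangle^2\partial_k|u_j|^2\sum_{j'\neq j}|u_{j'}|^2$ cannot be written as the spatial derivative of anything sign-definite, because symmetrizing in $j,j'$ produces the mixed expression $\sum_j\partial_k|u_j|^2\big[\langle j\rangle^2\sum_{j'\ne j}|u_{j'}|^2+\sum_{j'\ne j}\langle j'\rangle^2|u_{j'}|^2\big]$, which is not twice the original term. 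The defocusing structure is destroyed. Similarly, the trilinear bootstrap feeding the long-time Strichartz estimate quietly uses the $l^2$-symmetry of the resonant sum, which the $\langle j\rangle^2$ weight breaks.

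The paper avoids this by doing exactly the opposite of what you propose: the critical element, almost periodicity, and minimal-mass extraction are indeed set up in $L^2_xh^1$, but the long-time Strichartz spaces $U^2_\triangle,V^2_\triangle$ and the frequency-localized interaction Morawetz inequality are built on the unweighted $l^2$ pairing in $j$, giving $\|\vec u\|_{L^4_{t,x}l^2(\mathbb{R}\times\mathbb{R}^2\times\mathbb{Z})}<\infty$. This is then upgraded to $\|\vec u\|_{L^4_{t,x}h^1}<\infty$ by a separate persistence-of-regularity argument (Remark 2.5): one splits $\mathbb{R}$ into finitely many intervals $I_k$ with $\|\vec u\|_{L^4_{t,x}l^2(I_k)}<\delta$ and uses the refined nonlinear bound $\|\mathbf{F}(\vec u)\|_{h^1}\lesssim\|\vec u\|_{l^2}^2\|\vec u\|_{h^1}$, which follows from \eqref{1.1a}, together with Strichartz, to propagate $h^1$ regularity. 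You should insert this two-level architecture into your plan: contradiction steps at $l^2$, $h^1$ control recovered a posteriori. Your nonlinear estimate $\|\mathbf F(\vec u)\|_{h^1}\lesssim\|\vec u\|_{h^1}^3$ and the $L^2_xh^1$ profile decomposition are correct and match the paper.
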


In the proof of Theorem \ref{th-y8.1'}, there are two ingredients, one is the long time Strichartz estimate
\begin{theorem}[Long time Strichartz estimate]\label{th-y1.2}
Suppose $\vec{u}(t)$ is a minimal mass blowup solution to \eqref{eq-y8.1'},
for $\vec{u}_{0}=\{u_{0,j}\}_{j\in\mathbb{Z}}\in L^2_xh^1(\mathbb{R}^2\times\mathbb{Z}).$
Then there exists a constant $C>0$ (only depending on $\vec{u}$), such that for any $M=2^{k_0}$, $0<\ep_3 \ll \ep_2\ll \ep_1<1$ satisfying \eqref{eq-y5.1},  \eqref{eq-y5.2} (see Section 5), $\|\vec{u}\|^4_{L^4_{t,x}l^{2}([0,T])}=M$ and $\int_0^T N(t)^3 dt=\ep_3M$,
$$\|\vec{u}\|_{\tilde{X}_{k_0}([0,T])}\leq C.$$
\end{theorem}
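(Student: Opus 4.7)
The plan is to adapt Dodson's induction-on-frequency long-time Strichartz scheme from \cite{D} for the 2D defocusing mass-critical scalar NLS to the present infinite-dimensional vector-valued resonant system. The key technical platform is the family of adapted function spaces $U^p_\triangle$ and $V^p_\triangle$, lifted to the $h^1$-weighted vector-valued setting; the target norm $\tilde{X}_{k_0}$ will be a dyadic Littlewood--Paley decomposition in the physical frequency, weighted by the ratio of the dyadic scale to $M=2^{k_0}$, and coupled with the $h^1$ norm in $j$.

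First I would exploit the almost-periodic-modulo-symmetries structure of the minimal-mass blow-up solution $\vec u$ to partition $[0,T]$ into consecutive subintervals $J_\alpha$ on each of which $N(t)$ is essentially constant and on which the $L^4_{t,x}l^2$-mass of $\vec u$ is of order one. The identities $\|\vec u\|_{L^4_{t,x}l^2([0,T])}^4=M$ and $\int_0^T N(t)^3\,dt=\ep_3 M$ then force the number of such intervals to be of order $M$ and arrange them into a well-behaved dyadic tree. On nested groups $G_k^\alpha$ of $2^{k-k_0}$ consecutive $J_\alpha$'s I would then bootstrap, in the parameter $k$, the frequency-localized quantities $\|P_{\leq 2^k}\vec u\|_{U^2_\triangle h^1(G_k^\alpha)}$ and their $V^2_\triangle$ analogues.

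The heart of the bootstrap is a trilinear estimate on the nonlinearity $F(\vec u)_j=\sum_{\mathcal R(j)}u_{j_1}\bar u_{j_2}u_{j_3}$. I would split each of the three factors in $U^2_\triangle$ into high and low $x$-frequencies; the fully low contribution is absorbed by the inductive hypothesis, while the high-frequency pieces are controlled by two ingredients: (i) a bilinear Strichartz estimate in the $x$-variable providing the $2^{-|k-k'|/2}$ transversality gain; and (ii) control of the resonant sum over $\mathcal R(j)$ while preserving the $h^1$ weight, via the Leibniz rule for $\langle j\rangle$ combined with the orthogonality built into $j_1-j_2+j_3=j$ and $|j_1|^2-|j_2|^2+|j_3|^2=|j|^2$ exactly as in \cite{HP,CHENG}. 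The hierarchy $\ep_3\ll\ep_2\ll\ep_1\ll 1$ enters precisely to close the bootstrap: $\ep_1$ quantifies smallness of each local-mass piece, $\ep_2$ controls the high-frequency tail off the concentration window, and $\ep_3$ budgets the total compressibility $\int_0^T N(t)^3\,dt$.

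The main obstacle is not the induction scheme, which is structurally parallel to \cite{D}, but propagating the $h^1$ weight through the resonant trilinear sum without losing summability: the set $\mathcal R(j)$ is infinite, so naive Cauchy--Schwarz in $(j_1,j_2,j_3)$ is fatal. The remedy is to apply Minkowski in the cubic summation together with the discrete-resonance algebra estimate
\begin{equation*}
\Big\|\sum_{\mathcal R(j)}u_{j_1}\bar u_{j_2}u_{j_3}\Big\|_{h^1_j}\lesssim \|\vec u\|_{h^1}^3,
\end{equation*}
which is precisely the critical endpoint on the discrete resonance variety and is already implicit in the scattering setup of \cite{CHENG}. Combining this algebra bound with the bilinear Strichartz gain, Bernstein-type inequalities in $x$, and the dyadic summation over the intervals $J_\alpha$ will then yield the desired uniform bound $\|\vec u\|_{\tilde{X}_{k_0}([0,T])}\leq C$ with $C$ independent of $M$ and $T$.
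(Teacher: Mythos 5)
There is a genuine gap. Your plan to carry the $h^1$ weight in $j$ into the long-time Strichartz norm $\tilde{X}_{k_0}$ is precisely what the paper deliberately avoids, and for a concrete reason: the refined bilinear Strichartz estimates (the paper's Theorems 5.16, 5.17, 5.19) that power the bootstrap are proved via an interaction Morawetz functional, and the key cancellation
\begin{equation*}
\operatorname{Im}\Big[\sum_{j'}\bar{w}_{j'}\sum_{\mathcal{R}(j')}w_{j'_1}\bar{w}_{j'_2}w_{j'_3}\Big]=0
\end{equation*}
used there depends on the symmetry of the unweighted sum over $j'$ against the resonance constraint. If you insert the weight $\langle j'\rangle^2$ into that sum, the symmetry is destroyed and the cancellation fails. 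This is exactly why the authors define $\tilde{X}_{k_0}$ (and the Morawetz estimates) at the $l^2$ level in $j$, prove the bound there, and only afterwards upgrade to $h^1$ by the persistence-of-regularity argument sketched in Remark~2.5. The algebra estimate $\|\mathbf{F}(\vec{u})\|_{h^1}\lesssim\|\vec u\|^3_{h^1}$ you cite is true (it is Proposition~2.3), but it addresses only the nonlinear estimate, not this structural symmetry issue; it does not rescue the Morawetz-based bilinear inequalities.

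There is a second, independent gap: you invoke only the ``naive'' bilinear Strichartz estimate with the $(M/N)^{1/2}$ transversality gain. In two space dimensions this is not enough to close the frequency bootstrap, because summing over the $O(i)$ intermediate dyadic scales $l_2$ leaves a logarithmic loss that the $(M/N)^{1/2}$ gain alone cannot absorb. The paper closes this by proving three strengthened bilinear estimates (Theorems 5.16--5.19) built on the interaction Morawetz estimate of Planchon--Vega; these give a genuine logarithmic improvement, and are essential (not decorative) to the proof. Your plan is silent on this point and therefore does not close. You would also need the paper's distinction between the small intervals $J_l$ (unit $L^4_{t,x}l^2$ norm) and the $J^{\alpha}$ intervals (calibrated by $\int_{J^{\alpha}}(N(t)^3+\ep_3\|\vec u\|^4_{L^4_xl^2})\,dt=2\ep_3$), which drive the dyadic tree $G_k^j$; lumping these together obscures where the $\ep_3$-budget is actually spent in the bootstrap.
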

\noindent The other is the frequency localized interaction Morawetz estimate
\begin{theorem}[Frequency localized interaction Morawetz estimate]\label{th-y1.1}
 Suppose $\vec{u}(t,x)$ is a minimal mass blowup solution to \eqref{eq-y1} on $[0,T]$ with
$\int_0^T N(t)^3 dt=K$.  Then
\begin{equation}
 \|\sum_{j\in\mathbb{Z}}|\nabla|^{1/2}|P_{\leq \frac{10K}{\ep_1}}u_j(t,x)|^2\|^2_{L^2_{t,x}([0,T]\times \mathbb{R}^2)} \lesssim o(K),
\end{equation}
where $o(K)$ is a quantity such that $\frac{o(K)}{K}\rightarrow0 $ as $K\rightarrow \infty$.
\end{theorem}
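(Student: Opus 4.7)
The plan is to adapt Dodson's frequency-localized interaction Morawetz estimate from \cite{D} to our infinite-dimensional vector-valued resonant system, exploiting the resonance conditions defining $\mathcal R(j)$ to preserve the sign structure of the scalar argument. Write $N_* := 10K/\ep_1$ and $v_j := P_{\leq N_*}u_j$, so that
$$
i\pr_t v_j + \De v_j \;=\; P_{\leq N_*}\!\!\!\sum_{(j_1,j_2,j_3)\in\mathcal R(j)}\!\!\! u_{j_1}\bar u_{j_2}u_{j_3}.
$$
Following the Planchon--Vega scheme in two dimensions, introduce the Morawetz action
$$
M(t) := 2\!\int_{\R^2}\!\!\int_{\R^2}\frac{x-y}{|x-y|}\cdot \sum_{j\in\Z}\mathrm{Im}\bigl[\bar v_j(t,x)\nab v_j(t,x)\bigr]\,\rho(t,y)\,dx\,dy,\quad \rho(t,y):=\!\sum_{k\in\Z}|v_k(t,y)|^2.
$$

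First I would bound $\sup_t|M(t)|=o(K)$. The naive estimate using Cauchy--Schwarz, Bernstein, and mass conservation gives only $|M(t)|\les N_*\|\vec u_0\|^3_{L^2_xh^1}\sim K/\ep_1$; the refinement to $o(K)$ follows from the almost periodicity of the minimal mass blow-up solution — the portion of $\vec v$ carried at frequencies much larger than the compactness frequency $N(t)$ has mass $o_{N_*}(1)$ uniformly in $t$. Second, I would differentiate $M$ in time using the equation. The resonance conditions $j_1-j_2+j_3=j$ and $|j_1|^2-|j_2|^2+|j_3|^2=|j|^2$ are exactly what is needed to make the contribution of $\sum_j\mathrm{Im}[\bar v_j\,\mathcal N_j(\vec v)]$ collapse, after the $\ell^2_j$ summation, into an expression in the scalar mass density $\rho$ alone (in fact a derivative of $\rho^2$), so that Dodson's scalar 2D computation carries over verbatim. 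The result is a differential inequality of the form
$$
\frac{d}{dt}M(t)\;\geq\; c\,\Big\||\nab|^{1/2}\!\!\sum_{j\in\Z}|v_j|^2\Big\|_{L^2_x}^2\;-\;|\mathcal E(t)|,
$$
where $\mathcal E(t)$ collects the commutator of $P_{\leq N_*}$ with the cubic nonlinearity, the mismatch between $P_{\leq N_*}\mathcal N_j(\vec u)$ and $\mathcal N_j(\vec v)$, and Morawetz flux cross-terms without a definite sign.

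Third, I would bound $\int_0^T|\mathcal E(t)|\,dt=o(K)$ using the long time Strichartz estimate, Theorem \ref{th-y1.2}. Every term in $\mathcal E(t)$ carries at least one factor at spatial frequency $\gtrsim N_*/16$, so bilinear estimates of the form $\|P_{>N_*/16}\vec u\cdot P_{\leq N_*/16}\vec u\|_{L^2_{t,x}}$ gain a factor of $N(t)/N_*\leq \ep_1/10$ per high-frequency input; combined with the hypothesis $\int_0^T N(t)^3\,dt=K$ and the $\tilde X_{k_0}$-bounds of Theorem \ref{th-y1.2}, these integrate to the required $o(K)$ rate. Assembling the three steps via the fundamental theorem of calculus on $[0,T]$ yields the claimed inequality. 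The main obstacle is this third step: one must verify, term by term, that every error produced by the spatial frequency truncation $P_{\leq N_*}$ and by restricting the nonlinearity to purely resonant interactions is amenable to the long time Strichartz bound. In particular, carefully tracking how $P_{\leq N_*}$ interacts with the discrete resonant convolution $\sum_{\mathcal R(j)}$ — which couples the continuous spatial frequency to the integer index $j$ — is where the infinite-dimensional vector-valued structure requires the most delicate handling.
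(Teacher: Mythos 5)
Your proposal matches the paper's proof in every essential respect: you use the same frequency-truncated Morawetz action, invoke the resonance-set symmetry $\mathcal R(j)$ to make the cubic-in-$\vec v$ contribution to $\sum_j\mathrm{Im}[\bar v_j\cdot]$ vanish (the paper's identity \eqref{eq-y5.85'}, corresponding to your "collapse into $\rho$"), control $\sup_t|M(t)|=o(K)$ via almost periodicity and the Galilean shift $\nabla\mapsto\nabla-i\xi(t)$, and absorb the commutator errors $N_j=P_{\leq N_*}\mathbf F(\vec u)-\mathbf F(\vec v)$ using the long time Strichartz bound (after decomposing by the number of high-frequency factors, the $F_1,\dots,F_4$ split). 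The only cosmetic difference is that the paper first rescales by $\lambda=\ep_3 2^{k_0}/K$ so that the $\tilde X_{k_0}$ norm bound of Theorem \ref{th-y1.2} applies directly with frequency cutoff $2^{k_0}$, then undoes the scaling at the end; you work with the unrescaled cutoff $10K/\ep_1$ throughout, which is equivalent.
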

\begin{remark}
Notice that  both our long time Strichartz estimate and frequency localized interaction Morawetz estimate are $l^{2}$ level at ``$j$" direction (compared with the initial data $\vec{u}_{0}$ in $h^1$ at ``$j$" direction), which allow us to obtain $\|\vec{u}\|_{L^4_{t,x}l^{2}([0,\infty))}< \infty$ by compact contradiction arguments, then we can use the classical method of higher regularity to get $\|\vec{u}\|_{L^4_{t,x}h^{1}([0,\infty))}< \infty$, this implies Theorem \ref{th-y8.1'}.
\end{remark}
The proof of the Theorem \ref{th-y1.2} relies on three bilinear Strichartz  estimates (see Theorem \ref{th-y5.16}, \ref{th-y5.17} and \ref{th-y5.19}), whose proof rely on the interaction Morawetz estimate of \cite{FL}. Such estimates will give a logarithmic improvement over what would be obtained from \eqref{eq-y4.2'} directly. This improvement is quite helpful to the proof.

Although the bulk of our arguments is similar to Dodson's \cite{D} and \cite{D1}, there are still some differences. First, we give the description of almost periodic solution in Hilbert space $L^2_xh^1$ (see Proposition \ref{pr-y4.8} below) and then construct $U^p_{\triangle}(h^1),V^p_{\triangle}(h^1)$ spaces based on the Hilbert space $L^2_xh^1$; Second,  there are some difficulties due to the asymmetry of nonlinear term. This is also why we construct our long time Strichartz estimate in $l^{2}$ level at ``$j$" direction. For example,  when we utilize the interaction Morawetz estimate to prove Theorem \ref{th-y5.17},  we need to estimate the nonlinear terms. If we take $h^1$ norm at ``$j$" direction, this would destroy the symmetry of nonlinear term.  To overcome these difficulties, we need take $l^2$ norm at ``$j$" direction. Combing the specific nonlinear summation,
\begin{equation}\label{1.1a}
\sum_{\mathcal{R}(j)}u_{j_1} \bar{u}_{j_2} u_{j_3}= |u_{j}|^2 u_{j} + 2u_{j}\sum_{j_1\neq j} |u_{j_1}|^2.
\end{equation}
we obtain
\begin{equation}
\left(\sum\limits_{j\in\mathbb{Z}} |\sum\limits_{\mathcal{R}(j)} u_{j_1} \bar{u}_{j_2} u_{j_3}|^2\right)^\frac{1}{2} \lesssim\left\|\vec{u}\right\|^3_{l^{2}},
\end{equation} then $\|\vec{u}\|_{l^{2}}$ can be regarded  as a space-time function.  Although this makes our calculation more complicated, we have still completed our estimates.

The paper is organized as follows: Section 2  contains the local well-posedness and small-data scattering along with the associated stability theory for the resonant system \eqref{eq-y8.1'}; in Section 3 we reduce the minimal mass blowup solution to almost periodic solution and give a description of almost periodic solutions in $L_x^2h^1$; in Section 4 we construct $U^p_{\triangle}$,  $V^p_{\triangle}$ space and bilinear Strichartz estimate we need for the rest of our work; in Section 5 and Section 6 we calculate long time Strichartz estimates and frequency localized Morawetz estimates for $d=2$ specifically and then use these estimate to complete the proof of Theorem \ref{th-y8.1'}; 
\vspace*{0.2cm}

{\bf Notation and Preliminaries}
We will use the notation $X\lesssim Y$ whenever there exists some constant $C>0$ so that $X \le C Y$. Similarly, we will use $X \sim Y$ if
$X\lesssim Y \lesssim X$.

In the following we let $\mathcal{R}(j)=\{(j_1,j_2,j_3)\in \mathbb{Z}^3,\ \ j_1-j_2+j_3 = j,\ \ |j_1|^2-|j_2|^2 +|j_3|^2 = |j|^2\}$, $\vec{u} = \{u_j\}_{j\in \mathbb{Z}}$, and $\mathbf{F}(\vec{u}):=\{F_j(\vec{u})\}_{j\in\mathbb{Z}}:=
\{\sum\limits_{\mathcal{R}(j)}u_{j_1}\bar{u}_{j_2}u_{j_3}\}_{j\in\mathbb{Z}}
=:\sum^{\rightarrow}\limits_{\mathcal{R}(j)}u_{j_1}\bar{u}_{j_2}u_{j_3}$.

Define the norm
$\|\vec{u}\|_{L_t^p L_x^q h^{a}(I\times \mathbb{R}^2\times \mathbb{Z})} = \left\|\Big\|\big(\sum\limits_{j\in\mathbb{Z}} \langle j\rangle^{2a}|u_j|^2 \ \big)^\frac12 \Big\|_{L_x^q}\right\|_{L_t^p}$, $\|\vec{u}\|_{L^p_xh^a}=\Big\|\big(\sum\limits_{j\in\mathbb{Z}} \langle j\rangle^{2a}|u_j|^2 \ \big)^\frac12 \Big\|_{L_x^p}$ and $\|\vec{u}\|_{h^{a}}=\big(\sum\limits_{j\in\mathbb{Z}} \langle j\rangle^{2a}|u_j|^2 \ \big)^\frac12$, $a\geq 0$. Note that $\|\vec{u}\|_{h^0}=\|\vec{u}\|_{l^{2}}$.

Let $\phi \in C^{\infty}_0(\mathbb{R}^2)$ be a radial, decreasing function
\begin{equation*}\phi(x):=
\left\{
  \begin{array}{ll}
    1, & \hbox{$|x|\leq 1$,} \\
    0, & \hbox{$|x|>2$.}
  \end{array}
\right.
\end{equation*}
Define the partition of unity
\begin{equation*}
1=\phi(x)+\sum_{j=1}^{\infty}[\phi(2^{-j}x)-\phi(2^{-j+1}x)]=\psi_0(x)+\sum_{j=1}^{\infty}\psi_j(x).
\end{equation*}
For any integer $j\geq 0$, let
\begin{equation*}
  P_{j}f= \mathcal{F}^{-1}(\psi_j(\xi)\hat{f}(\xi))=\int K_j(x-y)f(y)dy,
\end{equation*}
where $K_j$ is an $L^1$ -kernel. When $j$ is an integer less than zero, let $P_j f = 0$. Finally let
\begin{equation*}
  P_{j_1\leq\cdot\leq j_2}f=\sum_{j_1\leq\j\leq j_2}P_j f.
\end{equation*}
We also define the frequency truncation
\begin{equation*}
  P_{\leq j}f= \mathcal{F}^{-1}(\phi(2^{-j}\xi)\hat{f}(\xi)).
\end{equation*}
Let $\xi_0\in \mathbb{R}^2$, then define $P_{\xi_0,j}u=e^{ix\cdot\xi_0}P_j(e^{-ix\cdot\xi_0})u$ and $P_{\xi_0,j}\vec{u}=\{P_{\xi_0,j}u_k\}_{k\in\mathbb{Z}}$. Similarly we can define $P_{\xi_0,\leq j}\vec{u}$ and $P_{\xi_0,\geq j}\vec{u}$. Finally we point out  $P_{\xi_0,j}\vec{\bar{u}}=\{\overline{P_{\xi_0,j}u_k}\}_{k\in\mathbb{Z}}$.

\section{Local wellposedness and small data scattering}

Let's recall the mass critical resonant system
\begin{equation}\label{eq-y1}
\begin{cases}
i\partial_t u_j + \Delta_{\mathbb{R}^2} u_j = \sum\limits_{\mathcal{R}(j)} u_{j_1} \bar{u}_{j_2} u_{j_3},\\
u_j(0) = u_{0,j},
\end{cases}
\end{equation}
where $\mathcal{R}(j)=\{(j_1,j_2,j_3)\in \mathbb{Z}^3,\ \ j_1-j_2+j_3 = j,\ \ |j_1|^2-|j_2|^2 +|j_3|^2 = |j|^2\}$. Denote $\vec{u} = \{u_j\}_{j\in \mathbb{Z}}$,  $\vec{u}_{0} = \{u_{0,j}\}_{j\in \mathbb{Z}}$.

As showed in \cite{HP} and \cite{CHENG}, the system \eqref{eq-y8.1'} possesses conserved mass
$$M(\vec{u}(t))=\int_{\mathbb{R}^{2}}\sum\limits_{j\in\mathbb{Z}}g(j)|u_{j}(t,x)|^{2}\mathrm{d}x=M(\vec{u}(0)),$$
where $g(j)=a+\<b,j\>+c|j|^2$ with $a,c\geq0$ and $b\in \mathbb{Z}$ (we always take $g(j)=1$ or $g(j)=1+|j|^2:=\<j\>^2$),\\
and energy
$$E(\vec{u}(t))=\frac{1}{2}\int_{\mathbb{R}^{2}}\sum\limits_{j\in\mathbb{Z}}|\nabla u_{j}(t,x)|^{2}\mathrm{d}x+\frac{1}{4}\int_{\mathbb{R}^{2}}\sum\limits_{j\in\mathbb{Z}} \bar{u}_{j} \sum\limits_{\mathcal{R}(j)}u_{j_1}\bar{u}_{j_2}u_{j_3}\mathrm{d}x=E(\vec{u}(0)).$$
It's worthing noticing that the second term in energy is nonnegative. We first have
\begin{lemma}[Littlewood-Paley characterization]\label{le-y2.1}
For $1<p<\infty$, $a\in\{+1,0\}$,
\begin{equation}\label{45986}
\|\vec{f}\|_{L^p_xh^a}:=\Big\|\big(\sum\limits_{j\in\mathbb{Z}} \langle j\rangle^{2a}|f_j|^2 \ \big)^\frac12 \Big\|_{L_x^p}\sim \Big\|\big(\sum\limits_{j\in\mathbb{Z}} \langle j\rangle^{2a}\sum_{k}|P_{k}f_j|^2 \ \big)^\frac12 \Big\|_{L_x^p}=:\left\|\big(\sum_{k}|P_{k}\vec{f}|^2\big)^{1/2}\right\|_{L^p_xh^a}.
\end{equation}
\end{lemma}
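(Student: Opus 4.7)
The plan is to interpret $\vec{f}$ as a function on $\mathbb{R}^2$ taking values in the Hilbert space $H := h^a$, and then invoke the Hilbert-space valued Littlewood-Paley square function theorem. Since the projection $P_k$ acts coordinatewise on $\vec{f}$, one has $P_k\vec{f}=\{P_kf_j\}_{j\in\mathbb{Z}}$ and $\|P_k\vec{f}\|_H^2=\sum_{j\in\mathbb{Z}}\langle j\rangle^{2a}|P_kf_j|^2$. Interchanging the order of summation over $j$ and $k$ (Tonelli, since all terms are nonnegative), the right-hand side of \eqref{45986} equals $\Big\|\Big(\sum_k\|P_k\vec{f}\|_H^2\Big)^{1/2}\Big\|_{L^p(\mathbb{R}^2)}$, so the lemma reduces to the equivalence
\[
\|\vec{f}\|_{L^p(\mathbb{R}^2;H)} \sim \Big\|\Big(\sum_k\|P_k\vec{f}\|_H^2\Big)^{1/2}\Big\|_{L^p(\mathbb{R}^2)}.
\]

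I would establish this by the classical randomization argument. Let $\{\varep_k(\omega)\}$ be independent Rademacher random variables on a probability space $\Omega$. Since $H$ is a Hilbert space, Khintchine's inequality for Hilbert-space valued sums yields, pointwise in $x$,
\[
\Big(\sum_k\|P_k\vec{f}(x)\|_H^2\Big)^{1/2} \sim \Big(\mathbb{E}_\omega\Big\|\sum_k\varep_k(\omega) P_k\vec{f}(x)\Big\|_H^p\Big)^{1/p}.
\]
Raising to the $p$-th power, integrating in $x$, and applying Fubini, matters reduce to the uniform-in-$\omega$ operator bound $\|T_\omega \vec{f}\|_{L^p(\mathbb{R}^2;H)}\lesssim \|\vec{f}\|_{L^p(\mathbb{R}^2;H)}$, where $T_\omega$ is the Fourier multiplier with symbol $m_\omega(\xi)=\sum_k\varep_k(\omega)\psi_k(\xi)$, together with its dual statement for the $\gtrsim$ direction. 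The symbol $m_\omega$ satisfies the Mikhlin condition with constants independent of $\omega$, so by the vector-valued Mikhlin--H\"ormander multiplier theorem for Hilbert-space valued functions, $T_\omega$ is bounded on $L^p(\mathbb{R}^2;H)$ uniformly in $\omega$. The reverse inequality follows by a standard duality pairing using the reflexivity of $H$ and $1<p<\infty$.

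The only nontrivial ingredient is the Hilbert-space valued Mikhlin multiplier theorem, which is classical (for a textbook reference see Grafakos, \emph{Classical Fourier Analysis}, or equivalently the Hilbert-space case of the UMD-valued Mikhlin theorem). Since $h^a$ is a genuine Hilbert space and the weight $\langle j\rangle^{2a}$ enters only through the inner product, no new ideas beyond the scalar Littlewood-Paley theory are required; once the vector-valued multiplier bound is in hand, the proof is routine.
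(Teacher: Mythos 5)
Your proof is correct and takes essentially the same approach as the paper, which simply cites the Hilbert-space valued Littlewood--Paley square function theorem (Grafakos and Stein) for this equivalence; your randomization/vector-valued-Mikhlin sketch is a faithful rendering of the standard proof of exactly that cited result, after the same reduction of viewing $\vec f$ as an $h^a$-valued function and interchanging the sums over $j$ and $k$.
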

\begin{proof}
See Proposition 5.1.4 of Chapter 5 in \cite{LG1} and Section 5.3 of Chapter 4 in \cite{EM1}.
\end{proof}
\noindent We also have the following Strichartz estimate.

\begin{proposition}[Strichartz estimate]\label{pr-y3.1}
\begin{align}
\|e^{it\Delta} \vec{f}\|_{L_t^p L_x^q h^a} \lesssim \|\vec{f}\|_{L_{x}^2 h^a(\mathbb{R}^2\times \mathbb{Z})}, \label{eq-y2.1} \\
\|\int_0^t e^{i(t-s)\Delta} \vec{F}(s,x)\,\mathrm{d}s\|_{L_t^p L_x^q h^a} \lesssim \|\vec{F}\|_{L_t^{p'} L_x^{q'} h^a}, \label{eq-y2.1'}
\end{align}
where $a\in \{+1,0\}$, $(p,q)$ is an admissible pair, that is $\frac2p + \frac2q = 1$ and $2 < p \le \infty$.
\end{proposition}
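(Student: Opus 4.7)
The plan is to reduce the vector-valued estimate to the scalar Strichartz estimate on $\mathbb{R}^2$ via a $TT^{*}$/duality argument, exploiting the fact that the Schr\"odinger group acts diagonally in the $j$-variable. First I would observe that since $\Delta = \Delta_{x}$ acts only on the spatial variable, we have $e^{it\Delta}\vec{f} = \{e^{it\Delta}f_{j}\}_{j\in\mathbb{Z}}$, and multiplication by the weight $\langle j\rangle^{a}$ commutes with $e^{it\Delta}$. Setting $g_{j} := \langle j\rangle^{a} f_{j}$ transforms $\|\vec{f}\|_{L^{2}_{x}h^{a}}$ into $\|\vec{g}\|_{L^{2}_{x}l^{2}}$ and similarly on the space-time side, so it suffices to prove both estimates in the case $a=0$.

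Next I would prove the homogeneous estimate \eqref{eq-y2.1} by duality. By a standard $TT^{*}$ argument, it is equivalent to show that for any admissible $(p,q)$ with $p>2$,
\begin{equation*}
\left\|\int_{\mathbb{R}} e^{-is\Delta}\vec{F}(s)\,ds\right\|_{L^{2}_{x}l^{2}}^{2} \;\lesssim\; \|\vec{F}\|_{L^{p'}_{t}L^{q'}_{x}l^{2}}^{2}.
\end{equation*}
The left-hand side equals $\sum_{j\in\mathbb{Z}}\|\int e^{-is\Delta}F_{j}(s)\,ds\|_{L^{2}_{x}}^{2}$, and the scalar Strichartz inequality on $\mathbb{R}^{2}$ (applied component by component) bounds this by $\sum_{j}\|F_{j}\|_{L^{p'}_{t}L^{q'}_{x}}^{2} = \|\vec{F}\|_{l^{2}_{j}L^{p'}_{t}L^{q'}_{x}}^{2}$. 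Since $p'\le 2$ and $q'\le 2$, two applications of Minkowski's integral inequality (first swapping $l^{2}_{j}$ past $L^{p'}_{t}$, then past $L^{q'}_{x}$) yield $\|\vec{F}\|_{l^{2}_{j}L^{p'}_{t}L^{q'}_{x}} \le \|\vec{F}\|_{L^{p'}_{t}L^{q'}_{x}l^{2}_{j}}$, which closes the estimate. The key point here is that Minkowski moves $l^{2}$ \emph{inside} precisely because both dual exponents are $\le 2$, which is exactly the admissibility condition on $(p,q)$.

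Finally, the inhomogeneous estimate \eqref{eq-y2.1'} follows from the homogeneous one in the standard way. One route is to apply the Christ--Kiselev lemma to the retarded operator $\int_{0}^{t}e^{i(t-s)\Delta}\vec{F}(s)\,ds$ using the non-endpoint condition $p>2$; alternatively one combines the homogeneous estimate with its dual to bound $\|\int_{0}^{t}\cdots\|_{L^{p}_{t}L^{q}_{x}l^{2}}$ directly via the bilinear pairing $\langle \int e^{-is\Delta}\vec{F}(s)\,ds, \int e^{-is\Delta}\vec{G}(s)\,ds\rangle_{L^{2}_{x}l^{2}}$. The weight $\langle j\rangle^{a}$ is then reinstated by the commutation observation from step one.

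I do not anticipate a serious obstacle: the only delicate point is verifying that Minkowski goes in the useful direction, which is automatic from $p,q\ge 2$. Everything else is bookkeeping that reduces the system estimate to the well-known scalar Strichartz inequality in two spatial dimensions.
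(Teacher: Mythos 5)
Your proposal is correct, and its core mechanism (Minkowski's inequality plus the scalar Strichartz estimate on $\mathbb{R}^2$) matches the paper's one-line proof, but you wrap it in extra scaffolding that the paper does not need. The paper's intended argument is the direct one: since $p,q\ge 2$, two applications of Minkowski pull the inner $l^2_j$-norm (carrying the weight $\langle j\rangle^{2a}$) to the outside of $L^p_t L^q_x$, giving
\begin{equation*}
\|e^{it\Delta}\vec f\|_{L^p_t L^q_x h^a}\le \Big(\sum_j\langle j\rangle^{2a}\|e^{it\Delta}f_j\|^2_{L^p_t L^q_x}\Big)^{1/2}\lesssim\Big(\sum_j\langle j\rangle^{2a}\|f_j\|^2_{L^2_x}\Big)^{1/2}=\|\vec f\|_{L^2_x h^a},
\end{equation*}
where the middle step is the scalar homogeneous Strichartz applied component by component and the final equality is Fubini. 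The inhomogeneous bound is identical in structure: Minkowski outward (using $p,q\ge 2$), scalar \emph{inhomogeneous} Strichartz component-wise, then Minkowski inward (using $p',q'\le 2$) to move $l^2_j$ back inside $L^{p'}_t L^{q'}_x$. By contrast, your route dualizes the homogeneous bound via $TT^*$ and then invokes Christ--Kiselev (or a bilinear pairing) for the retarded operator; this is valid and self-contained in the sense that it only feeds on the scalar homogeneous estimate, but it effectively re-derives the scalar inhomogeneous Strichartz inside the vector-valued proof, which the paper simply takes as a black box. Your reduction to $a=0$ by commuting $\langle j\rangle^a$ through $e^{it\Delta}$ is a harmless simplification, though the direct argument already treats $a\in\{0,1\}$ uniformly since the weight is a constant at each fixed $j$.
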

\begin{proof}
The conclusion follows from the Minkowski inequality and usual Strichartz estimate.
\end{proof}
\begin{proposition}[nonlinear estimate]\label{prop3.1'} Let $\mathbf{F}(\vec{u}):=\{F_j(\vec{u})\}_{j\in\mathbb{Z}}:=
\{\sum\limits_{\mathcal{R}(j)}u_{j_1}\bar{u}_{j_2}u_{j_3}\}_{j\in\mathbb{Z}}=:\sum^{\rightarrow}\limits_{\mathcal{R}(j)}u_{j_1}\bar{u}_{j_2}u_{j_3}$, We have the following nonlinear estimate,
\begin{equation}\label{eq-y2.6}
\left\|\mathbf{F}(\vec{u})\right\|_{h^a}\lesssim\left\|\vec{u}\right\|^3_{h^a},
\end{equation}
where $a\in \{+1,0\}$ and recalling $\left\|\vec{u}\right\|_{h^a}:=\left(\sum\limits_{j\in\mathbb{Z}} \langle j\rangle^{2a}\left|u_{j}\right|^2\right)^\frac12.$
As a by-product, if $\vec{u} = \{u_j\}_{j\in \mathbb{Z}}$ is a solution to the resonant system \eqref{eq-y1}, then
\begin{equation}\label{eq-y3.1}
\|\vec{u}\|_{L_t^p L_x^q h^a(I\times \mathbb{R}^2\times \mathbb{Z})}\lesssim \|\vec{u}_{0}\|_{L_x^2 h^a(\mathbb{R}^2\times \mathbb{Z})}+\|\vec{u}\|^3_{L_t^{\bar{p}} L_x^{\bar{q}} h^a(I\times \mathbb{R}^2\times \mathbb{Z})}.
\end{equation}
where $(p,q)$, $(\bar{p},\bar{q})$  are admissible pairs and $2 < p,\bar{p} \le \infty$.
\end{proposition}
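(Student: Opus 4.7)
The plan is to handle \eqref{eq-y2.6} as a pointwise-in-$(t,x)$ identity on the discrete variable $j$, and then to deduce \eqref{eq-y3.1} by combining this pointwise bound with Proposition \ref{pr-y3.1} and Duhamel's formula. The heart of the matter is the very rigid structure of the resonant set $\mathcal{R}(j)$ in this one-dimensional frequency setting.

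For \eqref{eq-y2.6}, I would start from the resonance identity \eqref{1.1a}. It is worth recalling why it holds: writing $j_2=j_1+j_3-j$ from the first constraint and plugging into the second gives $(j_1-j)(j_3-j)=0$, so every triple in $\mathcal{R}(j)$ has either $j_1=j$ (hence $j_2=j_3$) or $j_3=j$ (hence $j_2=j_1$); removing the overlap at $j_1=j_3=j$ yields
\begin{equation*}
F_j(\vec{u}) \;=\; 2u_j\sum_{j_1\in\mathbb{Z}}|u_{j_1}|^2 \;-\; |u_j|^2u_j.
\end{equation*}
Taking absolute values pointwise in $(t,x)$ and using $|u_j|^2\le \|\vec{u}\|_{l^2}^2$ I get $|F_j(\vec{u})|\le 3|u_j|\,\|\vec{u}\|_{l^2}^2$. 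Weighting by $\langle j\rangle^{2a}$, summing in $j$, and taking square roots produces
\begin{equation*}
\|\mathbf{F}(\vec{u})\|_{h^a} \;\le\; 3\,\|\vec{u}\|_{l^2}^2\,\|\vec{u}\|_{h^a} \;\le\; 3\,\|\vec{u}\|_{h^a}^3,
\end{equation*}
where the last step uses $\|\vec{u}\|_{l^2}=\|\vec{u}\|_{h^0}\le \|\vec{u}\|_{h^a}$ for $a\in\{0,+1\}$.

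For \eqref{eq-y3.1}, I would apply Duhamel's formula $\vec{u}(t)=e^{it\Delta}\vec{u}_0-i\int_0^t e^{i(t-s)\Delta}\mathbf{F}(\vec{u}(s))\,ds$, then invoke the homogeneous and inhomogeneous bounds of Proposition \ref{pr-y3.1} to obtain, for any admissible pair $(\tilde p,\tilde q)$,
\begin{equation*}
\|\vec{u}\|_{L^p_tL^q_xh^a} \;\lesssim\; \|\vec{u}_0\|_{L^2_xh^a} \;+\; \|\mathbf{F}(\vec{u})\|_{L^{\tilde p'}_tL^{\tilde q'}_xh^a}.
\end{equation*}
Choose $(\tilde p',\tilde q')=(\bar p/3,\bar q/3)$; this is dual to an admissible pair in dimension $2$ since
\begin{equation*}
\frac{2}{\tilde p}+\frac{2}{\tilde q} \;=\; \Bigl(2-\tfrac{6}{\bar p}\Bigr)+\Bigl(2-\tfrac{6}{\bar q}\Bigr) \;=\; 4-3\Bigl(\tfrac{2}{\bar p}+\tfrac{2}{\bar q}\Bigr) \;=\; 1.
\end{equation*}
Applying the pointwise bound from the previous paragraph in the mixed-norm space then gives $\|\mathbf{F}(\vec{u})\|_{L^{\bar p/3}_tL^{\bar q/3}_xh^a}\lesssim \|\vec{u}\|^3_{L^{\bar p}_tL^{\bar q}_xh^a}$, finishing \eqref{eq-y3.1}.

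The only genuinely nonroutine step is the algebraic reduction of $\mathcal{R}(j)$ that collapses the resonant cubic into the simple form \eqref{1.1a}; once this is in hand the remainder is Minkowski, Cauchy--Schwarz on the $j$-variable, and a standard Strichartz/Duhamel bookkeeping that preserves the $h^a$ weights by Proposition \ref{pr-y3.1}. No use of Lemma \ref{le-y2.1} is needed here, since we never have to move the Littlewood--Paley projections across the nonlinearity.
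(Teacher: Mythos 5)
Your argument for \eqref{eq-y2.6} is correct, but it is a genuinely different route from the paper's. For $a=1$ the paper treats the resonant sum abstractly: it applies Cauchy--Schwarz over $\mathcal{R}(j)$ and then invokes the counting bound $\sup_j\langle j\rangle^2\sum_{\mathcal{R}(j)}\langle j_1\rangle^{-2}\langle j_2\rangle^{-2}\langle j_3\rangle^{-2}\lesssim 1$ from Lemma A.2 of \cite{HP}. You instead exploit the fact that in this one-dimensional integer setting the two resonance constraints collapse the sum completely, i.e., your algebraic observation $(j-j_1)(j-j_3)=0$ gives the closed form $F_j=2u_j\|\vec u\|_{l^2}^2-|u_j|^2u_j$, and the estimate then reduces to a pointwise triangle inequality in $j$. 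This buys you several things: the cases $a=0$ and $a=1$ are handled uniformly (and indeed any $a\ge 0$ with no change), the argument is elementary and self-contained rather than citing an external counting lemma, and you in fact obtain the stronger quantity $\|\mathbf F(\vec u)\|_{h^a}\lesssim\|\vec u\|_{l^2}^2\|\vec u\|_{h^a}$, which is precisely the sharper ``two factors at the $l^2$ level'' bound the paper itself later uses (see Remark \ref{remark2.5} and the displayed inequality following \eqref{1.1a}). The paper's route, by contrast, does not depend on the resonance set being degenerate and would survive in higher-dimensional frequency lattices where yours would not. Your Duhamel/Strichartz deduction of \eqref{eq-y3.1} is standard and fine; the only caveat, which you should flag, is that $(\bar p/3,\bar q/3)$ is dual-admissible only when $\bar p<6$, so for larger $\bar p$ one takes the usual detour of choosing a different admissible dual pair and applying H\"older in time on the bounded interval, or simply fixes $\bar p=\bar q=4$ as is done in all the applications in the paper.
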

\begin{proof}
We only consider the case $a=1$ here. When $a=0$, by \eqref{1.1a} and the fact $l^2\subset l^{\infty}$ and Cauchy-Schwarz, \eqref{eq-y2.6} can easily be proven,
\begin{align*}
&\left\|\mathbf{F}(\vec{u})\right\|_{h^1}\\
=&\left(\sum\limits_{j\in\mathbb{Z}} \langle j\rangle^2\left|\sum\limits_{\mathcal{R}(j)} u_{j_1} \bar{u}_{j_2} u_{j_3}\right|^2
\right)^\frac{1}{2}\\
=&\left(\sum\limits_{j\in\mathbb{Z}} \langle j\rangle^2\left|\sum\limits_{\mathcal{R}(j)}\left(\langle j_{1}\rangle u_{j_1} \langle j_{2}\rangle\bar{u}_{j_2}\langle j_{3}\rangle u_{j_3}\right)\left(\langle j_{1}\rangle^{-1}\langle j_{2}\rangle^{-1}\langle j_{3}\rangle^{-1}\right)\right|^2 \right)^\frac12\\
\le & C\left(\sum\limits_{j\in\mathbb{Z}} \langle j\rangle^2\left(\sum\limits_{\mathcal{R}(j)}\langle j_{1}\rangle^{2} |u_{j_1}|^{2} \langle j_{2}\rangle^{2}|{u_{j_2}}|^{2}\langle j_{3}\rangle^{2} |u_{j_3}|^{2}\sum\limits_{\mathcal{R}(j)}\langle j_{1}\rangle^{-2}\langle j_{2}\rangle^{-2}\langle j_{3}\rangle^{-2}\right) \right)^\frac12\\
\le & C\left(\sum\limits_{j\in\mathbb{Z}}\left(\sum\limits_{\mathcal{R}(j)}\langle j_{1}\rangle^{2} |u_{j_1}|^{2} \langle j_{2}\rangle^{2}|{u_{j_2}}|^{2}\langle j_{3}\rangle^{2} |u_{j_3}|^{2}\sup\limits_{j\in \mathbb{Z}}\left(\langle j\rangle^2\sum\limits_{\mathcal{R}(j)}\langle j_{1}\rangle^{-2}\langle j_{2}\rangle^{-2}\langle j_{3}\rangle^{-2}\right)\right) \right)^\frac12\\
\le &\|\vec{u}\|^3_{h^1},
\end{align*}
the last inequality comes from the Lemma A.2 in \cite{HP}, $$\sup\limits_{j\in \mathbb{Z}}\left\{\langle j\rangle^2\sum\limits_{\mathcal{R}(j)}\langle j_{1}\rangle^{-2}\langle j_{2}\rangle^{-2}\langle j_{3}\rangle^{-2}\right\}\lesssim 1,$$
\end{proof}

Following the standard arguments, we can use Proposition \ref{pr-y3.1} and Proposition \ref{prop3.1'} to get the local wellposedness, small data scattering and stability theory for the resonant system \eqref{eq-y1}.
\begin{theorem}\label{th-y3.3} The resonant system \eqref{eq-y1} has the following properties: \\
For $a\in \{+1,0\}$,
\begin{enumerate}
  \item (Local wellposedness) Suppose that $\|\vec{u}_{0}\|_{L^2 h^1} \le E$,  then the resonant system \eqref{eq-y1} has a unique strong solution $\vec{u}\in C_t^0\left((-T,T);L^{2}h^{1}\right)\bigcap L_{t}^{4}L_{x}^{4}h^{a}\left((-T,T)\times \mathbb{R}^2\times \mathbb{Z}\right)$  for some $T > 0$, satisfying $\vec{u}(0)=\vec{u}_{0}$;
  \item (Small data scattering) There exists sufficiently small $\delta > 0$, if $\|\vec{u}_{0}\|_{L^2 h^1} \le \delta$,  then \eqref{eq-y1} has an unique global solution
$\vec{u}\in L_{t}^{\infty}L_{x}^{2}h^{1}\left(\mathbb{R}\times \mathbb{R}^2\times \mathbb{Z}\right)\bigcap L_{t}^{4}L_{x}^{4}h^{a}\left(\mathbb{R}\times \mathbb{R}^2\times \mathbb{Z}\right)$ with initial data $\vec{u}(0)=\vec{u}_{0}$, Moreover, there exist $\vec{u}^{\pm} \in L_{x}^{2}h^{1}(\mathbb{R}^2 \times \mathbb{Z})$ such that
\begin{equation} \label{eq1.3}
\|\vec{u}(t)- e^{it\Delta} \vec{u}^{\pm}\|_{L_{x}^{2}h^{a}} \to 0, \ \text{ as } t\to \pm \infty.
\end{equation}
\end{enumerate}
\end{theorem}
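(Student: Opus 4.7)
The plan is to apply the Banach fixed point theorem to the Duhamel map
\begin{equation*}
\Phi(\vec{u})(t) := e^{it\Delta}\vec{u}_0 - i\int_0^t e^{i(t-s)\Delta}\mathbf{F}(\vec{u}(s))\,ds,
\end{equation*}
using Proposition \ref{pr-y3.1} and Proposition \ref{prop3.1'} as the only analytic inputs. Since $(4,4)$ is admissible and $(4/3,4/3)$ is its dual, H\"older in space-time combined with the pointwise-in-time estimate $\|\mathbf{F}(\vec{u})\|_{h^a}\lesssim\|\vec{u}\|^3_{h^a}$ yields $\|\mathbf{F}(\vec{u})\|_{L_t^{4/3}L_x^{4/3}h^a(I)}\lesssim\|\vec{u}\|^3_{L_t^4L_x^4h^a(I)}$ on any interval $I$. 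A trilinear polarization of the same Cauchy--Schwarz argument used in the proof of Proposition \ref{prop3.1'} (and, crucially, the Hani--Pausader bound $\sup_j\langle j\rangle^2\sum_{\mathcal{R}(j)}\langle j_1\rangle^{-2}\langle j_2\rangle^{-2}\langle j_3\rangle^{-2}\lesssim 1$) also gives the Lipschitz-type estimate $\|\mathbf{F}(\vec{u})-\mathbf{F}(\vec{v})\|_{h^a}\lesssim(\|\vec{u}\|^2_{h^a}+\|\vec{v}\|^2_{h^a})\|\vec{u}-\vec{v}\|_{h^a}$.

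For part (1), Proposition \ref{pr-y3.1} with $a=1$ places $e^{it\Delta}\vec{u}_0$ in $L_t^4L_x^4h^1(\mathbb{R})$, so by monotone convergence one can choose $T=T(\vec{u}_0)>0$ with $\|e^{it\Delta}\vec{u}_0\|_{L_t^4L_x^4h^1((-T,T))}\le\eta$ for any prescribed small $\eta$. On the complete metric space
\begin{equation*}
X_T := \bigl\{\vec{u}\in C_t^0L_x^2h^1\cap L_t^4L_x^4h^1((-T,T)): \|\vec{u}\|_{L_t^4L_x^4h^1}\le 2\eta,\ \|\vec{u}\|_{L_t^\infty L_x^2h^1}\le 2CE\bigr\}
\end{equation*}
with distance $d(\vec{u},\vec{v})=\|\vec{u}-\vec{v}\|_{L_t^4L_x^4h^1}$, Proposition \ref{pr-y3.1} and the nonlinear bound yield $\|\Phi(\vec{u})\|_{L_t^4L_x^4h^1}\le\eta+C(2\eta)^3$ and $d(\Phi(\vec{u}),\Phi(\vec{v}))\lesssim\eta^2 d(\vec{u},\vec{v})$; for $\eta$ small, $\Phi$ is a contraction on $X_T$ and the unique fixed point is the desired solution. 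The $L_t^4L_x^4h^0$ control follows from the trivial embedding $h^1\hookrightarrow h^0=l^2$, giving both cases $a\in\{0,1\}$ at once.

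For part (2), the same argument runs globally: when $\|\vec{u}_0\|_{L^2h^1}\le\delta$, Proposition \ref{pr-y3.1} gives $\|e^{it\Delta}\vec{u}_0\|_{L_t^4L_x^4h^1(\mathbb{R})}\le C\delta$, so for $\delta$ small we may take $I=\mathbb{R}$ in the contraction, producing a global solution of size $O(\delta)$ in $L_t^\infty L_x^2h^1\cap L_t^4L_x^4h^a(\mathbb{R})$. Scattering then follows by setting
\begin{equation*}
\vec{u}^{\pm}:=\vec{u}_0\mp i\int_0^{\pm\infty}e^{-is\Delta}\mathbf{F}(\vec{u}(s))\,ds,
\end{equation*}
where the integrals converge in $L_x^2h^1$ via the Cauchy-type estimate $\|\int_{t_1}^{t_2}e^{-is\Delta}\mathbf{F}(\vec{u})\,ds\|_{L_x^2h^a}\lesssim\|\vec{u}\|^3_{L_t^4L_x^4h^a([t_1,t_2])}\to 0$ as $t_1,t_2\to\pm\infty$; applying the same bound to the tail $[t,\pm\infty)$ delivers \eqref{eq1.3}.

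There is no real obstacle here: the whole argument is standard Kato-style iteration, and the only point requiring any care is verifying that $\mathbf{F}$ obeys a Lipschitz-in-$h^a$ estimate compatible with the weighted Cauchy--Schwarz manipulation of Proposition \ref{prop3.1'}. Because the resonance indicator $\mathcal{R}(j)$ and the Hani--Pausader summability bound are exactly the same in the difference estimate as in the pointwise bound, this step is a direct polarization of the proof already recorded for \eqref{eq-y2.6}, so it introduces nothing new.
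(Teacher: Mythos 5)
Your proof is correct and matches the route the paper itself prescribes: the text before Theorem \ref{th-y3.3} states only that ``following the standard arguments'' one uses Proposition \ref{pr-y3.1} (Strichartz in $L^p_tL^q_xh^a$) and Proposition \ref{prop3.1'} (the nonlinear bound $\|\mathbf{F}(\vec u)\|_{h^a}\lesssim\|\vec u\|_{h^a}^3$), and your Kato-style contraction on $X_T$ with the weak metric, plus the polarization of \eqref{eq-y2.6} for the Lipschitz bound and the Cauchy-tail argument for the scattering states, is exactly that standard argument spelled out.
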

\begin{remark}\label{remark2.5}
Actually whenever $a=1$ or $a=0$, \eqref{eq1.3} implies $\|\vec{u}(t)- e^{it\Delta} \vec{u}^{\pm}\|_{L_{x}^{2}h^{1}} \to 0$,  as  $t\to \pm \infty$. Indeed, \eqref{eq1.3} is equivalent to $\|\vec{u}\|_{L_{t}^{4}L_{x}^{4}h^{a}(\mathbb{R}\times \mathbb{R}^2\times \mathbb{Z})}<\infty$. Then we can divide the time interval into finite segments $\mathbb{R}=\cup_{j=1}^M I_j$, such that $\|\vec{u}\|_{L_{t}^{4}L_{x}^{4}h^{a}(I_j\times \mathbb{R}^2\times \mathbb{Z})}<\delta$. Then by Proposition \ref{pr-y3.1}, \eqref{1.1a} and the fact $l^2\subset l^{\infty}$ and Cauchy-Schwarz,
\begin{equation*}
\begin{split}
\|\vec{u}\|_{L_{t}^{4}L_{x}^{4}h^{1}(I_j\times \mathbb{R}^2\times \mathbb{Z})}&\lesssim \|\vec{u}_0\|_{L_x^2 h^1} +\left\|\mathbf{F}(\vec{u})\right\|_{L_{t}^{4/3}L_{x}^{4/3}h^{1}(I_j\times \mathbb{R}^2\times \mathbb{Z})}\\
&\lesssim \|\vec{u}_0\|_{L_x^2 h^1} + \|\vec{u}\|^2_{L_{t}^{4}L_{x}^{4}l^2(I_j\times \mathbb{R}^2\times \mathbb{Z})}\|\vec{u}\|_{L_{t}^{4}L_{x}^{4}h^{1}(I_j\times \mathbb{R}^2\times \mathbb{Z})}\\
&\lesssim \|\vec{u}_0\|_{L_x^2 h^1} + \delta^2 \|\vec{u}\|_{L_{t}^{4}L_{x}^{4}h^{1}(I_j\times \mathbb{R}^2\times \mathbb{Z})}.
\end{split}
\end{equation*}
This implies $\|\vec{u}\|_{L_{t}^{4}L_{x}^{4}h^{1}(I_j\times \mathbb{R}^2\times \mathbb{Z})}\lesssim \|\vec{u}_0\|_{L_x^2 h^1}$, so $\|\vec{u}\|_{L_{t}^{4}L_{x}^{4}h^{1}(\mathbb{R}\times \mathbb{R}^2\times \mathbb{Z})}<\infty$, this in turn shows $\|\vec{u}(t)- e^{it\Delta} \vec{u}^{\pm}\|_{L_{x}^{2}h^{1}} \to 0$,  as  $t\to \pm \infty$.
\end{remark}
\begin{theorem}[Stability]\label{th-y3.6}
 For $a\in \{+1,0\}$, let I be a compact interval and $\vec{e}=\{e_{j}\}_{j\in \mathbb{Z}}$, $e_{j}=i\partial_t u_j + \Delta u_j - \sum\limits_{\mathcal{R}(j)} u_{j_1} \bar{u}_{j_2} u_{j_3}$, assume $\|\vec{u}\|_{L_{t}^{4}L_{x}^{4}h^{a}\left(I\times \mathbb{R}^2\times \mathbb{Z}\right)}\leq A$ for some $A>0$,  then for $\forall\epsilon>0,\exists \delta>0$,  such that if $\|\vec{e}\|_{L_{t}^{\frac{4}{3}}L_{x}^{\frac{4}{3}}h^{a}\left(I\times \mathbb{R}^2\times \mathbb{Z}\right)}\leq\delta,\|\vec{u}(t_{0})-\vec{v}_{0}\|_{L^{2}h^{1}}\leq\delta,$
then the resonant system \eqref{eq-y1} has a solution $\vec{v}\in L_{t}^{\infty}L_{x}^{2}h^{1}(I)\cap L_{t}^{4}L_{x}^{4}h^{a}(I)$ with initial data $\vec{v}(t_{0})=\vec{v}_0$, Moreover,
$$\|\vec{u}-\vec{v}\|_{L_{t}^{4}L_{x}^{4}h^{a}\left(I\times \mathbb{R}^2\times \mathbb{Z}\right)} +\|\vec{u}-\vec{v}\|_{L_{t}^{\infty}L_{x}^{2}h^{1}\left(I\times \mathbb{R}^2\times \mathbb{Z}\right)}\leq\epsilon.$$
\end{theorem}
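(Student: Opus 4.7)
\textbf{Proof proposal for Theorem~\ref{th-y3.6}.}
The plan is to run a standard Strichartz perturbation argument, but carried out twice: first at the $h^a$ level (in which the hypothesis controls $\vec{u}$), and then upgraded to the $h^1$ level in the spirit of Remark~\ref{remark2.5}. Throughout, set $\vec{w}=\vec{v}-\vec{u}$; then $\vec{w}$ formally satisfies
\begin{equation*}
i\partial_t w_j + \Delta w_j = \bigl[\mathbf{F}(\vec{u}+\vec{w}) - \mathbf{F}(\vec{u})\bigr]_j - e_j,\qquad \vec{w}(t_0)=\vec{v}_0-\vec{u}(t_0).
\end{equation*}
Expanding trilinearly, $\mathbf{F}(\vec{u}+\vec{w})-\mathbf{F}(\vec{u})$ is a finite sum of resonant-type cubic expressions, each containing at least one factor of $w$. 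The very same Cauchy--Schwarz plus Lemma~A.2 of \cite{HP} computation used in the proof of Proposition \ref{prop3.1'} then yields the pointwise--in--time estimate
\begin{equation*}
\bigl\|\mathbf{F}(\vec{u}+\vec{w})-\mathbf{F}(\vec{u})\bigr\|_{h^a} \;\lesssim\; \bigl(\|\vec{u}\|_{h^a}+\|\vec{w}\|_{h^a}\bigr)^2\,\|\vec{w}\|_{h^a},
\end{equation*}
and hence the space-time version in $L_t^{4/3}L_x^{4/3}h^a$ by H\"older.

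The first step is to partition $I=\bigcup_{k=1}^{M} I_k$ into finitely many subintervals on which $\|\vec{u}\|_{L_t^4 L_x^4 h^a(I_k)}\le\eta$, where $\eta$ is a small universal constant to be fixed; the number $M$ depends only on $A$. On each $I_k=[t_k,t_{k+1}]$ Duhamel's formula together with Proposition \ref{pr-y3.1} and the nonlinear difference estimate gives
\begin{equation*}
\|\vec{w}\|_{S^a(I_k)} \;\le\; C\|\vec{w}(t_k)\|_{L^2 h^a} + C\|\vec{e}\|_{L_t^{4/3}L_x^{4/3}h^a(I_k)} + C\bigl(\eta+\|\vec{w}\|_{S^a(I_k)}\bigr)^{2}\|\vec{w}\|_{S^a(I_k)},
\end{equation*}
where $S^a(I_k) := L_t^\infty L_x^2 h^a(I_k)\cap L_t^4 L_x^4 h^a(I_k)$. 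A standard continuity/bootstrap argument, started from the smallness of $\|\vec{w}(t_0)\|_{L^2h^1}\le\delta$ and of $\|\vec{e}\|_{L_t^{4/3}L_x^{4/3}h^a}\le\delta$, then yields
$\|\vec{w}\|_{S^a(I_k)}\le 2C\bigl(\|\vec{w}(t_k)\|_{L^2 h^a}+\|\vec{e}\|_{L_t^{4/3}L_x^{4/3}h^a(I_k)}\bigr)$. Iterating the resulting recursion over the $M$ subintervals gives
\begin{equation*}
\|\vec{w}\|_{L_t^4 L_x^4 h^a(I)}+\|\vec{w}\|_{L_t^\infty L_x^2 h^a(I)} \;\le\; (2C)^{M}\bigl(\delta+\|\vec{e}\|_{L_t^{4/3}L_x^{4/3}h^a(I)}\bigr),
\end{equation*}
so choosing $\delta=\delta(\varepsilon,A)$ sufficiently small concludes the $h^a$ portion of the stability, along with the existence of $\vec{v}$ on the whole of $I$.

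When $a=1$ this already delivers the desired conclusion. When $a=0$ one must upgrade the bound on $\vec{w}$ from $l^2$ to $h^1$; this is the only place where the asymmetry between the level of the hypothesis and the level of the conclusion intervenes. The key point is that once $\vec{w}$ is controlled in $L_t^4L_x^4 l^2$, the triangle inequality gives a bound on $\|\vec{v}\|_{L_t^4L_x^4 l^2(I)}$. One then repartitions $I$ into subintervals on which $\|\vec{v}\|_{L_t^4L_x^4 l^2}$ is small and, exploiting the decomposition \eqref{1.1a} exactly as in Remark~\ref{remark2.5}, estimates the $h^1$ piece of the nonlinear difference by
$\|\vec{v}\|_{L_t^4L_x^4 l^2}^2\,\|\vec{w}\|_{L_t^4L_x^4 h^1}$ plus symmetric terms, thereby closing a second bootstrap at the $h^1$ level and yielding the $L_t^\infty L_x^2 h^1$ bound on $\vec{w}$.

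The principal obstacle is accounting: the iteration produces a constant of the form $(2C)^M$ with $M\sim(A/\eta)^{4}$, so $\delta$ must be chosen doubly-exponentially small in $A$. The secondary obstacle is the $l^2\to h^1$ upgrade in the $a=0$ case, which is why one genuinely needs the structural identity \eqref{1.1a} (and Lemma A.2 of \cite{HP}) rather than a naive H\"older estimate — without exploiting the special shape of the resonant sum, taking an $h^1$ norm inside the cubic expression destroys the symmetry required to absorb one factor into a small-norm quantity.
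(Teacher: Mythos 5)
The paper does not actually write out a proof of this theorem: it states just before Theorem~\ref{th-y3.3} that local well-posedness, small-data scattering, and stability all ``follow from standard arguments'' using Proposition~\ref{pr-y3.1} and Proposition~\ref{prop3.1'}. Your proposal is precisely that standard Strichartz perturbation argument, and the $a=1$ case is handled correctly: the trilinear difference bound via Lemma~A.2 of \cite{HP}, the partition of $I$ into $M\sim(A/\eta)^4$ subintervals, the per-interval bootstrap, and the iterated recursion. (The resulting constant $(2C)^M$ is exponential in $A^4$; calling $\delta$ ``doubly-exponentially small in $A$'' is a cosmetic overstatement but not an error.)

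The gap is in the $h^1$ upgrade for $a=0$, and your appeal to Remark~\ref{remark2.5} does not close it. Using \eqref{1.1a}, the trilinear difference obeys the pointwise bound
\begin{equation*}
|F_j(\vec{v})-F_j(\vec{u})| \;\lesssim\; |w_j|\bigl(\|\vec{u}\|_{l^2}^2+\|\vec{v}\|_{l^2}^2\bigr) + |u_j|\,\|\vec{w}\|_{l^2}\bigl(\|\vec{u}\|_{l^2}+\|\vec{v}\|_{l^2}\bigr),
\end{equation*}
so after taking $h^1$ norms the second, \emph{asymmetric} term produces $\|\vec{u}\|_{h^1}\|\vec{w}\|_{l^2}(\|\vec{u}\|_{l^2}+\|\vec{v}\|_{l^2})$, not something absorbable into $\|\vec{w}\|_{h^1}$. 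It is not one of your ``symmetric terms'': absorbing it requires a priori control of $\|\vec{u}\|_{L^4_{t,x}h^1}$, which the hypothesis $\|\vec{u}\|_{L^4_{t,x}l^2}\le A$ does not supply. (One can instead push the $h^1$ weight onto $v_j$ and first bound $\|\vec{v}\|_{L^4_{t,x}h^1}$ by running the Remark~\ref{remark2.5} argument on the genuine solution $\vec{v}$, but you do not carry out this bookkeeping.) More fundamentally, the Duhamel integral for $\vec{w}$ contains $-\vec{e}$, so the claimed $L^\infty_t L^2_x h^1$ bound on $\vec{w}$ unavoidably asks for $\|\vec{e}\|_{L^{4/3}_{t,x}h^1}$; for $a=0$ the hypothesis only controls $\|\vec{e}\|_{L^{4/3}_{t,x}l^2}$, and no purely $l^2$ hypothesis on the error can yield an $h^1$ conclusion for the difference. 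Either the theorem implicitly carries the stronger hypothesis $\|\vec{e}\|_{L^{4/3}_{t,x}h^1}\le\delta$, or the $h^1$ conclusion should be read only for $a=1$, or the theorem is only meant to be invoked with $\vec{e}=0$ (which is in fact how it is used in the proof that Proposition~\ref{pr-y4.4} implies Theorem~\ref{th-y4.3}, where both $\vec{u}$ and $\vec{u}_n$ are exact solutions). Your proposal passes over this silently; it should be flagged rather than waved at via Remark~\ref{remark2.5}.
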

We also have the persistence of regularity:
\begin{theorem}[Persistence of regularity]\label{co6.533}
Suppose $\vec{u}_0 \in L_x^2 h^1$ and $\vec{u} \in C_t^0 L_x^2 h^1(\mathbb{R} \times \mathbb{R}^2 \times \mathbb{Z})$ is the solution of \eqref{eq-y1}.
Suppose also that $\vec{v}_0 \in H^4_x h^5$ satisfies
\begin{align*}
\|\vec{u}_0 - \vec{v}_0 \|_{L_x^2 h^1} \lesssim \epsilon,
\end{align*}
and that $\vec{v}$ is the solution to \eqref{eq-y1} with initial data $\vec{v}(0) = \vec{v}_0$. Then it holds that
\begin{align*}
\|\vec{v}\|_{L_t^\infty H_x^4 h^5} + \bigg\| \Big( \sum_{p\in \mathbb{Z} } \langle p \rangle^{10}  \big|  (\langle \nabla \rangle^4 v_p)(t,x)\big|^2 \Big)^\frac12   \bigg\|_{L_{x,t}^4(\mathbb{R} \times \mathbb{R}^2)} & \lesssim \|\vec{v}_0\|_{H^4_x h^5},\\
\|\vec{u} - \vec{v} \|_{L_t^\infty L_x^2 h^1 \left(\mathbb{R}\times \mathbb{R}^2\times \mathbb{Z}\right)}+ \|\vec{u}-\vec{v}\|_{L_{t}^{4}L_{x}^{4}h^{1}\left(\mathbb{R}\times \mathbb{R}^2\times \mathbb{Z}\right)} & \lesssim  \epsilon,
\end{align*}
and there exists $\vec{w}^\pm \in H^4_x h^5$ such that
\begin{align*}
\bigg\| \Big(\sum_{p\in \mathbb{Z}^2} \langle p\rangle^2 \big|(v_p(t)- e^{it\Delta_{\mathbb{R}^2}} w_p^\pm)(x)\big|^2\Big)^\frac12 \bigg\|_{L_x^2(\mathbb{R}^2)} \to 0, as\,\, t\to \pm\infty.
\end{align*}

\end{theorem}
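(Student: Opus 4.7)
The plan is to first obtain global existence and Strichartz control of $\vec{v}$ at the $L^2_x h^1$ level (via the main theorem and the stability result), and then upgrade to the higher $H^4_x h^5$ regularity by a bootstrap on finitely many intervals. Since $\vec{v}_0 \in H^4_x h^5 \hookrightarrow L^2_x h^1$ with $\|\vec{v}_0\|_{L^2_x h^1} \leq \|\vec{u}_0\|_{L^2_x h^1} + \epsilon$, Theorem \ref{th-y8.1'} applied to $\vec{v}_0$ produces a global solution $\vec{v}$ with $\|\vec{v}\|_{L^4_{t,x} h^1(\mathbb{R})} < \infty$. Theorem \ref{th-y3.6} on $I = \mathbb{R}$ with $\vec{e} = 0$ then immediately gives $\|\vec{u}-\vec{v}\|_{L^\infty_t L^2_x h^1} + \|\vec{u}-\vec{v}\|_{L^4_{t,x} h^1} \lesssim \epsilon$, which is the second displayed estimate in the conclusion.

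For the higher regularity bound, I would partition $\mathbb{R} = \bigcup_{k=1}^N I_k$ into finitely many consecutive intervals on which $\|\vec{v}\|_{L^4_{t,x} h^1(I_k)} \leq \eta$ for a small $\eta>0$ chosen later. On each $I_k = [t_k, t_{k+1}]$, Duhamel's formula together with the Strichartz estimate of Proposition \ref{pr-y3.1} yields
\begin{equation*}
\bigl\|\langle \nabla \rangle^4 \vec{v}\bigr\|_{L^\infty_t L^2_x h^5 \cap L^4_{t,x} h^5 (I_k)} \lesssim \|\vec{v}(t_k)\|_{H^4_x h^5} + \bigl\|\langle \nabla \rangle^4 \mathbf{F}(\vec{v})\bigr\|_{L^{4/3}_{t,x} h^5(I_k)}.
\end{equation*}
The key nonlinear estimate to establish is
\begin{equation*}
\bigl\|\langle \nabla \rangle^4 \mathbf{F}(\vec{v})\bigr\|_{L^{4/3}_x h^5} \lesssim \bigl\|\langle \nabla \rangle^4\vec{v}\bigr\|_{L^4_x h^5} \|\vec{v}\|_{L^4_x h^1}^2,
\end{equation*}
which I would prove by combining the fractional Leibniz rule (distributing $\langle \nabla \rangle^4$ onto one of the three factors) with a H\"older split in $x$ placing all three factors in $L^4_x$, and the weighted $h^5$ summation adapted from the proof of Proposition \ref{prop3.1'}: bound $\langle j\rangle^{10} \lesssim \langle j_1\rangle^{10}+\langle j_2\rangle^{10}+\langle j_3\rangle^{10}$, apply Cauchy--Schwarz against the Lemma A.2 of \cite{HP} estimate $\sup_j \langle j\rangle^2\sum_{\mathcal{R}(j)}\langle j_1\rangle^{-2}\langle j_2\rangle^{-2}\langle j_3\rangle^{-2}\lesssim 1$, and remove the constraint $\mathcal{R}(j)$ by summing in $j$ first, so that exactly one factor ends up carrying the $h^5$ weight and the other two carry the $h^1$ weight.

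Choosing $\eta$ small enough to absorb the quadratic factor $\|\vec{v}\|_{L^4_{t,x}h^1(I_k)}^2$ then gives $\bigl\|\langle \nabla \rangle^4 \vec{v}\bigr\|_{L^\infty_t L^2_x h^5 \cap L^4_{t,x} h^5(I_k)} \lesssim \|\vec{v}(t_k)\|_{H^4_x h^5}$; iterating through the $N$ intervals produces the global $H^4_x h^5$ Strichartz bound asserted in the first displayed inequality. Scattering then follows routinely: global finiteness of $\bigl\|\langle \nabla \rangle^4 \mathbf{F}(\vec{v})\bigr\|_{L^{4/3}_{t,x} h^5(\mathbb{R})}$ forces $e^{-it\Delta} \vec{v}(t)$ to be Cauchy in $H^4_x h^5$ as $t \to \pm \infty$, defining $\vec{w}^\pm \in H^4_x h^5$ and implying in particular the weaker convergence stated in the theorem. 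The main obstacle will be the high-regularity nonlinear estimate above: one must simultaneously handle the fractional Leibniz rule for $\langle \nabla \rangle^4$ and the combinatorial weighted $l^2$-summation over the resonance constraint $\mathcal{R}(j)$, ensuring that two of the three factors carry the small $L^4_{t,x} h^1$ norm while only the third carries the large $\langle \nabla \rangle^4 \vec{v}$ norm with $h^5$ weight, so that smallness of $\eta$ enables the bootstrap to close on each $I_k$.
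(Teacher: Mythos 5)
The paper states this theorem without a proof, so I cannot compare your route to a canonical one; nevertheless your overall skeleton (global well-posedness of $\vec v$ via Theorem \ref{th-y8.1'}, the perturbation bound via Theorem \ref{th-y3.6}, a bootstrap on finitely many intervals with $\|\vec v\|_{L^4_{t,x}h^1(I_k)}\leq\eta$, and the Cauchy criterion for scattering) is the standard and correct strategy. The gap is in the high-regularity nonlinear estimate. You claim
$\|\langle\nabla\rangle^4\mathbf{F}(\vec v)\|_{L^{4/3}_x h^5}\lesssim \|\langle\nabla\rangle^4\vec v\|_{L^4_x h^5}\|\vec v\|_{L^4_x h^1}^2$,
and propose to prove it by distributing $\langle\nabla\rangle^4$ via fractional Leibniz onto one factor while routing the $h^5$ weight to the dominant $j_k$. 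But these two choices are independent: the Leibniz rule forces a sum over which factor receives $\langle\nabla\rangle^4$, while the resonance sum forces a sum over which $j_k$ dominates. When they disagree -- say $\langle\nabla\rangle^4$ falls on $v_{j_1}$ while $\langle j_2\rangle\gtrsim\langle j\rangle$ -- the resulting term is of the form $\|\langle\nabla\rangle^4\vec v\|_{L^4_x l^2}\cdot\|\vec v\|_{L^4_x h^5}\cdot\|\vec v\|_{L^4_x l^2}$. Neither $\|\langle\nabla\rangle^4\vec v\|_{L^4_x l^2}$ nor $\|\vec v\|_{L^4_x h^5}$ is controlled by the small quantity $\|\vec v\|_{L^4_x h^1}$; both are bounded only by the full joint norm $\|\langle\nabla\rangle^4\vec v\|_{L^4_x h^5}$, so this cross term is \emph{quadratic} in the bootstrap quantity, not linear times $\eta^2$. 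Your Lemma~A.2 route has a parallel issue: the quantity $\sup_j\langle j\rangle^{10}\sum_{\mathcal{R}(j)}\langle j_1\rangle^{-10}\langle j_2\rangle^{-2}\langle j_3\rangle^{-2}$ is not uniformly bounded when $j_2$ or $j_3$ is the dominant index, so Cauchy--Schwarz in the form you describe does not close.

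The standard repair is a \emph{layered} bootstrap rather than a single joint one. First, use identity \eqref{1.1a}, $F_j = v_j\big(2\sum_k|v_k|^2 - |v_j|^2\big)$, under which the $h^a$ weight attaches unambiguously to the single factor $v_j$ and yields the clean algebra bound $\|\mathbf{F}(\vec v)\|_{h^a}\lesssim\|\vec v\|_{h^a}\|\vec v\|_{l^2}^2$ for any $a\geq 0$ (no combinatorics, no Lemma~A.2). This closes $\|\vec v\|_{L^\infty_t L^2_x h^5\cap L^4_{t,x}h^5}\lesssim\|\vec v_0\|_{L^2_xh^5}$ by itself. Second, apply the Leibniz rule with the $l^2_j$ (unweighted) sequence norm to close $\|\langle\nabla\rangle^4\vec v\|_{L^\infty_t L^2_x l^2\cap L^4_{t,x}l^2}\lesssim\|\vec v_0\|_{H^4_xl^2}$; here the cross terms live in $l^2_j$ only and are harmless. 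Third, with both of these quantities already known to be finite and controlled, the troublesome cross term $\|\vec v\|_{L^4_xh^5}\|\langle\nabla\rangle^4\vec v\|_{L^4_xl^2}\|\vec v\|_{L^4_xl^2}$ becomes a pre-controlled constant times $\eta$, so the joint estimate for $\|\langle\nabla\rangle^4\vec v\|_{L^4_{t,x}h^5}$ closes on the third pass. Without this staging the bootstrap inequality $X\lesssim\|\vec v(t_k)\|_{H^4_xh^5}+\eta^2X+\eta X^2$ cannot be closed, since $\|\vec v(t_k)\|_{H^4_xh^5}$ is \emph{not} small.
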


\section{Reduction to almost periodic solutions and the existence of critical element}

We let $G$ be the  group  generated by phase rotations, Galilean transforms, translations and dilations. We let $G\backslash L^{2}h^{1}(\mathbb{R}^2\times\mathbb{Z})$ be the modulo space of $G$-orbits $G\vec{f}:=\{g\vec{f}:g\in G\}$ endowed with the usual quotient topology. Define
$$(T_{g_{\theta,\xi_0,x_0,\lambda}}\vec{u})(t,x):=\frac{1}{\lambda}e^{i\theta}e^{ix\cdot\xi_0}e^{-it|\xi_0|^2}
\vec{u}\left(\frac{t}{\lambda^2},\frac{x-x_0-2\xi_0t}{\lambda}\right),$$
 then the map $g\mapsto T_g$ is a group action of $G$.
\begin{proposition}[Linear profile decomposition in $L_x^2 h^1(\mathbb{R}^2 \times \mathbb{Z})$]  \label{pr6.640}
Let $\{\vec{u}_{n}\}$ be a bounded sequence in $L_x^2 h^1(\mathbb{R}^2 \times \mathbb{Z})$. Then (after passing to a subsequence if necessary) there
exists $J\in  \{0,1, \cdots \} \cup \{\infty\}$, functions $\{\vec{\phi}^{j}\}_{j=1}^{J} \subseteq L_x^2 h^1$, group elements $\{g_n^j\}_{j=1}^{J} \subseteq G$,
and times $\{t_n^j\}_{j=1}^{J} \subseteq \mathbb{R}$ so that defining $\vec{w}_{n}^J$ by
\begin{align*}
\vec{u}_n(x) = &  \sum_{j=1}^J g_n^j e^{it_n^j \Delta_{\mathbb{R}^2}} \vec{\phi}^j + \vec{w}_n^J(x) \\
         := & \sum_{j=1}^J \frac1{\lambda_n^j} e^{ix\xi_n^j} (e^{it_n^j \Delta_{\mathbb{R}^2} } \vec{\phi}^j)\left(\frac{x-x_n^j}{\lambda_n^j} \right) + \vec{w}_n^J(x ),
\end{align*}
we have the following properties:
\begin{align*}
\limsup_{n\to \infty} \|e^{it\Delta_{\mathbb{R}^2}} \vec{w}_n^J\|_{L_{t,x}^4 l^4(\mathbb{R} \times \mathbb{R}^2 \times \mathbb{Z} )} \to 0, \ \text{ as } J\to \infty, \\
e^{-it_n^j \Delta_{\mathbb{R}^2}} (g_n^j)^{-1} \vec{w}_n^J \rightharpoonup 0  \text{ in } L_x^2 h^1,  \text{ as  } n\to \infty,\text{ for each }  j\le J,\\
\sup_{J} \lim_{n\to \infty} \left(\|\vec{u}_n\|_{L_x^2 h^1}^2 - \sum_{j=1}^J \|\vec{\phi}^j\|_{L_x^2 h^1}^2 - \|\vec{w}_n^J\|_{L_x^2 h^1}^2\right) = 0,
\end{align*}
and lastly, for $j\ne j'$, and $n\to \infty$,
\begin{align*}
\frac{\lambda_n^j}{\lambda_n^{j'}} + \frac{\lambda_n^{j'}}{\lambda_n^j} + \lambda_n^j \lambda_n^{j'} |\xi_n^j - \xi_n^{j'}|^2 + \frac{|x_n^j-x_n^{j'}|^2 } {\lambda_n^j \lambda_n^{j'}}  + \frac{|(\lambda_n^j)^2 t_n^j -(\lambda_n^{j'})^2 t_n^{j'}|}{\lambda_n^j \lambda_n^{j'}} \to \infty.
\end{align*}%
\end{proposition}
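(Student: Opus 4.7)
The plan is to execute the Bahouri--Gérard/Keraani concentration-compactness scheme in the Hilbert-valued space $L^2_xh^1$, viewing each $\vec{u}_n$ as a function $\mathbb{R}^2\to h^1$. The structural point is that the symmetry group $G$ acts trivially in the $j$-variable, so a single quadruple $(\lambda_n,\xi_n,x_n,t_n)$ will extract a profile $\vec\phi\in L^2_xh^1$ simultaneously in all components, and the $h^1$ weight $\langle j\rangle^2$ will control the range of significant components.

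The first main step is an inverse Strichartz inequality: if $\|\vec u_n\|_{L^2_xh^1}\le A$ and $\limsup_n\|e^{it\Delta}\vec u_n\|_{L^4_{t,x}\ell^4}\ge \epsilon$, then after passing to a subsequence there exist parameters $(\lambda_n,\xi_n,x_n,t_n)$, a corresponding $g_n\in G$, a time $t_n\in\mathbb{R}$, and a profile $\vec\phi\in L^2_xh^1$ with $\|\vec\phi\|_{L^2_xh^1}\gtrsim \epsilon^{C_1}A^{-C_2}$ for some $C_1,C_2>0$, such that $e^{-it_n\Delta}T_{g_n}^{-1}\vec u_n\rightharpoonup \vec\phi$ weakly in $L^2_xh^1$. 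To prove this, interchange $\|\cdot\|_{L^4_{t,x}\ell^4}^4=\sum_j\|\cdot\|_{L^4_{t,x}}^4$ and apply to each component $u_{n,j}$ the scalar two-dimensional refined Strichartz inequality
\begin{equation*}
\|e^{it\Delta}f\|_{L^4_{t,x}(\mathbb{R}\times\mathbb{R}^2)}^{4}\lesssim \|f\|_{L^2}^{2}\,\sup_Q|Q|\,\|e^{it\Delta}P_Qf\|_{L^\infty_{t,x}}^{2}.
\end{equation*}
A pigeonhole using $\|\vec u_n\|_{L^2_x\ell^2}\le A$ produces an index $j_n\in\mathbb{Z}$, a dyadic frequency cube $Q_n$ (of scale $\lambda_n^{-1}$ and center $\xi_n$), and a spacetime point $(x_n,-t_n)$ at which $|Q_n|^{1/2}\bigl|(e^{-it_n\Delta}P_{Q_n}u_{n,j_n})(x_n)\bigr|\gtrsim \epsilon^2/A$. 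The upper bound $\|u_{n,j_n}\|_{L^2_x}\le A/\langle j_n\rangle$ forced by the $h^1$-norm, combined with this concentration, constrains $\langle j_n\rangle$ to remain bounded uniformly in $n$, so after a further subsequence we may assume $j_n\equiv j_\infty$ is constant. Banach--Alaoglu in the Hilbert space $L^2_xh^1$ then yields the weak limit $\vec\phi$ of $e^{-it_n\Delta}T_{g_n}^{-1}\vec u_n$, supported essentially in the slot $j_\infty$, with the claimed quantitative lower mass bound.

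With the inverse Strichartz in hand, the iterative extraction proceeds as in Keraani: set $\vec w_n^0:=\vec u_n$; at stage $J\ge 1$ let $\epsilon_J:=\limsup_n\|e^{it\Delta}\vec w_n^{J-1}\|_{L^4_{t,x}\ell^4}$; if $\epsilon_J>0$, extract a profile $\vec\phi^J$ and parameters $(g_n^J,t_n^J)$ from $\vec w_n^{J-1}$, and define $\vec w_n^J:=\vec w_n^{J-1}-T_{g_n^J}e^{it_n^J\Delta}\vec\phi^J$. The weak-zero property at stage $J$ is automatic by construction; at earlier stages $j<J$ it propagates inductively once the asymptotic orthogonality of parameters is established. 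Orthogonality of $(g_n^j,t_n^j)$ versus $(g_n^{j'},t_n^{j'})$ for $j\ne j'$ is proved by contradiction: the negation of the divergence condition would produce a nontrivial limit operator intertwining the two frames, which would conjugate $\vec\phi^j$ into the $j'$-frame and contradict the weak-zero property there. The Pythagorean $L^2_xh^1$ identity then follows by iteratively expanding $\|\vec w_n^{J-1}\|_{L^2_xh^1}^2=\|\vec w_n^J+T_{g_n^J}e^{it_n^J\Delta}\vec\phi^J\|_{L^2_xh^1}^2$ and using weak convergence in the Hilbert space to annihilate the cross term. Combining the lower bound $\|\vec\phi^J\|_{L^2_xh^1}^2\gtrsim \epsilon_J^{C_1}A^{-C_2}$ with the summability $\sum_J\|\vec\phi^J\|_{L^2_xh^1}^2\le A^2$ forces $\epsilon_J\to 0$ as $J\to\infty$.

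The main obstacle is the vector-valued refined Strichartz itself: one must produce a single quadruple $(\lambda_n,\xi_n,x_n,t_n)$ that works uniformly in $j\in\mathbb{Z}$ rather than $j$-dependent parameters. This is achieved via the bound $\langle j_n\rangle\lesssim A^2/\epsilon^2$ sketched above, which restricts the concentration to finitely many components and permits a subsequence along which the extraction frame is independent of $j$. A secondary technicality is that, since $G$ contains no translations in $j$, each extracted profile must live essentially in a single component slot; this is compatible with the Hilbert-space structure of $L^2_xh^1$ and explains why the weaker $\ell^4$ norm (rather than an $h^1$-type norm) on the left-hand side of the first convergence statement is the natural norm in which the concentration mechanism operates.
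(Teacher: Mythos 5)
Your proposal is essentially correct and follows the same Bahouri--G\'erard/Keraani concentration-compactness scheme that the paper delegates to Proposition 3.1 of \cite{CHENG}: a vector-valued inverse Strichartz lemma followed by iterative profile extraction in the Hilbert space $L^2_xh^1$, with Pythagorean decoupling and asymptotic orthogonality of frames obtained by the standard weak-limit arguments. The genuine new ingredient you correctly isolate --- and the only place where the vector-valued structure actually intervenes --- is that a single frame $(\lambda_n,\xi_n,x_n,t_n)$ must serve all components simultaneously, and your mechanism (pigeonhole over $j$ using $\|e^{it\Delta}\vec u_n\|_{L^4_{t,x}\ell^4}^4=\sum_j\|e^{it\Delta}u_{n,j}\|_{L^4_{t,x}}^4$, then the $\langle j\rangle$-weight to cap the concentrating index) is the right one; since $G$ and $e^{it\Delta}$ act componentwise and hence unitarily on $L^2_xh^1$, a single Banach--Alaoglu extraction then produces the full $h^1$-valued profile, and $\|\vec\phi^J\|_{L^2_xh^1}\ge\|\phi^J_{j_\infty}\|_{L^2_x}$ transfers the quantitative lower bound.

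One correction that is not purely cosmetic: the scale-invariant form of the $d=2$ refined Strichartz has $|Q|^{-1}$, not $|Q|$, in front of $\|e^{it\Delta}P_Qf\|_{L^\infty_{t,x}}^2$. Under the $L^2$-invariant dilation $f\mapsto\lambda^{-1}f(\cdot/\lambda)$, $|Q|$ scales like $\lambda^{-2}$ while $\|e^{it\Delta}P_Qf\|_{L^\infty_{t,x}}$ scales like $\lambda^{-1}$, so only $|Q|^{-1/2}\|e^{it\Delta}P_Qf\|_{L^\infty_{t,x}}$ is invariant. This matters precisely at the step you lean on: with the correct exponent, Bernstein gives $|Q|^{-1/2}\|e^{it\Delta}P_Qf\|_{L^\infty_{t,x}}\lesssim\|f\|_{L^2_x}$, so the concentration $|Q_n|^{-1/2}|(e^{-it_n\Delta}P_{Q_n}u_{n,j_n})(x_n)|\gtrsim\epsilon^2/A$ forces $\|u_{n,j_n}\|_{L^2}\gtrsim\epsilon^2/A$ and hence $\langle j_n\rangle\lesssim A^2/\epsilon^2$. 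With the exponent as you wrote it, $\sup_Q|Q|^{1/2}\|e^{it\Delta}P_Qf\|_{L^\infty_{t,x}}$ is not controlled by $\|f\|_{L^2}$ (it can be large for large cubes even when $\|f\|_{L^2}$ is small), and the deduction that $\langle j_n\rangle$ stays bounded does not follow from the stated concentration. Once this sign is fixed, the rest of your argument goes through.
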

\begin{proof}the proof is similar to Proposition 3.1 in \cite{CHENG}, so we won't reiterate them here.
\end{proof}

\begin{remark}
By using interpolation, the H\"older inequality and Proposition \ref{pr6.640}, for $0<\epsilon_0\leq 1$,  we have
\begin{align*}
  \limsup_{n\to \infty} \|e^{it\Delta_{\mathbb{R}^2}} \vec{w}_n^J\|_{L_{t,x}^4 h^{1-\epsilon_0}}
\lesssim &  \limsup_{n\to \infty} \|e^{it\Delta_{\mathbb{R}^2}} \vec{w}_n^J\|_{L_{t,x}^4 h^{1 }}^{1-\epsilon_0} \|e^{it\Delta_{\mathbb{R}^2}} \vec{w}_n^J\|_{L_{t,x}^4 l^2}^{\epsilon_0} \\
\lesssim  & \limsup_{n\to \infty} \|\vec{w}_n^J\|_{L_x^2 h^1}^{1-\epsilon_0} \|e^{it\Delta_{\mathbb{R}^2}} \vec{w}_n^J\|_{L_{t,x}^4 l^1} ^{\frac{ \epsilon_0}{3}}  \|e^{it\Delta_{\mathbb{R}^2}} \vec{w}_n^J\|_{L_{t,x}^4 l^4}^{\frac{2\epsilon_0}{3}} \\
\lesssim & \limsup_{n\to \infty} \|\vec{w}_n^J\|_{L_x^2 h^1}^{1-\epsilon_0} \|e^{it\Delta_{\mathbb{R}^2}} \vec{w}_n^J\|_{L_{t,x}^4 h^1} ^{\frac{ \epsilon_0}{3}}  \|e^{it\Delta_{\mathbb{R}^2}} \vec{w}_n^J\|_{L_{t,x}^4 l^4}^{\frac{2\epsilon_0}{3}}\\
\lesssim & \limsup_{n\to \infty} \|\vec{w}_n^J\|_{L_x^2 h^1}^{1-\frac23 \epsilon_0}\|e^{it\Delta_{\mathbb{R}^2}} \vec{w}_n^J\|_{L_{t,x}^4 l^4}^{\frac{2\epsilon_0}{3}}
  \to 0, \text{ as }  J\to \infty.
\end{align*}

\end{remark}
To prove resonant system \eqref{eq-y1} is globally well-posed and scattering it suffices to prove that if $\vec{u}$ is a solution to \eqref{eq-y1}, then
$$\|\vec{u}\|_{L_{t,x}^{4}h^1(\mathbb{R}\times\mathbb{R}^{2}\times\mathbb{Z})}<\infty,$$ for all $\vec{u}_{0}\in L^2h^1(\mathbb{R}^2\times\mathbb{Z}).$
Actually from Remark \ref{remark2.5}, we just need to show $\|\vec{u}\|_{L_{t,x}^{4}l^2(\mathbb{R}\times\mathbb{R}^{2}\times\mathbb{Z})}<\infty$ for all $\vec{u}_{0}\in L^2h^1(\mathbb{R}^2\times\mathbb{Z}).$

For $\vec{u}$ solving \eqref{eq-y1} with maximal lifespan interval $I$, we define the function
$$A(m)=\sup\{\|\vec{u}\|_{L_{t,x}^{4}l^2(I\times\mathbb{R}^{2}\times\mathbb{Z})}: \|\vec{u}(0)\|_{L^2 h^1(\mathbb{R}^2 \times\mathbb{Z})}\leq m\},$$ and $$m_{0}=\sup\{m: A(m')<+\infty, \forall m'<m\}.$$
If we can prove $m_{0}=+\infty$, then global wellposedness and scattering are estabilished.

\begin{theorem}[Reduction to almost periodic solutions]\label{th-y4.3}
Assume $m_{0}<+\infty$. Then there exits a solution (calling critical element) $\vec{u}\in C_{t}^{0}L_{x}^{2}h^{1}\left(I\times \mathbb{R}^2\times \mathbb{Z}\right)\bigcap L_{t}^{4}L_{x}^{4}l^2\left(I\times \mathbb{R}^2\times \mathbb{Z}\right)$ to \eqref{eq-y1} with $I$ the maximal lifespan interval such that
\begin{enumerate}
    \item $M(\vec{u})=m_{0}.$
    \item $\vec{u}$ blows up at both directions in time, i.e. $\|\vec{u}\|_{L_{t,x}^{4}l^2(I\cap(-\infty,t_{0}))}=
        \|\vec{u}\|_{L_{t,x}^{4}l^2(I\cap(\widetilde{t}_{0},+\infty))}=+\infty$, for some $t_{0},\widetilde{t}_{0}\in I.$
    \item $\vec{u}$ is an almost periodic solution modulo $G.$
\end{enumerate}
where $\vec{u}$ is called an almost periodic solution modulo $G$ if the quotiented orbit $\{G\vec{u}:t\in I\}$ is a precompact subset of $G\backslash L^{2}h^{1}(\mathbb{R}^2\times\mathbb{Z}).$
\end{theorem}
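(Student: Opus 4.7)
The plan is to follow the standard concentration–compactness reduction of Kenig–Merle and Keraani, adapted to the resonant system. Fix a sequence $\vec{u}_n$ of maximal-lifespan solutions to \eqref{eq-y1} on intervals $I_n$ with $M(\vec{u}_n)\to m_0$ and $\|\vec{u}_n\|_{L^4_{t,x}l^2(I_n\times\mathbb{R}^2\times\mathbb{Z})}\to\infty$. By the time-reversal and time-translation symmetries of \eqref{eq-y1}, we may place $0\in I_n$ and arrange both $\|\vec{u}_n\|_{L^4_{t,x}l^2(I_n\cap(-\infty,0])}\to\infty$ and $\|\vec{u}_n\|_{L^4_{t,x}l^2(I_n\cap[0,\infty))}\to\infty$. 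Apply Proposition \ref{pr6.640} to the bounded sequence $\vec{u}_n(0)$ in $L^2_x h^1$, obtaining profiles $\vec{\phi}^j$, parameters $g_n^j\in G$, times $t_n^j$, and remainder $\vec{w}_n^J$.

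Next I would set up nonlinear profiles. For each $j$ define $\vec{v}^j$ as the maximal solution of \eqref{eq-y1} with initial data (at $t=0$) equal to $\vec{\phi}^j$ when $t_n^j\equiv 0$, or otherwise with the scattering asymptotic $\|\vec{v}^j(-t_n^j)-e^{-it_n^j\Delta}\vec{\phi}^j\|_{L^2_x h^1}\to 0$ provided by Theorem \ref{th-y3.3}(2) in the small-data regime and by time evolution in general. Define the rescaled nonlinear profiles $\vec{v}_n^j(t,x)=(T_{g_n^j}\vec{v}^j(\cdot-t_n^j))(t,x)$. Assume for contradiction that either two distinct profiles are nontrivial, or a single nontrivial profile has $\|\vec{\phi}^1\|_{L^2_x h^1}<m_0$. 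Then the mass Pythagorean identity in Proposition \ref{pr6.640} implies $\|\vec{\phi}^j\|_{L^2_x h^1}<m_0$ for every $j$, so by the definition of $m_0$ each $\vec{v}^j$ is global with $\|\vec{v}^j\|_{L^4_{t,x}l^2}\le A(\|\vec{\phi}^j\|_{L^2_x h^1})<\infty$. Consider the approximate solution $\vec{u}_n^J:=\sum_{j=1}^J\vec{v}_n^j+e^{it\Delta}\vec{w}_n^J$. Asymptotic orthogonality of the parameters (last display of Proposition \ref{pr6.640}) forces the cross terms $\vec{v}_n^j\vec{v}_n^{j'}$ at the $L^2_{t,x}l^2$ level to vanish as $n\to\infty$; combined with the decay of the remainder in the Remark after Proposition \ref{pr6.640} and the nonlinear bound \eqref{eq-y2.6}, the error $\vec{e}_n^J=(i\partial_t+\Delta)\vec{u}_n^J-\mathbf{F}(\vec{u}_n^J)$ satisfies $\|\vec{e}_n^J\|_{L^{4/3}_{t,x}l^2}\to 0$ as $n\to\infty$ and then $J\to\infty$. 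The stability theorem \ref{th-y3.6} at the $l^2$ level then yields $\|\vec{u}_n\|_{L^4_{t,x}l^2}\lesssim 1$ uniformly, the desired contradiction.

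Therefore exactly one profile appears and $\|\vec{\phi}^1\|_{L^2_x h^1}=m_0$, while $\vec{w}_n^1\to 0$ in $L^2_x h^1$. Absorbing $g_n^1$ by the group action and replacing $\vec{u}_n$ by $(T_{g_n^1})^{-1}\vec{u}_n$, we get $\vec{u}_n(-t_n^1)\to\vec{\phi}^1$ strongly in $L^2_x h^1$; let $\vec{u}$ be the maximal-lifespan solution of \eqref{eq-y1} with data $\vec{\phi}^1$ at the appropriate time. The stability theorem then transfers the blow-up of $\|\vec{u}_n\|_{L^4_{t,x}l^2}$ on both sides of the origin to $\vec{u}$, giving properties (1) and (2) of Theorem \ref{th-y4.3}.

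For almost periodicity, given any sequence of times $\tau_n\in I$, apply the entire argument above to the sequence $\vec{u}(\tau_n)$, which is again a minimizing sequence for the criterion defining $m_0$. Running the same one–profile reduction yields, up to subsequence, $\vec{u}(\tau_n)=g_n\vec{\phi}+\vec{r}_n$ with $\vec{r}_n\to 0$ in $L^2_x h^1$, i.e.\ $G\vec{u}(\tau_n)\to G\vec{\phi}$ in $G\backslash L^2_x h^1$. Hence $\{G\vec{u}(t):t\in I\}$ is precompact. The main technical obstacle is controlling the nonlinear error $\vec{e}_n^J$ in $L^{4/3}_{t,x}l^2$: the asymmetric nonlinearity $\mathbf{F}$ is not multilinear in a simple way, so one has to expand $\mathbf{F}(\sum_j\vec{v}_n^j+e^{it\Delta}\vec{w}_n^J)$, separate diagonal and off-diagonal contributions using \eqref{1.1a}, and exploit both the parameter orthogonality and the $L^4_{t,x}h^{1-\epsilon_0}$ smallness of the linear remainder — working at the $l^2$ (rather than $h^1$) level at the $j$–direction is what keeps the multilinear algebra symmetric and tractable.
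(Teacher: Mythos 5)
Your proposal follows the same concentration--compactness route the paper uses: the paper isolates the key convergence statement as Proposition~\ref{pr-y4.4}, derives Theorem~\ref{th-y4.3} from it by the standard double--blowup and almost--periodicity bootstrap, and then proves Proposition~\ref{pr-y4.4} by the linear profile decomposition (Proposition~\ref{pr6.640}) together with nonlinear profiles, approximate--solution construction, and the $l^2$--level stability theorem~\ref{th-y3.6} --- precisely the steps you spell out, which the paper itself compresses by citing Section~5 of~\cite{TVZ1}. The substance is the same; you have simply expanded the part the authors delegate to the reference, and correctly identified the one place where the asymmetric nonlinearity requires care (working at the $l^2$ rather than $h^1$ level in $j$ when estimating the error $\vec{e}_n^J$).
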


In order to prove this theorem we need the following proposition whose proof will be postponed.

\begin{proposition}\label{pr-y4.4}
Let $u_{n,p}(t,x)$ be defined on time interval $I_{n}.$
Assume $m_{0}<+\infty$,  $\vec{u}_{n}:=\left\{u_{n,p}\right\}_{p\in\mathbb{Z}}, n=1,2,\cdot\cdot\cdot$ is a sequence of solutions to \eqref{eq-y1} satisfying $$\limsup\limits_{n\rightarrow\infty}M\left(\vec{u}_{n}\right)=m_{0},$$ $$\lim\limits_{n\rightarrow\infty}\left\|\vec{u}_{n}\right\|_{L_{t,x}^{4}l^2(I_{n}\cap(-\infty,t_{n}))}=
\lim\limits_{n\rightarrow\infty}\left\|\vec{u}_{n}\right\|_{L_{t,x}^{4}l^2(I_{n}\cap(t_{n},+\infty))}=+\infty, \,\,for\,\, some \,\,t_{n}\in I_{n}.$$
Then $G\vec{u}_{n}(t_{n})$ converges (up to subsequence) in $G\backslash L^{2}h^{1}(\mathbb{R}^2\times\mathbb{Z}).$
\end{proposition}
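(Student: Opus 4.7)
The plan is to apply the linear profile decomposition of Proposition \ref{pr6.640} to the sequence $\vec{u}_n(t_n)$ in $L_x^2h^1$ and argue that, under the two-sided blow-up assumption at $t_n$, there must be exactly one nontrivial profile carrying all of the limiting norm $m_0$ with an error vanishing in $L_x^2h^1$; this will yield the desired convergence of $G\vec{u}_n(t_n)$ in the quotient $G\backslash L^2h^1$.

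First, by the time-translation invariance of \eqref{eq-y1} we may replace $\vec{u}_n(\cdot)$ by $\vec{u}_n(\cdot+t_n)$ and assume $t_n=0$, then apply Proposition \ref{pr6.640} to the bounded sequence $\{\vec{u}_n(0)\}$ to extract profiles $\vec{\phi}^j\in L^2h^1$, group elements $g_n^j\in G$ with parameters $(\lambda_n^j,\xi_n^j,x_n^j)$, times $t_n^j$, and errors $\vec{w}_n^J$ with the stated asymptotic Pythagorean mass decomposition and orthogonality of parameters. For each $j$, along a further subsequence $t_n^j/(\lambda_n^j)^2$ converges in $[-\infty,+\infty]$, and we associate to $\vec{\phi}^j$ a nonlinear profile $\vec{v}^j$ solving \eqref{eq-y1}, defined either with initial datum $\vec{\phi}^j$ when the limit is $0$, or as the unique global solution with scattering state $\vec{\phi}^j$ at $\pm\infty$ (using Theorem \ref{th-y3.3}).

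Assume for contradiction that the conclusion fails. Then, along a subsequence, either (a) at least two profiles are nontrivial, or (b) exactly one profile has $\|\vec{\phi}^1\|_{L^2h^1}=m_0$ but $\vec{w}_n^J$ does not tend to $0$ in $L_x^2h^1$, or (c) every profile is trivial. The asymptotic mass decomposition together with $\limsup M(\vec{u}_n)=m_0^2$ forces $\|\vec{\phi}^j\|_{L^2h^1}<m_0$ for every $j$ in cases (a) and (b); by the definition of $m_0$ each nonlinear profile satisfies $\|\vec{v}^j\|_{L_{t,x}^4l^2}<\infty$, and Theorem \ref{co6.533} upgrades this to $\vec{v}^j\in L_{t,x}^4h^1$. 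We assemble the approximant
\begin{align*}
\vec{u}_n^J(t,x):=\sum_{j=1}^J T_{g_n^j}\bigl[\vec{v}^j(\cdot+t_n^j/(\lambda_n^j)^2)\bigr](t,x)+e^{it\Delta}\vec{w}_n^J(x),
\end{align*}
use the asymptotic orthogonality of $(\lambda_n^j,\xi_n^j,x_n^j,t_n^j)$ to decouple $\|\vec{u}_n^J\|_{L_{t,x}^4l^2}^4$ into $\sum_j\|\vec{v}^j\|_{L_{t,x}^4l^2}^4+o(1)$, and check that the error $\vec{e}_n^J=(i\partial_t+\Delta)\vec{u}_n^J-\mathbf{F}(\vec{u}_n^J)$ is small in $L_{t,x}^{4/3}l^2$ using the vanishing of $\|e^{it\Delta}\vec{w}_n^J\|_{L_{t,x}^4l^2}$ together with the factorization \eqref{1.1a}. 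Applying the stability theorem (Theorem \ref{th-y3.6} with $a=0$) produces a uniform bound $\|\vec{u}_n\|_{L_{t,x}^4l^2(I_n)}\lesssim 1$, contradicting the two-sided blow-up hypothesis; case (c) is immediate since then $\|e^{it\Delta}\vec{u}_n(0)\|_{L_{t,x}^4l^2}\to 0$.

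Hence there is exactly one profile with $\|\vec{\phi}^1\|_{L^2h^1}=m_0$ and $\vec{w}_n^J\to 0$ in $L_x^2h^1$, so $\vec{u}_n(0)=g_n^1 e^{it_n^1\Delta}\vec{\phi}^1+o_{L^2h^1}(1)$. Passing to a further subsequence with $t_n^1\to t^\star\in[-\infty,+\infty]$, and absorbing $e^{it^\star\Delta}\vec{\phi}^1$ into a representative if $t^\star$ is finite or using the compactness provided by scattering if $t^\star=\pm\infty$, exhibits $G\vec{u}_n(0)$ as converging to the $G$-orbit of a single $L^2h^1$ element, completing the argument. The main technical obstacle is the decoupling step above: because of the asymmetric resonant nonlinearity captured by \eqref{1.1a}, the cross-terms in $\mathbf{F}\bigl(\sum_j T_{g_n^j}\vec{v}^j\bigr)$ must be controlled at the $l^2$ level on the $j$-variable so that the nonlinearity factors as $\lesssim\|\vec{u}\|_{l^2}^3$, and one must verify that orthogonality of the scaling, Galilean, translation, \emph{and} time parameters is enough to kill every mixed $L_{t,x}^{4/3}l^2$ cross-term coming from the resonant sum $\mathcal{R}(j)$.
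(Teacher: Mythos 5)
Your proposal follows the same approach as the paper: apply the linear profile decomposition of Proposition \ref{pr6.640} to $\vec{u}_n(t_n)$, associate nonlinear profiles, use asymptotic orthogonality plus the stability theorem (Theorem \ref{th-y3.6}) to show that having multiple nontrivial profiles, or a single sub-critical profile, or a non-vanishing remainder would yield a uniform $L^4_{t,x}l^2$ bound contradicting the two-sided blow-up at $t_n$; hence there is exactly one profile carrying the full mass $m_0$ and the remainder vanishes. The paper compresses all of this into a single citation to Section 5 of \cite{TVZ1}, so you are essentially supplying the argument that the authors outsource, and your emphasis on performing the decoupling at the $l^2$ level in $j$ (so that the resonant nonlinearity factors as $\lesssim \|\vec u\|_{l^2}^3$) is the correct adaptation of the scalar argument to the vector-valued system.

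One point in the final step is misstated. You write that when $t_n^1\to\pm\infty$ one concludes using ``the compactness provided by scattering.'' That is not how this case is handled: if $t_n^1\to+\infty$ (say), then the nonlinear profile $\vec v^1$ scatters forward, the approximant $\vec u_n^1$ has uniformly bounded $L^4_{t,x}l^2$ norm on $[0,\infty)$, and the stability theorem transfers this bound to $\vec u_n$ for large $n$; this contradicts the hypothesis $\|\vec u_n\|_{L^4_{t,x}l^2(I_n\cap(t_n,\infty))}\to\infty$. In other words, the cases $t^\star=\pm\infty$ must be \emph{excluded} by the same blow-up-versus-stability argument you already set up, forcing $t_n^1\to 0$ after redefinition; it is only then, with a finite limit of the time parameters, that $g_n^1 e^{it_n^1\Delta}\vec\phi^1$ modulo $G$ converges in $L^2_xh^1$. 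The paper records exactly this when it asserts $\lim_{n\to\infty}t_n=0$. The rest of your argument is sound.
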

\begin{proof}[Proposition \ref{pr-y4.4}$\Rightarrow$Theorem \ref{th-y4.3}]
By the definition of $m_{0}$,  if $m_{0}<+\infty$, then we can find a sequence  $\vec{u}_{n}:=\left\{u_{n,j}\right\}_{j\in\mathbb{Z}}, n=1,2,\cdot\cdot\cdot$, where $u_{n,j}(t,x)$ is defined on the maximal lifespan time interval $I_{n}$ such that  $$\|\vec{u}^{(n)}\|_{L_{t,x}^{4}l^2(I_{n}\times\mathbb{R}^{2}\times\mathbb{Z})}\nearrow \infty,$$ $$\limsup\limits_{n\rightarrow\infty}M\left(\vec{u}^{(n)}\right)=m_{0},$$ then $\exists t_n \in I_n$ such that
\begin{equation}\label{eq-y4.1}
\lim\limits_{n\rightarrow\infty}\left\|\vec{u}_{n}\right\|_{L_{t,x}^{4}l^2(I_{n}\cap(-\infty,t_{n}))}
=\lim\limits_{n\rightarrow\infty}\left\|\vec{u}_{n}\right\|_{L_{t,x}^{4}l^2(I_{n}\cap(t_{n},+\infty))}=+\infty.
\end{equation}
By time translation invariance we may take $t_n \equiv0$. Proposition \ref{pr-y4.4} guarantees that there is a subsequence of $G\vec{u}_{n}(0)$ (still denoted as $G\vec{u}_{n}(0)$) converging  in $G\backslash L^{2}h^{1}$, i.e. $\exists \vec{u}_{0}\in L^{2}h^{1},g_{n}\in G$,  such that  $\lim\limits_{n\rightarrow\infty}g_{n}\vec{u}_{n}(0)=g^{(0)}\vec{u}_{0}$ in $L^{2}h^{1}$. We might as well suppose $g_{n}\equiv \mathrm{I}d$ by the group's action, then $\lim\limits_{n\rightarrow\infty}\vec{u}_{n}(0)=\vec{u}_{0}$ in $L^{2}h^{1}$, this implies $M\left(\vec{u}_{0}\right)\leq m_{0}.$

Let $\vec{u}(t,x)$ be the solution to \eqref{eq-y1} on the maximal lifespan interval $I$ with initial data $\vec{u}(0,x)=\vec{u}_{0}$, then we claim that $\vec{u}$ blows up at both directions in time. Indeed, if $\vec{u}$ does not blow up forward in time (say), then $[0,+\infty)\subset I$ and $\|\vec{u}\|_{L_{t,x}^{4}l^2(0,+\infty)}<+\infty$, by the stability theorem \ref{th-y3.6}, for sufficiently large $n$,  we have $[0,+\infty)\subset I_n$ and $\limsup\limits_{n\rightarrow\infty}\left\|\vec{u}_{n}\right\|_{L_{t,x}^{4}l^2(I_{n}\cap(t_{n},+\infty))}<\infty$,  contradicting \eqref{eq-y4.1}. Similarly we get $\vec{u}$ blows up backward in time. By the construction of $m_0$ this forces $M\left(\vec{u}_{0}\right)\geq m_{0}$ and hence $M\left(\vec{u}_{0}\right)= m_{0}$ exactly.

It remains to show that  $\vec{u}$  is an almost periodic solution modulo $G$. Consider an arbitrary sequence $G\vec{u}(t_{n}')$ in $\{G\vec{u}(t):t\in I\}$. Since $\vec{u}$ blows up at both directions in time, but is locally in $L_{t,x}^{4}l^2$, we have
$$\left\|\vec{u}\right\|_{L_{t,x}^{4}l^2(I\cap(-\infty,t_{n}'))}
=\left\|\vec{u}\right\|_{L_{t,x}^{4}l^2(I\cap(t_{n}',+\infty))}=+\infty.$$
Applying Proposition \ref{pr-y4.4} once again we see that $G\vec{u}(t_{n}')$ does have a convergent sequence in $G\backslash L^{2}h^{1}(\mathbb{R}^2\times\mathbb{Z})$. Thus, the orbit $\{G\vec{u}(t):t\in I\}$ is precompact in $G\backslash L^{2}h^{1}(\mathbb{R}^2\times\mathbb{Z})$ as desired.
\end{proof}

\begin{proof}[Proof of Proposition \ref{pr-y4.4}]
 By translating $\vec{u}_{n}$ (and $I_n$ ) in time, we may take
$t_n = 0$ for all $n$; thus,
$$\lim\limits_{n\rightarrow\infty}\left\|\vec{u}_{n}\right\|_{L_{t,x}^{4}l^2(I\cap(-\infty,0))}=
\lim\limits_{n\rightarrow\infty}\left\|\vec{u}^{(n)}\right\|_{L_{t,x}^{4}l^2(I\cap(0,+\infty))}=+\infty.$$
$\limsup\limits_{n\rightarrow\infty}M\left(\vec{u}_{n}\right)=m_{0}$ implies that $\{\vec{u}_{n}\}$ is bounded  (passing to a subsequence if necessary) in $L_x^2h^1$.  Therefore, by Proposition \ref{pr6.640}, we have
$$\vec{u}_n(0,x) =   \sum_{j=1}^J g_n^j e^{it_n^j \Delta_{\mathbb{R}^2}} \vec{\phi}^j + \vec{w}_n^J(x),$$
where $t_n^{j}\in \mathbb{R}$,  $g_n^{j}\in G$. By extracting subsequence and time translation, we can assume $t_n^{j}\rightarrow t^{j}\in \{-\infty,0,+\infty\}$ as $n\rightarrow \infty$. then following the argument in section 5 of \cite{TVZ1}, we can show $\sup\limits_{j}M(\vec{\phi}^{j})=m_0$. where $\vec{\phi}^{j}=\{\phi_p^{j}\}_{p\in \mathbb{Z}}$, this implies $J=1$, $M(\vec{\phi}^{1})=m_0$ and
$$\vec{u}_{n}(0,x)= g_n^{1} e^{it_n^{1} \triangle} \vec{\phi}^{1} +\vec{w}_n^{1}=:g_{n} e^{it_{n} \triangle} \vec{\phi} +\vec{w}_{n}.$$
Furthermore, we obtain $\lim_{n\rightarrow\infty}t_{n}=0$. This shows Proposition \ref{pr-y4.4} is true.
\end{proof}

Next we phrase the property of almost periodicity modulo $G$ in a more ``quantitative'' sense. At first we give the description of the precompactness of Hilbert space $L^2h^1$.
\begin{proposition}
$\mathfrak{X}\subset L^2h^1$ is bounded, then $\mathfrak{X}$ is precompact iff for arbitrary $\epsilon>0$, there exist $ K=K(\ep)>0,R=R(\ep)>0$,  such that $\sum\limits_{|j|>K}\langle j\rangle^{2}\|u_{j}\|_{L^2}^{2}<\epsilon$, besides,
 \begin{align*}
   \sum\limits_{|j|=0}^K \langle j\rangle^{2}\int_{|x|\geq R}|u_{j}(x)|^{2}\mathrm{d}x&<\epsilon, \\
   \sum\limits_{|j|=0}^K \langle j\rangle^{2}\int_{|\xi|\geq R}|\hat{u}_{j}(\xi)|^{2}\mathrm{d}\xi&<\epsilon.
 \end{align*}
 for all $\vec{u}=\{u_j\}_{j\in\mathbb{Z}}\in \mathfrak{X}.$
\end{proposition}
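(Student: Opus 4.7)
The plan is to argue this as a weighted $\ell^2$-valued version of the classical Pego/Riesz--Kolmogorov compactness theorem in $L^2(\mathbb{R}^2)$. I will prove the two implications separately.

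\smallskip

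\noindent\textbf{Sufficiency ($\Leftarrow$).} Suppose $\mathfrak{X}$ is bounded and satisfies the three smallness conditions, and let $\{\vec{u}^{(n)}\}\subset\mathfrak{X}$. First, for each fixed $j\in\mathbb{Z}$, the scalar sequence $\{u^{(n)}_{j}\}_{n\ge1}\subset L^2(\mathbb{R}^2)$ is bounded, and by the hypothesis the tails
$\int_{|x|\ge R}|u^{(n)}_j|^2\,dx$ and $\int_{|\xi|\ge R}|\widehat{u^{(n)}_j}|^2\,d\xi$ are uniformly small in $n$ for $R$ large. By the classical Riesz--Kolmogorov (equivalently, Pego's) criterion in $L^2(\mathbb{R}^2)$, the family $\{u^{(n)}_j\}_n$ is precompact in $L^2(\mathbb{R}^2)$ for every fixed $j$. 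A standard diagonal extraction then produces a subsequence (still labelled $n$) such that $u^{(n)}_j\to u^{(\infty)}_j$ strongly in $L^2(\mathbb{R}^2)$ for every $j\in\mathbb{Z}$. The tail condition in $j$ together with Fatou's lemma gives $\sum_{|j|>K}\langle j\rangle^{2}\|u^{(\infty)}_j\|_{L^2}^2\le\varepsilon$, so $\vec{u}^{(\infty)}\in L^2h^1$.

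To upgrade componentwise convergence to convergence in $L^2h^1$, fix $\varepsilon>0$ and choose $K=K(\varepsilon)$ as in the hypothesis. Then
\begin{equation*}
\|\vec{u}^{(n)}-\vec{u}^{(\infty)}\|_{L^2h^1}^{2}
\le\sum_{|j|\le K}\langle j\rangle^{2}\|u^{(n)}_j-u^{(\infty)}_j\|_{L^2}^{2}
+2\sum_{|j|>K}\langle j\rangle^{2}\bigl(\|u^{(n)}_j\|_{L^2}^{2}+\|u^{(\infty)}_j\|_{L^2}^{2}\bigr).
\end{equation*}
The last sum is $\lesssim \varepsilon$ uniformly in $n$, while the first, being a finite sum, tends to $0$ by the already established componentwise convergence. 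Letting $n\to\infty$ and then $\varepsilon\to 0$ gives convergence in $L^2h^1$, proving precompactness.

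\smallskip

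\noindent\textbf{Necessity ($\Rightarrow$).} Assume $\mathfrak{X}$ is precompact and fix $\varepsilon>0$. Choose a finite $\tfrac{\sqrt{\varepsilon}}{2}$-net $\vec{v}^{(1)},\dots,\vec{v}^{(N)}\in \mathfrak{X}$. For each single element $\vec{v}^{(\ell)}\in L^2h^1$, the dominated convergence theorem yields $K_\ell,R_\ell$ with
\begin{equation*}
\sum_{|j|>K_\ell}\langle j\rangle^{2}\|v^{(\ell)}_j\|_{L^2}^{2}<\tfrac{\varepsilon}{4},\qquad
\sum_{|j|\le K_\ell}\langle j\rangle^{2}\!\!\int_{|x|\ge R_\ell}\!\!|v^{(\ell)}_j|^2\,dx<\tfrac{\varepsilon}{4},
\end{equation*}
and similarly for the frequency tail (using Plancherel and dominated convergence on each $\widehat{v^{(\ell)}_j}$). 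Setting $K:=\max_\ell K_\ell$ and $R:=\max_\ell R_\ell$ and using the triangle inequality against the net, any $\vec{u}\in \mathfrak{X}$ satisfies the three smallness estimates up to a constant multiple of $\varepsilon$. Rescaling $\varepsilon$ finishes the proof.

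\smallskip

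\noindent\textbf{Main obstacle.} The only delicate point is the verification that componentwise $L^2(\mathbb{R}^2)$-convergence together with the uniform $j$-tail bound forces convergence in $L^2h^1$; this is handled by the split displayed above. Everything else is a direct transcription of the scalar Riesz--Kolmogorov theorem and the standard finite-net characterization of precompactness in a Hilbert space.
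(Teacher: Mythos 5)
Your argument is correct. The necessity direction is essentially the paper's: take a finite net, apply dominated convergence to each net element to produce $K_\ell$, $R_\ell$, take the maximum, and close with the triangle inequality against the net. For sufficiency you take a genuinely different route. You argue via sequential compactness: componentwise Riesz--Kolmogorov gives precompactness of each $\{u^{(n)}_j\}_n$ in $L^2(\mathbb{R}^2)$, a diagonal extraction gives a componentwise limit $\vec{u}^{(\infty)}$, and the uniform $j$-tail bound upgrades componentwise convergence to convergence in $L^2h^1$. The paper instead argues via total boundedness: it projects $\mathfrak{X}$ onto its first $K(\epsilon)$ coordinates, uses the two tail conditions to show the projected set is precompact in the finite direct sum $\bigoplus_{|j|\le K(\epsilon)}L^2$, extracts a finite $\delta$-net there with $\delta=\epsilon/(1+K(\epsilon)^2)^2$, and checks that the corresponding elements of $\mathfrak{X}$ form a $3\epsilon$-net in $L^2h^1$. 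The two approaches are interchangeable since sequential compactness and total boundedness coincide in a complete metric space; yours avoids the bookkeeping with the weighted net radius, while the paper's exhibits the net explicitly. One detail you leave implicit: for a fixed index with $|j_0|>K(\epsilon)$ the spatial/frequency tail conditions are only stated for $|j|\le K(\epsilon)$, so they do not directly give tightness of $\{u^{(n)}_{j_0}\}_n$; but the $j$-tail bound already forces $\|u^{(n)}_{j_0}\|_{L^2}^2<\epsilon/\langle j_0\rangle^2$, so tightness is trivial in that range. This is not a gap, but it is worth spelling out before invoking Riesz--Kolmogorov componentwise.
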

\begin{proof}We first prove the sufficiency. Suppose for any
 $\epsilon>0$, there exist $ K(\epsilon)>0,R(\epsilon)>0$,  such that $\sum\limits_{|j|>K(\epsilon)}\langle j\rangle^{2}\|u_{j}\|_{L^2}^{2}<\frac{\epsilon}{2}$ and
 \begin{align*}
   \sum\limits_{|j|=0}^{K(\epsilon)} \langle j\rangle^{2}\int_{|x|\geq R(\epsilon)}|u_{j}(x)|^{2}\mathrm{d}x&<\frac{\epsilon}{2}, \\
   \sum\limits_{|j|=0}^{K(\epsilon)} \langle j\rangle^{2}\int_{|\xi|\geq R(\epsilon)}|\hat{u}_{j}(\xi)|^{2}\mathrm{d}\xi&<\frac{\epsilon}{2}
 \end{align*}
 for all $\vec{u}=\{u_j\}_{j\in\mathbb{Z}}\in \mathfrak{X}$, then for arbitrary $n>0,$
 \begin{align*}
   &\sum\limits_{|j|=0}^{n}\left(\int_{|x|\geq R(\epsilon)}|u_{j}(x)|^{2}\mathrm{d}x+\int_{|\xi|\geq R(\epsilon)}|\hat{u}_{j}(\xi)|^{2}\mathrm{d}\xi\right) \\
   =&\sum\limits_{|j|=0}^{K(\epsilon)}\left(\int_{|x|\geq R(\epsilon)}|u_{j}(x)|^{2}\mathrm{d}x+\int_{|\xi|\geq R(\epsilon)}|\hat{u}_{j}(\xi)|^{2}\mathrm{d}\xi\right)\\
   &\quad + \sum\limits_{|j|=K(\epsilon)}^{n}\left(\int_{|x|\geq R(\epsilon)}|u_{j}(x)|^{2}\mathrm{d}x+\int_{|\xi|\geq R(\epsilon)}|\hat{u}_{j}(\xi)|^{2}\mathrm{d}\xi\right)\\
   \leq &\frac{\epsilon}{2}+2\sum\limits_{j\geq K(\epsilon)}\|u_j\|_{L^2}^{2}\\
   \leq&2\epsilon.
 \end{align*}
 Therefore, by the description of the precompactness of $\bigoplus\limits_{j=1}^{n}L^2$,  $\{(u_1,u_2,\cdots,u_n)\in\bigoplus\limits_{j=1}^{n}L^2 :\{u_j\}_{j\in\mathbb{Z}}\in \mathfrak{X}\}$ is precompact in $\bigoplus\limits_{j=1}^{n}L^2$. Thus  $\{(u_1,u_2,\cdots,u_{K(\epsilon)})\in\bigoplus\limits_{j=1}^{K(\epsilon)}L^2 :\{u_j\}_{j\in\mathbb{Z}}\in \mathfrak{X}\}$ is precompact in $\bigoplus\limits_{j=1}^{K(\epsilon)}L^2$. Therefore, there exist finite $n_1,n_2,\cdots,n_l$,  such that the $\frac{\epsilon}{(1+|K(\epsilon)|^2)^2}$ neighborhood of the finite set $\left\{(u_{1}^{n_1},u_2^{n_1}\cdots,u_{K(\epsilon)}^{n_1}),(u_{1}^{n_2},u_2^{n_2}\cdots,u_{K(\epsilon)}^{n_2}),\cdots,(u_{1}^{n_l},u_2^{n_l}\cdots,u_{K(\epsilon)}^{n_l})\right\}$ covers $\{(u_1,u_2,\cdots,u_{K(\epsilon)})\in\bigoplus\limits_{j=1}^{K(\epsilon)}L^2 :\{u_j\}_{j\in\mathbb{Z}}\in \mathfrak{X}\}.$

 We claim that the $3\epsilon$ neighborhood of $\left\{\{u^{n_1}_j\}_{j\in\mathbb{Z}},\{u^{n_2}_j\}_{j\in\mathbb{Z}},\cdots,\{u^{n_l}_j\}_{j\in\mathbb{Z}}\right\}$ covers $\mathfrak{X}$. In fact, for $\forall\vec{u}=\{u_j\}_{j\in\mathbb{Z}}\in \mathfrak{X}$, its first $K(\epsilon)$ components must be located in the $\frac{\epsilon}{(1+|K(\epsilon)|^2)^2}$ neighbourhood of one point $(u_{1}^{n_i},u_2^{n_i}\cdots,u_{K(\epsilon)}^{n_i})\in \left\{(u_{1}^{n_1},u_2^{n_1}\cdots,u_{K(\epsilon)}^{n_1}),(u_{1}^{n_2},u_2^{n_2}\cdots,u_{K(\epsilon)}^{n_2}),\cdots,(u_{1}^{n_l},u_2^{n_l}\cdots,u_{K(\epsilon)}^{n_l})\right\},$
thus
$$\left\|\vec{u}-\{u^{n_i}_j\}_{j\in\mathbb{Z}}\right\|_{L^2h^1}^{2}\leq \sum\limits_{j=0}^{K(\epsilon)}\langle j\rangle^{2}\|u_j-u_j^{n_i}\|_{L^2}^{2}+\sum\limits_{j\geq K(\epsilon)}\langle j\rangle^{2}\|u_j^{n_i}\|_{L^2}^{2}+\sum\limits_{j\geq K(\epsilon)}\langle j\rangle^{2}\|u_j\|_{L^2}^{2}\leq 3\epsilon,$$
which implies $\mathfrak{X}$ is precompact in $L^2h^1.$

 Now let's prove the necessity. Suppose $\mathfrak{X}\subset L^2h^1$ is precompact, then $P_{K}\mathfrak{X}=\{(u_1,u_2,\cdots,u_K):\{u_j\}_{j\in\mathbb{Z}}\in\mathfrak{X}\}$ is precompact in $\bigoplus\limits_{j=1}^{K}L^2$, where $P_{K}$ is a projection operator and $K$ is to be determined.

Because $\mathfrak{X}\subset L^2h^1$ is precompact, for arbitrary $\epsilon>0$, there exists a finite set $\{\vec{u}^{(1)},\vec{u}^{(2)},\cdots,\vec{u}^{(n_{\epsilon})}\}\subset \mathfrak{X}$ whose $\frac{\epsilon}{2}$ nets cover $\mathfrak{X}$. However, there exsits $ K=K(\epsilon)>0$ such that  $\forall \vec{u}^{(i)},i=1,2,\cdots,n_{\epsilon},$
$$\sum_{j\geq K(\epsilon)}\langle j\rangle^{2}\|u_j^{(i)}\|^2_{L^2}<\frac{\epsilon}{2}.$$
Therefore, for arbitrary $\vec{u}\in\mathfrak{X}$, there exists $\vec{u}^{(i)}\in \{\vec{u}^{(1)},\vec{u}^{(2)},\cdots,\vec{u}^{(n_{\epsilon})}\}$ such that  $\vec{u}$ lies in the $\frac{\epsilon}{2}$ neighbourhood of $\vec{u}^{(i)}$. Hence,
$$\sum_{j\geq K(\epsilon)}\langle j\rangle^{2}\|u_j\|^2_{L^2}\leq\sum_{j\geq K(\epsilon)}\langle j\rangle^{2}\|u_j-u_j^{(i)}\|^2_{L^2}+\sum_{j\geq K(\epsilon)}\langle j\rangle^{2}\|u_j^{(i)}\|^2_{L^2}<\epsilon.$$
Meanwhile, once $K(\epsilon)$ has been determined, $P_{K(\epsilon)}\mathfrak{X}\subset \bigoplus\limits_{j=1}^{K(\epsilon)}L^2$ is also precompact. By the precompactness of $\bigoplus\limits_{j=1}^{K(\epsilon)}L^2$, there exists $R(\epsilon)>0$ such that
 \begin{align*}
   \sum\limits_{|j|=0}^{K(\epsilon)} \langle j\rangle^{2}\int_{|x|\geq R(\epsilon)}|u_{j}(x)|^{2}\mathrm{d}x&<\epsilon, \\
   \sum\limits_{|j|=0}^{K(\epsilon)} \langle j\rangle^{2}\int_{|\xi|\geq R(\epsilon)}|\hat{u}_{j}(\xi)|^{2}\mathrm{d}\xi&<\epsilon.
 \end{align*}
This completes the proof.
\end{proof}
\begin{corollary}\label{co-y4.7}
Suppose $\mathfrak{X}\subset L^2h^1$ is precompact, then for arbitrary $\epsilon>0$, there exists $ R(\epsilon)>0$ such that
$$\sum\limits_{j\in\mathbb{Z}} \langle j\rangle^{2}\left[\int_{|x|\geq R(\epsilon)}|u_{j}(x)|^{2}\mathrm{d}x+\int_{|\xi|\geq R(\epsilon)}|\hat{u}_{j}(\xi)|^{2}\mathrm{d}\xi\right]<\epsilon,$$
for all $\vec{u}=\{u_j\}_{j\in\mathbb{Z}}\in \mathfrak{X}.$
\end{corollary}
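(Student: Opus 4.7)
The plan is to deduce the corollary directly from the preceding proposition by splitting the sum over $j$ into a low-index part and a tail, and controlling each piece separately. The low-index part will be handled by the spatial and frequency concentration estimates furnished by the proposition, while the tail in $j$ will be absorbed using only the $L^2$ bound, since the integrals over $\{|x|\ge R\}$ and $\{|\xi|\ge R\}$ are trivially dominated by $\|u_j\|_{L^2}^2$ via Plancherel.

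More concretely, given $\varepsilon>0$, I would invoke the previous proposition with parameter $\varepsilon/4$ to extract $K_0=K(\varepsilon/4)$ and $R_0=R(\varepsilon/4)$ such that, uniformly in $\vec{u}=\{u_j\}\in\mathfrak{X}$,
\begin{equation*}
\sum_{|j|>K_0}\langle j\rangle^2\|u_j\|_{L^2}^2<\tfrac{\varepsilon}{4},\qquad
\sum_{|j|=0}^{K_0}\langle j\rangle^2\!\int_{|x|\ge R_0}\!|u_j(x)|^2\,dx<\tfrac{\varepsilon}{4},\qquad
\sum_{|j|=0}^{K_0}\langle j\rangle^2\!\int_{|\xi|\ge R_0}\!|\hat u_j(\xi)|^2\,d\xi<\tfrac{\varepsilon}{4}.
\end{equation*}

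For indices with $|j|>K_0$, Plancherel's theorem yields
\begin{equation*}
\int_{|x|\ge R_0}|u_j(x)|^2\,dx+\int_{|\xi|\ge R_0}|\hat u_j(\xi)|^2\,d\xi\le 2\|u_j\|_{L^2}^2,
\end{equation*}
so summing against $\langle j\rangle^2$ over $|j|>K_0$ gives a contribution bounded by $2\cdot\varepsilon/4=\varepsilon/2$. Combining this with the two $|j|\le K_0$ bounds, which together contribute at most $\varepsilon/2$, and declaring $R(\varepsilon):=R_0$, we obtain the desired estimate with total bound $\varepsilon$.

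There is essentially no obstacle here: the only delicate point is recognizing that the integrals in $x$ and $\xi$ are automatically controlled by the mass once the index $|j|$ exceeds $K_0$, so no further information about the shape of $u_j$ or $\hat u_j$ is needed in the tail. The constants $\varepsilon/4$ in the application of the proposition are chosen purely to absorb the factor $2$ from Plancherel and the three-term split.
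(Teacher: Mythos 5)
Your proposal is correct and follows essentially the same argument as the paper: split the sum into $|j|\le K_0$ and $|j|>K_0$, control the low-index part directly by the two concentration bounds from the preceding proposition, and absorb the tail using Plancherel to bound each bracketed integral by $2\|u_j\|_{L^2}^2$. The only cosmetic difference is bookkeeping of the $\epsilon/4$ factors; no substantive deviation.
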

\begin{proof}
Since $\mathfrak{X}\subset L^2h^1$ is precompact, by Proposition \ref{pr-y4.4}  for arbitrary $\epsilon>0$, there exist $K>0$ and $R>0$ such that $\sum\limits_{|j|>K}\langle j\rangle^{2}\|u_{j}\|_{L^2}^{2}<\frac{\epsilon}{4}$. In addition,
 \begin{align*}
   \sum\limits_{|j|=0}^K \langle j\rangle^{2}\int_{|x|\geq R}|u_{j}(x)|^{2}\mathrm{d}x&<\frac{\epsilon}{4}, \\
   \sum\limits_{|j|=0}^K \langle j\rangle^{2}\int_{|\xi|\geq R}|\hat{u}_{j}(\xi)|^{2}\mathrm{d}\xi&<\frac{\epsilon}{4}
 \end{align*}
 for all $\vec{u}=\{u_j\}_{j\in\mathbb{Z}}\in \mathfrak{X}.$
Thus, by Plancherel identity,
\begin{align*}
 &\sum\limits_{j\in\mathbb{Z}} \langle j\rangle^{2}\left[\int_{|x|\geq R(\epsilon)}|u_{j}(x)|^{2}\mathrm{d}x+\int_{|\xi|\geq R(\epsilon)}|\hat{u}_{j}(\xi)|^{2}\mathrm{d}\xi\right]\\
 \leq&\sum\limits_{j=1}^{K(\epsilon)} \langle j\rangle^{2}\left[\int_{|x|\geq R(\epsilon)}|u_{j}(x)|^{2}\mathrm{d}x+\int_{|\xi|\geq R(\epsilon)}|\hat{u}_{j}(\xi)|^{2}\mathrm{d}\xi\right]+2\sum\limits_{j\geq K(\epsilon)} \langle j\rangle^{2}\|u_{j}\|_{L^2}^{2}\\
 \leq& \frac{\epsilon}{4}+\frac{\epsilon}{4}+2\times \frac{\epsilon}{4}<\epsilon.
\end{align*}
\end{proof}
As a consequence of Proposition \ref{pr-y4.4}, we phrase the property of almost periodicity modulo $G$ in a ``quantitative'' version.
\begin{proposition}\label{pr-y4.8}
The following statements are equivalent.
\begin{enumerate}
  \item $\vec{u}\in C_{t,loc}^{0}L^2h^1(I\times\mathbb{R}^2\times\mathbb{Z})$ is almost periodic modulo $G.$
  \item $\{G\vec{u}(t):t\in I\}$ is precompact in $G\backslash L^{2}h^{1}(\mathbb{R}^2\times\mathbb{Z}).$
  \item $\exists x(t),\xi(t),N(t)$ such that  $\forall\eta>0$, $\exists K(\eta)>0,R(\eta)>0$ such that  for $t\in I,$
\begin{align}\label{eq-y4.2}
  \sum\limits_{|j|>K(\eta)}\langle j\rangle^{2}\|u_{j}\|_{L^2}^{2}&<\eta,\\ \label{eq-y4.3}
  \sum\limits_{|j|=0}^{K(\eta)} \langle j\rangle^{2}\int_{|x-x(t)|\geq \frac{R(\eta)}{N(t)}}|u_{j}(x)|^{2}\mathrm{d}x&<\eta, \\ \label{eq-y4.4}
  \sum\limits_{|j|=0}^{K(\eta)} \langle j\rangle^{2}\int_{|\xi-\xi(t)|\geq R(\eta)N(t)}|\hat{u}_{j}(\xi)|^{2}\mathrm{d}\xi&<\eta.
\end{align}
\end{enumerate}
\end{proposition}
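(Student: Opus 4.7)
The plan is straightforward: (1) and (2) are tautologically equivalent, as (1) is the very definition of almost periodicity modulo $G$ used already in Theorem \ref{th-y4.3}. The real content is the equivalence (2)$\Leftrightarrow$(3), and both directions reduce to pulling the already-established precompactness characterization of subsets of $L_x^2 h^1$ (the Proposition immediately preceding Corollary \ref{co-y4.7}) through the explicit form of the group action
$$(T_{g_{\theta,\xi_0,x_0,\lambda}}\vec{f})(x)=\lambda^{-1}e^{i\theta}e^{ix\cdot\xi_0}\vec{f}\!\left(\tfrac{x-x_0}{\lambda}\right),$$
noting that this action affects only the spatial variable, not the index $j$.

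For (2)$\Rightarrow$(3), I first use precompactness of $\{G\vec{u}(t)\}$ in $G\backslash L^2h^1$ to select, for each $t\in I$, a representative $g_t\in G$ so that the set $\mathfrak{X}:=\{T_{g_t^{-1}}\vec{u}(t):t\in I\}\subset L_x^2 h^1$ is precompact (a pointwise selection is all that is needed, since we require precompactness only, not continuity in $t$). Writing $g_t$ via parameters $(\theta(t),\xi(t),x(t),\lambda(t))$ and setting $N(t):=1/\lambda(t)$, I apply the precompactness characterization to $\mathfrak{X}$ to obtain, for each $\eta>0$, constants $K(\eta),R(\eta)$ so that $\vec{v}(t):=T_{g_t^{-1}}\vec{u}(t)$ satisfies the three smallness conditions of that Proposition uniformly in $t$. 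Translating back: since the group action does not touch the $j$-variable, the $h^1$-tail estimate on $\vec{v}(t)$ pulls back unchanged, yielding \eqref{eq-y4.2}. The spatial cutoff $\{|y|\geq R\}$ on $\vec{v}(t)$ corresponds, under the substitution $y=(x-x(t))/\lambda(t)$, to $\{|x-x(t)|\geq R/N(t)\}$ on $\vec{u}(t)$, giving \eqref{eq-y4.3}. Finally, the Galilean modulation shifts frequencies by $\xi(t)$ and the dilation rescales them by $N(t)$, so the frequency cutoff on $\vec{v}(t)$ becomes $\{|\xi-\xi(t)|\geq RN(t)\}$ on $\vec{u}(t)$, giving \eqref{eq-y4.4}.

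For (3)$\Rightarrow$(2), I reverse the construction: define $g_t\in G$ with parameters $(0,\xi(t),x(t),1/N(t))$ and set $\mathfrak{X}:=\{T_{g_t^{-1}}\vec{u}(t)\}$. The set $\mathfrak{X}$ is bounded in $L_x^2h^1$ by mass conservation (or simply from \eqref{eq-y4.2}--\eqref{eq-y4.4} after summing), and conditions \eqref{eq-y4.2}--\eqref{eq-y4.4} become exactly the three hypotheses of the precompactness criterion applied to $\mathfrak{X}$. Hence $\mathfrak{X}$ is precompact in $L_x^2h^1$, which immediately gives precompactness of $\{G\vec{u}(t)\}$ in the quotient $G\backslash L_x^2h^1$.

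The only mildly delicate point is the coordinate bookkeeping showing that the spatial-decay and frequency-localization estimates transform correctly under dilation $\lambda$, translation $x_0$, and Galilean boost $\xi_0$; no new analytic ingredient is needed beyond the precompactness characterization already established, and the entire proof is a change-of-variables argument combined with the trivial observation that $T_g$ commutes with the $j$-index operations.
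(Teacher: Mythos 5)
Your proposal is correct, and in fact it fills in a gap that the paper itself leaves unaddressed: the paper states Proposition \ref{pr-y4.8} with only the remark ``as a consequence of Proposition \ref{pr-y4.4},'' without giving a proof. (The cross-reference appears to be a typo for the unlabeled precompactness characterization of subsets of $L^2_xh^1$ proved just above Corollary \ref{co-y4.7}; that characterization is clearly the ingredient being invoked.) Your argument — the tautological identification of (1) and (2), then the reduction of (2)$\Leftrightarrow$(3) to the $L^2_xh^1$ precompactness criterion composed with the explicit change of variables under dilation, translation, and Galilean boost, together with the observation that the $G$-action is trivial on the index $j$ so \eqref{eq-y4.2} is untouched — is exactly the route the paper implicitly intends, and the scaling bookkeeping ($N(t)=1/\lambda(t)$, spatial cutoff $R\mapsto R/N(t)$, frequency cutoff $R\mapsto RN(t)$) checks out.

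The one point you pass over a bit quickly is the lifting step in (2)$\Rightarrow$(3): from precompactness of $\{G\vec{u}(t):t\in I\}$ in the quotient you need a choice $t\mapsto g_t\in G$ making $\{g_t^{-1}\vec{u}(t):t\in I\}$ precompact in $L^2_xh^1$ itself, and ``a pointwise selection is all that is needed'' is asserted rather than argued. This is genuinely true (cover the compact closure by finitely many small quotient-metric balls, pick representatives of the centers, and lift each $G\vec{u}(t)$ to the chosen representative closest to it), and it is the same lemma that every treatment of almost-periodic solutions invokes silently; so your level of rigor matches the paper's. The converse direction (3)$\Rightarrow$(2) is unproblematic since the continuous image of a precompact set under the quotient map is precompact. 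In short: correct, same approach the paper intends, and more explicit than the paper itself.
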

\begin{corollary}\label{co-y4.9}
$\vec{u}\in C_{t,loc}^{0}L^2h^1(I\times\mathbb{R}^2\times\mathbb{Z})$ is almost periodic modulo $G$. Then there exist $x(t),\xi(t),N(t)$ such that for arbitrary $\eta>0$, there exists $ R(\eta)>0$ such that for $t\in I,$
\begin{equation}\label{eq-y3.6}
\sum\limits_{j\in\mathbb{Z}} \langle j\rangle^{2}\left[\int_{|x-x(t)|\geq \frac{R(\eta)}{N(t)}}|u_{j}(x)|^{2}\mathrm{d}x+\int_{|\xi-\xi(t)|\geq R(\eta)N(t)}|\hat{u}_{j}(\xi)|^{2}\mathrm{d}\xi\right]<\eta.
\end{equation}
\end{corollary}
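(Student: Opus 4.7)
The plan is to deduce this corollary directly from the quantitative characterization in Proposition \ref{pr-y4.8} by splitting the $j$-sum at the threshold produced by that proposition, in exactly the same fashion that Corollary \ref{co-y4.7} was extracted from its precompactness criterion. Almost periodicity modulo $G$ gives us the parameters $x(t),\xi(t),N(t)$ for free, so the work reduces to showing that the three separate smallness estimates \eqref{eq-y4.2}--\eqref{eq-y4.4} can be combined into a single estimate of the form \eqref{eq-y3.6}.

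Concretely, given $\eta>0$, I would apply Proposition \ref{pr-y4.8} with tolerance $\eta/4$ to produce $K=K(\eta/4)$ and $R=R(\eta/4)$ such that, uniformly in $t\in I$,
\begin{equation*}
\sum_{|j|>K}\langle j\rangle^{2}\|u_{j}(t)\|_{L^{2}}^{2}<\tfrac{\eta}{4},\qquad \sum_{|j|\le K}\langle j\rangle^{2}\!\!\int_{|x-x(t)|\ge R/N(t)}\!\!|u_{j}(t,x)|^{2}\,dx<\tfrac{\eta}{4},
\end{equation*}
and similarly for the frequency-side integral on $|\xi-\xi(t)|\ge RN(t)$. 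Now split the full sum in \eqref{eq-y3.6} as $\sum_{|j|\le K}+\sum_{|j|>K}$. The low-frequency (in $j$) contribution is controlled directly by \eqref{eq-y4.3} and \eqref{eq-y4.4}, contributing at most $\eta/4+\eta/4=\eta/2$. For the high-$j$ tail, I would simply drop the spatial and Fourier truncations and use Plancherel to bound
\begin{equation*}
\sum_{|j|>K}\langle j\rangle^{2}\!\!\left[\int_{|x-x(t)|\ge R/N(t)}\!\!|u_{j}|^{2}\,dx+\int_{|\xi-\xi(t)|\ge RN(t)}\!\!|\hat u_{j}|^{2}\,d\xi\right]\le 2\sum_{|j|>K}\langle j\rangle^{2}\|u_{j}\|_{L^{2}}^{2}<\tfrac{\eta}{2},
\end{equation*}
by \eqref{eq-y4.2}. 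Adding the two contributions and taking $R(\eta):=R(\eta/4)$ yields the claimed bound.

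There is essentially no obstacle here: the statement is a cosmetic repackaging of Proposition \ref{pr-y4.8}, with the only ``idea'' being the observation that one does not need to separately control the $j$-tail of the truncated integrals, since dropping the truncations only costs a factor of $2$ and the $j$-tail of the full $L^{2}h^{1}$ norm is already small by \eqref{eq-y4.2}. The redistribution of $\eta$ among the three partial estimates is the only quantitative choice, and $\eta/4$ for each piece clearly suffices.
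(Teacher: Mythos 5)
Your proof is correct and takes exactly the approach the paper intends: Corollary \ref{co-y4.9} is the almost-periodic analogue of Corollary \ref{co-y4.7}, and its proof is a verbatim repetition of the argument given there — split the $j$-sum at the threshold $K$ from Proposition \ref{pr-y4.8}, use \eqref{eq-y4.3}--\eqref{eq-y4.4} on the low-$j$ part, drop the truncations on the high-$j$ tail and control it by $2\sum_{|j|>K}\langle j\rangle^{2}\|u_j\|_{L^2}^2$ via \eqref{eq-y4.2} and Plancherel. Your allocation of $\eta/4$ to each piece matches the paper's choice of $\epsilon/4$ in the proof of Corollary \ref{co-y4.7}.
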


\begin{theorem}[The estimate of $N(t)$]\label{th-y4.5}The following statements hold:
\begin{enumerate}
  \item For any nonzero almost periodic solution $\vec{u}$ to \eqref{eq-y8.1'} there exists $\delta(\vec{u}) > 0$ such that for any $t_0 \in I$,
      $$\|\vec{u}\|_{L^4_{t,x}l^{2}([t_0,t_0+\frac{\delta}{N(t_0)^2}]\times\mathbb{R}^2)}\sim \|\vec{u}\|_{L^4_{t,x}l^{2}([t_0-\frac{\delta}{N(t_0)^2},t_0]\times\mathbb{R}^2)}\sim1.$$
  \item If $J$ is an interval with $\|\vec{u}\|_{L^4_{t,x}l^{2}(J\times\mathbb{R}^2)}=1$, then for $t_1,t_2\in J$,  $N(t_1)\sim_{m_0} N(t_2)$, and $|\xi(t_1)-\xi(t_2)|\lesssim N(J_k)$, where $N(J_k):=\sup_{t\in J_k}N(t)$.
  \item  Suppose $\vec{u}$ is a minimal mass blowup solution with $N(t) \leq 1$.  Suppose also that $J$
   is some interval partitioned into subintervals $J_k$ with $\|\vec{u}\|_{L^4_{t,x}l^{2}(J_k\times\mathbb{R}^2)}= 1$ on each $J_k$, then $N(J_k)\sim \int_{J_k}N(t)^3 dt\sim \inf_{t\in J_k}N(t)$ and $\sum_{J_k}N(J_k)\sim \int_{J}N(t)^3 dt$.
   \item  If $\vec{u}(t,x)$ is a minimal mass blowup solution on an interval $J$, then
   $$\int_J N(t)^2dt\lesssim \|\vec{u}\|^{4}_{L_{t,x}^{4}l^{2}(J\times \mathbb{R}^2)}\lesssim 1+\int_J N(t)^2dt.$$
\end{enumerate}
\end{theorem}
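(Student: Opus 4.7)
The plan is to prove the four assertions in order, since Parts (2)--(4) follow from Part (1) by fairly direct arguments, so the substantive work lies in Part (1). My first step for Part (1) is to exploit the $G$-invariance of \eqref{eq-y8.1'} (phase rotation, Galilean transform, translation, dilation) to reduce to the normalization $N(t_0)=1$, $x(t_0)=0$, $\xi(t_0)=0$; by Proposition \ref{pr-y4.8} together with Corollary \ref{co-y4.9}, after this normalization the rescaled data $\tilde{\vec{u}}(t_0)$ lies in a single precompact set $K\subset L_x^2 h^1$ as $t_0$ varies over $I$. For the upper bound I would show that $\|e^{it\Delta}\vec{\phi}\|_{L^4_{t,x}l^2([0,T])}\to 0$ as $T\to 0$ uniformly for $\vec{\phi}\in K$, using density of smooth compactly supported data in $L_x^2 h^1$ together with the compactness of $K$, and then close a Duhamel--bootstrap argument via Proposition \ref{pr-y3.1} and Proposition \ref{prop3.1'} to get $\|\tilde{\vec{u}}\|_{L^4_{t,x}l^2([0,\delta])}\lesssim 1$ for some fixed $\delta>0$ independent of $t_0$.

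For the lower bound in Part (1) I would argue by contradiction. If it failed, there would exist a sequence $t_n\in I$ along which $\|\vec{u}\|_{L^4_{t,x}l^2([t_n,t_n+\delta/N(t_n)^2])}\to 0$. The $G$-rescaled data $\tilde{\vec{u}}^{(n)}(0)\in K$ converges (up to a subsequence) to some $\vec{\phi}\in L_x^2 h^1$ with $\|\vec{\phi}\|_{L_x^2 l^2}=m_0>0$ by mass conservation. By Duhamel together with Proposition \ref{prop3.1'}, the nonlinear contribution on $[0,\delta]$ is controlled by $\|\tilde{\vec{u}}^{(n)}\|_{L^4_{t,x}l^2([0,\delta])}^3\to 0$, so in the limit the solution on $[0,\delta]$ reduces to $e^{it\Delta}\vec{\phi}$, which must itself vanish in $L^4_{t,x}l^2([0,\delta])$. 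This forces $\vec{\phi}=0$, contradicting $\|\vec{\phi}\|_{L_x^2 l^2}=m_0>0$. The backward interval is handled by time reversal.

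For Part (2), applying Part (1) at any $t\in J$ pins $|J|$ to $\sim 1/N(t)^2$, which immediately yields $N(t_1)\sim N(t_2)$ for any two $t_1,t_2\in J$. The frequency-center bound $|\xi(t_1)-\xi(t_2)|\lesssim N(J)$ I would derive from the fact that $e^{i(t_2-t_1)\Delta}$ preserves Fourier support, while Proposition \ref{prop3.1'} bounds the Duhamel remainder $\int_{t_1}^{t_2}e^{i(t_2-s)\Delta}\mathbf{F}(\vec{u}(s))\,ds$ in $L_x^2 l^2$ by $\lesssim\|\vec{u}\|_{L^4_{t,x}l^2(J)}^3\sim 1$; if $|\xi(t_1)-\xi(t_2)|\gg N(J)$, the concentration balls of radius $\sim N(J)$ around $\xi(t_1)$ and $\xi(t_2)$ given by Corollary \ref{co-y4.9} would be essentially disjoint, contradicting mass conservation. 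Part (3) is then immediate: on $J_k$, $N(t)\sim N(J_k)$ and $|J_k|\sim 1/N(J_k)^2$, so $\inf_{t\in J_k}N(t)\sim N(J_k)$ and $\int_{J_k}N(t)^3\,dt\sim N(J_k)^3|J_k|\sim N(J_k)$, and summing over $k$ gives $\sum_k N(J_k)\sim\int_J N(t)^3\,dt$.

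Part (4) is a Riemann-sum aggregation: partitioning $J$ into consecutive subintervals $J_k$ on which $\|\vec{u}\|_{L^4_{t,x}l^2(J_k)}=1$, together with at most one residual interval $\tilde{J}$ with $\|\vec{u}\|_{L^4_{t,x}l^2(\tilde{J})}\leq 1$, gives $\|\vec{u}\|_{L^4_{t,x}l^2(J)}^4=\#\{J_k\}+\|\vec{u}\|_{L^4_{t,x}l^2(\tilde{J})}^4$, while $\#\{J_k\}\sim\sum_k N(J_k)^2|J_k|\sim\int_{\cup_k J_k}N(t)^2\,dt$ as in Part (3); the residual term accounts for the $+1$ in the upper bound. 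The main obstacle is the lower bound in Part (1): passing from vanishing $L^4_{t,x}l^2$ norms of a sequence of nonlinear solutions to a vanishing free evolution of the $L_x^2 h^1$-limit $\vec{\phi}$ must be done carefully, leaning on the stability theorem \ref{th-y3.6} and the precompactness of $K$; the frequency-center estimate in Part (2) is a close second in subtlety but becomes routine once (1) is in hand.
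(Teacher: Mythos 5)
Your Part (1) argument is sound and is essentially the standard rescaling-and-compactness argument one finds in Dodson's Lemmas 2.12--2.13 (which the paper cites instead of giving a proof). The genuine gap is in Part (2). You assert that Part (1) ``pins $|J|$ to $\sim 1/N(t)^2$ for any $t\in J$,'' but this does not follow. Part (1) gives $\|\vec{u}\|_{L^4([s,s+\delta/N(s)^2])}\gtrsim 1$ for each $s$, so an interval $J$ with $\|\vec{u}\|_{L^4(J)}=1$ is covered by $O(1)$ intervals of lengths $\delta/N(s_i)^2$ with base-points $s_1,s_2,\dots$ chosen consecutively --- but those lengths are a priori uncontrolled unless you already know $N(s_i)\sim N(s_{i+1})$, which is precisely what is being proved. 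The missing ingredient is the \emph{local stability of the frequency scale}: there is $\delta_0(\vec{u})>0$ such that $N(t)\sim N(t')$ whenever $|t-t'|\le \delta_0 N(t)^{-2}$. This is a separate compactness/contradiction argument (rescale at $t_n$, extract a limit $\vec{\phi}$ in $L^2_xh^1$, and use the stability theorem \ref{th-y3.6} plus the description of Proposition \ref{pr-y4.8} to show the rescaled $N$ cannot blow up or vanish over a bounded rescaled time), and only after that does the iteration over covering intervals close Part (2), and hence Parts (3)--(4).

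The $\xi$-estimate in Part (2) also does not close as written. You argue that if $|\xi(t_1)-\xi(t_2)|\gg N(J)$ then the Fourier concentration balls are essentially disjoint, ``contradicting mass conservation,'' because the Duhamel remainder has $L^2_xl^2$ norm $\lesssim \|\vec{u}\|^3_{L^4_{t,x}l^2(J)}\sim 1$. But $\lesssim 1$ is not small: the implicit Strichartz constant is an absolute constant that may well exceed $m_0$, so the Duhamel term is permitted to transport essentially all of the Fourier mass from one ball to a far-away ball without any contradiction with mass conservation. Here too the correct route is a compactness argument at the level of the rescaled orbit (if $|\xi(t_n)-\xi(t_n')|/N(J_n)\to\infty$ for $t_n,t_n'$ in a single unit-$L^4$ window, rescale at $t_n$ and use precompactness modulo $G$ together with the bounded rescaled time gap to derive a contradiction). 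Once the theorem is established one can recover this estimate cleanly from $|\xi'(t)|\lesssim N(t)^3$ as in Remark \ref{re-y4.10}, but that remark is stated as a \emph{consequence} of the theorem, so invoking it here would be circular.
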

\begin{proof}
  The proof is similar to that of Lemma 2.12, Lemma 2.13 and Lemma 2.15 in \cite{D} and we omit it.
\end{proof}
\begin{remark}\label{re-y4.10}
By Theorem \ref{th-y4.5}, $|N'(t)|,|\xi'(t)|\lesssim N(t)^3$. We can use this fact to control the movement of $\xi(t)$.
\end{remark}
\section{$U^p_{\triangle}(l^{2})$,  $V^p_{\triangle}(l^{2})$ spaces and bilinear Strichartz estimate }
In this section, we give the definition of $U^p_{\triangle}(l^{2})$,  $V^p_{\triangle}(l^{2})$ spaces and then prove corresponding bilinear Strichartz estimate in such spaces.

\begin{definition} Let $1\leq p <\infty $.  Then $U^p_{\triangle}(l^{2})$ is an atomic space, where atoms are piecewise solutions
to the linear equation
$$\vec{u}=\sum\limits_{k}\chi_{[t_k,t_{k+1}]}(t)e^{it\triangle}\vec{u}_k(x),\,\,
\sum\limits_{k}\|\vec{u}_k(x)\|_{L^2l^{2}}^p=1.$$
We define $\|\cdot\|_{U^p_{\triangle}(l^{2})}$ as
$$\|\vec{u}\|_{U^p_{\triangle}(l^{2})}:=\inf\left\{\sum\limits_{\lambda}|c_{\lambda}|:\vec{u} =\sum_{\lambda}c_{\lambda}\vec{u}^{\lambda},\,\vec{u}^{\lambda} \,are\, U^p_{\triangle}(l^{2}) \,atoms\right\}.$$\\
Let $DU^p_{\triangle}(l^{2})$ be the space
$$DU^p_{\triangle}(l^{2})=\{(i\partial_{t}+\triangle)\vec{u}:\vec{u}\in U^p_{\triangle}(l^{2})\}$$
and the norm is
$$\|(i\partial_{t}+\triangle)\vec{u}\|_{DU^p_{\triangle}(l^{2})} :=\left\|\int_{0}^{t}e^{i(t-s)\triangle}(i\partial_{s}\vec{u}
+\triangle\vec{u})(s)ds\right\|_{U^p_{\triangle}(l^{2})}.$$
We can also define the spaces $U^p_{\triangle}(l^{2})$ and $DU^p_{\triangle}(l^{2})$ in the same way, where we just replace the $U^p_{\triangle}(l^{2})$ atoms with $U^p_{\triangle}(l^{2})$ atoms
$$\vec{u}=\sum\limits_{k}\chi_{[t_k,t_{k+1}]}(t)e^{it\triangle}\vec{u}_k(x),\,\,
\sum\limits_{k}\|\vec{u}_k(x)\|_{L^2l^{2}}^p=1.$$
\end{definition}

\begin{definition}Let $1\leq p <\infty $.  Then $V^p_{\triangle}(l^{2})$ is the space of right continuous functions $\vec{v}\in L_{t}^{\infty}L_{x}^{2}l^{2}$ such that
$$\|\vec{v}\|_{V^p_{\triangle}(l^{2})}^p:=\|\vec{v}\|_{L_{t}^{\infty}L_{x}^{2}l^{2}}^p+\sup_{\{t_k\}\nearrow}\sum_{k}\|
e^{-it_{k+1}\triangle}\vec{v}(t_{k+1})-e^{-it_{k}\triangle}\vec{v}(t_{k})\|_{L^2l^{2}}^{p}.$$ Define
$V^p_{\triangle}(l^{2})$ as the space of right continuous functions $\vec{v}\in L_{t}^{\infty}L_{x}^{2}l^{2}$ such that
$$\|\vec{v}\|_{V^p_{\triangle}(l^{2})}^p:=\|\vec{v}\|_{L_{t}^{\infty}L_{x}^{2}l^{2}}^p +\sup_{\{t_k\}\nearrow}\sum_{k}\|e^{-it_{k+1}\triangle}\vec{v}(t_{k+1}) -e^{-it_{k}\triangle}\vec{v}(t_{k})\|_{L^2l^{2}}^{p},$$
where the supremum is taken over increasing sequences $t_{k}.$
\end{definition}

We collect some useful properties about $U^p_{\triangle}(l^{2})$,  $V^p_{\triangle}(l^{2})$ space below.
\begin{proposition}\label{pr-y5.3}
$U^p_{\triangle}(l^{2})$,  $V^p_{\triangle}(l^{2})$ space has the following properties:
\begin{enumerate}
  \item $U^p_{\triangle}(l^{2})$,  $V^p_{\triangle}(l^{2})$ is a Banach space.
  \item $U^p_{\triangle}(l^{2}) \subset V^p_{\triangle}(l^{2}) \subset U^q_{\triangle}(l^{2})$,  $1<p<q<\infty$.
  \item $(DU^p_{\triangle}(l^{2}))^{*}=V^{p'}_{\triangle}(l^{2}),\frac{1}{p}+\frac{1}{p'}=1.$ and $1<p<\infty$.
  \item These spaces are also closed under truncation in time.$$\chi_{I}:U^p_{\triangle}(l^{2})\rightarrow U^p_{\triangle}(l^{2});\,\,\,\,\chi_{I}:V^p_{\triangle}(l^{2})\rightarrow V^p_{\triangle}(l^{2}).$$
  \item  Suppose $J = I_1 \cup I_2 , I_1 = [a,b], I_2 = [b,c], a\leq b\leq c$, then
  $$\|\vec{u}\|_{U^p_{\triangle}(l^{2};J)}^{p}\leq\|\vec{u}\|_{U^p_{\triangle}(l^{2};I_1)}^{p}+
  \|\vec{u}\|_{U^p_{\triangle}(l^{2};I_2)}^{p},$$
  $$\|\vec{u}\|_{U^p_{\triangle}(l^{2};I_1)}\leq \|\vec{u}\|_{U^p_{\triangle}(l^{2};J)}.$$
  \item $\|\vec{u}\|_{L^p_t L^q_x l^{2}}+\|\vec{u}\|_{L^{\infty}_t L^2_x l^{2}}\lesssim \|\vec{u}\|_{U^p_{\triangle}(l^{2})}$, $(p,q)$ is an admissible pair, $p>2$.  i.e. $\frac{1}{p}+\frac{1}{q}=\frac{1}{2}.$
  \item There is the easy estimate
      $$\|\vec{u}\|_{U^p_{\triangle}(l^{2})}\lesssim\|\vec{u}(0)\|_{L^2l^{2}}+
      \|(i\partial_{t}+\triangle)\vec{u}\|_{DU^p_{\triangle}(l^{2})}.$$
\end{enumerate}
\end{proposition}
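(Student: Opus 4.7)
The plan is to reduce every claim to the classical $U^p$/$V^p$ theory (see Koch--Tataru, Hadac--Herr--Koch) applied with the Hilbert space $L^2_x l^2$ in place of $L^2_x$. Since $l^2(\mathbb Z)$ is Hilbert, the product space $L^2_x l^2$ is a Hilbert space and the free Schr\"odinger propagator $e^{it\Delta}$ acts diagonally on the $j$-index, so it is a unitary group on $L^2_x l^2$. The whole abstract machinery therefore transfers verbatim; the only thing to check is that no argument depends on a property that fails in the vector-valued setting, which it does not.

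For (1), Banach space completeness of $U^p_\triangle(l^2)$ follows from the standard argument: show the atomic norm is nondegenerate using the pointwise bound $\|\vec u\|_{L^\infty_t L^2_x l^2}\lesssim \|\vec u\|_{U^p_\triangle(l^2)}$ (since each atom has $L^\infty_t L^2_x l^2$-norm $\le 1$), then use that $\ell^1$-sums of atoms are complete. For $V^p_\triangle(l^2)$, completeness follows directly from the definition once one observes that Cauchy sequences in the $V^p$ norm are Cauchy in $L^\infty_t L^2_x l^2$. For (2), the embedding $U^p\hookrightarrow V^p$ reduces, by linearity in atoms, to checking that a single atom has $V^p$-norm bounded by $1$, which is immediate because the partial sums of $\|e^{-it_{k+1}\Delta}\vec u(t_{k+1})-e^{-it_k\Delta}\vec u(t_k)\|^p$ telescope over the atom's breakpoints. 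The embedding $V^p\hookrightarrow U^q$ for $p<q$ is the standard Hadac--Herr--Koch argument: decompose $\vec v\in V^p$ by dyadic jump sizes and write each level as a $U^q$-atom, then sum geometrically.

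The main obstacle is (3), the duality $(DU^p_\triangle(l^2))^*\cong V^{p'}_\triangle(l^2)$. The proof proceeds by defining the bilinear pairing
\[
B(\vec u,\vec v)=\lim_{K\to\infty}\sum_k \langle \vec u(t_k),\vec v(t_k)-e^{i(t_k-t_{k+1})\Delta}\vec v(t_{k+1})\rangle_{L^2 l^2}
\]
for step functions $\vec u$ and showing, via Abel summation and H\"older in the time-partition, that $|B(\vec u,\vec v)|\le \|\vec u\|_{U^p}\|\vec v\|_{V^{p'}}$, so $B$ extends to $U^p_\triangle(l^2)\times V^{p'}_\triangle(l^2)$. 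Rewriting $B(\vec u,\vec v)=-i\int \langle (i\partial_t+\Delta)\vec u,\vec v\rangle_{L^2 l^2}\,dt$ identifies the pairing with $DU^p$. The reverse inequality (surjectivity) is obtained by a Hahn--Banach extension argument exactly as in Proposition~2.10 of Hadac--Herr--Koch, using that $L^2_x l^2$ is a Hilbert space. Property (4) is immediate since truncating an atom produces a sum of two atoms of smaller norm; property (5) follows similarly by splitting atoms at the common endpoint $b$ and using the $\ell^p$-subadditivity of the atomic decomposition.

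For (6), the Strichartz embedding, one uses atomicity: for a $U^p_\triangle(l^2)$-atom $\vec u=\sum_k \chi_{[t_k,t_{k+1}]}e^{it\Delta}\vec u_k$ with $\sum_k\|\vec u_k\|_{L^2 l^2}^p=1$, Proposition \ref{pr-y3.1} (applied componentwise and combined with Minkowski in the $l^2$-index, since $(p,q)$ is admissible with $p>2$) gives $\|e^{it\Delta}\vec u_k\|_{L^p_t L^q_x l^2}\lesssim \|\vec u_k\|_{L^2 l^2}$; since $p>2$, $\ell^p$-orthogonality in the disjoint time intervals yields $\|\vec u\|_{L^p_t L^q_x l^2}\lesssim (\sum_k\|\vec u_k\|_{L^2 l^2}^p)^{1/p}=1$, and the $L^\infty_t L^2_x l^2$ bound is immediate. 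Extending by the atomic definition finishes (6). Finally (7) is the Duhamel identity: writing $\vec u(t)=e^{it\Delta}\vec u(0)-i\int_0^t e^{i(t-s)\Delta}(i\partial_s+\Delta)\vec u(s)\,ds$, the first term is a single atom of norm $\|\vec u(0)\|_{L^2 l^2}$, and the second is controlled by the $DU^p_\triangle(l^2)$-norm by definition. The details are routine and follow the references cited.
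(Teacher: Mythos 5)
Your proposal is correct and, where the paper actually writes a proof, follows essentially the same route. The paper only spells out parts (3) and (6) and refers to Dodson, Hadac--Herr--Koch, and Koch--Tataru--Visan for the rest; your argument for (6) is essentially identical (check on atoms, apply the vector-valued Strichartz estimate of Proposition~\ref{pr-y3.1} to each piece, sum the $p$-th powers of the $L^p_t$ norms over the disjoint time intervals, and observe the $L^\infty_t L^2_x l^2$ bound directly from the atom definition). For (3), the paper takes a small shortcut: it quotes from Chapter~4 of \cite{KTV} the identity $\|\vec v\|_{V^{p'}_\triangle(l^2)}=\sup_{\|\vec u\|_{U^p_\triangle(l^2)}\le 1}|B(e^{-it\triangle}\vec u,e^{-it\triangle}\vec v)|$ and then rewrites $B$ via the Duhamel integral as $\int\langle(i\partial_t+\triangle)\vec u,\vec v\rangle\,dt$; you instead propose to re-derive the bilinear pairing from scratch via Abel summation and close the duality with a Hahn--Banach argument. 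That re-derivation is exactly what underlies the cited result, so the content is the same -- you are just opening up the black box rather than invoking it. Both work because, as you correctly observe, $L^2_xl^2$ is a Hilbert space on which $e^{it\Delta}$ acts as a unitary group, so the entire abstract $U^p$/$V^p$ framework transfers verbatim.
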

\begin{proof}The proof is standard. For completion we give the proof of (3), (6). Others can be found in  in \cite{D}, \cite{HHK} and \cite{KTV}.

First we prove (3), by the Chapter 4 in \cite{KTV},
$$\|\vec{v}\|_{V^{p'}_{\triangle}(l^{2})}= \sup\limits_{\|\vec{u}\|_{U^{p}_{\triangle}(l^{2})}\leq1}|
B(e^{-it\triangle}\vec{u},e^{-it\triangle}\vec{v})|,$$
where
\begin{align*}
  \left|B(e^{-it\triangle}\vec{u},e^{-it\triangle}\vec{v})\right|=&\left|\int_{\mathbb{R}}\langle\partial_t e^{-it\triangle}\vec{u}(t),e^{-it\triangle}\vec{v}(t)\rangle dt\right|\\
  =&\left|-i\int_{\mathbb{R}}\langle e^{-it\triangle}(i\partial_t\vec{u}+\triangle\vec{u})(t),e^{-it\triangle}\vec{v}(t)\rangle dt\right|\\
  =&\left|\int_{\mathbb{R}}\langle\partial_t(e^{-it\triangle}\int_{0}^{t}e^{i(t-s)\triangle}(i\partial_{s}\vec{u}
+\triangle\vec{u})(s)ds),e^{-it\triangle}\vec{v}(t)\rangle dt\right|\\
  =&\left|B\big(e^{-it\triangle}\int_{0}^{t}e^{i(t-s)\triangle}(i\partial_{s}\vec{u}
+\triangle\vec{u})(s)ds,e^{-it\triangle}\vec{v}\big)\right|.
\end{align*}Here $\<\cdot,\cdot\>$ means the paring between $L_x^2l^{2}$ and $L_x^2l^{2}$. Thus,
\begin{align*}
\|\vec{v}\|_{V^{p'}_{\triangle}(l^{2})}
=&\sup\limits_{\|(i\partial_{t}+\triangle)\vec{u}\|_{DU^p_{\triangle}(l^{2})}\leq1} \left|B\left(e^{-it\triangle}\int_{0}^{t}e^{i(t-s)\triangle}(i\partial_{s}\vec{u}
+\triangle\vec{u})(s)ds,e^{-it\triangle}\vec{v}\right)\right|\\
=&\sup\limits_{\|(i\partial_{t}+\triangle)\vec{u}\|_{DU^p_{\triangle}(l^{2})}\leq1}\left|\int_{\mathbb{R}}
\langle(i\partial_t\vec{u}+\triangle\vec{u})(t),\vec{v}(t)\rangle dt\right|\\
=&\sup\limits_{\|(i\partial_{t}+\triangle)\vec{u}\|_{DU^p_{\triangle}(l^{2})}\leq1}\left|\int_{\mathbb{R}} \int_{\mathbb{R}^2}\sum_{j\in\mathbb{Z}}(i\partial_t u_j+\triangle u_j) \bar{v}_j dxdt\right|.
\end{align*}
This means $(DU^p_{\triangle}(l^{2}))^{*}=V^{p'}_{\triangle}(l^{2}).$

Next we prove (6). It suffices to  check individual $U^p_{\triangle}(l^{2})$-atom
$$\vec{u}=\sum\limits_{k}\chi_{[t_k,t_{k+1}]}(t)e^{it\triangle}\vec{u}_k(x),\,\,
\sum\limits_{k}\|\vec{u}_k(x)\|_{L^2l^{2}}^p=1.$$
Without loss of generality, we may assume $\|\vec{u}\|_{U^p_{\triangle}(l^{2})}=1.$
For arbitrary $t\in\mathbb{R}$, there exists $k_{0}$ such that  $t\in[t_{k_0},t_{k_0+1}]$, then $\vec{u}(t,x)=e^{it\triangle}\vec{u}_{k_0}(x),$
\begin{align*}
  \|\vec{u}(t)\|_{L_x^2l^{2}}&=\|e^{it\triangle}\vec{u}_{k_0}\|_{L_x^2l^{2}}
   =\|\vec{u}_{k_0}\|_{L_x^2l^{2}}
   =\left(\|\vec{u}_{k_0}\|^p_{L_x^2l^{2}}\right)^{\frac{1}{p}}
   \leq \left(\sum_k\|\vec{u}_{k}\|^p_{L_x^2l^{2}}\right)^{\frac{1}{p}}
   =1.
\end{align*}
This implies $\|\vec{u}(t)\|_{L_t^{\infty}L_x ^2l^{2}}\lesssim\|\vec{u}\|_{U^p_{\triangle}(l^{2})}$. Furthermore,
\begin{align*}
  \|\vec{u}\|_{L_t^p L_x^q l^{2}}&=\left(\int_{\mathbb{R}}\|\vec{u}\|^p_{L_x^ql^2}dt\right)^{\frac{1}{p}}\\
   &=\left(\sum_{k}\int^{t_{k+1}}_{t_k}\|\vec{u}\|^p_{L_x^ql^2}dt\right)^{\frac{1}{p}}\\
   &=\left(\sum_{k}\int^{t_{k+1}}_{t_k}\|e^{it\triangle}\vec{u}_k\|^p_{L_x^ql^2}dt\right)^{\frac{1}{p}}\\
   &= \left(\sum_k\|e^{it\triangle}\vec{u}_{k}\|^p_{L_t^pL_x^ql^2([t_k,t_{k+1}])}\right)^{\frac{1}{p}}\\
   &\leq \left(\sum_k\|\vec{u}_{k}\|^p_{L_x^2l^{2}}\right)^{\frac{1}{p}}\\
   &=1.
\end{align*}
The last inequality is due to Strichartz estimate. Therefore, $\|\vec{u}\|_{L_t^p L_x^q l^{2}}\lesssim\|\vec{u}\|_{U^p_{\triangle}(l^{2})}.$
\end{proof}

\begin{lemma}\label{le-y4.5}
Suppose $J=\cup ^k_{m=1}J^m$, where $J^m$ are consecutive intervals, $J^m=[a_m,b_m]$, $a_{m+1}=b_m$. Also suppose that $\vec{F}=\{F_j\}_{j\in\mathbb{Z}}\in L ^1_t L^2_x l^{2} (J \times\mathbb{R}^2)$ (however our bound will not depend on $\|\vec{F}\|_{L ^1_t L^2_x l^{2}}$.) Then
for any $t_0 \in J$,
\begin{equation}\label{eq-y4.1'}
\begin{split}
  \big\|\int^t_{t_0} e^{i(t-\tau)\triangle}\vec{F}(\tau)d\tau \big\|_{U^2_{\triangle}(l^{2};J\times \mathbb{R}^2)} &\lesssim \sum^{k}_{m=1} \big\| \int_{J^m} e^{-i\tau\triangle}\vec{F}(\tau)d\tau \big\|_{L^2_x l^{2}} + \big[\sum^{k}_{m=1}(\|\vec{F}\|_{DU^{2}_{\triangle}(l^{2};J^m\times \mathbb{R}^2)})^2 \big]^{1/2}.
\end{split}
\end{equation}Where $$\|\vec{F}\|_{DU^{2}_{\triangle}(l^{2};J^m\times \mathbb{R}^2)}:=\sup_{\|\vec{v}\|_{V^{2}_{\triangle}(l^{2};J^m\times \mathbb{R}^2)}=1}\int_{J^m}\sum_{j\in \mathbb{Z}}F_j(\tau)v_j(\tau) d\tau.$$
\end{lemma}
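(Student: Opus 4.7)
The plan is a classical two-piece decomposition of the Duhamel operator within the $U^p/DU^p$ framework. First, I reduce to the case $t_0 = a_1$ via the identity
$$\int_{t_0}^t e^{i(t-\tau)\triangle}\vec{F}(\tau)\,d\tau = \int_{a_1}^t e^{i(t-\tau)\triangle}\vec{F}(\tau)\,d\tau - e^{it\triangle}\int_{a_1}^{t_0}e^{-i\tau\triangle}\vec{F}(\tau)\,d\tau.$$
The second (correction) term is a pure Schr\"odinger evolution whose $U^2(J)$-norm equals $\big\|\int_{a_1}^{t_0}e^{-i\tau\triangle}\vec{F}(\tau)\,d\tau\big\|_{L_x^2 l^2}$. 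Splitting this $L_x^2 l^2$-integral into full subintervals $J^1,\dots,J^{m_1-1}$ (where $t_0\in J^{m_1}$) plus the partial Duhamel integral on $[a_{m_1},t_0]$, and using $U^2$–$V^2$ duality on that last piece, bounds this correction by the right-hand side of \eqref{eq-y4.1'}.

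Assuming now $t_0 = a_1$, for $t\in J^{m_0}$ I split
$$\int_{a_1}^t e^{i(t-\tau)\triangle}\vec{F}(\tau)\,d\tau \;=\; \underbrace{e^{it\triangle}\sum_{m<m_0}\int_{J^m}e^{-i\tau\triangle}\vec{F}(\tau)\,d\tau}_{=:\vec{H}_1(t)} \;+\; \underbrace{\int_{a_{m_0}}^{t} e^{i(t-\tau)\triangle}\vec{F}(\tau)\,d\tau}_{=:\vec{H}_2(t)}.$$
For $\vec{H}_1$, rewrite it as the telescoping sum $\vec{H}_1(t)=\sum_{m=1}^{k-1}\chi_{[b_m,b_k]}(t)\,e^{it\triangle}\vec{d}_m$ with $\vec{d}_m:=\int_{J^m}e^{-i\tau\triangle}\vec{F}(\tau)\,d\tau$. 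Each summand is (a scalar multiple of) a single $U^2$-atom, with $\|\chi_{[b_m,b_k]}\,e^{it\triangle}\vec{d}_m\|_{U^2_{\triangle}(l^2;J)}=\|\vec{d}_m\|_{L_x^2 l^2}$ directly from the atomic definition, so the triangle inequality in $U^2$ produces the first term on the right-hand side of \eqref{eq-y4.1'}. For $\vec{H}_2$, the restriction to each $J^m$ is precisely the Duhamel propagator of $\vec{F}$ starting at $a_m$; by the $U^2$-$V^2$ duality supplied by Proposition \ref{pr-y5.3}(3) and the dual formulation of the $DU^2$-norm stated in the lemma, this restriction satisfies $\|\vec{H}_2\|_{U^2_{\triangle}(l^2;J^m)}=\|\vec{F}\|_{DU^2_{\triangle}(l^2;J^m)}$. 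Iterating the subinterval splitting bound of Proposition \ref{pr-y5.3}(5) over $m=1,\dots,k$ then gives
$$\|\vec{H}_2\|_{U^2_{\triangle}(l^2;J)}^2 \;\leq\; \sum_{m=1}^k \|\vec{H}_2\|_{U^2_{\triangle}(l^2;J^m)}^2 \;=\; \sum_{m=1}^k\|\vec{F}\|_{DU^2_{\triangle}(l^2;J^m)}^2,$$
which is the second term on the right-hand side of \eqref{eq-y4.1'}.

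Combining the bounds on $\vec{H}_1$ and $\vec{H}_2$ by one final triangle inequality yields \eqref{eq-y4.1'}. The main (and essentially only) conceptual point to verify is that $\vec{H}_2$ is \emph{discontinuous} at each interior boundary $b_m$ — it jumps back to zero as $t$ crosses into $J^{m+1}$ — so one must check that the splitting inequality in Proposition \ref{pr-y5.3}(5) can be iterated for such a function. This is harmless, because $U^p$-atoms are by construction piecewise free evolutions with \emph{arbitrary} jumps at their partition points, so the atomic decompositions of $\vec{H}_2$ on adjacent subintervals may simply be concatenated without any continuity hypothesis at the junctions. Once this observation is granted, the rest of the argument is routine bookkeeping and produces the claimed inequality with an implicit constant independent of $k$ and of $\|\vec{F}\|_{L^1_tL^2_xl^2}$.
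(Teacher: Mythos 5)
Your proof is correct, and it is essentially the argument of Dodson's Lemma 3.4 in \cite{D}, which is exactly what the paper invokes (the paper states ``the proof is the same as Lemma 3.4 in \cite{D}'' without reproducing it). Your decomposition into the free evolution of the accumulated mass $\vec{H}_1$ (handled atom by atom) and the reset Duhamel propagator $\vec{H}_2$ (handled by the $\ell^2$ subinterval splitting of Proposition \ref{pr-y5.3}(5) together with $U^2$--$V^2$ duality) is the standard mechanism, and your reduction to $t_0=a_1$ and the continuity/jump discussion at the $b_m$ are both sound.
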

\begin{proof}
The proof is the same as Lemma 3.4 in \cite{D}. So we won't repeat it here.
\end{proof}
\begin{proposition}[bilinear Strichartz estimate]\label{pr-y5.4}
$\frac{1}{p}+\frac{1}{q}=1,\vec{u}_{0}=\{u_{0,j}\}_{j\in\mathbb{Z}}, \vec{v}_0=\{v_{0,j}\}_{j\in\mathbb{Z}}$. Assume  $\hat{u}_{0,j}$ is supported on $|\xi|\sim N$ and $\hat{v}_{0,j}$ is supported on $|\xi|\sim M$ for $j\in\mathbb{Z}$. If $M \ll N$, then
\begin{equation}\label{eq-y4.2'}
  \left\|\|e^{it\triangle}\vec{u}_{0}\|_{l^{2}}\cdot\|e^{\pm it\triangle}\vec{v}_0\|_{l^{2}}\right\|_{L_t^p L_x^q(\mathbb{R}\times\mathbb{R}^2)}\lesssim\big(\frac{M}{N}\big)^{\frac{1}{p}}\|\vec{u}_{0}\|_{L_x^2l^{2}}
  \|\vec{v}_0\|_{L_x^2l^{2}}.
\end{equation}
\end{proposition}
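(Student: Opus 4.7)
\medskip

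\noindent\textbf{Proof proposal for Proposition \ref{pr-y5.4}.}
My plan is to reduce the mixed-norm estimate to two endpoint estimates and then recover the general $L^p_tL^q_x$ bound by $L^p$ interpolation of the scalar function $F(t,x):=\|e^{it\triangle}\vec{u}_0\|_{l^2}\cdot\|e^{\pm it\triangle}\vec{v}_0\|_{l^2}$. The two endpoints are $(p,q)=(2,2)$, where the gain $(M/N)^{1/2}$ should appear, and $(p,q)=(\infty,1)$, which will be essentially trivial from mass conservation. Since $F$ is a non-negative scalar function, log-convexity of mixed $L^p_tL^q_x$ norms (a direct consequence of H\"older's inequality) gives
\[
\|F\|_{L^p_tL^q_x}\leq \|F\|_{L^2_{t,x}}^{2/p}\,\|F\|_{L^\infty_tL^1_x}^{1-2/p},
\]
which interpolates the two endpoints to the full line $\tfrac1p+\tfrac1q=1$ (with $2\leq p\leq\infty$) and produces the desired $(M/N)^{1/p}$.

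\medskip

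For the $L^2_{t,x}$ endpoint I would expand the $l^2$ norms inside the integral and use Fubini:
\[
\|F\|_{L^2_{t,x}}^2=\int_{\mathbb{R}}\int_{\mathbb{R}^2}\Big(\sum_{j\in\mathbb{Z}}|e^{it\triangle}u_{0,j}|^2\Big)\Big(\sum_{k\in\mathbb{Z}}|e^{\pm it\triangle}v_{0,k}|^2\Big)\,dxdt=\sum_{j,k}\|e^{it\triangle}u_{0,j}\cdot e^{\pm it\triangle}v_{0,k}\|^2_{L^2_{t,x}}.
\]
Since for each fixed $(j,k)$ the two factors $u_{0,j}$ and $v_{0,k}$ have Fourier supports in $|\xi|\sim N$ and $|\xi|\sim M$ respectively with $M\ll N$, the classical scalar bilinear Strichartz estimate in $\mathbb{R}^2$ (Bourgain/Ozawa-type, as used in \cite{D}) applies to each pair, giving $\|e^{it\triangle}u_{0,j}\cdot e^{\pm it\triangle}v_{0,k}\|_{L^2_{t,x}}\lesssim (M/N)^{1/2}\|u_{0,j}\|_{L^2}\|v_{0,k}\|_{L^2}$. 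Summing over $j,k$ and factoring,
\[
\|F\|_{L^2_{t,x}}^2\lesssim \frac{M}{N}\Big(\sum_j\|u_{0,j}\|_{L^2}^2\Big)\Big(\sum_k\|v_{0,k}\|_{L^2}^2\Big)=\frac{M}{N}\|\vec{u}_0\|_{L^2_xl^2}^2\|\vec{v}_0\|_{L^2_xl^2}^2.
\]
For the $L^\infty_tL^1_x$ endpoint, Cauchy--Schwarz in $x$ and the fact that $e^{it\triangle}$ is unitary on $L^2_xl^2$ (componentwise mass conservation) give $\|F(t,\cdot)\|_{L^1_x}\leq \|e^{it\triangle}\vec{u}_0\|_{L^2_xl^2}\|e^{\pm it\triangle}\vec{v}_0\|_{L^2_xl^2}=\|\vec{u}_0\|_{L^2_xl^2}\|\vec{v}_0\|_{L^2_xl^2}$ uniformly in $t$. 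Plugging the two endpoint bounds into the interpolation inequality and simplifying exponents gives exactly $(M/N)^{1/p}\|\vec{u}_0\|_{L^2_xl^2}\|\vec{v}_0\|_{L^2_xl^2}$.

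\medskip

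\noindent I do not expect any serious obstacle here: the whole argument is driven by the scalar bilinear Strichartz estimate on $\mathbb{R}^2$, and the passage to the vector-valued setting costs nothing because the $l^2$ norms over $j\in\mathbb{Z}$ decouple after squaring the $L^2_{t,x}$ norm. The only place where one must be slightly attentive is the interpolation step: one should verify that the scalar log-convexity bound $\|F\|_{L^p_tL^q_x}\leq \|F\|_{L^2_{t,x}}^{2/p}\|F\|_{L^\infty_tL^1_x}^{1-2/p}$ for $\tfrac1p+\tfrac1q=1$, $p\geq 2$, is simply the two applications of H\"older's inequality (first in $x$ at each fixed $t$, then in $t$) rather than anything requiring bilinear real interpolation.
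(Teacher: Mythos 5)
Your proof takes essentially the same route as the paper: decouple the $l^2$ sums after squaring the $L^2_{t,x}$ norm and apply Bourgain's scalar bilinear Strichartz estimate on $\mathbb{R}^2$ termwise to get the $(M/N)^{1/2}$ gain, bound the $L^\infty_t L^1_x$ endpoint trivially by Cauchy--Schwarz and mass conservation, and interpolate between the two endpoints along $\tfrac1p+\tfrac1q=1$, $p\ge 2$. Your explicit remark that the interpolation is nothing more than two applications of H\"older (in $x$ at fixed $t$, then in $t$) applied to the nonnegative scalar $F$, rather than a genuine bilinear interpolation theorem, is correct and is a slightly more careful rendering of the step the paper dispatches with the single word ``interpolating.''
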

\begin{proof}
On one hand, by \cite{B} and Minkowski inequality, we have
\begin{align*}
  &\left\|\|e^{it\triangle}\vec{u}_{0}\|_{l^{2}}\cdot\|e^{\pm it\triangle}\vec{v}_0\|_{l^{2}}\right\|_{L_t^2 L_x^2(\mathbb{R}\times\mathbb{R}^2)} \\
 =&\left\|\left(\sum_{j\in\mathbb{Z}}|e^{it\triangle}u_{0,j}|^{2}\right)^{\frac{1}{2}}
 \left(\sum_{j'\in\mathbb{Z}}|e^{\pm it\triangle}v_{0,j'}|^2\right)^{\frac{1}{2}}\right\|_{L_t^2 L_x^2(\mathbb{R}\times\mathbb{R}^2)}\\
 =&\left\|\left(\sum_{j,j'\in\mathbb{Z}}|e^{it\triangle}u_{0,j}|^2|e^{\pm it\triangle}v_{0,j'}|^2\right)^{\frac{1}{2}}\right\|_{L_t^2 L_x^2(\mathbb{R}\times\mathbb{R}^2)}\\
 =&\left(\sum_{j,j'\in\mathbb{Z}}\left\|e^{it\triangle}u_{0,j}e^{\pm it\triangle}v_{0,j'}\right\|_{L_t^2 L_x^2(\mathbb{R}\times\mathbb{R}^2)}^2\right)^{\frac{1}{2}}\\
 \leq &\big(\frac{M}{N}\big)^{\frac{1}{2}}\left(\sum_{j,j'\in\mathbb{Z}}
 \left\|u_{0,j}\right\|_{L_x^2(\mathbb{R}^2)}^2
 \left\|v_{0,j'}\right\|_{L_x^2(\mathbb{R}^2)}^2\right)^{\frac{1}{2}}\\
 =&\big(\frac{M}{N}\big)^{\frac{1}{2}}\|\vec{u}_{0}\|_{L_x^2 l^{2}}\|\vec{v}_0\|_{L_x^2 l^{2}}.
\end{align*} On the other hand, by H\"{o}lder inequality, we get
\begin{align*}
   &\left\|\|e^{it\triangle}\vec{u}_{0}\|_{l^{2}}\cdot\|e^{\pm it\triangle}\vec{v}_0\|_{l^{2}}\right\|_{L_t^{\infty} L_x^1(\mathbb{R}\times\mathbb{R}^2)} \\
   \leq&\left\|e^{it\triangle}\vec{u}_{0}\right\|_{L_t^{\infty} L_x^2l^{2}}\left\|e^{\pm it\triangle}\vec{v}_0\right\|_{L_t^{\infty} L_x^2l^{2}}\\
   \leq&\|\vec{u}_{0}\|_{L_x^2l^{2}}\|\vec{v}_0\|_{L_x^2l^{2}}.
\end{align*}
Interpolating between the above two inequalities, we obtain
$$\left\|\|e^{it\triangle}\vec{u}_{0}\|_{l^{2}}\cdot\|e^{\pm it\triangle}\vec{v}_0\|_{l^{2}}\right\|_{L_t^pL_x^q(\mathbb{R}\times\mathbb{R}^2)} \lesssim\big(\frac{M}{N}\big)^{\frac{1}{p}}\|\vec{u}_{0}\|_{L_x^2l^{2}}\|\vec{v}_0\|_{L_x^2l^{2}}.$$
\end{proof}
\begin{remark}\label{convolution}
Suppose that $g(t,x-y)$ and $h(t,x-z)$ are convolution kernels with the bounds
\begin{equation}\label{condition1}
 \|\sup_{t\in \mathbb{R}}|g(t,x)|\|_{L^1(\mathbb{R}^2)}\lesssim1\quad and\quad \|\sup_{t\in \mathbb{R}}|h(t,x)|\|_{L^1(\mathbb{R}^2)}\lesssim1,
\end{equation}then under the hypothesis of Proposition \ref{pr-y5.4}, we have
\begin{equation}\label{coneq-y4.2'}
  \left\|\|g\ast e^{it\triangle}\vec{u}_{0}\|_{l^{2}}\cdot\|h\ast e^{\pm it\triangle}\vec{v}_0\|_{l^{2}}\right\|_{L_t^p L_x^q(\mathbb{R}\times\mathbb{R}^2)}\lesssim\big(\frac{M}{N}\big)^{\frac{1}{p}}\|\vec{u}_{0}\|_{L_x^2l^{2}}
  \|\vec{v}_0\|_{L_x^2l^{2}},
\end{equation}where $\|g\ast e^{it\triangle}\vec{u}_{0}\|_{l^{2}}=\big(\sum_j|g\ast e^{it\triangle}u_{0,j}|^2\big)^{1/2}$. The kernels of $P_{\xi(t),j}$, $P_{\xi(t),\leq j}$ and $P_{\xi(t),\geq j}$ all satisfy \eqref{condition1}.
\end{remark}

\begin{proposition}\label{pr-y5.5}
 For $\vec{u}=\{u_j\}_{j\in\mathbb{Z}},\vec{v}=\{v_j\}_{j\in\mathbb{Z}}$, we assume $supp\ \ \hat{u}_j\subset\{\xi:|\xi|\sim N\},supp\ \ \hat{v}_j\subset\{\xi:|\xi|\sim M\}$. If $M\ll  N$, we have
\begin{equation}
\left\|\|\vec{u}\|_{l^{2}}\cdot\|\vec{v}\|_{l^{2}}\right\|_{L^p_t L^q_x}\lesssim\big(\frac{M}{N}\big)^{\frac{1}{p}}\|\vec{u}\|_{U^p_{\triangle}(l^{2})} \|\vec{v}\|_{U^p_{\triangle}(l^{2})},
\end{equation}where $\frac{1}{p}+\frac{1}{q}=1$.
\end{proposition}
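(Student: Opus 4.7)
The plan is to bootstrap the linear bilinear estimate of Proposition \ref{pr-y5.4} to $U^p_{\triangle}(l^2)$ functions through the atomic definition of $U^p_{\triangle}$. Since the $U^p_{\triangle}(l^2)$ norm is defined as the $\ell^1$-infimum over atomic decompositions, it suffices by the sub-additivity of the left-hand side (in $\vec{u}$ and $\vec{v}$ separately, via the triangle inequality in $l^2$ followed by the triangle inequality in $L^p_tL^q_x$) to establish the bound with $\|\vec{u}\|_{U^p_\triangle(l^2)} \|\vec{v}\|_{U^p_\triangle(l^2)}$ replaced by $1\cdot 1$ whenever $\vec{u}$ and $\vec{v}$ are individual atoms.

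So assume
$$\vec{u}=\sum_{k}\chi_{[t_k,t_{k+1}]}(t)\,e^{it\triangle}\vec{u}_k(x),\quad
\vec{v}=\sum_{k'}\chi_{[s_{k'},s_{k'+1}]}(t)\,e^{it\triangle}\vec{v}_{k'}(x),$$
with $\sum_k\|\vec{u}_k\|_{L^2l^2}^p=\sum_{k'}\|\vec{v}_{k'}\|_{L^2l^2}^p=1$, and each $\hat u_{k,j}, \hat v_{k',j}$ still supported in $|\xi|\sim N$, $|\xi|\sim M$ respectively (which holds automatically because the frequency support is preserved by the linear flow and by $\chi_{[t_k,t_{k+1}]}$ acting in $t$). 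At each fixed $t$, exactly one term is active in each sum, so
$$\|\vec u(t)\|_{l^2}\|\vec v(t)\|_{l^2}=\sum_{k,k'}\chi_{I_k\cap J_{k'}}(t)\,\|e^{it\triangle}\vec u_k\|_{l^2}\|e^{it\triangle}\vec v_{k'}\|_{l^2},$$
where $I_k=[t_k,t_{k+1}]$, $J_{k'}=[s_{k'},s_{k'+1}]$. The intervals $\{I_k\cap J_{k'}\}_{k,k'}$ are pairwise disjoint in $t$, so taking the $L^p_tL^q_x$ norm turns the sum into an $\ell^p$-sum in $(k,k')$:
$$\bigl\|\|\vec u\|_{l^2}\|\vec v\|_{l^2}\bigr\|_{L^p_tL^q_x}^p
=\sum_{k,k'}\Bigl\|\chi_{I_k\cap J_{k'}}(t)\|e^{it\triangle}\vec u_k\|_{l^2}\|e^{it\triangle}\vec v_{k'}\|_{l^2}\Bigr\|_{L^p_tL^q_x}^p.$$

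Dropping the time cutoff and invoking Proposition \ref{pr-y5.4} on each full line bounds each summand by $(M/N)\|\vec u_k\|_{L^2l^2}^p\|\vec v_{k'}\|_{L^2l^2}^p$. Summing and factoring,
$$\bigl\|\|\vec u\|_{l^2}\|\vec v\|_{l^2}\bigr\|_{L^p_tL^q_x}^p
\lesssim \frac{M}{N}\Bigl(\sum_k\|\vec u_k\|_{L^2l^2}^p\Bigr)\Bigl(\sum_{k'}\|\vec v_{k'}\|_{L^2l^2}^p\Bigr)=\frac{M}{N},$$
giving the atom estimate with constant $(M/N)^{1/p}$. Finally, if $\vec u=\sum_\lambda c_\lambda\vec u^{\lambda}$ and $\vec v=\sum_\mu d_\mu\vec v^{\mu}$ are general atomic representations, the pointwise triangle inequality in $l^2$ yields
$$\bigl\|\|\vec u\|_{l^2}\|\vec v\|_{l^2}\bigr\|_{L^p_tL^q_x}\le \sum_{\lambda,\mu}|c_\lambda||d_\mu|\bigl\|\|\vec u^{\lambda}\|_{l^2}\|\vec v^{\mu}\|_{l^2}\bigr\|_{L^p_tL^q_x}\lesssim \Bigl(\frac{M}{N}\Bigr)^{1/p}\Bigl(\sum_\lambda|c_\lambda|\Bigr)\Bigl(\sum_\mu|d_\mu|\Bigr),$$
and infimizing over atomic representations of $\vec u$ and $\vec v$ concludes.

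The only point requiring care is the $L^p$-additivity step: it is valid precisely because the intervals $I_k\cap J_{k'}$ partition the real line disjointly, so that the $p$-th power of the $L^p_t$ norm of a sum of functions supported on these disjoint intervals equals the sum of the $p$-th powers. The frequency localization is inherited automatically by each atomic building block from the global frequency support of $\vec u$ and $\vec v$, so no additional care is needed to apply Proposition \ref{pr-y5.4}. I expect no substantial obstacle beyond correctly bookkeeping the atomic decomposition and the time partition.
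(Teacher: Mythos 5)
Your argument follows the paper's proof essentially verbatim: reduce to $U^p_\triangle(l^2)$ atoms, observe that the time intervals $I_k\cap J_{k'}$ are disjoint so the $L^p_t$ norm raised to the $p$-th power becomes additive over the partition, invoke Proposition \ref{pr-y5.4} piecewise, and sum. Both your write-up and the paper's implicitly rely on being able to assume each atom inherits the frequency localization (justified by inserting the projection $P_N$, which commutes with $e^{it\triangle}$ and is bounded on $L^2 l^2$), but this is a standard reduction and does not affect the validity of the argument.
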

\begin{proof}
We just take $U^p_{\triangle}(l^{2})$ and $U^p_{\triangle}(l^{2})$ atoms into consideration .  Let
$$\vec{u}=\sum_k \chi_{[t_k,t_{k+1}]}e^{it\triangle}\vec{u}_{k}, \sum\limits_{k}\|\vec{u}_k(x)\|_{L_x^2l^{2}}^p=1; \vec{v}=\sum_{k'} \chi_{[t_{k'},t_{k'+1}]}e^{it\triangle}\vec{v}_{k'}, \sum\limits_{k'}\|\vec{v}_{k'}(x)\|_{L_x^2l^{2}}^p=1.$$
Without loss of generality, we may assume $\|\vec{u}\|_{U^p_{\triangle}(l^{2})}=1, \|\vec{v}\|_{U^p_{\triangle}(l^{2})}=1$.  It suffices to show
$$\left\|\|\vec{u}\|_{l^{2}}\cdot\|\vec{v}\|_{l^{2}}\right\|_{L^p_t L^q_x}\lesssim \big(\frac{M}{N}\big)^{\frac{1}{p}}.$$
By Minkowski inequality and Proposition \ref{pr-y5.4}, we have
\begin{align*}
  \left\|\|\vec{u}\|_{l^{2}}\cdot\|\vec{v}\|_{l^{2}}\right\|^p_{L^p_t L^q_x}=&\left\|\|\sum_k \chi_{[t_k,t_{k+1}]}e^{it\triangle}\vec{u}_{k}\|_{l^{2}}\cdot\|\sum_{k'} \chi_{[t_{k'},t_{k'+1}]}e^{it\triangle}\vec{v}_{k'}\|_{l^{2}}\right\|^p_{L^p_t L^q_x} \\
  \leq&\left\|(\sum_{k}\chi_{[t_k,t_{k+1}]}\|e^{it\triangle}\vec{u}_{k}\|_{l^{2}})\cdot(\sum_{k'} \chi_{[t_{k'},t_{k'+1}]}\|e^{it\triangle}\vec{v}_{k'}\|_{l^{2}})\right\|^p_{L^p_t L^q_x}\\
  =&\left\|(\sum_{k,k'}\chi_{[t_k,t_{k+1}]}\chi_{[t_{k'},t_{k'+1}]}\|e^{it\triangle}\vec{u}_{k}\|_{l^{2}} \|e^{it\triangle}\vec{v}_{k'}\|_{l^{2}})\right\|^p_{L^p_t L^q_x}\\
  \leq&\int_{\mathbb{R}}\left(\sum_{k,k'}\chi_{[t_k,t_{k+1}]}\chi_{[t_{k'},t_{k'+1}]} \left\|\|e^{it\triangle}\vec{u}_{k}\|_{l^{2}} \|e^{it\triangle}\vec{v}_{k'}\|_{l^{2}}\right\|_{L_x^q}\right)^p(t)dt\\
  =&\sum_k\int_{t_k}^{t_{k+1}}\left(\sum_{k'}\chi_{[t_{k'},t_{k'+1}]} \left\|\|e^{it\triangle}\vec{u}_{k}\|_{l^{2}} \|e^{it\triangle}\vec{v}_{k'}\|_{l^{2}}\right\|_{L_x^q}\right)^p(t)dt\\
  \leq&\sum_k\sum_{k'}\int_{[t_{k'},t_{k'+1}]\cap[t_{k},t_{k+1}]} \left(\left\|\|e^{it\triangle}\vec{u}_{k}\|_{l^{2}} \|e^{it\triangle}\vec{v}_{k'}\|_{l^{2}}\right\|_{L_x^q}\right)^p(t)dt\\
  =&\sum_k\sum_{k'}\left(\left\|\|e^{it\triangle}\vec{u}_{k}\|_{l^{2}} \|e^{it\triangle}\vec{v}_{k'}\|_{l^{2}}\right\|_{L^p_t L_x^q([t_{k'},t_{k'+1}]\cap[t_{k},t_{k+1}])}\right)^p\\
  \leq&\frac{M}{N}\sum_{k}\sum_{k'}\|\vec{u}_{k}\|^p_{L^2_x l^{2}}\|\vec{v}_{k'}\|^p_{L^2_x l^{2}}\\
  =&\frac{M}{N}(\sum_{k}\|\vec{u}_{k}\|^p_{L^2_x l^{2}})(\sum_{k'}\|\vec{v}_{k'}\|^p_{L^2_xh^{0}})\\
  =&\frac{M}{N}.
\end{align*}Therefore,
$$\left\|\|\vec{u}\|_{l^{2}}\cdot\|\vec{v}\|_{l^{2}}\right\|_{L^p_t L^q_x}\lesssim \big(\frac{M}{N}\big)^{\frac{1}{p}}.$$
\end{proof}

In next section, we will use these properties to give the crucial estimates.

\section{Long time Strichartz estimate}
The argument in this section is similar to the section 5's of \cite{D}. However, because the nonlinear terms in this paper are different from the nonlinear term in \cite{D}, we will take a slightly different norm to exploit the nonlinear terms' as much symmetry as possible.

Let's first do some preparations. Fix three constants $0<\ep_3 \ll \ep_2\ll \ep_1<1$ in the following. By Corollary \ref{co-y4.9}  and Remark \ref{re-y4.10}, $\ep_1,\ep_2,\ep_3$ can also satisfy
\begin{equation}\label{eq-y5.1}
|N'(t)|+|\xi'(t)|\leq2^{-20}\frac{N(t)^3}{\ep_1^{1/2}},
\end{equation}
\begin{equation}\label{eq-y5.2}
\sum\limits_{j\in\mathbb{Z}} \left[\int_{|x-x(t)|\geq \frac{2^{-20}\ep_3^{-1/4}}{N(t)}}|u_{j}(x)|^{2}\mathrm{d}x+\int_{|\xi-\xi(t)|\geq 2^{-20}\ep_3^{-1/4}N(t)}|\hat{u}_{j}(\xi)|^{2}\mathrm{d}\xi\right]<\epsilon_2^2.
\end{equation}

Suppose $M=2^{k_0}$ is a dyadic integer with $k_0\geq0$. Let $[0,T]$ be an interval such that $\|\vec{u}\|^4_{L^4_{t,x}l^{2}([0,T])}=M$ and $\int_0^T N(t)^3 dt=\ep_3M$. Partition $[0,T]=\cup_{l=0}^{M-1} J_l$ with $\|\vec{u}\|_{L^4_{t,x}l^{2}(J_l)}=1$, we call the intervals $J_l$'s small intervals.

\begin{definition}\label{de-y5.1}
For an integer $0\leq j<k_0$, $0\leq k<2^{k_0-j}$, let
$$G_{k}^{j}=\cup_{\alpha=k2^j}^{(k+1)2^j-1}J^{\alpha}.$$
Where $J^{\alpha}$'s satisfy $[0,T]=\cup_{\alpha=0}^{M-1} J^{\alpha}$  with
\begin{equation}\label{eq-y5.33}
\int_{J^{\alpha}} \big( N(t)^3 + \ep_3\|\vec{u}(t)\|^4_{L_x^4l^{2}(\mathbb{R}^2\times \mathbb{Z})} \big)dt=2\ep_3.
\end{equation}
 For$j\geq k_0$ let $G_{k}^j=[0,T].$
 Now suppose that $G_k^j=[t_0,t_1]$, let $\xi(G_k^j)=\xi(t_0)$ and define $\xi(J_l)$, $\xi(J^{\alpha})$ in a similar manner.
\end{definition}
\begin{remark}\label{re-y5.1}There are some differences between $J_l$'s and $J^{\alpha}$'s:
\begin{enumerate}
\item As in the remark of Dodson's book \cite{D6}, If $N(t)$ was constant, then there would exist a constant $C$ such that at most $C$ $J_l$ intervals intersect any one $J^{\alpha}$, and at most $C$ $J^{\alpha}$ intervals intersect any one $J_l$. However, since $N(t)$ is free to move around, the partitions $J_l$'s and $J^{\alpha}$'s can be quite different.
\item It follows from Theorem \ref{th-y4.5} that $N(J_l)\sim \int_{J_l}N(t)^3 dt\sim \inf_{t\in J_l}N(t)$. Additionally, by \eqref{eq-y5.33}, we obtain
   \begin{equation}\label{eq-y5.34}
   \sum_{J_l\subset G_k^j}N(J_l)\lesssim \sum_{J_l\subset G_k^j}\int_{J_l}N(t)^3dt\lesssim \int_{G_k^j}N(t)^3dt\lesssim \sum_{\alpha=k2^j}^{(k+1)2^j-1}\int_{J^{\alpha}}N(t)^3dt\lesssim 2^j\ep_3.
   \end{equation}
\item By \eqref{eq-y5.1} and Definition \ref{de-y5.1},  for all $t \in G_k^{j}$,
  \begin{equation}
  |\xi(t)-\xi(G_k^{j})|\leq \int_{G_k^{j}}2^{-20}\ep_{1}^{-1/2}N(t)^3dt \leq 2^{j-19}\ep_3 \ep_{1}^{-1/2}.
  \end{equation}
  Therefore, for all $t \in G_k^{j}$ and $i\geq j$,
  $$\{\xi:2^{i-1}\leq |\xi-\xi(t)|\leq 2^{i+1} \} \subset \{\xi:2^{i-2}\leq |\xi-\xi(G_k^{j})|\leq 2^{i+2} \} \subset \{\xi:2^{i-3}\leq |\xi-\xi(t)|\leq 2^{i+3} \},$$
  and
  $$\{\xi:|\xi-\xi(t)|\leq 2^{i+1} \} \subset \{\xi: |\xi-\xi(G_k^{j})|\leq 2^{i+2} \} \subset \{\xi: |\xi-\xi(t)|\leq 2^{i+3} \}.$$
\end{enumerate}
\end{remark}

The following lemma gives some ``smallness" characterization of the intervals $J_l$'s and $J^{\alpha}$'s.
\begin{lemma}\label{le-y6.2}
 Suppose $\vec{u}(t)$ is a minimal mass blowup solution to \eqref{eq-y1}. If $J$ is a time interval with
$\|\vec{u}\|_{L^4_{t,x}l^{2}(J)}\lesssim 1$, then
$$\|\vec{u}\|_{U^2_{\triangle}(l^{2};J)}\lesssim1 \ \ and \ \ \left\|P_{>\frac{N(J)}{2^4\ep_3^{1/4}}} \vec{u}\right\|_{U^2_{\triangle}(l^{2};J)}\lesssim \ep_2,$$ where $N(J)=\sup_{t\in J}N(t)$. Furthermore, by Proposition \ref{pr-y5.3}(6),
 $$\|\vec{u}\|_{L^p_tL^q_x l^{2}(J)}\lesssim 1,\,\,\, \|P_{>\frac{N(J)}{2^4\ep_3^{1/4}}} \vec{u}\|_{L^p_tL^q_x l^{2}(J)}\lesssim \ep_2.$$
for $(p,q)$ admissible pair and $p>2$.
\end{lemma}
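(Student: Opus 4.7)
The plan is to run Duhamel's formula on $J$ and invoke the $U^2_\triangle$--$V^2_\triangle$ machinery from Section 4. Fix any $t_0\in J$ and write $\vec{u}(t)=e^{i(t-t_0)\triangle}\vec{u}(t_0)-i\int_{t_0}^t e^{i(t-s)\triangle}\mathbf{F}(\vec{u})(s)\,ds$. By Proposition \ref{pr-y5.3}(7), the duality $(DU^2_\triangle(l^2))^*=V^2_\triangle(l^2)$ from item (3), and the chain $V^2_\triangle(l^2)\hookrightarrow U^4_\triangle(l^2)\hookrightarrow L^4_{t,x}l^2$ furnished by items (2) and (6),
\[
\|\vec{u}\|_{U^2_\triangle(l^2;J)}\lesssim \|\vec{u}(t_0)\|_{L^2_xl^2}+\|\mathbf{F}(\vec{u})\|_{L^{4/3}_{t,x}l^2(J)}.
\]
Mass conservation gives $\|\vec{u}(t_0)\|_{L^2_xl^2}\leq m_0^{1/2}$, and Proposition \ref{prop3.1'} at $a=0$ together with the hypothesis yields $\|\mathbf{F}(\vec{u})\|_{L^{4/3}_{t,x}l^2(J)}\lesssim \|\vec{u}\|_{L^4_{t,x}l^2(J)}^3\lesssim 1$, which proves the first inequality.

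For the second inequality, set $P_H:=P_{>N(J)/(2^4\ep_3^{1/4})}$ and apply the same Duhamel/duality argument to $P_H\vec{u}$:
\[
\|P_H\vec{u}\|_{U^2_\triangle(l^2;J)}\lesssim \|P_H\vec{u}(t_0)\|_{L^2_xl^2}+\|P_H\mathbf{F}(\vec{u})\|_{L^{4/3}_{t,x}l^2(J)}.
\]
Pick $t_0\in J$ with $N(t_0)\sim N(J)$ (available by Theorem \ref{th-y4.5}(2)); after the Galilean normalization that keeps $|\xi(t_0)|$ small relative to $N(J)/\ep_3^{1/4}$, the region $\{|\xi|>N(J)/(2^4\ep_3^{1/4})\}$ embeds in $\{|\xi-\xi(t_0)|\geq 2^{-20}\ep_3^{-1/4}N(t_0)\}$, so the almost-periodicity bound \eqref{eq-y5.2} gives $\|P_H\vec{u}(t_0)\|_{L^2_xl^2}\lesssim \ep_2$. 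For the nonlinear piece I would split $\vec{u}=\vec{u}_{lo}+\vec{u}_{hi}$ at the strictly smaller scale $N(J)/(2^6\ep_3^{1/4})$. Frequency-support in the convolution defining $\mathbf{F}$ annihilates $P_H\mathbf{F}(\vec{u}_{lo},\vec{u}_{lo},\vec{u}_{lo})$, so every surviving cubic term contains at least one $\vec{u}_{hi}$ factor; the pointwise nonlinear bound of Proposition \ref{prop3.1'} with H\"older in $(t,x)$ and Proposition \ref{pr-y5.3}(6) then give
\[
\|P_H\mathbf{F}(\vec{u})\|_{L^{4/3}_{t,x}l^2(J)}\lesssim \|\vec{u}_{hi}\|_{L^4_{t,x}l^2(J)}\|\vec{u}\|_{L^4_{t,x}l^2(J)}^2\lesssim \|P_H\vec{u}\|_{U^2_\triangle(l^2;J)}.
\]
A short continuity/bootstrap argument, or equivalently a finite subdivision of $J$ into pieces on which $\|\vec{u}\|_{L^4_{t,x}l^2}$ is small enough to absorb the cubic into the left-hand side, closes the estimate at level $\ep_2$. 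The two Strichartz consequences then follow immediately from Proposition \ref{pr-y5.3}(6) applied to $\vec{u}$ and to $P_H\vec{u}$.

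The main obstacle is not the free-evolution piece but the compatibility between the raw Littlewood--Paley projection $P_H$ (centered at $0$) and the Galilean-shifted almost-periodicity condition \eqref{eq-y5.2}: one needs $|\xi(t_0)|$ to be small on the scale $N(J)/\ep_3^{1/4}$, which relies on the preliminary reductions in the critical-element setup, and the low/high split scales must differ by a factor of at least $4$ so that frequency support truly annihilates the purely low cubic. Closing the bootstrap at the sharp level $\ep_2$ rather than a mere $O(1)$ is the other delicate point, and it is exactly here that \eqref{eq-y5.1}--\eqref{eq-y5.2} together with the hypothesis $\|\vec{u}\|_{L^4_{t,x}l^2(J)}\lesssim 1$ must be played off against the cubic self-interaction.
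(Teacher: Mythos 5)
Your proof of the first inequality ($\|\vec{u}\|_{U^2_\triangle(l^2;J)}\lesssim 1$) is correct and matches the paper's argument: Duhamel, mass conservation for the free piece, and the embedding $L^{4/3}_{t,x}l^2\hookrightarrow DU^2_\triangle(l^2)$ obtained from duality against $V^2_\triangle(l^2)\hookrightarrow U^4_\triangle(l^2)\hookrightarrow L^4_{t,x}l^2$, combined with Proposition~\ref{prop3.1'} at $a=0$. Your treatment of the free-evolution piece in the second inequality, and your observation about the interplay between the raw projector $P_H$ and the Galilean-shifted almost-periodicity~\eqref{eq-y5.2}, are both sound.

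The gap is in the nonlinear step of the second inequality. You split $\vec{u}=\vec{u}_{lo}+\vec{u}_{hi}$ at the scale $N(J)/(2^6\ep_3^{1/4})$ — necessarily \emph{below} the scale $N(J)/(2^4\ep_3^{1/4})$ defining $P_H$, so that the purely low cubic is annihilated — and then attempt to close a bootstrap with $\|\vec{u}_{hi}\|_{L^4_{t,x}l^2}\lesssim\|P_H\vec{u}\|_{U^2_\triangle(l^2;J)}$. This inequality is false: $\vec{u}_{hi}$ carries a strictly larger frequency support than $P_H\vec{u}$, so it cannot be dominated by $P_H\vec{u}$. Fixing this by bootstrapping at the lower threshold reproduces the same mismatch one level down, and the finite-subdivision variant introduces a number of pieces depending on $\ep_2$ that spoils the square-sum bound for the $U^2_\triangle$ norm. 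The paper sidesteps the bootstrap entirely by choosing the dual admissible pair $(3/2,6/5)$ rather than $(4/3,4/3)$ to measure $P_H\mathbf{F}(\vec{u})$; this lets the extracted high-frequency factor go into $L^\infty_t L^2_x l^2$, where it is directly $\lesssim\ep_2$ by~\eqref{eq-y5.2}, while the remaining two factors land in $L^3_t L^6_x l^2$ and are controlled unconditionally by the already-proved $\|\vec{u}\|_{U^2_\triangle(l^2;J)}\lesssim 1$. That single change of exponents removes the need for any self-referential estimate and yields the sharp $\ep_2$ bound in one pass. If you want to keep the $L^{4/3}_{t,x}$ pair, interpolation gives only $\|\vec{u}_{hi}\|_{L^4_{t,x}l^2}\lesssim\ep_2^{1/4}$, which is not at the level stated in the lemma.
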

\begin{proof}Let $J=[0,b]$. By Duhamel's formula, for $t \in J,$
\begin{align*}
\vec{u} = e^{it\Delta}\vec{u_{0}}  + i\int_0^t e^{i(t-s) \Delta} \mathbf{F}(\vec{u})\,\mathrm{d}s,
\end{align*}
where $\mathbf{F}(\vec{u}):=\{F_j(\vec{u})\}_{j\in\mathbb{Z}}:=\{\sum\limits_{\mathcal{R}(j)}u_{j_1}\bar{u}_{j_2}u_{j_3}\}_{j\in\mathbb{Z}}$. Then it follows from Proposition \ref{pr-y5.3} that
\begin{equation*}
\begin{split}
   \|\vec{u}\|_{U^2_{\triangle}(l^{2};J)}
   \leq& \|\vec{u}_{0}\|_{L^2_x l^{2}}+\|\mathbf{F}(\vec{u})\|_{DU^2_{\triangle}(l^{2};J)} \\
   \leq& \|\vec{u}_{0}\|_{L^2_x l^{2}}+\|\mathbf{F}(\vec{u})\|_{L^{\frac{4}{3}}_{t,x}l^{2}(J)}\\
   \leq& \|\vec{u}_{0}\|_{L^2_x l^{2}}+\|\vec{u}\|^3_{L^{4}_{t,x}l^{2}(J)}\\
   \lesssim& 1.
\end{split}
\end{equation*}
Since $\vec{u}$ is an almost periodic solution, by \eqref{eq-y5.2}, we have
$$\left\|P_{>\frac{N(J)}{2^{20}\ep_3^{1/4}}} \vec{u}\right\|_{L_t^{\infty}L^2_xl^2(J)}\leq \left\|P_{>\frac{N(t)}{2^{20}\ep_3^{1/4}}} \vec{u}\right\|_{L_t^{\infty}L^2_xl^2(J)}\leq \ep_2.$$
Thus,
\begin{align*}
   \left\|P_{>\frac{N(J)}{2^{4}\ep_3^{1/4}}} \vec{u}\right\|_{U^2_{\triangle}(l^{2};J)} \leq & \left\|P_{>\frac{N(J)}{2^{4}\ep_3^{1/4}}} e^{it\Delta}\vec{u_{0}}\right\|_{U^2_{\triangle}(l^{2};J)} + \left\|P_{>\frac{N(J)}{2^{4}\ep_3^{1/4}}} \mathbf{F}(\vec{u})\right\|_{DU^2_{\triangle}(l^{2};J)} \\
   \lesssim& \left\|P_{>\frac{N(J)}{2^{20}\ep_3^{1/4}}} \vec{u}_{0}\right\|_{L_t^{\infty}L^2_xl^2(J)}+ \left\|P_{>\frac{N(J)}{2^{4}\ep_3^{1/4}}} \mathbf{F}(\vec{u})\right\|_{L_t^{3/2}L_x^{6/5}l^{2}(J)}\\
   \lesssim& \ep_2 + \left\|P_{>\frac{N(J)}{2^{20}\ep_3^{1/4}}} \vec{u}\right\|_{L_t^{\infty}L^2_x l^{2}(J)} \|\vec{u}\|^2_{L_t^3L_x^6l^{2}(J)}\\
   \lesssim& \ep_2 + \ep_2 \left(\|\vec{u}_{0}\|_{L_t^{\infty}L_x^2l^{2}(J)} + \|\vec{u}\|^3_{L_{t,x}^4l^{2}(J)}\right)^2\\
   \lesssim& \ep_2.
\end{align*}
\end{proof}
\begin{remark}\label{re-y5.4}
If $N(J)< 2^{i-5}\ep_3^{1/2},$
$$\|P_{\xi(G_{\alpha}^{i}), i-2 \leq \cdot \leq i+2} \mathbf{F}(\vec{u})\|_{L_t^{3/2}L_x^{6/5}l^{2}(J)}\lesssim \big\|P_{>\frac{N(J)}{2^{20}\ep_3^{1/4}}} \vec{u}\big\|_{L_t^{\infty}L^2_xl^2(J)} \|\vec{u}\|^2_{L_t^3L_x^6l^{2}(J)}\lesssim \ep_2.$$
So for $0\leq i \leq 11$, $N(G_{\alpha}^{i})< 2^{i-5}\ep_3^{1/2},$ since $G_{\alpha}^{i}$ is a union of $\leq 2^{11}$ such small intervals,
$$\|P_{\xi(G_{\alpha}^{i}), i-2 \leq \cdot \leq i+2} \mathbf{F}(\vec{u})\|_{L_t^{3/2}L_x^{6/5}l^{2}(G_{\alpha}^{i})}\lesssim \ep_2.$$
\end{remark}
Now we define our spaces as in the Section 3 of \cite{D}, in which we derive the long time Strichartz estimates.
\begin{definition}[$\tilde{X}_{k_0}$ spaces]\label{de-y5.2}
For any $G_k^{j}\subset[0,T]$ let
\begin{equation}\label{eq-y5.3'}
  \|\vec{u}\|^2_{X(G_k^{j})}:=\sum_{0\leq i< j}2^{i-j}\sum_{G_{\alpha}^{i}\subset G_k^{j}}\|P_{\xi(G_{\alpha}^{i}), i-2\leq\cdot\leq i+2}\vec{u}\|^2_{U^2_{\triangle}(l^{2};G_{\alpha}^{i}\times\mathbb{R}^2)}
  +\sum_{i\geq j}\|P_{\xi(G_{k}^{j}),i-2\leq\cdot\leq i+2} \vec{u}\|^2_{U^2_{\triangle}(l^{2};G_{k}^{j}\times\mathbb{R}^2)}.
\end{equation}
Here $P_{\xi(t),i-2\leq\cdot\leq i+2} \vec{u}=e^{ix \cdot \xi(t)}P_{i-2\leq\cdot\leq i+2}(e^{-ix\cdot \xi(t)} \vec{u})$ with $P_{i-2\leq\cdot\leq i+2}$ being the Littlewood-Paley projector.\\
Then  define $\tilde{X}_{k_0}$ to be the supremum of \eqref{eq-y5.3'} over all intervals $G_k^j\subset[0,T]$ with $k\leq k_0$.
\begin{equation}\label{X}
\|\vec{u}\|^2_{\tilde{X}_{k_0}([0,T])}:=\sup_{0\leq j\leq k_0}\sup_{G_k^{j}\subset[0,T]}\|\vec{u}\|^2_{X(G_k^{j})}.
\end{equation}
Also for $0\leq k_{\ast}\leq k_0$, let
\begin{equation}\label{XXX}
\|\vec{u}\|^2_{\tilde{X}_{k_{\ast}}([0,T])}:=\sup_{0\leq j\leq k_{\ast}}\sup_{G_k^{j}\subset[0,T]}\|\vec{u}\|^2_{X(G_k^{j})}.
\end{equation}
\end{definition}

\begin{definition}[$\tilde{Y}_{k_0}$ spaces]
The $\tilde{Y}_{k_0}$ norm measures the $\tilde{X}_{k_0}$ norm of $\vec{u}$ at scales much higher than $N(t)$. This norm provides some crucial ``smallness", closing a bootstrap argument in the next section. Let
\begin{equation}
\begin{split}
  \|\vec{u}\|^2_{Y(G_k^{j})}&:=\sum_{0< i< j}2^{i-j}\sum_{G_{\alpha}^{i}\subset G_k^{j}:N(G_{\alpha}^{i})\leq 2^{i-5}\ep_3^{1/2}}\|P_{\xi(G_{\alpha}^{i}), i-2\leq\cdot\leq i+2} \vec{u}\|^2_{U^2_{\triangle}(l^{2};G_{\alpha}^{i}\times\mathbb{R}^2)} \\
    & +\sum_{i\geq j,i>0:N(G_{k}^{j})\leq 2^{i-5}\ep_3^{1/2}}\|P_{\xi(G_{k}^{j}),i-2\leq\cdot\leq i+2} \vec{u}\|^2_{U^2_{\triangle}(l^{2};G_{k}^{j}\times\mathbb{R}^2)}.
\end{split}
\end{equation}
Define $\|\vec{u}\|_{\tilde{Y}_{k_{\ast}}([0,T])}$
using $\|\vec{u}\|_{Y(G_k^{j})}$ in the same way as $\|\vec{u}\|_{\tilde{X}_{k_{\ast}}([0,T])}$ was done.
\end{definition}

After giving the long time Strichartz norms, we should point out the relationship between $L^p_tL^q_xh^{0}$ norm and the long time Strichartz norms.
\begin{lemma}\label{le-y5.7}
For $i < j$, $(p,q)$ an admissible pair, we have
\begin{equation}\label{eq-y5.9}
  \|P_{\xi(t),i}\vec{u}\|_{L^p_tL^q_xl^2(G^j_k\times\mathbb{R}^2)}\lesssim 2^{\frac{j-i}{p}} \|\vec{u}\|_{\tilde{X}_j(G^j_k)}.
\end{equation}
\begin{equation}\label{eq-y5.9'}
  \|P_{\xi(t),\geq j}\vec{u}\|_{L^p_tL^q_xl^2(G^j_k\times\mathbb{R}^2)}\lesssim  \|\vec{u}\|_{X(G^j_k)}.
\end{equation}
\end{lemma}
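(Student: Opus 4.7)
The plan is to prove both bounds by localizing in time: for \eqref{eq-y5.9} I partition $G^j_k$ into its $2^{j-i}$ constituent subintervals $G^i_\alpha$ (for \eqref{eq-y5.9'} I work directly on $G^j_k$), replace the time-dependent frequency projector by one with a frozen center on each piece, apply Strichartz via $U^2_\triangle$, and reassemble using the scale structure built into the $\tilde{X}_j$ norm.

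For \eqref{eq-y5.9}, fix a subinterval $G^i_\alpha\subset G^j_k$. By \eqref{eq-y5.1} and Remark \ref{re-y5.1}(3) we have $|\xi(t)-\xi(G^i_\alpha)|\lesssim 2^{i-19}\ep_3\ep_1^{-1/2}\ll 2^i$ for every $t\in G^i_\alpha$, so the operator identity
\[
P_{\xi(t),i}\vec{u} = P_{\xi(t),i}\bigl(P_{\xi(G^i_\alpha),\,i-2\le\cdot\le i+2}\vec{u}\bigr)
\]
holds pointwise in $t$. Since $P_{\xi(t),i}$ is convolution with an $L^1_x$-kernel of mass $\lesssim 1$ uniformly in $t$ (cf.\ Remark \ref{convolution}), combining Strichartz (Proposition \ref{pr-y5.3}(6)) with the embedding $U^2_\triangle\subset U^p_\triangle$ (Proposition \ref{pr-y5.3}(2), valid since $p>2$) yields
\[
\|P_{\xi(t),i}\vec{u}\|_{L^p_tL^q_xl^2(G^i_\alpha)}\lesssim \|P_{\xi(G^i_\alpha),\,i-2\le\cdot\le i+2}\vec{u}\|_{U^2_\triangle(l^2;G^i_\alpha)}=:a_\alpha,
\]
and summing the disjoint $L^p_t$ norms gives $\|P_{\xi(t),i}\vec{u}\|^p_{L^p_tL^q_xl^2(G^j_k)}\lesssim \sum_\alpha a_\alpha^p$. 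The crux is that the definition of $\tilde{X}_j$ supplies two \emph{simultaneous} controls on $\{a_\alpha\}$: the $i'<j'$ term of $\|\vec{u}\|^2_{X(G^j_k)}$ at $i'=i$, $j'=j$ gives $\sum_\alpha a_\alpha^2\le 2^{j-i}\|\vec{u}\|^2_{\tilde{X}_j}$, while the $i'\ge j'$ term of $\|\vec{u}\|^2_{X(G^i_\alpha)}$ at $i'=j'=i$ (using that $\tilde{X}_j$ is a supremum over \emph{all} $G^{j'}_{k'}$ with $j'\le j$) yields the uniform bound $a_\alpha\le \|\vec{u}\|_{\tilde{X}_j}$. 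Interpolating via $a_\alpha^p\le(\max_\alpha a_\alpha)^{p-2}a_\alpha^2$ then produces $\sum_\alpha a_\alpha^p\le 2^{j-i}\|\vec{u}\|^p_{\tilde{X}_j}$, and extracting $p$-th roots finishes \eqref{eq-y5.9}.

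For \eqref{eq-y5.9'}, I apply the same frozen-center idea but with the single center $\xi(G^j_k)$ on all of $G^j_k$. Remark \ref{re-y5.1}(3) at scale $j$ gives $|\xi(t)-\xi(G^j_k)|\ll 2^j$, so frequencies satisfying $|\xi-\xi(t)|\ge 2^{j-1}$ also satisfy $|\xi-\xi(G^j_k)|\ge 2^{j-2}$, whence $P_{\xi(t),\ge j}\vec{u} = P_{\xi(t),\ge j}\bigl(P_{\xi(G^j_k),\ge j-2}\vec{u}\bigr)$ and the $L^1_x$-kernel bound reduces the matter to estimating $P_{\xi(G^j_k),\ge j-2}\vec{u}$. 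The vector-valued Littlewood--Paley characterization (Lemma \ref{le-y2.1}) combined with Minkowski (applicable since $p,q\ge 2$) gives
\[
\|P_{\xi(G^j_k),\ge j-2}\vec{u}\|_{L^p_tL^q_xl^2(G^j_k)} \lesssim \Bigl(\sum_{i\ge j-2}\|P_{\xi(G^j_k),\,i-2\le\cdot\le i+2}\vec{u}\|^2_{L^p_tL^q_xl^2(G^j_k)}\Bigr)^{1/2},
\]
and Strichartz plus $U^2_\triangle\subset U^p_\triangle$ converts each summand to $\|P_{\xi(G^j_k),\,i-2\le\cdot\le i+2}\vec{u}\|^2_{U^2_\triangle(l^2;G^j_k)}$. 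The terms with $i\ge j$ are exactly the high-frequency part of $\|\vec{u}\|^2_{X(G^j_k)}$; the two edge indices $i=j-2,j-1$ are absorbed either by enlarging the dyadic window by a harmless constant or by subdividing $G^j_k$ into $G^{j-1}_\alpha$'s and invoking the $i<j$ part of the $X$-norm, giving the total bound $\lesssim \|\vec{u}\|^2_{X(G^j_k)}$.

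The principal technical point is justifying the frequency-center swap from $P_{\xi(t),\cdot}$ to $P_{\xi(G^i_\alpha),\cdot}$ on each subinterval; this rests entirely on \eqref{eq-y5.1} and Remark \ref{re-y5.1}(3), which guarantee $|\xi(t)-\xi(G^i_\alpha)|\ll 2^i$ and hence that the relevant Fourier supports fit inside the fattened dyadic shell $\{i-2,\ldots,i+2\}$. Everything else is routine bookkeeping with Strichartz, Minkowski, the $U^p_\triangle/V^p_\triangle$ scale properties, and the telescoping dyadic structure of $\tilde{X}_j$.
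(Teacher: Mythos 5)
Your proof is correct and follows essentially the same route as the paper: for \eqref{eq-y5.9} you partition $G^j_k$ into the subintervals $G^i_\alpha$, replace $P_{\xi(t),i}$ by the frozen-center projector $P_{\xi(G^i_\alpha),i-2\le\cdot\le i+2}$, apply Strichartz through $U^2_\triangle\subset U^p_\triangle$, and interpolate the resulting $\ell^p_\alpha$ sum between the $\ell^2_\alpha$ control coming from the $i<j$ part of $X(G^j_k)$ and the $\ell^\infty_\alpha$ control coming from $\tilde X_j$ as a supremum over subintervals, which is exactly the paper's $(\sum a_\alpha^2)^{1/p}(\sup a_\alpha)^{1-2/p}$ step. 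For \eqref{eq-y5.9'} you reach the same square-function bound via Lemma \ref{le-y2.1} and Minkowski; the only cosmetic difference is that the paper decomposes $P_{\xi(t),\ge j}$ into $\xi(t)$-centered dyadic pieces before swapping centers (so the sum starts cleanly at $l\ge j$ and no edge indices arise), whereas you freeze $\xi(G^j_k)$ first and then absorb the $i=j-2,j-1$ edge terms, which works but is slightly less tidy.
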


\begin{proof}By the definition of $\tilde{X}_j$ and Proposition \ref{pr-y5.3},
\begin{equation*}
\begin{split}
   &\|P_{\xi(t),i}\vec{u}\|_{L^p_tL^q_xl^2(G^j_k\times\mathbb{R}^2)}\\
   =& \big(\sum_{G^i_{\alpha} \subset G^j_k}\|P_{\xi(t),i}\vec{u}\|^p_{L^p_tL^q_xl^2(G^i_{\alpha}\times\mathbb{R}^2)}\big)^{1/p} \\
   \lesssim & \big(\sum_{G^i_{\alpha} \subset G^j_k}\|P_{\xi(t),i}\vec{u}\|^2_{L^p_tL^q_xl^2(G^i_{\alpha}\times\mathbb{R}^2)}\big)^{1/p} \big(\sup_{G^i_{\alpha} \subset G^j_k}\|P_{\xi(t),i}\vec{u}\|_{L^p_tL^q_xl^2(G^i_{\alpha}\times\mathbb{R}^2)}\big)^{1-\frac{2}{p}}\\
   \lesssim & \big(\sum_{G^i_{\alpha} \subset G^j_k}\|P_{\xi(G^i_{\alpha}), i-2\leq \cdot \leq i+2} \vec{u}\|^2_{U^2_{\triangle}(l^{2};G^i_{\alpha}\times\mathbb{R}^2)}\big)^{1/p} \big(\sup_{G^i_{\alpha} \subset G^j_k}\|P_{\xi(G^i_{\alpha}), i-2\leq \cdot \leq i+2} \vec{u}\|^2_{U^2_{\triangle}(l^{2};G^i_{\alpha}\times\mathbb{R}^2)}\big)^{1-\frac{2}{p}}\\
   \lesssim & 2^{\frac{j-i}{p}} \|\vec{u}\|^{\frac{2}{p}}_{X(G^j_k)} \|\vec{u}\|^{1-\frac{2}{p}}_{\tilde{X}_j(G^j_k)}\\
   \lesssim & 2^{\frac{j-i}{p}} \|\vec{u}\|_{\tilde{X}_j(G^j_k)}.
\end{split}
\end{equation*}
Also, by Lemma \ref{le-y2.1} and Minkowski inequality,
\begin{equation*}
\begin{split}
   &\|P_{\xi(t),\geq j}\vec{u}\|_{L^p_tL^q_xl^2(G^j_k\times\mathbb{R}^2)}\\
   \sim&\big\|\big(\sum_{l\geq j} |P_{\xi(t),l}\vec{u}|^2\big)^{1/2}\big\|_{L^p_tL^q_xl^2(G^j_k\times\mathbb{R}^2)}\\
   \lesssim & \big(\sum_{l\geq j}\|P_{\xi(t),l}\vec{u}\|^2_{L^p_tL^q_xl^2(G^j_k\times\mathbb{R}^2)}\big)^{1/2}\\
   \lesssim & \|\vec{u}\|_{X(G^j_k)}.
\end{split}
\end{equation*}
\end{proof}

Now we are ready to state the main theorem of this section.
\begin{theorem}[Long time Strichartz estimate]\label{th-y6.2}
Suppose $\vec{u}(t)$ is an almost periodic solution to
\begin{equation*}
\begin{cases}
i\partial_t u_j + \Delta_{\mathbb{R}^2} u_j = \sum\limits_{\mathcal{R}(j)} u_{j_1} \bar{u}_{j_2} u_{j_3},\\
u_j(0) = u_{0,j}.
\end{cases}
\end{equation*}for $\vec{u}_{0}=\{u_{0,j}\}_{j\in\mathbb{Z}}\in L^2_xh^1(\mathbb{R}^2\times\mathbb{Z}).$
Then there exists a constant $C>0$ (only depending on $\vec{u}$), such that for any $M=2^{k_0}$, $\ep_1,\ep_2,\ep_3$ satisfying above conditions, $\|\vec{u}\|^4_{L^4_{t,x}l^{2}([0,T])}=M$ and $\int_0^T N(t)^3 dt=\ep_3M$,
$$\|\vec{u}\|_{\tilde{X}_{k_0}([0,T])}\leq C.$$
\end{theorem}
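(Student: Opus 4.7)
The plan is to prove the bound by induction on the dyadic scale parameter $k_{\ast}$ running from $0$ up to $k_0$, showing that $\|\vec{u}\|_{\tilde{X}_{k_{\ast}}([0,T])} \le C$ with $C$ independent of $k_{\ast}$. The base case $k_{\ast}=0$ is essentially Lemma \ref{le-y6.2}: on each $G_k^0$ (a single small interval on which $\|\vec{u}\|_{L^4_{t,x}l^2}\lesssim 1$), we have $\|\vec{u}\|_{U^2_{\triangle}(l^2;G_k^0)}\lesssim 1$, and the square-function/Littlewood--Paley characterization (Lemma \ref{le-y2.1}) lets us sum the $P_{\xi(t),i}$ pieces in $\ell^2_i$ to bound $\|\vec{u}\|_{X(G_k^0)}$.

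For the inductive step, fix $G_k^j \subset [0,T]$ with $j\le k_{\ast}$ and a dyadic frequency shell indexed by $i$. Using Duhamel's formula starting from a reference time in $G_k^j$, and applying Lemma \ref{le-y4.5} to decompose the $U^2_{\triangle}$ norm across consecutive subintervals, the linear piece is controlled by $\|\vec{u}_0\|_{L^2_x l^2}$, and the nonlinear contribution reduces to estimating
\begin{equation*}
\sum_{G_{\alpha}^{i}\subset G_k^j} \|P_{\xi(G_{\alpha}^i),\, i-2\le\cdot\le i+2}\, \mathbf{F}(\vec u)\|^2_{DU^2_{\triangle}(l^2;G_{\alpha}^i)} + (\text{analogous terms for } i\ge j).
\end{equation*}
By duality, each $DU^2_{\triangle}$ norm is tested against a $V^2_{\triangle}$ function $\vec v$. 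Using the symmetric rewriting \eqref{1.1a} together with Cauchy--Schwarz in $j$ lets us bound the nonlinear pairing by trilinear expressions in the scalar field $\|\vec{u}\|_{l^2}$, paired against $\|\vec v\|_{l^2}$. This puts us in a position where Dodson's scalar bilinear machinery applies essentially verbatim: we pair the highest-frequency factor with a low-frequency factor via the bilinear Strichartz estimates (Proposition \ref{pr-y5.4}, Proposition \ref{pr-y5.5}, and the logarithmically-improved estimates \ref{th-y5.16}, \ref{th-y5.17}, \ref{th-y5.19} cited earlier), gaining a frequency-ratio factor that allows us to sum dyadically.

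The key point is to organize the resulting trilinear contributions into two types: those where the output frequency $2^i$ is comparable to $N(t)$ on at least one subinterval (these are controlled by the number of small intervals in $G_k^j$, which by \eqref{eq-y5.34} is $\lesssim 2^j \epsilon_3$, and the factor $\epsilon_3$ combines with Remark \ref{re-y5.4} to absorb a constant); and those where $2^i \gg N(t)$ everywhere on the relevant subintervals, which by the almost periodicity bound \eqref{eq-y4.4} contribute to the $\tilde Y$ norm with a small constant $\epsilon_2$. Concretely, one derives a bootstrap inequality of the shape
\begin{equation*}
\|\vec u\|_{\tilde{X}_{k_{\ast}}([0,T])} \;\le\; C_0 \;+\; C_1\,\epsilon_2\,\|\vec u\|_{\tilde{X}_{k_{\ast}}([0,T])} \;+\; C_2\,\epsilon_3^{1/2}\,\|\vec u\|^2_{\tilde{X}_{k_{\ast}}([0,T])},
\end{equation*}
where $C_0$ depends only on $\|\vec u_0\|_{L^2_x h^1}$, and the second and third terms come respectively from the $\tilde Y$-type (high-frequency) contributions and from trilinear self-interactions handled via the induction hypothesis at scales $<j$. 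With $\epsilon_2,\epsilon_3$ chosen small enough, a standard continuity/bootstrap argument (in the spirit of Dodson) closes the estimate at scale $k_{\ast}$, completing the induction.

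The main obstacle will be the middle step: matching Dodson's scalar bilinear bookkeeping to the asymmetric vector-valued nonlinearity $\mathbf{F}(\vec u)$. The difficulty is that frequency-localizing one factor in $\mathbf F$ does not interact cleanly with the resonance constraint $\mathcal R(j)$; the remedy, indicated by the authors' choice of $\ell^2$ (rather than $h^1$) in both the long-time Strichartz norm and the bilinear estimates, is to first dominate the $\ell^2$-norm of $\mathbf F$ by $\|\vec u\|_{\ell^2}^3$ using \eqref{1.1a} and Lemma A.2 of \cite{HP}, so that all frequency localization and bilinear Strichartz work is performed on the scalar field $\|\vec u(t,x)\|_{\ell^2}$. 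After that reduction, one must still verify that the bilinear Strichartz estimates at $\ell^2$ level commute with the Littlewood--Paley projectors in $x$ (which is immediate from Remark \ref{convolution}) and that the persistence-of-regularity type argument in the $j$-direction does not cost factors that break the induction.
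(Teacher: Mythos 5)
Your outline captures several genuine features of the paper's proof: induction on the scale $k_{\ast}$, Duhamel plus Lemma~\ref{le-y4.5} to decompose $U^2_{\triangle}$ across subintervals, the $DU^2_{\triangle}$--$V^2_{\triangle}$ duality, the use of \eqref{1.1a} to dominate the nonlinearity by $\|\vec u\|_{l^2}^3$, and the three logarithmically improved bilinear estimates. But the argument as written cannot close, because it drops the $\tilde{Y}$-norm from the bookkeeping, and the $\tilde{Y}$-norm is precisely the mechanism that makes the bootstrap terminate.

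Concretely, your proposed bootstrap inequality
\begin{equation*}
\|\vec u\|_{\tilde{X}_{k_{\ast}}} \le C_0 + C_1\,\epsilon_2\,\|\vec u\|_{\tilde{X}_{k_{\ast}}} + C_2\,\epsilon_3^{1/2}\,\|\vec u\|^2_{\tilde{X}_{k_{\ast}}}
\end{equation*}
is not what the bilinear machinery produces. What the hard piece of the nonlinearity (one high-frequency and two low-frequency factors, the term \eqref{eq-y5.27}(a) in the paper's notation) actually gives, after summing the Duhamel contributions, is an inequality of the shape stated in Theorem~\ref{th-y5.10}:
\begin{equation*}
\text{(Duhamel sums)} \lesssim \epsilon_2^{1/3}\|\vec u\|^{5/3}_{\tilde X_j}\|\vec u\|^{2}_{\tilde Y_j} + \epsilon_2^{2}\|\vec u\|^{2}_{\tilde Y_j} + \|\vec u\|^{4}_{\tilde Y_j}\big(1+\|\vec u\|^{8}_{\tilde X_j}\big),
\end{equation*}
in which the smallness is attached to $\tilde Y_j$, not to $\tilde X_j$, and the powers of $\tilde X_j$ rise to eight rather than two. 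There is no direct estimate $\|\vec u\|_{\tilde Y}\lesssim \epsilon_2\|\vec u\|_{\tilde X}$ that would let you fold the $\tilde Y$ factor into your inequality: $\tilde Y$ merely restricts the sum in $\tilde X$ to shells with $2^i\gg N(G^i_\alpha)$, which is small by almost periodicity but must be tracked as an independent inductive quantity. The paper therefore runs a simultaneous two-variable bootstrap: prove $\|\vec u\|_{\tilde X_0}\le C$ and $\|\vec u\|_{\tilde Y_0}\le C\epsilon_2^{3/4}$ (Lemma~\ref{le-y5.8}), show each quantity at most doubles when passing from $k_{\ast}$ to $k_{\ast}+1$, and then use Theorem~\ref{th-y5.10} to improve both bounds at the next scale. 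Without the $\tilde Y$ iterate, the high-degree $\tilde X$ terms in the actual estimate are not absorbable, and the single-variable bootstrap you propose does not close.

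A secondary, smaller point: your base-case reasoning invokes Lemma~\ref{le-y2.1} to sum the $P_{\xi(t),i}$ pieces in $\ell^2_i$, but Lemma~\ref{le-y2.1} is an $L^p_x h^a$ square-function equivalence, not a statement about $U^2_{\triangle}$ norms. The correct route (as in Lemma~\ref{le-y5.8}) is to put Duhamel into the $\ell^2_i$ sum directly, use $\big(\sum_i\|P_{\xi,i}\cdot\|^2_{L^2_x l^2}\big)^{1/2}\lesssim\|\cdot\|_{L^2_x l^2}$ by Plancherel and orthogonality, and bound the forcing in $L^1_t L^2_x l^2\subset DU^2_{\triangle}$. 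This is a fixable point but worth getting right, since the same Minkowski/Plancherel argument recurs in the inductive step.
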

\begin{remark}
Throughout this section the implicit constant depends only on $\vec{u}$, and not on $M$, or $\ep_1,\ep_2,\ep_3$.
\end{remark}

\begin{proof}[Proof of Theorem \ref{th-y6.2}]
 By Definition \ref{de-y5.2}, it suffices to show there exists a constant $C>0$ (only depending on $\vec{u}$), such that for any $0 \leq j \leq k_0$ and $G_k^j \subset [0,T]$,
\begin{equation}
 \sum_{0\leq i< j}2^{i-j}\sum_{G_{\alpha}^{i}\subset G_k^{j}}\|P_{\xi(G_{\alpha}^{i}), i-2\leq\cdot\leq i+2}\vec{u}\|^2_{U^2_{\triangle}(l^{2};G_{\alpha}^{i}\times\mathbb{R}^2)}
  +\sum_{i\geq j}\|P_{\xi(G_{k}^{j}),i-2\leq\cdot\leq i+2} \vec{u}\|^2_{U^2_{\triangle}(l^{2};G_{k}^{j}\times\mathbb{R}^2)}\leq C.
\end{equation}
In order to get this estimate, we will combine an induction on $0\leq k_{\ast} \leq k_0$ and a bootstrap argument. First we give the basic inductive estimates.
\begin{lemma}\label{le-y5.8}
We have
$$\|\vec{u}\|_{\tilde{X}_0([0,T])}\leq C.$$
$$\|\vec{u}\|_{\tilde{Y}_0([0,T])}\leq C\ep_2^{3/4}.$$
And for $0\leq k_{\ast} \leq k_0$,
$$\|\vec{u}\|^2_{\tilde{X}_{k_{\ast}+1}([0,T])}\leq 2\|\vec{u}\|^2_{\tilde{X}_{k_{\ast}}([0,T])}.$$
$$\|\vec{u}\|^2_{\tilde{Y}_{k_{\ast}+1}([0,T])}\leq 2\|\vec{u}\|^2_{\tilde{Y}_{k_{\ast}}([0,T])}.$$
\end{lemma}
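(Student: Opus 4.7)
My plan is to handle the four assertions in two pairs. Items (1) and (2) are base cases at $k_{\ast}=0$, proved directly from Duhamel, the atomic properties of $U^{2}_{\triangle}(l^{2})$, and the nonlinear estimate of Proposition \ref{prop3.1'}. Items (3) and (4) are inductive and will follow from the splitting $G^{k_{\ast}+1}_{k}=G^{k_{\ast}}_{2k}\cup G^{k_{\ast}}_{2k+1}$ together with Proposition \ref{pr-y5.3}(5) (almost-additivity of $U^{2}_{\triangle}$ in the time variable). For the base case of (1), I note that $G^{0}_{k}$ is a single interval of the $J^{\alpha}$ partition, so $\|\vec{u}\|_{L^{4}_{t,x}l^{2}(G^{0}_{k})}\lesssim 1$, and the sum in $X(G^{0}_{k})$ reduces to $\sum_{i\geq 0}\|P_{\xi(G^{0}_{k}),i-2\leq\cdot\leq i+2}\vec{u}\|^{2}_{U^{2}_{\triangle}(l^{2};G^{0}_{k})}$. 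Fixing $t_{0}\in G^{0}_{k}$, Duhamel's formula together with Proposition \ref{pr-y5.3}(3),(7) gives the pointwise-in-$i$ bound by a free part and a nonlinear part. After squaring and summing in $i$, the free part is bounded by $\|\vec{u}(t_{0})\|^{2}_{L^{2}_{x}l^{2}}\leq m_{0}^{2}\lesssim 1$ via the Littlewood--Paley almost-orthogonality of Lemma \ref{le-y2.1}, while the nonlinear part is bounded using the Minkowski embedding $\ell^{2}_{i}(L^{4/3}_{t,x}l^{2})\hookrightarrow L^{4/3}_{t,x}l^{2}(\ell^{2}_{i})$ (valid because $4/3<2$), Lemma \ref{le-y2.1}, and the pointwise bound \eqref{eq-y2.6}, yielding $\sum_{i}\|P_{\xi,i}\mathbf{F}(\vec{u})\|^{2}_{L^{4/3}_{t,x}l^{2}}\lesssim \|\vec{u}\|^{6}_{L^{4}_{t,x}l^{2}(G^{0}_{k})}\lesssim 1$.

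For (2), only indices $i>0$ with $N(G^{0}_{k})\leq 2^{i-5}\ep_{3}^{1/2}$ contribute to $\tilde{Y}_{0}$. For such $i$, the bound \eqref{eq-y5.2} produces an initial-data tail of size $\lesssim\ep_{2}^{2}$. For the Duhamel term I would observe that if all three factors in $u_{j_{1}}\bar{u}_{j_{2}}u_{j_{3}}$ are supported in $\{|\xi-\xi(G^{0}_{k})|\leq 2^{i-5}\}$, then the product is supported in $\{|\xi-\xi(G^{0}_{k})|\leq 3\cdot 2^{i-5}\}$, hence annihilated by $P_{\xi(G^{0}_{k}),i-2\leq\cdot\leq i+2}$; so at least one factor must carry frequency $>2^{i-5}$, and by \eqref{eq-y5.2} that factor has $L^{\infty}_{t}L^{2}_{x}l^{2}$-norm $\lesssim\ep_{2}$. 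H\"older estimates bound the remaining two factors by $\|\vec{u}\|_{L^{4}_{t,x}l^{2}}\lesssim 1$, and summing in $i$ as in (1) delivers $\|\vec{u}\|^{2}_{\tilde{Y}_{0}}\lesssim\ep_{2}^{2}$, which is stronger than the claimed $\ep_{2}^{3/4}$.

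For the inductive step (3), I plan to compare the expression \eqref{eq-y5.3'} written for $G^{k_{\ast}+1}_{k}$ with the analogous expressions for $G^{k_{\ast}}_{2k}$ and $G^{k_{\ast}}_{2k+1}$. The inner sums over $G^{i}_{\alpha}\subset G^{k_{\ast}+1}_{k}$ at $0\leq i\leq k_{\ast}-1$ partition exactly into the two halves, and the weight $2^{i-k_{\ast}-1}=\tfrac{1}{2}\cdot 2^{i-k_{\ast}}$ gives exactly half of the corresponding weight at scale $k_{\ast}$. The $i=k_{\ast}$ layer becomes (again with factor $1/2$) the $i=k_{\ast}$ slice of the second sum in $X(G^{k_{\ast}}_{2k\pm 1})$. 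Finally, for $i\geq k_{\ast}+1$, Proposition \ref{pr-y5.3}(5) splits the $U^{2}_{\triangle}$-norm across the two halves, while Remark \ref{re-y5.1}(3) shows that the Fourier window around $\xi(G^{k_{\ast}+1}_{k})$ is contained in a slightly wider window around $\xi(G^{k_{\ast}}_{2k\pm 1})$, so it is dominated by $O(1)$ adjacent Littlewood--Paley shells appearing in the second sums of $X(G^{k_{\ast}}_{2k\pm 1})$. Summing the three contributions gives (3); claim (4) is the same argument, with the caveat that the restriction $N(\cdot)\leq 2^{i-5}\ep_{3}^{1/2}$ defining $\tilde{Y}$ is preserved by passing from $G^{k_{\ast}+1}_{k}$ to its sub-intervals since $N(G^{k_{\ast}}_{2k\pm 1})\leq N(G^{k_{\ast}+1}_{k})$.

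The main obstacle I expect is closing the induction with the exact constant $2$. The Fourier-center shift $|\xi(G^{k_{\ast}+1}_{k})-\xi(G^{k_{\ast}}_{2k\pm 1})|\lesssim 2^{k_{\ast}-19}\ep_{3}\ep_{1}^{-1/2}$ from Remark \ref{re-y5.1}(3) guarantees that only boundedly many adjacent Littlewood--Paley shells contribute at each frequency scale $2^{i}$, but the associated combinatorial constants must be absorbed either into the slack in the $i-2\leq\cdot\leq i+2$ window width present in the $X$-norm or into the factor $1/2$ gained from the partition at lower scales. Verifying this delicate bookkeeping is the heart of the inductive step; once it is done the rest of the argument is purely algebraic.
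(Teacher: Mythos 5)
Your treatment of items (1), (3), and (4) follows the same route as the paper: Duhamel plus $L^1_tL^2_x\subset DU^2_\triangle$ (you use $L^{4/3}_{t,x}$, an equivalent dual pair) for the base case, and the dyadic splitting $G^{k_\ast+1}_k=G^{k_\ast}_{2k}\cup G^{k_\ast}_{2k+1}$ together with Proposition~\ref{pr-y5.3}(5), the factor $2^{i-(k_\ast+1)}=\tfrac12\,2^{i-k_\ast}$, and Remark~\ref{re-y5.1}(3) for the inductive step. You correctly isolate the bookkeeping issue — the slight widening of the frequency window around $\xi(G^{k_\ast+1}_k)$ when expressed relative to $\xi(G^{k_\ast}_{2k\pm1})$ — as the delicate point; the paper handles this identically (passing from $\pm2$ to $\pm3$ shells in \eqref{eq-y5.10}).

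There is, however, a genuine gap in your item (2). You assert that placing the one high-frequency factor in $L^\infty_tL^2_xl^2$ (small by \eqref{eq-y5.2}) and the two remaining factors in $L^4_{t,x}l^2$ closes the Duhamel estimate. But H\"older on this triple lands the nonlinearity in
\begin{equation*}
L^\infty_tL^2_x\times L^4_tL^4_x\times L^4_tL^4_x \hookrightarrow L^2_tL^1_x,
\end{equation*}
and $L^2_tL^1_x$ is dual to $L^2_tL^\infty_x$, which is exactly the forbidden Keel--Tao endpoint for the $2$D Schr\"odinger propagator; it is excluded from Proposition~\ref{pr-y3.1} (which requires $p>2$), so it cannot be fed into $DU^2_\triangle(l^2)$. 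Consequently you cannot reap the full power $\ep_2$ from the $L^\infty_tL^2_x$ smallness. To stay inside an admissible dual Strichartz space you must interpolate the high-frequency factor between $L^\infty_tL^2_x$ (norm $\lesssim\ep_2$) and some admissible pair such as $L^{9/4}_tL^{18}_x$ (norm $\lesssim1$, from $U^2_\triangle\subset V^2_\triangle\subset L^p_tL^q_x$), so that the product lands in, e.g., $L^1_tL^2_x$. This interpolation is precisely what costs you the exponent: the paper obtains
\begin{equation*}
\big\|P_{\xi(t),\geq\ep_3^{-1/2}N(t)}\vec{u}\big\|_{L^{9}_tL^{18/7}_xl^2}\lesssim
\big\|P_{\xi(t),\geq\ep_3^{-1/2}N(t)}\vec{u}\big\|^{3/4}_{L^\infty_tL^2_xl^2}
\big\|P_{\xi(t),\geq\ep_3^{-1/2}N(t)}\vec{u}\big\|^{1/4}_{L^{9/4}_tL^{18}_xl^2}\lesssim\ep_2^{3/4},
\end{equation*}
which is where the final bound $\|\vec{u}\|_{\tilde{Y}_0}\leq C\ep_2^{3/4}$ comes from. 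Your claimed improvement to $\ep_2^2$ (or even $\ep_2$) does not go through by the split you propose, and $\ep_2^{3/4}$ is indeed what the method produces; this weaker exponent is nevertheless sufficient for the subsequent bootstrap.
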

\begin{proof}
By the decomposition of $J^{\alpha}$'s intervals in Definition \ref{de-y5.1} and Lemma \ref{le-y6.2}, $\|\vec{u}\|_{U_{\triangle}^2(l^{2};J^{\alpha})} \lesssim 1$ for each $J^{\alpha} \subset G_k^j$. Therefore, by Duhamel's formula and Proposition \ref{pr-y5.3} (7), (6) and (2), we have
\begin{equation*}
\begin{split}
    & \big(\sum_{i\geq 0} \| P_{\xi(J^{\alpha}),i-2 \leq\cdot\leq i+2} \vec{u}\|^2_{U_{\triangle}^2(l^{2};J^{\alpha})}\big)^{1/2} \\
    \lesssim & \big( \sum_{i\geq 0} \| P_{\xi(J^{\alpha}),i-2 \leq\cdot\leq i+2} \vec{u}(t_0)\|^2_{L_x^2l^{2}}\big)^{1/2} +  \big( \sum_{i\geq0} \| P_{\xi(J^{\alpha}),i-2 \leq\cdot\leq i+2} \mathbf{F}(\vec{u})\|^2_{DU^2_{\triangle}(l^{2};J^{\alpha})}  \big)^{1/2}\\
    \lesssim & \| \vec{u}(t_0) \|_{L_x^2l^{2}} + \|\mathbf{F}(\vec{u})\|_{L^1_tL^2_xh^{0}(J^{\alpha})}\\
    \lesssim & \| \vec{u}(t_0) \|_{L_x^2l^{2}} + \|\vec{u}\|^3_{L^3_tL^6_xh^{0}(J^{\alpha})}\\
    \lesssim & \| \vec{u}(t_0) \|_{L_x^2l^{2}} + \|\vec{u}\|^3_{U^2_{\triangle}(l^{2};J^{\alpha})}\\
    \lesssim &1.
\end{split}
\end{equation*}
The second term in the third line comes from $L^1_tL^2_xh^{0} \subset DU^2_{\triangle}(l^{2})$, Minkowski inequality and Plancherel's identity. By Definition \ref{de-y5.2},
$$\|\vec{u}\|_{\tilde{X}_0([0,T])}\leq C.$$
We can similarly calculate
\begin{equation*}
\begin{split}
    & \big(\sum_{i\geq 0:N(J^{\alpha})\leq \ep_3^{1/2}2^{i-5}} \| P_{\xi(J^{\alpha}),i-2 \leq\cdot\leq i+2} \vec{u}\|^2_{U_{\triangle}^2(l^{2};J^{\alpha})}\big)^{1/2} \\
    \lesssim & \| P_{\xi(J^{\alpha}),\geq 8\ep_3^{-1/2}N(J^{\alpha})}\vec{u}(t_0) \|_{L_x^2l^{2}} + \|P_{\xi(J^{\alpha}),\geq 8\ep_3^{-1/2}N(J^{\alpha})} \mathbf{F}(\vec{u})\|_{L^1_tL^2_xh^{0}(J^{\alpha})}\\
    \lesssim & \| P_{\xi(t),\geq 4\ep_3^{-1/2}N(t)}\vec{u} \|_{L_t^{\infty}L_x^2l^{2}(J^{\alpha})} + \|P_{\xi(t),\geq 4\ep_3^{-1/2}N(t)} \mathbf{F}(\vec{u})\|_{L^1_tL^2_xh^{0}(J^{\alpha})}\\
    \lesssim & \| P_{\xi(t),\geq \ep_3^{-1/2}N(t)}\vec{u} \|^{3/4}_{L_t^{\infty}L_x^2l^{2}(J^{\alpha})} \big( \|\vec{u} \|^{1/4}_{L_t^{\infty}L_x^2l^{2}(J^{\alpha})} + \|\vec{u}\|^{9/4}_{L^{9/4}_tL^{18}_xh^{0}(J^{\alpha})}\big)\\
    \lesssim & \| P_{\xi(t),\geq \ep_3^{-1/2}N(t)}\vec{u} \|^{3/4}_{L_t^{\infty}L_x^2l^{2}(J^{\alpha})} \big( \|\vec{u} \|^{1/4}_{L_t^{\infty}L_x^2l^{2}(J^{\alpha})} + \|\vec{u}\|^{9/4}_{U_{\triangle}^2(l^{2};J^{\alpha})}\big)\\
    \lesssim &\ep_2^{3/4}.
\end{split}
\end{equation*}
In  the last inequality we use \eqref{eq-y5.2}, mass conversation and $\|\vec{u}\|_{U_{\triangle}^2(l^{2};J^{\alpha})} \lesssim 1$. Therefore,
$$\|\vec{u}\|_{\tilde{Y}_0([0,T])}\leq C\ep_2^{3/4}.$$
From Definition \ref{de-y5.1}, $G^{j+1}_k= G^{j}_{2k} \bigcup G^{j}_{2k+1}$ with $G^{j}_{2k} \bigcap G^{j}_{2k+1}=\varnothing$, and for $0\leq i\leq j$, if $G^{i}_{\alpha} \subset G^{j+1}_k= G^{j}_{2k} \bigcup G^{j}_{2k+1}$, then either $G^{i}_{\alpha} \subset G^{j}_{2k}$ or $G^{i}_{\alpha} \subset G^{j}_{2k+1}$ would happen. Thus
\begin{equation}\label{eq-y5.8}
\begin{split}
  &\sum_{0\leq i< j+1}2^{i-(j+1)}\sum_{G_{\alpha}^{i}\subset G_k^{j+1}}\|P_{\xi(G_{\alpha}^{i}), i-2\leq\cdot\leq i+2}\vec{u}\|^2_{U^2_{\triangle}(l^{2};G_{\alpha}^{i}\times\mathbb{R}^2)}\\
  \leq& 2^{-1}\sum_{0\leq i< j}2^{i-j} \big[\sum_{G_{\alpha}^{i}\subset G_{2k}^{j}}\|P_{\xi(G_{\alpha}^{i}), i-2\leq\cdot\leq i+2}\vec{u}\|^2_{U^2_{\triangle}(l^{2};G_{\alpha}^{i}\times\mathbb{R}^2)}+\sum_{G_{\alpha}^{i}\subset G_{2k+1}^{j}}\|P_{\xi(G_{\alpha}^{i}), i-2\leq\cdot\leq i+2}\vec{u}\|^2_{U^2_{\triangle}(l^{2};G_{\alpha}^{i}\times\mathbb{R}^2)}\big]\\
  &+ 2^{-1} \big[\|P_{\xi(G_{2k}^{j}), j-2\leq\cdot\leq j+2}\vec{u}\|^2_{U^2_{\triangle}(l^{2};G_{2k}^{j}\times\mathbb{R}^2)} + \|P_{\xi(G_{2k+1}^{j}), j-2\leq\cdot\leq j+2}\vec{u}\|^2_{U^2_{\triangle}(l^{2};G_{2k+1}^{j}\times\mathbb{R}^2)}\big]\\
  \leq& \frac{1}{2}\big[ \|\vec{u}\|^2_{X(G_{2k}^{j})}+\|\vec{u}\|^2_{X(G_{2k+1}^{j})}\big].
\end{split}
\end{equation}
Meanwhile notice that for all $t \in G_k^{j+1}$, from Remark \ref{re-y5.1}(3), we have
\begin{equation}
  |\xi(t)-\xi(G_k^{j+1})|\leq 2^{-18}\ep_3 \ep_{1}^{-1/2}.
\end{equation}
Therefore, for all $t \in G_k^{j+1}$ and $i\geq j$,
$$\{\xi:2^{i-1}\leq |\xi-\xi(t)|\leq 2^{i+1} \} \subset \{\xi:2^{i-2}\leq |\xi-\xi(G_k^{j+1})|\leq 2^{i+2} \} \subset \{\xi:2^{i-3}\leq |\xi-\xi(t)|\leq 2^{i+3} \},$$
which,combined with Proposition \ref{pr-y5.3} (5), yields
\begin{equation}\label{eq-y5.10}
\begin{split}
  &\sum_{i\geq j+1}\|P_{\xi(G_{k}^{j+1}),i-2\leq\cdot\leq i+2} \vec{u}\|^2_{U^2_{\triangle}(l^{2};G_{k}^{j+1}\times\mathbb{R}^2)}\\
  \leq & \sum_{i\geq j+1} \big[\|P_{\xi(G_{k}^{j+1}),i-2\leq\cdot\leq i+2} \vec{u}\|^2_{U^2_{\triangle}(l^{2};G_{2k}^{j}\times\mathbb{R}^2)} + \|P_{\xi(G_{k}^{j+1}),i-2\leq\cdot\leq i+2} \vec{u}\|^2_{U^2_{\triangle}(l^{2};G_{2k+1}^{j}\times\mathbb{R}^2)} \big]\\
  \leq & \sum_{i\geq j+1} \big[\|P_{\xi(G_{2k}^{j}),i-3\leq\cdot\leq i+3} \vec{u}\|^2_{U^2_{\triangle}(l^{2};G_{2k}^{j}\times\mathbb{R}^2)} + \|P_{\xi(G_{2k+1}^{j}),i-3\leq\cdot\leq i+3} \vec{u}\|^2_{U^2_{\triangle}(l^{2};G_{2k+1}^{j}\times\mathbb{R}^2)} \big]\\
\end{split}
\end{equation}
Thus \eqref{eq-y5.8} and \eqref{eq-y5.10} give
\begin{equation}
\begin{split}
  \|\vec{u}\|^2_{X(G_k^{j+1})}&=\sum_{0\leq i< j+1}2^{i-(j+1)}\sum_{G_{\alpha}^{i}\subset G_k^{j+1}}\|P_{\xi(G_{\alpha}^{i}), i-2\leq\cdot\leq i+2}\vec{u}\|^2_{U^2_{\triangle}(l^{2};G_{\alpha}^{i}\times\mathbb{R}^2)}
  +\sum_{i\geq j+1}\|P_{\xi(G_{k}^{j+1}),i-2\leq\cdot\leq i+2} \vec{u}\|^2_{U^2_{\triangle}(l^{2};G_{k}^{j+1}\times\mathbb{R}^2)}\\
  &\leq \big[ \|\vec{u}\|^2_{X(G_{2k}^{j})}+\|\vec{u}\|^2_{X(G_{2k+1}^{j})}\big].
\end{split}
\end{equation}
Finally, from the Definition \ref{de-y5.2}, we have
$$\|\vec{u}\|^2_{\tilde{X}_{k_{\ast}+1}([0,T])}\leq 2\|\vec{u}\|^2_{\tilde{X}_{k_{\ast}}([0,T])}.$$
Similarly we can show
$$\|\vec{u}\|^2_{\tilde{Y}_{k_{\ast}+1}([0,T])}\leq 2\|\vec{u}\|^2_{\tilde{Y}_{k_{\ast}}([0,T])}.$$
\end{proof}
Next we will derive the bootstrap estimates that will be used to complete the proof of Theorem \ref{th-y6.2}.  Fix $0 \leq j\leq k_0$ and $G_k^j \subset [0,T]$.  For $0 \leq i < j$, Duhamel's principle implies
\begin{equation}\label{eq-y5.14}
\begin{split}
 \|P_{\xi(G_{\alpha}^i),i-2\leq\cdot\leq i+2}\vec{u}\|_{U^2_{\triangle}(l^{2};G_{\alpha}^i)}&\lesssim \|P_{\xi(G_{\alpha}^i),i-2\leq\cdot\leq i+2}\vec{u}(t_{\alpha}^i)\|_{L_x^2l^{2}}\\
 + &\|\int_{t_{\alpha}^i}^{t} e^{i(t-\tau)\triangle} P_{\xi(G_{\alpha}^{i}),i-2\leq\cdot\leq i+2}
\mathbf{F}(\vec{u}(\tau))d\tau\|_{U_{\triangle}^2(l^{2};G_{\alpha}^{i})}.
\end{split}
\end{equation}
The free evolution term can be easily handled.
Choose $t_{\alpha}^i$ satisfying
$$\|P_{\xi(G_{\alpha}^i),i-2\leq\cdot\leq i+2}\vec{u}(t_{\alpha}^i)\|_{L_x^2l^{2}}=\inf_{t\in G_{\alpha}^i}\|P_{\xi(G_{\alpha}^i),i-2\leq\cdot\leq i+2}\vec{u}(t)\|_{L_x^2l^{2}}. $$
Then by \eqref{eq-y5.33} and Remark \ref{re-y5.1},
\begin{equation}\label{eq-y5.15}
\begin{split}
  &\sum_{0\leq i<j}2^{i-j} \sum_{G^i_{\alpha}\subset G^j_{k}} \|P_{\xi(G_{\alpha}^i),i-2\leq\cdot\leq i+2}\vec{u}(t_{\alpha}^i)\|^2_{L_x^2l^{2}} \\
  \lesssim& 2^{-j} \ep_3^{-1}\int_{G^j_{k}}\big( N(t)^3 + \ep_3\|\vec{u}(t)\|^4_{L_x^4l^{2}(\mathbb{R}^2\times \mathbb{Z})} \big)\sum_{0\leq i<j}\|P_{\xi(t),i-3\leq\cdot\leq i+3}\vec{u}(t)\|^2_{L_x^2l^{2}} dt\\
  \lesssim& 2^{-j} \ep_3^{-1} \|\vec{u}\|^2_{L_t^{\infty}L_x^2l^{2}} \int_{G^j_{k}}\big( N(t)^3 + \ep_3\|\vec{u}(t)\|^4_{L_x^4l^{2}(\mathbb{R}^2\times \mathbb{Z})} \big)dt\\
  \lesssim& 1.
\end{split}
\end{equation}
For $i \geq j$ simply take $t_{k}^j = t_0$, where $t_0$ is the left endpoint of $G^j_k$. Then
\begin{equation}\label{eq-y5.16}
 \sum_{i\geq j} \|P_{\xi(G_{k}^j),i-2\leq\cdot\leq i+2}\vec{u}(t_0)\|^2_{L_x^2l^{2}}\lesssim \|\vec{u}(t_0)\|^2_{L_x^2l^{2}}\lesssim 1.
\end{equation}
Therefore,
\begin{equation}\label{eq-y5.17}
 \sum_{0\leq i<j}2^{i-j} \sum_{G^i_{\alpha}\subset G^j_{k}} \|P_{\xi(G_{\alpha}^i),i-2\leq\cdot\leq i+2}\vec{u}(t_{\alpha}^i)\|^2_{L_x^2l^{2}}+\sum_{i\geq j} \|P_{\xi(G_{k}^j),i-2\leq\cdot\leq i+2}\vec{u}(t_0)\|^2_{L_x^2l^{2}}\lesssim 1.
\end{equation}
So \eqref{eq-y5.14}, \eqref{eq-y5.17} and Definition \ref{de-y5.2} give
\begin{equation}\label{eq-y5.18}
\begin{split}
   \|\vec{u}\|^2_{X(G_k^{j})}&\lesssim 1+ \sum_{i\geq j}\big\|\int_{t_{k}^j}^{t} e^{i(t-\tau)\triangle} P_{\xi(G_{k}^{j}),i-2\leq\cdot\leq i+2}
\mathbf{F}(\vec{u}(\tau))d\tau \big\|^2_{U_{\triangle}^2(l^{2};G_{k}^{j})} \\
   + & \sum_{0\leq i\leq j} 2^{i-j}\sum_{G^{i}_{\alpha}\subset G^j_k} \big\|\int_{t_{\alpha}^i}^{t} e^{i(t-\tau)\triangle} P_{\xi(G_{\alpha}^{i}),i-2\leq\cdot\leq i+2}
\mathbf{F}(\vec{u}(\tau))d\tau \big\|^2_{U_{\triangle}^2(l^{2};G_{\alpha}^{i})}.
\end{split}
\end{equation}
Similarly, we can get
\begin{equation}\label{eq-y5.19}
\begin{split}
   \|\vec{u}\|^2_{Y(G_k^{j})}&\lesssim \ep_2^{3/2}+ \sum_{i\geq j;N(G^j_k)\leq 2^{i-5}\ep_3^{1/2}}\big\|\int_{t_{k}^j}^{t} e^{i(t-\tau)\triangle} P_{\xi(G_{k}^{j}),i-2\leq\cdot\leq i+2}
\mathbf{F}(\vec{u}(\tau))d\tau \big\|^2_{U_{\triangle}^2(l^{2};G_{k}^{j})} \\
   + & \sum_{0\leq i\leq j} 2^{i-j}\sum_{G^{i}_{\alpha}\subset G^j_k;N(G^{i}_{\alpha})\leq 2^{i-5}\ep_3^{1/2}} \big\|\int_{t_{\alpha}^i}^{t} e^{i(t-\tau)\triangle} P_{\xi(G_{\alpha}^{i}),i-2\leq\cdot\leq i+2}
\mathbf{F}(\vec{u}(\tau))d\tau \big\|^2_{U_{\triangle}^2(l^{2};G_{\alpha}^{i})}.
\end{split}
\end{equation}
Now let's perform some subtle analysis on the summation of the $U^2_{\triangle}(l^{2})$ norm of Duhamel term in \eqref{eq-y5.18}. Take the intervals $G^{i}_{\alpha}\subset G^j_k$ with $N(G^{i}_{\alpha})\geq 2^{i-5}\ep_3^{1/2}$. These intervals appear in \eqref{eq-y5.18} but not \eqref{eq-y5.19}, and their contribution to the summation of the $U^2_{\triangle}(l^{2})$ norm of Duhamel term is small. Notice first that there are at most two small intervals, call them $J_1$ and $J_2$ , that intersect $G^j_k$ but are not contained in $G^j_k$. By Lemma \ref{le-y6.2} and $U^2_{\triangle}(l^{2})\subset U^3_{\triangle}(l^{2})\subset L^3_tL^6_xh^{0}$, $\|\vec{u}\|_{L^3_tL^6_xh^{0}(J_l)}\lesssim 1$. Therefore by $l^1\subset l^2$,
\begin{equation}\label{eq-y5.20}
\begin{split}
  \sum_{0\leq i<j}2^{i-j}\sum_{G^{i}_{\alpha}\subset G^j_k}\|\mathbf{F}(\vec{u})\|^2_{L^1_tL^2_xl^2(G^{i}_{\alpha}\cap (J_1 \cup J_2))}&\lesssim \sum_{0\leq i<j}2^{i-j} \|\mathbf{F}(\vec{u})\|^2_{L^1_tL^2_xl^2(J_1 \cup J_2)}\\
  &\lesssim\|\mathbf{F}(\vec{u})\|^2_{L^1_tL^2_xl^2(J_1)}+\|\mathbf{F}(\vec{u})\|^2_{L^1_tL^2_xl^2(J_2)}\\
  &\lesssim \|\vec{u}\|^6_{L^3_tL^6_xl^2(J_1)}+\|\vec{u}\|^6_{L^3_tL^6_xl^2(J_2)}\lesssim 1.
\end{split}
\end{equation}
Next observe that \eqref{eq-y5.1} and $N(G^{i}_{\alpha})\geq 2^{i-5}\ep_3^{1/2}$ implies $N(t)\geq 2^{i-6}\ep_3^{1/2}$ for all $t\in G^{i}_{\alpha}$, so by Proposition \ref{pr-y5.3}(5), $L^1_tL^2_xh^{0}\subset DU^2_{\triangle}(l^{2})$, \eqref{eq-y5.20}, \eqref{eq-y5.34} and Definition \ref{de-y5.1}, we have
\begin{equation}\label{eq-y5.21}
\begin{split}
  & \sum_{0\leq i\leq j} 2^{i-j}\sum_{G^{i}_{\alpha}\subset G^j_k;N(G^{i}_{\alpha})\geq 2^{i-5}\ep_3^{1/2}} \big\|\int_{t_{\alpha}^i}^{t} e^{i(t-\tau)\triangle} P_{\xi(G_{\alpha}^{i}),i-2\leq\cdot\leq i+2}
  \mathbf{F}(\vec{u}(\tau))d\tau \big\|^2_{U_{\triangle}^2(l^{2};G_{\alpha}^{i})}\\
  \lesssim& \sum_{0\leq i\leq j} 2^{i-j}\sum_{G^{i}_{\alpha}\subset G^j_k;N(G^{i}_{\alpha})\geq 2^{i-5}\ep_3^{1/2}} \sum_{J_l\cap G^j_k \neq \varnothing} \big\|\int_{t_{\alpha}^i}^{t} e^{i(t-\tau)\triangle} P_{\xi(G_{\alpha}^{i}),i-2\leq\cdot\leq i+2}
  \mathbf{F}(\vec{u}(\tau))d\tau \big\|^2_{U_{\triangle}^2(l^{2};G_{\alpha}^{i}\cap J_l)}\\
  =& \sum_{0\leq i\leq j} 2^{i-j}\sum_{G^{i}_{\alpha}\subset G^j_k;N(G^{i}_{\alpha})\geq 2^{i-5}\ep_3^{1/2}} \sum_{J_l\cap G^j_k \neq \varnothing} \big\|P_{\xi(G_{\alpha}^{i}),i-2\leq\cdot\leq i+2}
  \mathbf{F}(\vec{u})\big\|^2_{DU_{\triangle}^2(l^{2};G_{\alpha}^{i}\cap J_l)}\\
  \lesssim&\sum_{0\leq i<j}2^{i-j}\sum_{G^{i}_{\alpha}\subset G^j_k;N(G^{i}_{\alpha})\geq 2^{i-5}\ep_3^{1/2}} \sum_{J_l\cap G^j_k \neq \varnothing } \|\mathbf{F}(\vec{u})\|^2_{L^1_tL^2_xl^2(G^{i}_{\alpha}\cap J_l)}\\
  \lesssim&1+ \sum_{0\leq i<j}2^{i-j} \big(\sum_{J_l\subset G_k^j;N(J_l)\geq 2^{i-6}\ep_3^{1/2}}\|\mathbf{F}(\vec{u})\|^2_{L^1_tL^2_xl^2(J_l)}\big)\\
  \lesssim&1+ \sum_{0\leq i<j}2^{i-j} \big(\sum_{J_l\subset G_k^j;N(J_l)\geq 2^{i-6}\ep_3^{1/2}}\|\vec{u}\|^6_{L^3_tL^6_xl^2(J_l)}\big)\\
  \lesssim &1+ \sum_{J_l\subset G_k^j} \sum_{0\leq i<j;2^{i}\leq2^6\ep_3^{-1/2}N(J_l)}2^{i-j}\\
  \lesssim &1+ 2^{-j}\ep_3^{-1/2}\sum_{J_l\subset G_k^j}N(J_l)\\
  \lesssim &1+ \ep_3^{1/2}\lesssim 1.
\end{split}
\end{equation}
Similarly, if $N(G^{j}_{k})\geq 2^{j-5}\ep_3^{1/2}$, then \eqref{eq-y5.1} implies that $N(t)\geq 2^{j-6}\ep_3^{1/2}$ for all $t\in G^{j}_{k}$. Combing \eqref{eq-y5.33}, we have $\int_{G^{j}_{k}} N(t)^3dt \leq 2^{j+1}\ep_3 <2^7N(t)$ for all $t\in G^{j}_{k}$, so $\int_{G^{j}_{k}} N(t)^3dt \lesssim \inf_{t\in G^{j}_{k}}N(t)$. This implies $\int_{G^{j}_{k}} N(t)^2dt \lesssim1$. Therefore by Minkowski inequality, \eqref{eq-y3.1} and Theorem \ref{th-y4.5}, for $G^j_k$ with $N(G^{j}_{k})\geq 2^{j-5}\ep_3^{1/2}$, we have
\begin{equation}\label{eq-y5.22}
  \sum_{i\geq j;N(G^{j}_{k})\geq 2^{j-5}\ep_3^{1/2}}\|P_{\xi(G_{k}^{j}),i-2\leq\cdot\leq i+2}
  \mathbf{F}(\vec{u})\|^2_{L^1_tL^2_xl^2(G^{j}_{k})}\lesssim \|
  \mathbf{F}(\vec{u})\|^2_{L^1_tL^2_xl^2(G^{j}_{k})}\lesssim \|\vec{u}\|^6_{L^3_tL^6_xl^2(G^{j}_{k})}\lesssim 1.
\end{equation}
Therefore,
\begin{equation}\label{eq-y5.23}
\begin{split}
   \|\vec{u}\|^2_{X(G_k^{j})}&\lesssim 1+ \sum_{i\geq j;N(G^{j}_{k})\leq 2^{j-5}\ep_3^{1/2}}\big\|\int_{t_{k}^j}^{t} e^{i(t-\tau)\triangle} P_{\xi(G_{k}^{j}),i-2\leq\cdot\leq i+2}
\mathbf{F}(\vec{u}(\tau))d\tau \big\|^2_{U_{\triangle}^2(l^{2};G_{k}^{j})} \\
   + & \sum_{0\leq i\leq j} 2^{i-j}\sum_{G^{i}_{\alpha}\subset G^j_k;N(G^{i}_{\alpha})\leq 2^{i-5}\ep_3^{1/2}} \big\|\int_{t_{\alpha}^i}^{t} e^{i(t-\tau)\triangle} P_{\xi(G_{\alpha}^{i}),i-2\leq\cdot\leq i+2}
\mathbf{F}(\vec{u}(\tau))d\tau \big\|^2_{U_{\triangle}^2(l^{2};G_{\alpha}^{i})}.
\end{split}
\end{equation}
Following the same argument, we can also show that
\begin{equation}\label{eq-y5.23'}
\begin{split}
    &\sum_{i\geq j;2^{j-10}\ep_3^{1/2} \leq N(G^{j}_{k})\leq 2^{j-5}\ep_3^{1/2}}\big\|\int_{t_{k}^j}^{t} e^{i(t-\tau)\triangle} P_{\xi(G_{k}^{j}),i-2\leq\cdot\leq i+2}
\mathbf{F}(\vec{u}(\tau))d\tau \big\|^2_{U_{\triangle}^2(l^{2};G_{k}^{j})} \\
   + & \sum_{0\leq i\leq j} 2^{i-j}\sum_{G^{i}_{\alpha}\subset G^j_k;2^{i-10}\ep_3^{1/2} \leq N(G^{i}_{\alpha})\leq 2^{i-5}\ep_3^{1/2}} \big\|\int_{t_{\alpha}^i}^{t} e^{i(t-\tau)\triangle} P_{\xi(G_{\alpha}^{i}),i-2\leq\cdot\leq i+2}
\mathbf{F}(\vec{u}(\tau))d\tau \big\|^2_{U_{\triangle}^2(l^{2};G_{\alpha}^{i})}\\
  \lesssim&\ep_2^{3/2}.
\end{split}
\end{equation}
Therefore, \eqref{eq-y5.23} and \eqref{eq-y5.19} become
\begin{equation}\label{eq-y5.23''}
\begin{split}
   \|\vec{u}\|^2_{X(G_k^{j})}&\lesssim 1+ \sum_{i\geq j;N(G^{j}_{k})\leq 2^{j-10}\ep_3^{1/2}}\big\|\int_{t_{k}^j}^{t} e^{i(t-\tau)\triangle} P_{\xi(G_{k}^{j}),i-2\leq\cdot\leq i+2}
\mathbf{F}(\vec{u}(\tau))d\tau \big\|^2_{U_{\triangle}^2(l^{2};G_{k}^{j})} \\
   + & \sum_{0\leq i\leq j} 2^{i-j}\sum_{G^{i}_{\alpha}\subset G^j_k;N(G^{i}_{\alpha})\leq 2^{i-10}\ep_3^{1/2}} \big\|\int_{t_{\alpha}^i}^{t} e^{i(t-\tau)\triangle} P_{\xi(G_{\alpha}^{i}),i-2\leq\cdot\leq i+2}
\mathbf{F}(\vec{u}(\tau))d\tau \big\|^2_{U_{\triangle}^2(l^{2};G_{\alpha}^{i})},
\end{split}
\end{equation}and
\begin{equation}\label{eq-y5.19'}
\begin{split}
   \|\vec{u}\|^2_{Y(G_k^{j})}&\lesssim \ep_2^{3/2}+ \sum_{i\geq j;N(G^j_k)\leq 2^{i-10}\ep_3^{1/2}}\big\|\int_{t_{k}^j}^{t} e^{i(t-\tau)\triangle} P_{\xi(G_{k}^{j}),i-2\leq\cdot\leq i+2}
\mathbf{F}(\vec{u}(\tau))d\tau \big\|^2_{U_{\triangle}^2(l^{2};G_{k}^{j})} \\
   + & \sum_{0\leq i\leq j} 2^{i-j}\sum_{G^{i}_{\alpha}\subset G^j_k;N(G^{i}_{\alpha})\leq 2^{i-10}\ep_3^{1/2}} \big\|\int_{t_{\alpha}^i}^{t} e^{i(t-\tau)\triangle} P_{\xi(G_{\alpha}^{i}),i-2\leq\cdot\leq i+2}
\mathbf{F}(\vec{u}(\tau))d\tau \big\|^2_{U_{\triangle}^2(l^{2};G_{\alpha}^{i})}.
\end{split}
\end{equation}

\begin{remark}
According to Lemma \ref{le-y5.8}, Remark \ref{re-y5.4} and the fact $L_t^{3/2}L_x^{6/5}l^{2} \subset DU^{2}_{\triangle}(l^{2})$,
\begin{equation*}
\begin{split}
   &\sum_{i\geq j,0\leq i\leq 11;N(G^{j}_{k})\leq 2^{j-5}\ep_3^{1/2}}\big\|\int_{t_{k}^j}^{t} e^{i(t-\tau)\triangle} P_{\xi(G_{k}^{j}),i-2\leq\cdot\leq i+2}
\mathbf{F}(\vec{u}(\tau))d\tau \big\|^2_{U_{\triangle}^2(l^{2};G_{k}^{j})} \\
   + & \sum_{0\leq i\leq 11} 2^{i-j}\sum_{G^{i}_{\alpha}\subset G^j_k;N(G^{i}_{\alpha})\leq 2^{i-5}\ep_3^{1/2}} \big\|\int_{t_{\alpha}^i}^{t} e^{i(t-\tau)\triangle} P_{\xi(G_{\alpha}^{i}),i-2\leq\cdot\leq i+2}
\mathbf{F}(\vec{u}(\tau))d\tau \big\|^2_{U_{\triangle}^2(l^{2};G_{\alpha}^{i})}\\
   \lesssim 1.
\end{split}
\end{equation*}
Therefore the summation over $i$ in the right side of \eqref{eq-y5.23''} and \eqref{eq-y5.19'}can start from $i>11$. In the following we will not mention this unless necessary.
\end{remark}
Now, we can give the key bootstrap estimate below, which suffices to establish the long time strichartz estimates.
\begin{theorem}\label{th-y5.10}
 \begin{equation}\label{eq-y5.24}
\begin{split}
   & \sum_{i\geq j;N(G^{j}_{k})\leq 2^{i-10}\ep_3^{1/2}}\big\|\int_{t_{k}^j}^{t} e^{i(t-\tau)\triangle} P_{\xi(G_{k}^{j}),i-2\leq\cdot\leq i+2}
\mathbf{F}(\vec{u}(\tau))d\tau \big\|^2_{U_{\triangle}^2(l^{2};G_{k}^{j})} \\
   + & \sum_{0\leq i\leq j} 2^{i-j}\sum_{G^{i}_{\alpha}\subset G^j_k;N(G^{i}_{\alpha})\leq 2^{i-10}\ep_3^{1/2}} \big\|\int_{t_{\alpha}^i}^{t} e^{i(t-\tau)\triangle} P_{\xi(G_{\alpha}^{i}),i-2\leq\cdot\leq i+2}
\mathbf{F}(\vec{u}(\tau))d\tau \big\|^2_{U_{\triangle}^2(l^{2};G_{\alpha}^{i})}\\
 \lesssim &\ep_2^{1/3} \|\vec{u}\|^{5/3}_{\tilde{X}_j([0,T])} \|\vec{u}\|^{2}_{\tilde{Y}_j([0,T])} + \ep_2^{2} \|\vec{u}\|^{2}_{\tilde{Y}_j([0,T])} + \|\vec{u}\|^{4}_{\tilde{Y}_j([0,T])} \big( 1 + \|\vec{u}\|^{8}_{\tilde{X}_j([0,T])} \big).
\end{split}
\end{equation}
\end{theorem}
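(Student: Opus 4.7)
The plan is to estimate each Duhamel integral by duality against $V^2_\triangle(l^2)$ test functions (Proposition \ref{pr-y5.3}(3) together with Lemma \ref{le-y4.5}), and then reduce the resonant nonlinearity to a scalar cubic object at the $l^2$ level via the pointwise identity \eqref{1.1a} and the $h^0$ case of \eqref{eq-y2.6}, so that the standard bilinear Strichartz machinery of Proposition \ref{pr-y5.5} and Remark \ref{convolution} can be applied. Concretely, I rewrite
\begin{equation*}
\Big\|\int_{t_\alpha^i}^t e^{i(t-\tau)\Delta}P_{\xi(G^i_\alpha),i-2\le\cdot\le i+2}\mathbf F(\vec u)\,d\tau\Big\|_{U^2_\triangle(l^2;G^i_\alpha)}
=\sup_{\|\vec v\|_{V^2_\triangle(l^2)}\le 1}\Big|\int_{G^i_\alpha}\!\!\int_{\mathbb R^2}\sum_{j\in\mathbb Z}P_{\xi(G^i_\alpha),i-2\le\cdot\le i+2}F_j(\vec u)\,\overline{v_j}\,dx\,d\tau\Big|,
\end{equation*}
and likewise for the $i\ge j$ terms. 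The output frequency restriction $\{|\xi-\xi(G)|\sim 2^i\}$ forces at least one input factor to carry $x$-frequency $\gtrsim 2^{i-C}$, which I pair with the equally localized $\vec v$; by \eqref{eq-y2.6} the remaining two factors can be bounded scalar-wise by $\|\vec u\|_{l^2}^2$.

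Next I perform the standard paraproduct case analysis following Section 3 of \cite{D}. Split each of the three $u$-factors (and $\vec v$) into pieces $P_{\xi(G),l}$. The assumption $N(G)\le 2^{i-10}\ep_3^{1/2}$, combined with the a priori concentration properties \eqref{eq-y4.2}--\eqref{eq-y4.4}, guarantees that frequency pieces at level $l\sim i\gg \log_2 N(t)$ carry a $\tilde Y_j$-norm, picking up a factor of $\ep_2^{3/4}$ via Lemma \ref{le-y5.8}. Three regimes arise: (a) exactly one factor is paired at frequency $\sim 2^i$ while a second factor is the low-frequency partner of the bilinear Strichartz, yielding a gain $(2^{l_{\mathrm{low}}-i})^{1/2}$ from Proposition \ref{pr-y5.5}; (b) two factors sit well above $N(t)$ and are estimated in $\tilde Y_j$; (c) all four high-frequency factors (two from each side after squaring) lie in $\tilde Y_j$. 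The weights $2^{i-j}$ from Definition \ref{de-y5.2}, the bilinear gain, and the geometric series in $l$ assemble to a convergent double sum over $i$ and the $G^i_\alpha\subset G^j_k$. The remaining $L^p_tL^q_xl^2$ pieces are controlled through Lemma \ref{le-y5.7} by $\tilde X_j$. Tracking how many of the six $u$-factors land in $\tilde Y_j$ in each regime produces the three terms $\ep_2^{1/3}\|\vec u\|_{\tilde X_j}^{5/3}\|\vec u\|_{\tilde Y_j}^2$, $\ep_2^{2}\|\vec u\|_{\tilde Y_j}^2$, and $\|\vec u\|_{\tilde Y_j}^4(1+\|\vec u\|_{\tilde X_j}^8)$ respectively, where the fractional exponent $5/3$ comes from a further interpolation between the $U^2_\triangle$-norm and the $L^\infty_tL^2_x$-norm of the resonant factor used to upgrade one $\tilde X_j$-slot into a $\tilde Y_j^{2/3}$-slot.

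The main obstacle is the lack of symmetry of $\mathbf F$: the three inputs $u_{j_1},\bar u_{j_2},u_{j_3}$ are constrained by the resonance $\mathcal R(j)$, so a direct Littlewood--Paley expansion inside the $h^1$ norm on $j$ forces one to distribute the weights $\langle j_\ell\rangle$ over the three factors in a way that is incompatible with the $x$-frequency decomposition. This is precisely why the whole long-time Strichartz apparatus in Section 5 is set at the $l^2=h^0$ level on $j$; the identity \eqref{1.1a} and the pointwise bound \eqref{eq-y2.6} then let $\|\vec u\|_{l^2}$ act as a single scalar space-time density, and the bilinear estimates of Proposition \ref{pr-y5.5} apply without modification. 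The subsequent upgrade from the $l^2$-scattering norm to the $h^1$-scattering norm, which is what Theorem \ref{th-y8.1'} actually claims, is carried out separately through Remark \ref{remark2.5} and the persistence of regularity statement Theorem \ref{co6.533}, so the $l^2$-level bootstrap is genuinely sufficient at this stage.
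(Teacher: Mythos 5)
Your proposal captures the overall architecture accurately: duality with $V^2_\triangle(l^2)$ test functions, the reduction via \eqref{1.1a} and the $h^0$ case of \eqref{eq-y2.6} to a scalar density $\|\vec u\|_{l^2}$, the frequency split of $\mathbf F(\vec u)$ according to how many of the three inputs carry $x$-frequency $\gtrsim 2^{i-C}$, and the role of \eqref{eq-y5.2} in rendering the $\tilde Y_j$-contributions small. Your account of the $\ep_2^{1/3}\|\vec u\|_{\tilde X_j}^{5/3}\|\vec u\|_{\tilde Y_j}^2$ term as an interpolation between the $L^\infty_t L^2_x$ bound (carrying the $\ep_2$) and the $\tilde X_j$ bound of the high-frequency remainder is also essentially what the paper does in the two-high-frequency case (Theorem \ref{th-y5.13}, see \eqref{eq-y5.31}).

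The genuine gap is in the one-high-two-low case (Theorem \ref{th-y5.14}), which your proposal claims is closed by ``the standard bilinear Strichartz machinery of Proposition \ref{pr-y5.5} and Remark \ref{convolution}.'' It is not. After applying Lemma \ref{le-y4.5}, the Duhamel term on $G^i_\alpha$ splits into the four pieces \eqref{eq-y5.44}--\eqref{eq-y5.47}: contributions from small intervals $J_l$ where $N(J_l)\geq\ep_3^{1/2}2^{l_2-5}$, and contributions from the roughly $2^{i-l_2}$ intervals $G^{l_2}_\beta\subset G^i_\alpha$ with small $N$. On the former two pieces the constraint $\sum_{J_l\subset G^i_\alpha}N(J_l)\lesssim 2^i\ep_3$ from \eqref{eq-y5.34} caps the number of relevant $J_l$, so the per-interval $2^{(l_2-i)/2}$ gain from Proposition \ref{pr-y5.5} closes the sum. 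But for \eqref{eq-y5.46} and \eqref{eq-y5.47} there is no such cap: applying Proposition \ref{pr-y5.5} per-$G^{l_2}_\beta$ and summing in $\beta$ exactly cancels the bilinear gain, and the subsequent sum over the $\sim i$ dyadic values of $l_2$ produces a logarithmic loss that breaks the bootstrap. This is precisely why the paper introduces the three refined bilinear estimates Theorems \ref{th-y5.16}, \ref{th-y5.17} and \ref{th-y5.19}, proved by an interaction Morawetz argument, and the introduction explicitly flags them as giving ``a logarithmic improvement over what would be obtained from \eqref{eq-y4.2'} directly.'' Without stating and proving these Morawetz-level bilinear estimates, the quartic bound $\big[\ep_2+\|\vec u\|_{\tilde Y_i}(1+\|\vec u\|_{\tilde X_i})^4\big]$ in \eqref{eq-y5.38}, and hence the term $\|\vec u\|^4_{\tilde Y_j}\big(1+\|\vec u\|^8_{\tilde X_j}\big)$ in \eqref{eq-y5.24}, does not emerge from your scheme.
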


Indeed, to prove Theorem \ref{th-y6.2}, we perform a bootstrap argument. If
$$\|\vec{u}\|^{2}_{\tilde{X}_{k_{\ast}}([0,T])}\leq C_0,$$ and
$$\|\vec{u}\|^{2}_{\tilde{Y}_{k_{\ast}}([0,T])}\leq C\ep_2^{3/2}\leq \ep_2.$$
Then by Lemma \ref{le-y5.8},
$$\|\vec{u}\|^{2}_{\tilde{X}_{k_{\ast}+1}([0,T])}\leq 2C_0,$$ and
$$\|\vec{u}\|^{2}_{\tilde{Y}_{k_{\ast}+1}([0,T])}\leq 2\ep_2.$$
Meanwhile by \eqref{eq-y5.23''}, \eqref{eq-y5.19'},  \eqref{eq-y5.24} and Definition \ref{de-y5.2}, we have
\begin{equation}\label{eq-y5.25}
\|\vec{u}\|_{\tilde{X}_{k_{\ast}+1}([0,T])}\leq C (1+ \ep_2^{2/3}(2C_0)^{5/6} + \ep_2^{3/2}+\ep_2(1+2C_0)^8),
\end{equation}and
\begin{equation}\label{eq-y5.25}
\|\vec{u}\|_{\tilde{Y}_{k_{\ast}+1}([0,T])}\leq C (\ep_2^{3/4}+ \ep_2^{2/3}(2C_0)^{5/6} + \ep_2^{3/2}+\ep_2(1+2C_0)^8).
\end{equation}
Take $C_{0}=2^6 C,\ep_2>0$ sufficiently small, then we make the bootstrap closed. This implies
$$\|\vec{u}\|_{\tilde{X}_{k_{\ast}+1}([0,T])}\leq C_0,$$ and
$$\|\vec{u}\|_{\tilde{Y}_{k_{\ast}+1}([0,T])}\leq \ep_2^{1/2},$$
which yield Theorem \ref{th-y6.2} by induction on $k_{\ast}$.
\end{proof}

\begin{proof}[Proof of Theorem \ref{th-y5.10}]
Let's recall $\mathbf{F} (\vec{u}(t)): = \{ F_j(\vec{u}(t)) \}_{j\in\mathbb{Z}}: =\{\sum\limits_{\mathcal{R}(j)}u_{j_1}\bar{u}_{j_2}u_{j_3}\}_{j\in\mathbb{Z}} =:\sum^{\rightarrow}\limits_{\mathcal{R}(j)}u_{j_1}\bar{u}_{j_2}u_{j_3}$ at first. Because we are projecting $\mathbf{F} (\vec{u}(t))$ to frequencies at $\approx 2^{i}$, there must be at least one factor of each terms in $\sum\limits_{\mathcal{R}(j)}u_{j_1}\bar{u}_{j_2}u_{j_3}$ with frequency higher than $2^{i-5}$. This implies
\begin{equation}\label{eq-y5.27}
  \begin{split}
     P_{\xi( G_{\alpha}^{i}), i-2 \leq \cdot \leq i+2} \left(\sum\limits_{\mathcal{R}(j)}u_{j_1}\bar{u}_{j_2}u_{j_3}\right)
     &= P_{\xi( G_{\alpha}^{i}), i-2 \leq \cdot \leq i+2}O \left( \sum\limits_{\mathcal{R}(j)} P_{\xi( G_{\alpha}^{i}), \leq i-8} u_{j_1} P_{\xi( G_{\alpha}^{i}), \leq i-8} \bar{u}_{j_2} P_{\xi( G_{\alpha}^{i}), i-5\leq\cdot\leq i+5} u_{j_3}\right)  \\
     &\quad +  P_{\xi( G_{\alpha}^{i}), i-2 \leq \cdot \leq i+2} \left[ O\big( \sum\limits_{\mathcal{R}(j)} (P_{\xi( G_{\alpha}^{i}), \geq i-5} u_{j_1}) (P_{\xi( G_{\alpha}^{i}), > i-8} \bar{u}_{j_2}) u_{j_3}\big)\right]\\
     &=:(a)+(b),
   \end{split}
\end{equation}where the ``$O$" represents the different frequencies will be located in different $u_{j_l}$s', $l=1,2,3$. However, their estimates are the same. So we denote them as ``$O$".

Following Dodson's argument in \cite{D}, the terms \eqref{eq-y5.27}(b) with two high frequency factors can be handled easily. Due to \eqref{eq-y5.2}, each high frequency factor in these terms can be intuitively thought ``small" in some sense. So let's deal with the terms \eqref{eq-y5.27}(b) at first.

\begin{theorem}\label{th-y5.13}
For a fixed $G_k^j \subset [0,T]$, $j>0$,
\begin{equation}
\begin{split}
   & \sum_{i\geq j;N(G^{j}_{k})\leq 2^{i-10}\ep_3^{1/2}}\big\|\int_{t_{k}^j}^{t} e^{i(t-\tau)\triangle} P_{\xi(G_{k}^{j}),i-2\leq\cdot\leq i+2}
\sum^{\rightarrow}\limits_{\mathcal{R}(j)} (P_{\xi( G_{\alpha}^{i}), \geq i-5} u_{j_1}) (P_{\xi( \tau), \geq i-10} \bar{u}_{j_2}) u_{j_3}d\tau \big\|^2_{U_{\triangle}^2(l^{2};G_{k}^{j})} +\\
   & \sum_{0\leq i\leq j} 2^{i-j}\sum_{G^{i}_{\alpha}\subset G^j_k;N(G^{i}_{\alpha})\leq 2^{i-10}\ep_3^{1/2}} \big\|\int_{t_{\alpha}^i}^{t} e^{i(t-\tau)\triangle} P_{\xi(G_{\alpha}^{i}),i-2\leq\cdot\leq i+2}
\sum^{\rightarrow}\limits_{\mathcal{R}(j)} (P_{\xi( G_{\alpha}^{i}), \geq i-5} u_{j_1}) (P_{\xi(\tau), \geq i-10} \bar{u}_{j_2}) u_{j_3}d\tau \big\|^2_{U_{\triangle}^2(l^{2};G_{\alpha}^{i})}\\
 \lesssim &\ep_2^{1/3} \|\vec{u}\|^{5/3}_{\tilde{X}_j([0,T])} \|\vec{u}\|^{2}_{\tilde{Y}_j([0,T])} .
\end{split}
\end{equation}
\end{theorem}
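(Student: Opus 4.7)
The plan is to bound each Duhamel integral via the duality $(DU^2_\triangle)^* = V^2_\triangle$ from Proposition \ref{pr-y5.3}(3). For a fixed $G^i_\alpha$ (and analogously $G^j_k$ when $i \geq j$), the norm equals
\begin{equation*}
\sup_{\|\vec v\|_{V^2_\triangle(l^2;G^i_\alpha)}=1} \Big| \int_{G^i_\alpha}\!\!\int_{\mathbb R^2} \sum_{j\in\mathbb Z} \sum^{\rightarrow}_{\mathcal R(j)}(P_{\xi(G^i_\alpha),\geq i-5}u_{j_1})(P_{\xi(\tau),\geq i-10}\bar u_{j_2})u_{j_3}\,\overline{P_{\xi(G^i_\alpha),i-2\leq\cdot\leq i+2} v_j}\, dx\, d\tau\Big|.
\end{equation*}
After moving the outer Littlewood--Paley projector onto $\vec v$, the asymmetry of the nonlinearity is absorbed by the pointwise estimate $\|\mathbf F(\vec u)\|_{l^2}\lesssim\|\vec u\|_{l^2}^3$ from Proposition \ref{prop3.1'}, so by Cauchy--Schwarz in $j$ the integrand is majorized by a product of four scalar space-time functions of the form $\|P_\bullet \vec u\|_{l^2}$ and $\|P_\bullet \vec v\|_{l^2}$, allowing us to run the scalar Dodson-type argument.

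Next I extract smallness from the two high-frequency factors. Because $N(G^j_k)\leq 2^{i-10}\ep_3^{1/2}$, the frequency cutoff $2^{i-10}$ lies well above $\ep_3^{-1/2}N(t)$ throughout $G^j_k$, so after dyadically decomposing these pieces as $\sum_{i_1\geq i-5}P_{\xi,i_1}\vec u$ and $\sum_{i_2\geq i-10}P_{\xi,i_2}\vec u$, each summand contributes to the $\tilde Y_j$-norm. I then apply the bilinear Strichartz estimate (Proposition \ref{pr-y5.5} combined with Remark \ref{convolution}) by pairing the outer $\vec v$-piece at frequency $\sim 2^i$ with one of the high-frequency $\vec u$-pieces at frequency $\sim 2^{i_1}$ or $\sim 2^{i_2}$, producing a geometric gain of the form $(2^i/2^{\max(i_1,i_2)})^{1/p}$ with $p>2$. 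The unprojected factor $u_{j_3}$ is controlled in a Strichartz-type norm via Lemma \ref{le-y5.7}, contributing $\|\vec u\|_{\tilde X_j}$, and the remaining high-frequency piece contributes $\|\vec u\|_{\tilde Y_j}$.

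After these pointwise estimates I sum over the internal dyadic parameters $i_1, i_2$, then over $i\geq j$ and, separately, over the $2^{i-j}$-weighted sum over $G^i_\alpha\subset G^j_k$ for $i<j$. The geometric decay from bilinear Strichartz collapses the $i_1,i_2$ sums. The precise exponents $5/3$ on $\|\vec u\|_{\tilde X_j}$ and $2$ on $\|\vec u\|_{\tilde Y_j}$, together with the residual factor $\ep_2^{1/3}$, are obtained by a H\"older interpolation that distributes the three copies of $\vec u$ between the two norms and reserves a small $\ep_2^{1/3}$ fragment (invoked via \eqref{eq-y5.2}) to absorb the $\tilde Y_j$-dependences that would otherwise spoil the bootstrap that closes Theorem \ref{th-y6.2}.

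The main obstacle is the presence of the moving projector $P_{\xi(\tau),\geq i-10}$ on the $\bar u_{j_2}$ factor, since bilinear Strichartz requires fixed frequency projectors. By Remark \ref{re-y5.1}(3) and \eqref{eq-y5.1}, $|\xi(\tau)-\xi(G^j_k)|\lesssim 2^{j-19}\ep_3\ep_1^{-1/2}\ll 2^i$ throughout $G^j_k$, so the moving projector differs from the fixed one $P_{\xi(G^j_k),\geq i-10}$ only by a convolution with a kernel satisfying the bound in \eqref{condition1}, and Remark \ref{convolution} legitimizes the swap at no cost. Propagating this swap uniformly across all scales while preserving the $\ep_2$-smallness is the technical heart of the argument; once it is in place, the remaining bookkeeping follows the template of Dodson \cite{D}.
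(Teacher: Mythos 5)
Your overall plan is right in spirit — duality with $V^2_\triangle$, reduction to $l^2$-scalar space-time functions, bilinear Strichartz to exploit the frequency separation, and the summation bookkeeping that turns interval-wise $U^2$-bounds into the $\tilde Y_j$-norm — but the allocation of the three copies of $\vec u$ to the factors $\ep_2^{1/3}$, $\|\vec u\|_{\tilde X_j}^{5/3}$ and $\|\vec u\|_{\tilde Y_j}^2$ is misassigned, and as written the estimate would not close. Concretely, you propose to decompose \emph{both} high-frequency factors $P_{\xi(G^i_\alpha),\geq i-5}\vec u$ and $P_{\xi(\tau),\geq i-10}\vec u$ dyadically, but this introduces a double sum over $(i_1,i_2)$ that bilinear Strichartz (which gives geometric decay only in the gap between $\vec v$ at $\sim 2^i$ and the single higher-frequency factor it is paired with) does not control; the paper's computation in \eqref{eq-y5.31} decomposes only the $\geq i-5$ factor into $\sum_{l\geq i-5}P_{\xi(G^i_\alpha),l}\vec u$, and that single sum is what, after Cauchy--Schwarz and the rearrangement in \eqref{eq-y5.32}--\eqref{eq-y5.33'}, produces $\|\vec u\|_{\tilde Y_j}^2$.

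You also claim that the unprojected factor $u_{j_3}$ is controlled "via Lemma \ref{le-y5.7}, contributing $\|\vec u\|_{\tilde X_j}$." But Lemma \ref{le-y5.7} requires a Littlewood--Paley projector $P_{\xi(t),i}$ or $P_{\xi(t),\geq j}$ to relate $L^p_t L^q_x l^2$ to the long-time Strichartz norm; $u_{j_3}$ carries no such projector, so this step fails. In the paper the $u_{j_3}$ factor is simply bounded by $\|\vec u\|_{L^\infty_t L^2_x l^2}$ via mass conservation and contributes a constant. The factors $\ep_2^{1/6}$ and $\|\vec u\|^{5/6}_{\tilde X_j}$ (squaring to $\ep_2^{1/3}$ and $\|\vec u\|^{5/3}_{\tilde X_j}$) come from the \emph{same} high-frequency factor $P_{\xi(\tau),\geq i-10}\vec u$, by interpolating its $L^{5/2}_tL^{10}_x l^2$ norm between $L^\infty_t L^2_x l^2$ (small by $N(G^i_\alpha)\leq 2^{i-10}\ep_3^{1/2}$ and \eqref{eq-y5.2}) and $L^{25/12}_t L^{50}_x l^2$ (controlled by $\tilde X_i$ via \eqref{eq-y5.9'}). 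This is a single-factor interpolation, not a Hölder distribution across the three $\vec u$-copies as you describe. Finally, the moving-projector issue is a real but minor point (handled as you indicate by Remark \ref{convolution}); the genuine technical heart is precisely the single-factor interpolation and the sum-rearrangement in \eqref{eq-y5.32}.
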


\begin{proof}
 By Proposition \ref{pr-y5.3} (3), for any $\vec{v}=\{v_j\}_{j \in \mathbb{Z}}$, $\hat{v}_j$ is supported on $\{\xi: 2^{i-2} \leq|\xi - \xi(G_{\alpha}^{i})|\leq 2^{i+2}\}$ for every $j\in \mathbb{Z}$ and $\|\vec{v}\|_{V^2_{\triangle}(l^{2};G_{\alpha}^{i})}=1$. It follows from \eqref{eq-y2.6}, Proposition \ref{pr-y5.5}, mass conservation, Proposition \ref{pr-y5.3} (2)(6), Lemma \ref{le-y5.7}, $N(G^{i}_{\alpha})\leq 2^{i-10}\ep_3^{1/2}$ and \eqref{eq-y5.2} that
 \begin{equation}\label{eq-y5.31}
\begin{split}
   & \int_{G_{\alpha}^{i}} \<\vec{v},\sum^{\rightarrow}\limits_{\mathcal{R}(j)} (P_{\xi( G_{\alpha}^{i}), \geq i-5} u_{j_1}) (P_{\xi(\tau), \geq i-10} \bar{u}_{j_2}) (u_{j_3})\> (\tau) d\tau \\
  =& \int_{G_{\alpha}^{i}} \int_{\mathbb{R}^2} \big[\sum_{j \in \mathbb{Z}} v_j \sum_{\mathcal{R}(j)} (P_{\xi( G_{\alpha}^{i}), \geq i-5} u_{j_1}) (P_{\xi(\tau), \geq i-10} \bar{u}_{j_2}) (u_{j_3})\big](\tau,x)dx d\tau \\
  \leq& \sum_{l\geq i-5}\int_{G_{\alpha}^{i}} \int_{\mathbb{R}^2}\|\vec{v}\|_{l^{2}} \|P_{\xi(G_{\alpha}^{i}),l} \vec{u}\|_{l^{2}} \|P_{\xi(\tau),\geq i-10} \vec{u} \|_{l^{2}} \|\vec{u}\|_{l^{2}}(\tau,x)dx d\tau\\
  \leq& \sum_{l\geq i-5} \left\|\|\vec{v}\|_{l^{2}} \|P_{\xi(G_{\alpha}^{i}),l} \vec{u}\|_{l^{2}} \right\|^{1/2}_{L_t^{\frac{5}{2}}L_x^{\frac{5}{3}}(G_{\alpha}^{i} \times \mathbb{R}^2)} \|P_{\xi(G_{\alpha}^{i}),l} \vec{u}\|^{1/2}_{L_t^{5/2} L_x^{10} l^{2}(G_{\alpha}^{i} \times \mathbb{R}^2)} \|\vec{v}\|^{1/2}_{L_t^{5/2}L_x^{10}l^{2}(G_{\alpha}^{i} \times \mathbb{R}^2)}\\
  & \quad \times \|P_{\xi(\tau),\geq i-10} \vec{u}\|_{L_t^{\frac{5}{2}}L_x^{10} l^{2}(G_{\alpha}^{i} \times \mathbb{R}^2)} \|\vec{u} \|_{L_t^{\infty}L_x^2 l^{2}(G_{\alpha}^{i} \times \mathbb{R}^2)}\\
  \lesssim& \sum_{l\geq i-5}2^{\frac{i-l}{5}} \|P_{\xi(G_{\alpha}^{i}),l} \vec{u}\|_{U^2_{\triangle}(l^{2};G_{\alpha}^{i})} \|\vec{v}\|_{V^2_{\triangle}(l^{2};G_{\alpha}^{i})} \|P_{\xi(\tau),\geq i-10} \vec{u}\|_{L_t^{\frac{5}{2}}L_x^{10} l^{2}(G_{\alpha}^{i} \times \mathbb{R}^2)}\\
  \lesssim& \sum_{l\geq i-5}2^{\frac{i-l}{5}} \|P_{\xi(G_{\alpha}^{i}),l} \vec{u}\|_{U^2_{\triangle}(l^{2};G_{\alpha}^{i})} \|P_{\xi(\tau),\geq i-10} \vec{u}\|^{1/6}_{L_t^{\infty}L_x^{2} l^{2}(G_{\alpha}^{i} \times \mathbb{R}^2)} \|P_{\xi(\tau),\geq i-10} \vec{u}\|^{5/6}_{L_t^{25/12}L_x^{50} l^{2}(G_{\alpha}^{i} \times \mathbb{R}^2)}\\
  \lesssim& \ep_2^{1/6} \|\vec{u}\|^{5/6}_{\tilde{X}_i([0,T])} \sum_{l\geq i-5}2^{\frac{i-l}{5}} \|P_{\xi(G_{\alpha}^{i}),l} \vec{u}\|_{U^2_{\triangle}(l^{2};G_{\alpha}^{i})}\\
  \lesssim& \ep_2^{1/6} \|\vec{u}\|^{5/6}_{\tilde{X}_j([0,T])} \big[\sum_{l\geq i-5}2^{\frac{i-l}{5}} \|P_{\xi(G_{\alpha}^{i}),l} \vec{u}\|^2_{U^2_{\triangle}(l^{2};G_{\alpha}^{i})}\big]^{1/2}.
\end{split}
\end{equation}The last inequality follows from Cauchy-Schwarz inequality with $(\sum_{l\geq i-5}2^{\frac{i-l}{5}})^{1/2}\lesssim1$.

Now for any $0\leq l \leq j$, $G^j_k$ overlaps $2^{j-l}$ intervals $G^l_{\beta}$ and for $0 \leq i \leq l$, each $G^l_{\beta}$ overlaps $2^{l-i}$ intervals $G^i_{\alpha}$. Additionally, each $G^i_{\alpha}$ is the subset of one $G^l_{\beta}$. We can put the summation below into different groups according to $l\geq j$ and $0\leq l\leq j$. Using Fubini's Theorem, we can change the order of summation of ``$i$" and ``$l$". Therefore,
\begin{equation}\label{eq-y5.32}
\begin{split}
    & \sum_{0\leq i\leq j} 2^{i-j}\sum_{G^{i}_{\alpha}\subset G^j_k;N(G^{i}_{\alpha})\leq 2^{i-10}\ep_3^{1/2}} \big[\sum_{l\geq i-5}2^{\frac{i-l}{5}} \|P_{\xi(G_{\alpha}^{i}),l} \vec{u}\|^2_{U^2_{\triangle}(l^{2};G_{\alpha}^{i})}\big] \\
    \lesssim&  \sum_{0\leq i\leq j} 2^{i-j}\sum_{G^{l}_{\beta}\subset G^j_k;N(G^{l}_{\beta})\leq 2^{l-5}\ep_3^{1/2}} \sum_{G_{\alpha}^{i} \subset G^{l}_{\beta}} \big[\sum_{i-5\leq l\leq j}2^{\frac{i-l}{5}} \|P_{\xi(G_{\beta}^{l}),l-2\leq \cdot\leq l+2} \vec{u}\|^2_{U^2_{\triangle}(l^{2};G_{\beta}^{l})}\big]\\
    &+ \sum_{0\leq i\leq j} 2^{i-j}\sum_{G^j_k;N(G^{j}_{k})\leq 2^{l-5}\ep_3^{1/2}} \sum_{G_{\alpha}^{i} \subset G^{j}_{k}} \big[\sum_{l\geq j}2^{\frac{i-l}{5}} \|P_{\xi(G_{k}^{j}),l-2\leq \cdot\leq l+2} \vec{u}\|^2_{U^2_{\triangle}(l^{2};G_{k}^{j})}\big]\\
    \lesssim&  \sum_{0\leq i\leq j} 2^{i-j}\sum_{G^{l}_{\beta}\subset G^j_k;N(G^{l}_{\beta})\leq 2^{l-5}\ep_3^{1/2}} 2^{l-i} \big[\sum_{i-5\leq l\leq j}2^{\frac{i-l}{5}} \|P_{\xi(G_{\beta}^{l}),l-2\leq \cdot\leq l+2} \vec{u}\|^2_{U^2_{\triangle}(l^{2};G_{\beta}^{l})}\big]\\
    &+ \sum_{0\leq i\leq j} 2^{i-j}\sum_{G^j_k;N(G^{j}_{k})\leq 2^{l-5}\ep_3^{1/2}} 2^{j-i} \big[\sum_{l\geq j}2^{\frac{i-l}{5}} \|P_{\xi(G_{k}^{j}),l-2\leq \cdot\leq l+2} \vec{u}\|^2_{U^2_{\triangle}(l^{2};G_{k}^{j})}\big]\\
    \lesssim& \sum_{0\leq l\leq j} 2^{l-j}\sum_{G^{l}_{\beta}\subset G^j_k;N(G^{l}_{\beta})\leq 2^{l-5}\ep_3^{1/2}} \big( \|P_{\xi(G_{\beta}^{l}),l-2\leq \cdot\leq l+2} \vec{u}\|^2_{U^2_{\triangle}(l^{2};G_{\beta}^{l})}\big)(\sum_{0\leq i\leq l+5}2^{\frac{i-l}{5}})\\
    &+  \sum_{l\geq j;N(G^{j}_{k})\leq 2^{l-5}\ep_3^{1/2}} \big( \|P_{\xi(G_{k}^{j}),l-2\leq \cdot\leq l+2} \vec{u}\|^2_{U^2_{\triangle}(l^{2};G_{k}^{j})} \big) (\sum_{0\leq i\leq l}2^{\frac{i-l}{5}})\\
   \lesssim& \|\vec{u}\|^2_{\tilde{Y}_j([0,T])}.
\end{split}
\end{equation}
Similarly,
\begin{equation}\label{eq-y5.33'}
 \sum_{i\geq j;N(G^{j}_{k})\leq 2^{i-10}\ep_3^{1/2}} \big[\sum_{l\geq i-5}2^{\frac{i-l}{5}} \|P_{\xi(G_{k}^{j}),l} \vec{u}\|^2_{U^2_{\triangle}(l^{2};G_{\alpha}^{i})}\big]\lesssim \|\vec{u}\|^2_{\tilde{Y}_j([0,T])},
\end{equation}
which together with \eqref{eq-y5.31} and \eqref{eq-y5.32}, implies Theorem \ref{th-y5.13}.

\end{proof}

Now we take \eqref{eq-y5.27}(a) into consideration.
\begin{theorem}\label{th-y5.14}
  For any $0\leq i \leq j$, $G^i_{\alpha} \subset G^j_{k}$, $N(G^{i}_{\alpha})\leq 2^{i-10}\ep_3^{1/2}$,
\begin{equation}\label{eq-y5.38}
\begin{split}
   & \left\| \int^t_{t^i_{\alpha}} e^{i(t-\tau)\triangle} P_{\xi( G_{\alpha}^{i}), i-2 \leq \cdot \leq i+2} \big( \sum^{\rightarrow}\limits_{\mathcal{R}(j)} P_{\xi(\tau), \leq i-10} u_{j_1} P_{\xi(\tau), \leq i-10} \bar{u}_{j_2} P_{\xi( G_{\alpha}^{i}), i-5\leq\cdot\leq i+5} u_{j_3}\big)d\tau \right\|_{U^2_{\triangle}(l^{2};G_{\alpha}^{i})} \\
   \lesssim & \|P_{\xi( G_{\alpha}^{i}), i-5 \leq \cdot \leq i+5}\vec{u}\|_{U^2_{\triangle}(l^{2};G_{\alpha}^{i})} \big[\ep_2 +\|\vec{u}\|_{\tilde{Y}_i([0,T])} (1+\|\vec{u}\|_{\tilde{X}_i([0,T])})^{4} \big].
\end{split}
\end{equation}
And, for $i\geq j$, $N(G^{j}_{k})\leq 2^{i-10}\ep_3^{1/2}$,
\begin{equation}\label{eq-y5.39}
\begin{split}
   & \left\| \int^t_{t^j_{k}} e^{i(t-\tau)\triangle} P_{\xi( G_{k}^{j}), i-2 \leq \cdot \leq i+2} \big( \sum^{\rightarrow}\limits_{\mathcal{R}(j)} P_{\xi(\tau), \leq i-10} u_{j_1} P_{\xi(\tau), \leq i-10} \bar{u}_{j_2} P_{\xi( G_{k}^{j}), i-5\leq\cdot\leq i+5} u_{j_3}\big)d\tau \right\|_{U^2_{\triangle}(l^{2};G_{k}^{j})} \\
   \lesssim &2^{\frac{3(j-i)}{4}} \|P_{\xi( G_{k}^{j}), i-5 \leq \cdot \leq i+5}\vec{u}\|_{U^2_{\triangle}(l^{2};G_{k}^{j})} \big[\ep_2 +\|\vec{u}\|_{\tilde{Y}_j([0,T])} (1+\|\vec{u}\|_{\tilde{X}_j([0,T])})^{4} \big].
\end{split}
\end{equation}
\end{theorem}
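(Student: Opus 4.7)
The plan adapts Dodson's argument for the analogous statement in \cite{D}, with the key modification that every Cauchy--Schwarz step in the resonance summation be performed at the $l^2$-level (as in the proof of Proposition \ref{prop3.1'}), so that the asymmetry of the nonlinearity \eqref{1.1a} does not obstruct the argument. First I would invoke duality via Proposition \ref{pr-y5.3}(3): the $U^2_\triangle(l^2)$-norm on the left of \eqref{eq-y5.38} equals the supremum over $\|\vec v\|_{V^2_\triangle(l^2)}\le 1$ of the pairing $\int_{G^i_\alpha}\langle\vec v,\text{nonlinearity}\rangle\,d\tau$, and by projecting $\vec v$ to the shell $\{|\xi-\xi(G^i_\alpha)|\sim 2^i\}$ I may assume it is localized at the outer frequency. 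Cauchy--Schwarz across the resonance set $\mathcal R(p)$ (the same manoeuvre that produced \eqref{eq-y2.6}) then reduces the estimate to bounding the four-factor integral
\begin{equation*}
\int_{G^i_\alpha}\!\int_{\mathbb R^2}\|\vec v\|_{l^2}\,\|P_{\xi(\tau),\,\le i-10}\vec u\|_{l^2}^{\,2}\,\|P_{\xi(G^i_\alpha),\,i-5\le\cdot\le i+5}\vec u\|_{l^2}\,dx\,d\tau.
\end{equation*}

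The core step is a Cauchy--Schwarz in $L^2_{t,x}$ that splits this integral into two bilinear $L^2_{t,x}$ pieces: one pairing $\vec v$ (frequency $\sim 2^i$) with one copy of the low factor, and the other pairing the high piece (also frequency $\sim 2^i$) with the other copy. For each pairing I would decompose the low factor dyadically as $\sum_{l\le i-10}P_{\xi(\tau),l}\vec u$ and apply the bilinear Strichartz estimate of Proposition \ref{pr-y5.5} with exponent $p$ slightly larger than $2$, so that $V^2_\triangle(l^2)\hookrightarrow U^p_\triangle(l^2)$ by Proposition \ref{pr-y5.3}(2); the variable centre $\xi(\tau)$ is absorbed through Remark \ref{convolution}. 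Each dyadic shell contributes a gain of $(2^l/2^i)^{1/p}$, and the two pairings together produce a geometric series in $l$ to be summed next.

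Smallness is extracted by splitting the low factor at the threshold scale $2^{i_1}\sim 2^{-20}\ep_3^{-1/4}N(G^i_\alpha)$, which by the hypothesis $N(G^i_\alpha)\le 2^{i-10}\ep_3^{1/2}$ satisfies $i_1\le i-10$. The ``medium/high'' piece $P_{\xi(\tau),>i_1}\vec u$ has $L^\infty_t L^2_x l^2$-norm bounded by $\ep_2$ via the almost-periodicity bound \eqref{eq-y5.2} (using $N(t)\le N(G^i_\alpha)$ on $G^i_\alpha$); this smallness propagates through both bilinear pairings and, after geometric summation of the bilinear gain over $l>i_1$, yields the $\ep_2$ term. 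For the truly low piece at scale $l\le i_1$ the mass is only $\lesssim 1$, but summing the bilinear gain $(2^l/2^i)^{2/p}$ produces a factor bounded by a small power of $(N(G^i_\alpha)/2^i)\le 2^{-10}\ep_3^{1/2}$. The Strichartz norms of the high factor arising on the intervals $G^l_\beta$ from the two bilinear pairings are then transferred via Lemma \ref{le-y5.7} into combinations of $\|\vec u\|_{\tilde Y_i}$ and $\|\vec u\|_{\tilde X_i}$, producing the bracket $\ep_2+\|\vec u\|_{\tilde Y_i}(1+\|\vec u\|_{\tilde X_i})^4$ in \eqref{eq-y5.38}.

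The bound \eqref{eq-y5.39} is proved by the identical scheme, with the only change being that the outer interval $G^j_k$ ($j\le i$) is strictly smaller than the natural $G^i$-window of the outer frequency projector. Restricting the bilinear outputs to $G^j_k$ and then applying H\"older in time across the two $L^2_{t,x}$ pairings contributes the extra prefactor $2^{3(j-i)/4}$, the exponent $3/4=1/2+1/4$ being distributed between the two pairings. The principal obstacle throughout will be the bookkeeping: the sub-cases (both low factors $\le i_1$), (one above, one below), and (both $>i_1$) must be treated separately and recombined, all while preserving the resonance constraint $\mathcal R(p)$ and the $l^2$ summation at every Cauchy--Schwarz step -- it is precisely this $l^2$-level treatment, rather than the $h^1$-level one, that makes the long-time Strichartz norms of Definition \ref{de-y5.2} compatible with the resonant nonlinearity.
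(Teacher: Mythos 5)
Your outline correctly identifies the $l^2$-level duality and the threshold split at $2^{i_1}\sim \ep_3^{-1/4}N(G^i_\alpha)$ as ingredients, but it misses the two structural pillars of the paper's argument, and without them the estimate does not close.

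First, the paper does not estimate the Duhamel integral directly via duality over all of $G^i_\alpha$. It uses Lemma~\ref{le-y4.5} to split the $U^2_\triangle(l^2)$ norm over a union of consecutive subintervals into a ``free'' part (an $\ell^1$ sum of $L^2_x$ norms of $\int_{J^m}e^{-i\tau\triangle}\vec F\,d\tau$) plus an $\ell^2$ sum of local $DU^2_\triangle$ norms; the low-frequency scale $l_2$ dictates which subintervals $J_l$ (those with $N(J_l)\gtrsim \ep_3^{1/2}2^{l_2}$) versus which $G^{l_2}_\beta$ the Duhamel term is cut along. This decomposition is what links the output to the norms $\tilde X_i$, $\tilde Y_i$ through their defining $\ell^2$ sums over $G^{l_2}_\beta$; replacing it by a single dual pairing over $G^i_\alpha$ severs that link.

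Second, and more decisively, you rely only on the ``naive'' bilinear estimate Proposition~\ref{pr-y5.5}. Counting the normalization in Definition~\ref{de-y5.2}, the $U^2$ mass of $P_{\xi(t),l_2}\vec u$ accumulated over the $2^{i-l_2}$ subintervals $G^{l_2}_\beta\subset G^i_\alpha$ is $\lesssim 2^{i-l_2}\|\vec u\|^2_{\tilde X_i}$, so that applying Proposition~\ref{pr-y5.5} per $G^{l_2}_\beta$ and $\ell^2$-summing yields a bound of size $\|\vec u\|^2_{\tilde X_i}$ with \emph{no} decay in $l_2$; summing over $0\le l_2\le i-10$ then costs a factor of $i\sim\log M$. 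This is exactly the logarithmic loss the paper eliminates via the three interaction-Morawetz bilinear estimates, Theorems~\ref{th-y5.16}, \ref{th-y5.17}, and \ref{th-y5.19} (in the paper's words, they ``give a logarithmic improvement over what would be obtained from \eqref{eq-y4.2'} directly''). Your proposal does not invoke or supply any substitute for these, so the bracket $\bigl[\ep_2 + \|\vec u\|_{\tilde Y_i}(1+\|\vec u\|_{\tilde X_i})^4\bigr]$ cannot be reached — one would instead get an extra factor growing with $i$, which would then destroy the bootstrap closing Theorem~\ref{th-y6.2}. The Hölder-in-time heuristic for the $2^{3(j-i)/4}$ prefactor in \eqref{eq-y5.39} is also unsupported; that factor in fact falls out of the same Lemma~\ref{le-y4.5} + improved bilinear scheme applied with the outer interval $G^j_k$ shorter than the frequency scale $2^i$ warrants.
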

Theorem \ref{th-y5.14} and Theorem \ref{th-y5.13} yield Theorem \ref{th-y5.10} by summation over ``$i$" in the same way as \eqref{eq-y5.38} and \eqref{eq-y5.39}.

\end{proof}

\begin{proof}[Proof of Theorem \ref{th-y5.14}]
Since each term in \eqref{eq-y5.27}(a) has only one high frequency factor and two low frequency factors, the proof of Theorem \ref{th-y5.14} is considerably involved and will occupy the remainder of this section. The proof will focus on \eqref{eq-y5.38}, as the proof of \eqref{eq-y5.39} is nearly identical.

For a given $G^i_{\alpha}$ there are at most two small intervals $J_1$, $J_2$ that overlap $G^i_{\alpha}$ but are not contained in $G^i_{\alpha}$. Let $\tilde{G}^i_{\alpha}=G^i_{\alpha}\backslash (J_1\cup J_2)$. Then by Proposition \ref{pr-y5.3} (2)(3)(5)(6),
\begin{equation}\label{eq-y5.40}
\begin{split}
   & \left\| \int^t_{t^i_{\alpha}} e^{i(t-\tau)\triangle} P_{\xi( G_{\alpha}^{i}), i-2 \leq \cdot \leq i+2} \big( \sum^{\rightarrow}\limits_{\mathcal{R}(j)} P_{\xi(\tau), \leq i-10} u_{j_1} P_{\xi(\tau), \leq i-10} \bar{u}_{j_2} P_{\xi( G_{\alpha}^{i}), i-5\leq\cdot\leq i+5} u_{j_3}\big)d\tau \right\|_{U^2_{\triangle}(l^{2};G_{\alpha}^{i})} \\
   \lesssim &\left\| \int^t_{t^i_{\alpha}} e^{i(t-\tau)\triangle} P_{\xi( G_{\alpha}^{i}), i-2 \leq \cdot \leq i+2} \big( \sum^{\rightarrow}\limits_{\mathcal{R}(j)} P_{\xi(\tau), \leq i-10} u_{j_1} P_{\xi(\tau), \leq i-10} \bar{u}_{j_2} P_{\xi( G_{\alpha}^{i}), i-5\leq\cdot\leq i+5} u_{j_3}\big)d\tau \right\|_{U^2_{\triangle}(l^{2};\tilde{G}_{\alpha}^{i})} \\
   &\quad +\big\|\sum^{\rightarrow}\limits_{\mathcal{R}(j)} P_{\xi(\tau), \leq i-10} u_{j_1} P_{\xi(\tau), \leq i-10} \bar{u}_{j_2} P_{\xi( G_{\alpha}^{i}), i-5\leq\cdot\leq i+5} u_{j_3}\big\|_{L_{t,x}^{4/3}l^{2}(J_1\cap G_{\alpha}^{i})}\\
   &\quad +\big\|\sum^{\rightarrow}\limits_{\mathcal{R}(j)} P_{\xi(\tau), \leq i-10} u_{j_1} P_{\xi(\tau), \leq i-10} \bar{u}_{j_2} P_{\xi( G_{\alpha}^{i}), i-5\leq\cdot\leq i+5} u_{j_3}\big\|_{L_{t,x}^{4/3}l^{2}(J_2\cap G_{\alpha}^{i})}.
\end{split}
\end{equation}
We may assume $t^i_{\alpha}\in \tilde{G}_{\alpha}^{i}$. If $t^i_{\alpha}\notin \tilde{G}_{\alpha}^{i}$, then we can move $t^i_{\alpha}$ into $\tilde{G}_{\alpha}^{i}$ at a cost of
\begin{equation}\label{eq-y5.41}
\begin{split}
   &\big\|\sum^{\rightarrow}\limits_{\mathcal{R}(j)} P_{\xi(\tau), \leq i-10} u_{j_1} P_{\xi(\tau), \leq i-10} \bar{u}_{j_2} P_{\xi( G_{\alpha}^{i}), i-5\leq\cdot\leq i+5} u_{j_3}\big\|_{L_{t,x}^{4/3}l^{2}(J_1\cap G_{\alpha}^{i})}\\
   +& \big\|\sum^{\rightarrow}\limits_{\mathcal{R}(j)} P_{\xi(\tau), \leq i-10} u_{j_1} P_{\xi(\tau), \leq i-10} \bar{u}_{j_2} P_{\xi( G_{\alpha}^{i}), i-5\leq\cdot\leq i+5} u_{j_3}\big\|_{L_{t,x}^{4/3}l^{2}(J_2\cap G_{\alpha}^{i})}.
\end{split}
\end{equation}
Indeed, if $t^i_{\alpha}\notin \tilde{G}_{\alpha}^{i}$,  we suppose without loss of generality that $t^i_{\alpha}\in J_1$ and let $\tilde{t}^i_{\alpha}$ be the left endpoint of $\tilde{G}_{\alpha}^{i}$. By Strichartz estimate Proposition \ref{pr-y3.1},
\begin{equation}\label{eq-y5.41'}
\begin{split}
   & \left\| \int^{\tilde{t}^i_{\alpha}}_{t^i_{\alpha}} e^{i(\tilde{t}^i_{\alpha}-\tau)\triangle} P_{\xi( G_{\alpha}^{i}), i-2 \leq \cdot \leq i+2} \big( \sum^{\rightarrow}\limits_{\mathcal{R}(j)} P_{\xi(\tau), \leq i-10} u_{j_1} P_{\xi(\tau), \leq i-10} \bar{u}_{j_2} P_{\xi( G_{\alpha}^{i}), i-5\leq\cdot\leq i+5} u_{j_3}\big)d\tau \right\|_{L_x^2 l^{2}} \\
   \lesssim&\big\|\sum^{\rightarrow}\limits_{\mathcal{R}(j)} P_{\xi(\tau), \leq i-10} u_{j_1} P_{\xi(\tau), \leq i-10} \bar{u}_{j_2} P_{\xi( G_{\alpha}^{i}), i-5\leq\cdot\leq i+5} u_{j_3}\big\|_{L_{t,x}^{4/3}l^{2}(J_1\cap G_{\alpha}^{i})}.
\end{split}
\end{equation}
Then, for $J_l\in \{J_1,J_2\}$, by the bilinear estimate Proposition \ref{pr-y5.4} and \ref{pr-y5.5}, Proposition \ref{pr-y5.3} (6), $N(t)\leq 2^{i-5}\ep_3^{1/2}$ on $G^i_{\alpha}$, \eqref{eq-y5.1} and \eqref{eq-y5.2}, Lemma \ref{le-y6.2} we have
\begin{equation}\label{eq-y5.42}
\begin{split}
   &\big\|\sum^{\rightarrow}\limits_{\mathcal{R}(j)} P_{\xi(\tau), \leq i-10} u_{j_1} P_{\xi(\tau), \leq i-10} \bar{u}_{j_2} P_{\xi( G_{\alpha}^{i}), i-5\leq\cdot\leq i+5} u_{j_3}\big\|_{L_{t,x}^{4/3}l^{2}(J_l\cap G_{\alpha}^{i})}\\
   \lesssim&\big\| \| P_{\xi(J_l), \leq \ep_3^{-1/4}N(J_l)} \vec{u}\|_{l^{2}} \|P_{\xi( G_{\alpha}^{i}), i-5\leq\cdot\leq i+5} \vec{u}\|_{l^{2}} \big\|_{L_{t,x}^{2}(J_l\cap G_{\alpha}^{i})} \|P_{\xi(\tau), \leq i-10} \vec{u}\|_{L_{t,x}^{4}l^{2}(J_l\cap G_{\alpha}^{i})}\\
   +& \| P_{\xi(J_l), \geq \ep_3^{-1/4}N(J_l)} \vec{u}\|_{L_t^{\infty}L^2_x l^{2}(J_l\cap G^i_{\alpha})} \|P_{\xi( G_{\alpha}^{i}), i-5\leq\cdot\leq i+5} \vec{u}\|_{L_t^{8/3}L^8_x l^{2}(J_l\cap G_{\alpha}^{i})} \|P_{\xi(\tau), \leq i-10} \vec{u}\|_{L_t^{8/3}L^8_x l^{2}(J_l\cap G_{\alpha}^{i})}\\
   \lesssim& \sum_{2^k \leq \ep_3^{-1/4}N(J_l)} 2^{\frac{k-i}{2}} \|P_{\xi( G_{\alpha}^{i}), i-5\leq\cdot\leq i+5} \vec{u}\|_{U^2_{\triangle} (l^{2};G_{\alpha}^{i})} \|P_{\xi(J_l), k} \vec{u}\|_{U^2_{\triangle} (l^{2};J_l)}+\ep_2 \|P_{\xi( G_{\alpha}^{i}), i-5\leq\cdot\leq i+5} \vec{u}\|_{U^2_{\triangle} (l^{2};G_{\alpha}^{i})}\\
   \lesssim& \ep_2 \|P_{\xi( G_{\alpha}^{i}), i-5\leq\cdot\leq i+5} \vec{u}\|_{U^2_{\triangle} (l^{2};G_{\alpha}^{i})}.
\end{split}
\end{equation}
Therefore, at the price of \eqref{eq-y5.42} we have now simplified to a situation in which $G_{\alpha}^{i}$ is the union of a bunch of small intervals $J_l$'s.

Now by Lemma \ref{le-y4.5},
\begin{align}\label{eq-y5.43}
   & \left\| \int^t_{t^i_{\alpha}} e^{i(t-\tau)\triangle} P_{\xi( G_{\alpha}^{i}), i-2 \leq \cdot \leq i+2} \big( \sum^{\rightarrow}\limits_{\mathcal{R}(j)} P_{\xi(\tau), \leq i-10} u_{j_1} P_{\xi(\tau), \leq i-10} \bar{u}_{j_2} P_{\xi( G_{\alpha}^{i}), i-5\leq\cdot\leq i+5} u_{j_3}\big)d\tau \right\|_{U^2_{\triangle}(l^{2};G_{\alpha}^{i})} \\ \label{eq-y5.44}
   \lesssim & \sum_{0\leq l_2\leq i-10}\big( \sum_{J_l\subset G^{i}_{\alpha};N(J_l)\geq \ep_3^{1/2}2^{l_{2}-5}} \big\| P_{\xi( G_{\alpha}^{i}), i-2 \leq \cdot \leq i+2} \big( \sum^{\rightarrow}\limits_{\mathcal{R}(j)} P_{\xi(t), l_2} u_{j_1} P_{\xi(t), \leq l_2} \bar{u}_{j_2} P_{\xi( G_{\alpha}^{i}), i-5\leq\cdot\leq i+5} u_{j_3}\big) \big\|^2_{DU^{2}_{\triangle}(l^{2};J_l)} \big)^{1/2}\\ \label{eq-y5.45}
   +&\sum_{0\leq l_2\leq i-10}  \sum_{J_l\subset G^{i}_{\alpha};N(J_l)\geq \ep_3^{1/2}2^{l_{2}-5}} \big\|\int_{J_l} e^{-it\triangle} P_{\xi( G_{\alpha}^{i}), i-2 \leq \cdot \leq i+2} \big( \sum^{\rightarrow}\limits_{\mathcal{R}(j)} P_{\xi(t), l_2} u_{j_1} P_{\xi(t), \leq l_2} \bar{u}_{j_2} P_{\xi( G_{\alpha}^{i}), i-5\leq\cdot\leq i+5} u_{j_3}\big)dt \big\|_{L_x^2l^{2}}\\ \label{eq-y5.46}
   + & \sum_{0\leq l_2\leq i-10}\big( \sum_{G^{l_2}_{\beta}\subset G^{i}_{\alpha};N(G^{l_2}_{\beta})\leq \ep_3^{1/2}2^{l_{2}-5}} \big\| P_{\xi( G_{\alpha}^{i}), i-2 \leq \cdot \leq i+2} \big( \sum^{\rightarrow}\limits_{\mathcal{R}(j)} P_{\xi(t), l_2} u_{j_1} P_{\xi(t), \leq l_2} \bar{u}_{j_2} P_{\xi( G_{\alpha}^{i}), i-5\leq\cdot\leq i+5} u_{j_3}\big) \big\|^2_{DU^{2}_{\triangle}(l^{2};G^{l_2}_{\beta})} \big)^{1/2}\\ \label{eq-y5.47}
   +&\sum_{0\leq l_2\leq i-10}  \sum_{G^{l_2}_{\beta}\subset G^{i}_{\alpha};N(G^{l_2}_{\beta})\leq \ep_3^{1/2}2^{l_{2}-5}} \big\|\int_{G^{l_2}_{\beta}} e^{-it\triangle} P_{\xi( G_{\alpha}^{i}), i-2 \leq \cdot \leq i+2} \big( \sum^{\rightarrow}\limits_{\mathcal{R}(j)} P_{\xi(t), l_2} u_{j_1} P_{\xi(t), \leq l_2} \bar{u}_{j_2} P_{\xi( G_{\alpha}^{i}), i-5\leq\cdot\leq i+5} u_{j_3}\big)dt \big\|_{L_x^2l^{2}}.
\end{align}
First let's look at \eqref{eq-y5.44}. On one hand, \eqref{eq-y5.1} implies $|\xi(t)-\xi(G^i_{\alpha})| \ll 2^i$.
Hence taking $\vec{v}=\{v_j\}_{j \in \mathbb{Z}}$, $\hat{v}_j$ supported on $\{\xi:2^{i-2}\leq |\xi - \xi(G_{\alpha}^{i})|\leq 2^{i+2} \}$ for every $j\in \mathbb{Z}$ and $\|\vec{v}\|_{V^2_{\triangle}(l^{2};J_l)}=1$, then by Cauchy-Schwarz and \eqref{eq-y2.6}, Proposition \ref{pr-y5.5}, $V^2_{\triangle}(l^{2})\subset U^4_{\triangle}(l^{2})\subset L^{4}_t L^{4}_x l^{2}$, Lemma \ref{le-y6.2} we have
\begin{equation}\label{eq-y5.48}
\begin{split}
&\left| \int_{J_l} \int_{\mathbb{R}^2} \sum_{j\in \mathbb{Z}} v_j \sum\limits_{\mathcal{R}(j)} P_{\xi(t), l_2} u_{j_1} P_{\xi(t), \leq l_2} \bar{u}_{j_2} P_{\xi( G_{\alpha}^{i}), i-5\leq\cdot\leq i+5} u_{j_3}dxdt\right|\\
\lesssim& \int_{J_l} \int_{\mathbb{R}^2} \|\vec{v}\|_{l^{2}} \|P_{\xi(t), \leq l_2} \vec{u}\|_{l^{2}} \|P_{\xi( G_{\alpha}^{i}), i-5\leq\cdot\leq i+5} \vec{u}\|_{l^{2}} \|P_{\xi(t), l_2} \vec{u}\|_{l^{2}}dxdt\\
\lesssim& \left\|\|P_{\xi( G_{\alpha}^{i}), i-5\leq\cdot\leq i+5} \vec{u}\|_{l^{2}} \|P_{\xi(t), l_2} \vec{u}\|_{l^{2}}\right\|_{L^{2}_t L^{2}_x (J_l\times\mathbb{R}^2)} \|\vec{v}\|_{L^{4}_t L^{4}_x l^{2} (J_l)} \|P_{\xi(t), \leq l_2} \vec{u}\|_{L^{4}_t L^{4}_x l^{2} (J_l)}\\
\lesssim& 2^{\frac{l_2-i}{2}} \|P_{\xi( G_{\alpha}^{i}), i-5\leq\cdot\leq i+5} \vec{u}\|_{U^2_{\triangle}(l^{2};G_{\alpha}^{i})}.
\end{split}
\end{equation}
On the other hand, by Proposition \ref{pr-y5.5}, Proposition \ref{pr-y5.3} (2)(6), Lemma \ref{le-y6.2} and $N(J_l)\geq \ep_3^{1/2} 2^{l_2-5}$,
\begin{equation}\label{eq-y5.49}
  \begin{split}
    &\left| \int_{J_l} \int_{\mathbb{R}^2} \sum_{j\in \mathbb{Z}} v_j \sum\limits_{\mathcal{R}(j)} P_{\xi(t), l_2} u_{j_1} P_{\xi(t), \leq l_2} \bar{u}_{j_2} P_{\xi( G_{\alpha}^{i}), i-5\leq\cdot\leq i+5} u_{j_3}dxdt\right|\\
     \lesssim& \left\|\|P_{\xi( G_{\alpha}^{i}), i-5\leq\cdot\leq i+5} \vec{u}\|_{l^{2}} \|P_{\xi(t), l_2} \vec{u}\|_{l^{2}}\right\|_{L^{2}_t L^{2}_x (J_l\times\mathbb{R}^2)} \left\|\|\vec{v}\|_{l^{2}} \|P_{\xi(t), \leq l_2} \vec{u}\|_{l^{2}}\right\|^{5/6}_{L^{20/9}_t L^{20/11}_x (J_l\times\mathbb{R}^2)}\\
     &\quad \times \|P_{\xi(t), \leq l_2} \vec{u}\|^{1/6}_{L^{\frac{8}{3}}_t L^{8}_x l^{2} (J_l)} \|\vec{v}\|^{1/6}_{L^{\frac{8}{3}}_t L^{8}_x l^{2} (J_l)}\\
     \lesssim& 2^{\frac{l_2-i}{2}} \|P_{\xi( G_{\alpha}^{i}), i-5\leq\cdot\leq i+5} \vec{u}\|_{U^2_{\triangle}(l^{2};G_{\alpha}^{i})}\times 2^{\frac{3(l_2-i)}{8}}\\
     \lesssim& \ep_3^{-7/16} 2^{-7i/8} N(J_l)^{7/8}\|P_{\xi( G_{\alpha}^{i}), i-5\leq\cdot\leq i+5} \vec{u}\|_{U^2_{\triangle}(l^{2};G_{\alpha}^{i})}.
  \end{split}
\end{equation}
Interpolating \eqref{eq-y5.48} and \eqref{eq-y5.49}, we obtain

\begin{equation*}
\begin{split}
  & \big\| P_{\xi( G_{\alpha}^{i}), i-2 \leq \cdot \leq i+2} \big( \sum^{\rightarrow}\limits_{\mathcal{R}(j)} P_{\xi(\tau), l_2} u_{j_1} P_{\xi(\tau), \leq l_2} \bar{u}_{j_2} P_{\xi( G_{\alpha}^{i}), i-5\leq\cdot\leq i+5} u_{j_3}\big) \big\|^2_{DU^{2}_{\triangle}(l^{2};J_l)} \\
  \lesssim& \ep_3^{-1/2} 2^{-i} N(J_l) 2^{\frac{3(l_2-i)}{7}}\|P_{\xi( G_{\alpha}^{i}), i-5\leq\cdot\leq i+5} \vec{u}\|^2_{U^2_{\triangle}(l^{2};G_{\alpha}^{i})}.
\end{split}
\end{equation*}
Finally, by \eqref{eq-y5.34},
\begin{equation}\label{eq-y5.50}
\begin{split}
  \eqref{eq-y5.44}\lesssim&\sum_{0\leq l_2\leq i-10}\big( \sum_{J_l\subset G^{i}_{\alpha};N(J_l)\geq \ep_3^{1/2}2^{l_{2}-5}} \ep_3^{-1/2} 2^{-i} N(J_l) 2^{\frac{3(l_2-i)}{7}}\|P_{\xi( G_{\alpha}^{i}), i-5\leq\cdot\leq i+5} \vec{u}\|^2_{U^2_{\triangle}(l^{2};G_{\alpha}^{i})} \big)^{1/2} \\
  \lesssim&\ep_3^{1/4}\|P_{\xi( G_{\alpha}^{i}), i-5\leq\cdot\leq i+5} \vec{u}\|_{U^2_{\triangle}(l^{2};G_{\alpha}^{i})}\\
  \lesssim&\ep_2^{2}\|P_{\xi( G_{\alpha}^{i}), i-5\leq\cdot\leq i+5} \vec{u}\|_{U^2_{\triangle}(l^{2};G_{\alpha}^{i})}.
\end{split}
\end{equation}
For \eqref{eq-y5.45} - \eqref{eq-y5.47} we will use three bilinear estimates below that rely on the interaction Morawetz estimate of \cite{FL}, whose proof will be postponed to the next section. Such estimates will give a logarithmic improvement over what would be obtained from \eqref{eq-y4.2'} directly. This improvement is quite helpful to the proof.
\begin{theorem}[First bilinear Strichartz estimate]\label{th-y5.16}
Suppose that $\vec{v}_0=\{v_{0,j}\}_{j\in \mathbb{Z}}\in L_x^2l^{2} (\mathbb{R}^2\times \mathbb{Z} )$ and $\hat{v}_{0,j}$ is supported on
$\{\xi:2^{i-5}\leq |\xi - \xi(G_{\alpha}^{i})|\leq 2^{i+5} \}$ for every $j\in \mathbb{Z}$.  Also suppose $J_l\subset G^i_{\alpha}$ is a small interval and $|\xi(t)-\xi(G^i_{\alpha})|\leq 2^{i-10}$ for all $t\in G^i_{\alpha}$. Then for any $0\leq l_2 \leq i-10$,
\begin{equation}\label{eq-y5.51}
\begin{split}
  &\left\| \|e^{it\triangle}\vec{v}_0\|_{l^{2}} \|P_{\xi(t),\leq l_2} \vec{u}\|_{l^{2}} \right\|^2_{L_t^2L_x^2(J_l\times\mathbb{R}^2)} \lesssim 2^{l_2-i}\|\vec{v}_0\|^2_{L_x^2l^{2}(\mathbb{R}^2\times \mathbb{Z})}  \\
  +& 2^{-i}\|\vec{v}_0\|^2_{L_x^2l^{2}(\mathbb{R}^2\times \mathbb{Z})} \big(\int_{J_l} |\xi'(t)|\sum_{l_1\leq l_2}2^{\frac{l_1-l_2}{2}} \|P_{\xi(t),l_2-3 \leq \cdot\leq l_2+3} \vec{u}\|_{L_x^2l^{2}(\mathbb{R}^2\times \mathbb{Z})} \|P_{\xi(t), l_1} \vec{u}\|_{L_x^2l^{2}(\mathbb{R}^2\times \mathbb{Z})} dt\big)
\end{split}
\end{equation}
The same estimate also holds when $P_{\xi(t),\leq l_2}$ is replaced by $P_{\xi(t), l_2}$.
\end{theorem}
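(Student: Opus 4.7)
The plan is to run a bilinear interaction Morawetz identity in the spirit of Planchon--Vega \cite{FL}, adapted to the vector-valued setting, for the pair $(e^{it\Delta}\vec{v}_0,\ P_{\xi(t),\leq l_2}\vec{u})$. Since the linear Schr\"odinger flow commutes with the $l^2$-sum in the $j$-index, the $l^2$-summed densities
\begin{equation*}
 w(t,x) := \sum_{j\in\mathbb{Z}} |(e^{it\Delta}v_{0,j})(x)|^2, \qquad W(t,x) := \sum_{k\in\mathbb{Z}} |(P_{\xi(t),\leq l_2}u_k)(t,x)|^2
\end{equation*}
obey scalar-type local mass-conservation identities, so the bilinear virial computation applies verbatim to the pair $(w,W)$. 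The first term on the right-hand side of \eqref{eq-y5.51} is the ``clean'' bilinear Strichartz output, and the second is the error produced by the time-dependence of the frequency-cutoff $\xi(t)$.

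First I would introduce a Morawetz functional of the form
\begin{equation*}
 M(t) := \sum_{j,k\in\mathbb{Z}} \int_{\mathbb{R}^2\times\mathbb{R}^2}\!\! \frac{x-y}{|x-y|}\cdot\mathrm{Im}\bigl[|e^{it\Delta}v_{0,j}(x)|^2\, \overline{P_{\xi(t),\leq l_2}u_k(y)}\,\nabla P_{\xi(t),\leq l_2}u_k(y)\bigr]\,dx\,dy \ -\ \text{(symmetric term)},
\end{equation*}
which satisfies $|M(t)|\lesssim \|\vec{v}_0\|_{L^2_xl^2}^2$ by mass conservation. Differentiating in $t$ using $(i\partial_t+\Delta)e^{it\Delta}\vec{v}_0 = 0$ and $(i\partial_t+\Delta)P_{\xi(t),\leq l_2}u_k = P_{\xi(t),\leq l_2}F_k(\vec{u}) + [\partial_t,P_{\xi(t),\leq l_2}]u_k$, the standard bilinear virial calculation yields, after integrating over $J_l$ and using Bernstein at the separated frequencies $2^{l_2}\ll 2^i$ to convert the $|\nabla|^{1/2}$-output of Planchon--Vega into the plain $L^2_{t,x}$ norm on the left, a bound of the form
\begin{equation*}
 \bigl\|w^{1/2}W^{1/2}\bigr\|_{L^2_{t,x}(J_l\times\mathbb{R}^2)}^2 \ \lesssim\  2^{l_2-i}\,\|\vec{v}_0\|_{L^2_xl^2}^2\ +\ \mathcal{N}\ +\ \mathcal{C},
\end{equation*}
where $\mathcal{N}$ collects the nonlinear errors and $\mathcal{C}$ the commutator errors.

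Second, the nonlinear error $\mathcal{N}$ arising from $P_{\xi(t),\leq l_2}\mathbf{F}(\vec{u})$ paired against the Morawetz weight is controlled using the symmetric pointwise estimate $\|\mathbf{F}(\vec{u})\|_{l^2}\lesssim\|\vec{u}\|_{l^2}^3$ from \eqref{eq-y2.6}, the bilinear Strichartz of Proposition \ref{pr-y5.4} applied to the frequency-separated pair, and the small-data bound $\|\vec{u}\|_{L^4_{t,x}l^2(J_l)}\lesssim 1$ of Lemma \ref{le-y6.2}; this contribution is $\mathcal{O}(\ep_2\cdot 2^{l_2-i}\|\vec{v}_0\|_{L^2_xl^2}^2)$ and is absorbed into the first term on the right. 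The commutator satisfies $[\partial_t,P_{\xi(t),\leq l_2}]u_k = \xi'(t)\cdot\widetilde{P}_{\xi(t),l_2}u_k$, where $\widetilde{P}_{\xi(t),l_2}$ has symbol $(\nabla\phi)(2^{-l_2}(\xi-\xi(t)))$ supported on the annulus $|\xi-\xi(t)|\sim 2^{l_2}$, so effectively $\widetilde{P}_{\xi(t),l_2}u_k$ equals $P_{\xi(t),l_2-3\leq\cdot\leq l_2+3}u_k$ up to a bounded multiplier. Splitting the companion factor $P_{\xi(t),\leq l_2}u_k = \sum_{l_1\leq l_2}P_{\xi(t),l_1}u_k$ and pairing each dyadic piece against the high-frequency $e^{it\Delta}\vec{v}_0$ via Proposition \ref{pr-y5.4} (which produces the gain $2^{(l_1-l_2)/2}$ for each $l_1\leq l_2$), the resulting estimate of $\mathcal{C}$ is precisely the second term of \eqref{eq-y5.51}.

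The main obstacle is the careful bookkeeping of the commutator $[\partial_t,P_{\xi(t),\leq l_2}]$ and the extraction of the prescribed weighted sum. One must first verify that $\partial_t[\phi(2^{-l_2}(\xi-\xi(t)))]$ is a Fourier multiplier supported on $|\xi-\xi(t)|\sim 2^{l_2}$ of size $|\xi'(t)|\cdot 2^{-l_2}$, so that the output genuinely corresponds to the boundary piece $P_{\xi(t),l_2-3\leq\cdot\leq l_2+3}\vec{u}$; then pair it dyadically with the low-frequency shells $P_{\xi(t),l_1}\vec{u}$ using bilinear Strichartz against the scale-$2^i$ factor $e^{it\Delta}\vec{v}_0$, so the dual gain $2^{(l_1-l_2)/2}$ appears. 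A secondary subtlety, resolved by the frequency separation $2^{l_2}\ll 2^i$ together with Bernstein, is that the natural Planchon--Vega output is at the $|\nabla|^{1/2}$-level, and passing to the plain $L^2_{t,x}$ norm of $\|e^{it\Delta}\vec{v}_0\|_{l^2}\|P_{\xi(t),\leq l_2}\vec{u}\|_{l^2}$ costs the factor $2^{l_2-i}$ (not $2^{(l_2-i)/2}$), which is exactly what appears on the right.
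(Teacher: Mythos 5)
Your overall strategy — a bilinear interaction Morawetz identity in the style of Planchon--Vega for the pair $(e^{it\Delta}\vec{v}_0,\,P_{\xi(t),\leq l_2}\vec{u})$, with the ``defocusing'' cubic part dropping out of the derivative of $M(t)$ and only the frequency-cutoff commutator errors $N_{1,j'}$, $N_{2,j'}$ surviving — is the right one, and it is exactly the approach the paper uses; the paper does not write out the proof of Theorem~\ref{th-y5.16} explicitly but refers to the proof of Theorem~\ref{th-y5.17} (which goes through \eqref{eq-y5.52}, \eqref{eq-y5.68}--\eqref{eq-y5.70}, \eqref{eq-y5.84}) as the model, and on the single small interval $J_l$ the nonlinear $N_1$-errors are directly controllable since $\|\vec{u}\|_{L^4_{t,x}l^2(J_l)}\lesssim 1$ by Lemma~\ref{le-y6.2}.

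However, your account of \emph{where the second term on the right-hand side of \eqref{eq-y5.51} comes from} is wrong in a way that would block a fully written-out proof. You attribute the weight $2^{(l_1-l_2)/2}$ to Proposition~\ref{pr-y5.4} bilinear Strichartz applied to $P_{\xi(t),l_1}\vec{u}$ and the high-frequency $e^{it\Delta}\vec{v}_0$. That estimate gives the gain $2^{(l_1-i)/2}$, not $2^{(l_1-l_2)/2}$, and moreover it does not apply: after integrating the Morawetz identity, the commutator contribution sits inside integrals of the type $\int_{J_l}\!\int\!\int |v_j(t,y)|^2\,\tfrac{x-y}{|x-y|}\cdot\mathrm{Im}[\bar w_{j'}N_{2,j'}](t,x)\,dx\,dy\,dt$ (and the analogue with $\mathrm{Im}[\bar v_j(\nabla-i\xi)v_j]$), where the $v$-factors live in the $y$-variable and the $w$-, $N_2$-factors in the $x$-variable; these decouple under H\"older into $\|\vec v\|^2_{L^\infty_tL^2_y}$ times an $L^1_tL^1_x$ quantity, so there is no bilinear $L^2_{t,x}$ product of $v$ with $P_{\xi(t),l_1}\vec{u}$ on which to apply Proposition~\ref{pr-y5.4}. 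In the paper's model computation, the $N_2$-contribution (see the second line of the estimate for \eqref{eq-y5.68}, and \eqref{eq-y5.84}) is handled purely by mass conservation, Bernstein, and the explicit size of the commutator symbol \eqref{eq-yfm1}, which is $|\xi'(t)|\,2^{-l_2}$ on the annulus $|\xi-\xi(t)|\sim 2^{l_2}$ — a factor of $2^{-l_2}$ that your description drops. The weight you want then appears from the product of the Planchon--Vega normalisation $2^{l_2-2i}$, the symbol size $2^{-l_2}$, the Bernstein factor $2^{l_1}$ (or $2^{l_1/2}$ from the $(\nabla-i\xi(t))^{1/2}$ output) on $\|(\nabla-i\xi(t))P_{\xi(t),l_1}\vec u\|_{L^2}$, followed by the elementary relaxation $2^{l_1-i}\leq 2^{(l_1-l_2)/2}$ valid since $l_1+l_2\leq 2(i-10)<2i$. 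Relatedly, $|M(t)|\lesssim\|\vec v_0\|^2_{L^2_xl^2}$ is not correct as stated — because $M(t)$ contains a gradient of $v$, one has $|M(t)|\lesssim 2^i\|\vec v_0\|^2_{L^2_xl^2}$; it is only after multiplying by the $2^{l_2-2i}$ normalisation that the first term $2^{l_2-i}\|\vec v_0\|^2$ emerges, so the two errors in your sketch happen to cancel in the leading term but would misalign the bookkeeping of the remainder.
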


Now let's use Theorem \ref{th-y5.16} to estimate \eqref{eq-y5.45}.
\begin{equation}\label{eq-y5.64}
\begin{split}
 &\big\|\int_{J_l} e^{-it\triangle} P_{\xi( G_{\alpha}^{i}), i-2 \leq \cdot \leq i+2} \big( \sum^{\rightarrow}\limits_{\mathcal{R}(j)} P_{\xi(t), l_2} u_{j_1} P_{\xi(t), \leq l_2} \bar{u}_{j_2} P_{\xi( G_{\alpha}^{i}), i-5\leq\cdot\leq i+5} u_{j_3}\big)dt \big\|_{L_x^2l^{2}}\\
 \lesssim&\sup_{\|\vec{v}_0\|_{L^2_xh^{0}}=1} \big\| \sum_{j}e^{it\triangle}v_{0,j} \sum\limits_{\mathcal{R}(j)} P_{\xi(t), l_2} u_{j_1} P_{\xi(t), \leq l_2} \bar{u}_{j_2} P_{\xi( G_{\alpha}^{i}), i-5\leq\cdot\leq i+5} u_{j_3}\big\|_{L^1_{t,x}(J_l\times\mathbb{R}^2)}\\
 \lesssim&\sup_{\|\vec{v}_0\|_{L^2_xh^{0}}=1} \big\| \|e^{it\triangle}\vec{v}_{0}\|_{l^{2}}  \|P_{\xi(t), l_2} \vec{u}\|_{l^{2}} \|P_{\xi(t), \leq l_2} \vec{u}\|_{l^{2}} \|P_{\xi( G_{\alpha}^{i}), i-5\leq\cdot\leq i+5} \vec{u}\|_{l^{2}}\big\|_{L^1_{t,x}(J_l\times\mathbb{R}^2)},
\end{split}
\end{equation}
for $\vec{v}_0=\{v_{0,j}\}_{j\in \mathbb{Z}}$ with $\hat{v}_{0,j}$ supported on
$\{\xi:2^{i-5}\leq |\xi - \xi(G_{\alpha}^{i})|\leq 2^{i+5} \}$ for every $j\in \mathbb{Z}$. On one hand, by Theorem \ref{th-y5.16}, we have
\begin{equation*}
\begin{split}
&\big\|\|e^{it\triangle}\vec{v}_{0}\|_{l^{2}}  \|P_{\xi(t), l_2} \vec{u}\|_{l^{2}}\big\|_{L^2_tL^2_x(J_l\times\mathbb{R}^2)}\\
\lesssim&2^{(l_2-i)/2}+2^{-i/2} \big(\int_{J_l} |\xi'(t)|\sum_{l_1\leq l_2}2^{\frac{l_1-l_2}{2}} \|P_{\xi(t),l_2-3 \leq \cdot\leq l_2+3} \vec{u}\|_{L_x^2l^{2}(\mathbb{R}^2\times \mathbb{Z})} \|P_{\xi(t), l_1} \vec{u}\|_{L_x^2l^{2}(\mathbb{R}^2\times \mathbb{Z})} dt\big)^{1/2}.
\end{split}
\end{equation*}
On the other hand, applying Theorem \ref{th-y5.16} to atoms of $P_{\xi( G_{\alpha}^{i}), i-5\leq\cdot\leq i+5} \vec{u}$, we have
\begin{equation*}
\begin{split}
  &\big\|\|P_{\xi(t), \leq l_2} \vec{u}\|_{l^{2}} \|P_{\xi( G_{\alpha}^{i}), i-5\leq\cdot\leq i+5} \vec{u}\|_{l^{2}}\big\|_{L^2_tL^2_x(J_l\times\mathbb{R}^2)}\\
  \lesssim&2^{(l_2-i)/2}\|P_{\xi( G_{\alpha}^{i}), i-5\leq\cdot\leq i+5} \vec{u}\|_{U^2_{\triangle}(l^{2};J_l\times\mathbb{R}^2)}+ 2^{-i/2}\|P_{\xi( G_{\alpha}^{i}), i-5\leq\cdot\leq i+5} \vec{u}\|_{U^2_{\triangle}(l^{2};J_l\times\mathbb{R}^2)}\\
  &  \times \big(\int_{J_l} |\xi'(t)|\sum_{l_1\leq l_2}2^{\frac{l_1-l_2}{2}} \|P_{\xi(t),l_2-3 \leq \cdot\leq l_2+3}  \vec{u}\|_{L_x^2l^{2}(\mathbb{R}^2\times \mathbb{Z})} \|P_{\xi(t), l_1} \vec{u}\|_{L_x^2l^{2}(\mathbb{R}^2\times \mathbb{Z})} dt\big)^{1/2}.
\end{split}
\end{equation*}
So
\begin{equation}\label{eq-y5.65}
\begin{split}
  \eqref{eq-y5.64} \lesssim& 2^{l_2-i}\|P_{\xi( G_{\alpha}^{i}), i-5\leq\cdot\leq i+5} \vec{u}\|_{U^2_{\triangle}(l^{2};J_l\times\mathbb{R}^2)}+ 2^{-i}\|P_{\xi( G_{\alpha}^{i}), i-5\leq\cdot\leq i+5} \vec{u}\|_{U^2_{\triangle}(l^{2};J_l\times\mathbb{R}^2)}\\
  &\times\big(\int_{J_l} |\xi'(t)|\sum_{l_1\leq l_2}2^{\frac{l_1-l_2}{2}} \|P_{\xi(t),l_2-3 \leq \cdot\leq l_2+3}  \vec{u}\|_{L_x^2l^{2}(\mathbb{R}^2\times \mathbb{Z})} \|P_{\xi(t), l_1} \vec{u}\|_{L_x^2l^{2}(\mathbb{R}^2\times \mathbb{Z})} dt\big).
\end{split}
\end{equation}
Rearranging the order of summation and by \eqref{eq-y5.34}, we have
\begin{equation}\label{eq-y5.65''}
\begin{split}
&\sum_{0\leq l_2\leq i-10}  \sum_{J_l\subset G^{i}_{\alpha};N(J_l)\geq \ep_3^{1/2}2^{l_{2}-5}}2^{l_2-i}\|P_{\xi( G_{\alpha}^{i}), i-5\leq\cdot\leq i+5} \vec{u}\|_{U^2_{\triangle}(l^{2};J_l\times\mathbb{R}^2)}\\
\leq & 2^{-i} \ep_3^{-1/2}\|P_{\xi(G_{\alpha}^{i}),i-5\leq\cdot\leq i+5} \vec{u}\|_{U^2_{\triangle}(l^{2};G_{\alpha}^{i}\times\mathbb{R}^2)} \sum_{J_l\subset G^i_{\alpha}}N(J_l)\\
\lesssim &  \ep_3^{1/2} \|P_{\xi(G_{\alpha}^{i}),i-5\leq\cdot\leq i+5} \vec{u}\|_{U^2_{\triangle}(l^{2};G_{\alpha}^{i}\times\mathbb{R}^2)}.
\end{split}
\end{equation}
H\"{o}lder inequality and Young's inequality imply that
\begin{equation}\label{eq-y5.67'}
\sum_{0\leq l_2\leq i-10}\|P_{\xi(t),l_2-3 \leq \cdot\leq l_2+3}  \vec{u}\|_{L_x^2l^{2}(\mathbb{R}^2\times \mathbb{Z})}\sum_{l_1\leq l_2}2^{\frac{l_1-l_2}{2}} \|P_{\xi(t), l_1} \vec{u}\|_{L_x^2l^{2}(\mathbb{R}^2\times \mathbb{Z})} \lesssim \|\vec{u}\|^2_{L_x^2l^{2}(\mathbb{R}^2\times \mathbb{Z})}.
\end{equation}
So by \eqref{eq-y5.1}, Theorem \ref{th-y4.5} (3), \eqref{eq-y5.34} and conservation of mass,
\begin{equation}\label{eq-y5.66}
\begin{split}
&\sum_{0\leq l_2\leq i-10}  \sum_{J_l\subset G^{i}_{\alpha}} 2^{-i}\|P_{\xi( G_{\alpha}^{i}), i-5\leq\cdot\leq i+5} \vec{u}\|_{U^2_{\triangle}(l^{2};J_l\times\mathbb{R}^2)}\\
&\quad\times\big(\int_{J_l} |\xi'(t)|\sum_{l_1\leq l_2}2^{\frac{l_1-l_2}{2}} \|P_{\xi(t),l_2-3 \leq \cdot\leq l_2+3}  \vec{u}\|_{L_x^2l^{2}(\mathbb{R}^2\times \mathbb{Z})} \|P_{\xi(t), l_1} \vec{u}\|_{L_x^2l^{2}(\mathbb{R}^2\times \mathbb{Z})} dt\big)\\
\lesssim& \sum_{J_l\subset G^{i}_{\alpha}} 2^{-i}\|P_{\xi( G_{\alpha}^{i}), i-5\leq\cdot\leq i+5} \vec{u}\|_{U^2_{\triangle}(l^{2};G_{\alpha}^{i}\times\mathbb{R}^2)}(N(J_l)\ep_1^{-1/2} \|\vec{u}\|^2_{L^{\infty}_tL_x^2l^{2}(J_l\times\mathbb{R}^2\times \mathbb{Z})})\\
\lesssim& \ep_3^{1/2} \|P_{\xi(G_{\alpha}^{i}),i-5\leq\cdot\leq i+5} \vec{u}\|_{U^2_{\triangle}(l^{2};G_{\alpha}^{i}\times\mathbb{R}^2)}.
\end{split}
\end{equation}
Combining \eqref{eq-y5.65} and \eqref{eq-y5.66},
$$\eqref{eq-y5.45}\lesssim \ep_3^{1/2} \|P_{\xi(G_{\alpha}^{i}),i-5\leq\cdot\leq i+5} \vec{u}\|_{U^2_{\triangle}(l^{2};G_{\alpha}^{i}\times\mathbb{R}^2)}.$$
Next we deal with \eqref{eq-y5.46}. In order to control \eqref{eq-y5.46}, we need the following
\begin{theorem}[Second bilinear Strichartz estimate]\label{th-y5.17}
Suppose that $\vec{v}_0=\{v_{0,j}\}_{j\in \mathbb{Z}}\in L_x^2l^{2} (\mathbb{R}^2\times \mathbb{Z} )$, $\hat{v}_{0,j}$ supported on
$\{\xi:2^{i-5}\leq |\xi - \xi(G_{\alpha}^{i})|\leq 2^{i+5} \}$ for every $j\in \mathbb{Z}$.  Then for any $0\leq l_2 \leq i-10$, $G_{\beta}^{l_2}\subset G^i_{\alpha}$,
\begin{equation}\label{eq-y5.67}
\left\| \|e^{it\triangle}\vec{v}_0\|_{l^{2}} \cdot\|P_{\xi(t),\leq l_2} \vec{u}\|_{l^{2}} \right\|^2_{L_t^2L_x^2(G_{\beta}^{l_2} \times\mathbb{R}^2)} \lesssim \|\vec{v}_0\|^2_{L_x^2l^{2}(\mathbb{R}^2\times \mathbb{Z})}(1+\|\vec{u}\|^4_{\tilde{X}_{i}(G_{\alpha}^{i}\times\mathbb{R}^2)}).
\end{equation}
\end{theorem}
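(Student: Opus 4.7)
The plan is to combine a Duhamel decomposition of $\vec u$ on $G_\beta^{l_2}$ with the free bilinear Strichartz estimate (Proposition \ref{pr-y5.4} together with Remark \ref{convolution}) and a telescoped version of the interaction-Morawetz argument that underlies Theorem \ref{th-y5.16}. Fix a reference time $t_0\in G_\beta^{l_2}$ and write
$$
P_{\xi(t),\le l_2}\vec u(t) = P_{\xi(t),\le l_2}\Big[e^{i(t-t_0)\Delta}\vec u(t_0) - i\int_{t_0}^t e^{i(t-s)\Delta}\mathbf F(\vec u(s))\,ds\Big].
$$
Decompose $\vec u(t_0) = \sum_{l\ge 0}P_{\xi(t_0),l}\vec u(t_0)$ dyadically. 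For each piece in the frequency band $l\le l_2$, Remark \ref{convolution} applies with high frequency $\sim 2^i$ (from $\vec v_0$) and low frequency $\sim 2^l$; since $l_2\le i-10$, this yields a bilinear $L^2_{t,x}$ bound of size $(2^l/2^i)^{1/2}\|\vec v_0\|_{L^2_x l^{2}}\|P_{\xi(t_0),l}\vec u(t_0)\|_{L^2_x l^{2}}$. Summing geometrically in $l$ and using mass conservation, the linear contribution is already controlled by $\|\vec v_0\|^2_{L^2_x l^{2}}$.

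It then remains to bound the squared pairing of $\|e^{it\Delta}\vec v_0\|_{l^{2}}$ against $\|P_{\xi(t),\le l_2}\vec W\|_{l^{2}}$ in $L^2_{t,x}(G_\beta^{l_2})$, where $\vec W(t):=\int_{t_0}^t e^{i(t-s)\Delta}\mathbf F(\vec u(s))\,ds$. Partition $G_\beta^{l_2}$ into small intervals $J_l$ (from Lemma \ref{le-y6.2}) and apply on each $J_l$ the same interaction Morawetz estimate of \cite{FL} used in Theorem \ref{th-y5.16}, this time to the cubic triple product that arises from $\mathbf F(\vec u)$. On each $J_l$ this produces a main term controlled by the $U^2_\Delta(l^{2})$-norm of a dyadic piece $P_{\xi(t),l_2-3\le\cdot\le l_2+3}\vec u$, plus an error of the form $\int_{J_l}|\xi'(t)|\sum_{l_1\le l_2}2^{(l_1-l_2)/2}\|P_{\xi(t),l_2-3\le\cdot\le l_2+3}\vec u\|_{L^2_x l^{2}}\|P_{\xi(t),l_1}\vec u\|_{L^2_x l^{2}}\,dt$. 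Summing the errors over $J_l\subset G_\beta^{l_2}$ and invoking \eqref{eq-y5.1}, \eqref{eq-y5.34}, \eqref{eq-y5.67'}, and mass conservation folds them into an $O(1)$ factor times $\|\vec v_0\|^2_{L^2_x l^{2}}$. The main terms, when iterated once more through Duhamel and Proposition \ref{pr-y5.5}, produce four factors of $\vec u$ at frequency-localized $U^2_\Delta(l^{2})$ level; reorganizing the sum in $l_2$ matches these exactly against the $\tilde X_i$-norm on $G_\alpha^i$, which delivers the claimed $\|\vec u\|^4_{\tilde X_i(G_\alpha^i\times\mathbb R^2)}$ factor.

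The main obstacle is the asymmetry of the nonlinearity $\mathbf F(\vec u)=\sum_{\mathcal R(j)}u_{j_1}\bar u_{j_2}u_{j_3}$ with respect to the Fourier $j$-variable, which makes placing a $\|\cdot\|_{l^{2}}$ outside while preserving bilinear pairings subtle. The identity \eqref{1.1a} and the pointwise bound $\|\mathbf F(\vec u)\|_{l^{2}}\lesssim \|\vec u\|_{l^{2}}^3$ from Proposition \ref{prop3.1'} allow us to treat $\|\vec u(t,x)\|_{l^{2}}$ as a scalar-valued space-time function and to mimic the scalar template of \cite{D}. A secondary difficulty is the time-dependence of the frequency cutoff $P_{\xi(t),\le l_2}$; the commutator errors it generates are absorbed via $|\xi'(t)|\lesssim N(t)^3\ep_1^{-1/2}$ from \eqref{eq-y5.1} combined with $\int_{G_\alpha^i}N(t)^3\,dt\lesssim 2^i\ep_3$ implied by \eqref{eq-y5.34}.
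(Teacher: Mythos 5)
Your proposal reaches the boundary-term and single-high-frequency pieces, but it misses the single hardest ingredient in the paper's proof: the normal form (integration-by-parts in time / space--time resonances) argument used in Lemma~\ref{le-y5.18} to control the $F_2$-type piece of the commutator error, i.e.\ the part of $Im[\bar w_{j'}N_{1,j'}]$ with exactly two high-frequency factors $u^h = P_{\xi(t),\,l_2-5\le\cdot\le l_2}u$. On a single small interval $J_l$ (Theorem~\ref{th-y5.16}) this piece is harmless because $\|\vec u\|_{L^4_{t,x}l^2(J_l)}\lesssim1$, but on $G_\beta^{l_2}$ the solution accumulates $L^4_{t,x}$ mass of order $2^{l_2/4}$, and the naive estimate of \eqref{eq-y5.70} with two $u^h$'s and the factor $(\nabla-i\xi(t))v_j\sim 2^i v_j$ gives something like $2^{l_2-2i}\cdot 2^i\cdot 2^{2l_2}\cdot\|\vec u^h\|^2_{L^4_{t,x}l^2}\|\vec v_0\|^2$, which blows up as $l_2\to i$. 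The paper gets around this by writing the two-high-frequency product in Fourier variables, observing that the phase $q(\eta)=|\eta_1|^2+|\eta_2|^2-|\eta_3|^2-|\eta_4|^2\gtrsim 2^{2l_2}$ on the relevant support (two large $\eta$'s, two small $\eta$'s, and a low overall output frequency because of $P_{\le l_2-10}$), and then integrating by parts in $t$ against $e^{itq(\eta)}/q(\eta)$ to gain the crucial factor $2^{-2l_2}$. Your ``reorganizing the sum in $l_2$'' and ``iterating once more through Duhamel'' do not produce this cancellation, and no perturbative or partition-into-$J_l$ strategy can, because the obstruction is a genuinely oscillatory-integral gain, not an $L^p$ one.

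A secondary issue: your small-interval telescoping is essentially equivalent to running the Morawetz monotonicity directly on all of $G_\beta^{l_2}$ (the boundary terms telescope to the endpoints, and that is exactly what the paper does with a single functional $M(t)$ and $\sup_{t\in G_\beta^{l_2}}|M(t)|$), so it gains nothing over the paper's setup while obscuring the bookkeeping. Also, the linear term $P_{\xi(t),\le l_2}e^{i(t-t_0)\Delta}\vec u(t_0)$ is not a free wave because the cutoff $P_{\xi(t),\le l_2}$ is time-dependent; the discrepancy produces exactly the $N_2$-type error driven by $\xi'(t)$, which you list among the errors to be absorbed, so this is fixable---but it means your ``linear part is already controlled by $\|\vec v_0\|^2$'' step is stated slightly too strongly. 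To close the argument you need to (i) decompose $N_{1}$ by the number of $u^h$ factors (as in \eqref{eq-y5.85}), (ii) observe $F_0$ vanishes by the resonance symmetry \eqref{eq-y5.85'}, handle $F_1$ and $F_3,F_4$ by integration by parts in $y$ plus Hardy--Littlewood--Sobolev and the $\tilde X_i$ norm, and (iii) handle $F_2$ by the Fourier normal form argument of Lemma~\ref{le-y5.18}. Without (iii) the proof has a gap.
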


Now let's use Theorem \ref{th-y5.17} to estimate \eqref{eq-y5.46}. For $G^{i-10}_{\beta}\subset G^i_{\alpha}$, $0\leq l_2\leq i-10$, Remark \ref{convolution} and Theorem \ref{th-y5.17} ensure that
\begin{equation}\label{eq-y5.93}
\begin{split}
&\big\| \|P_{\xi(G^i_{\alpha}),i-5\leq \cdot\leq i+5}\vec{u}\|_{l^{2}}\cdot \|P_{\xi(t),\leq l_2}\vec{u}\|_{l^{2}} \big\|_{L^2_{t,x}(G^{i-10}_{\beta}\times \mathbb{R}^2 )}\\
\lesssim & \big\| \|P_{\xi(G^i_{\alpha}),i-5\leq \cdot\leq i+5}\vec{u}\|_{l^{2}}\cdot \|P_{\xi(t),\leq i-10}\vec{u}\|_{l^{2}} \big\|_{L^2_{t,x}(G^{i-10}_{\beta}\times \mathbb{R}^2 )} \\
\lesssim & \|P_{\xi(G^i_{\alpha}),i-5\leq \cdot\leq i+5} \vec{u}\|_{U^2_{\triangle}(l^{2};G^i_{\alpha})} (1+\|\vec{u}\|^2_{\tilde{X}_{i}(G_{\alpha}^{i}\times\mathbb{R}^2)}).
\end{split}
\end{equation}
Since $G_{\alpha}^{i}$ consists of $2^{10}$ subintervals $G^{i-10}_{\beta}$, we have
\begin{equation}\label{eq-y5.94}
\big\| \|P_{\xi(G^i_{\alpha}),i-5\leq \cdot\leq i+5}\vec{u}\|_{l^{2}}\cdot \|P_{\xi(t),\leq l_2}\|_{l^{2}} \big\|_{L^2_{t,x}(G^{i}_{\alpha}\times \mathbb{R}^2 )}\lesssim \|P_{\xi(G^i_{\alpha}),i-5\leq \cdot\leq i+5}\vec{u}\|_{U^2_{\triangle}(l^{2};G^i_{\alpha})}(1+\|\vec{u}\|^2_{\tilde{X}_{i}(G_{\alpha}^{i}\times\mathbb{R}^2)}).
\end{equation}
Now suppose that, for each $G^{l_2}_{\beta} \subset G^i_{\alpha}$, take $\vec{v}^{l_2}_{\beta}=\{v^{l_2}_{\beta,j}\}_{j \in \mathbb{Z}}$ with $\hat{v}^{l_2}_{\beta,j}$ supported on $\{\xi:2^{i-2}\leq |\xi - \xi(G_{\alpha}^{i})|\leq 2^{i+2} \}$ for every $j\in \mathbb{Z}$ and $\|\vec{v}^{l_2}_{\beta}\|_{V^2_{\triangle}(l^{2};G^{l_2}_{\beta})}=1$, then by H\"{o}lder inequality, \eqref{eq-y2.6} and \eqref{eq-y5.94}, we have
\begin{equation*}
\begin{split}
&\big( \sum_{G^{l_2}_{\beta}\subset G^{i}_{\alpha};N(G^{l_2}_{\beta})\leq \ep_3^{1/2}2^{l_{2}-5}} \big\|\sum_{j} v^{l_2}_{\beta,j} \sum\limits_{\mathcal{R}(j)} (P_{\xi(t), l_2} u_{j_1}) (P_{\xi(t), \leq l_2} \bar{u}_{j_2}) (P_{\xi( G_{\alpha}^{i}), i-5\leq\cdot\leq i+5} u_{j_3}) \big\|^2_{L^1_{t,x}(G^{l_2}_{\beta}\times\mathbb{R}^2)} \big)^{1/2}\\
\lesssim & \big(\sup_{G^{l_2}_{\beta}\subset G^{i}_{\alpha};N(G^{l_2}_{\beta})\leq \ep_3^{1/2}2^{l_{2}-5}} \big\|\|\vec{v}^{l_2}_{\beta}\|_{l^{2}}\cdot \|P_{\xi(t),l_2}\vec{u}\|_{l^{2}} \big\|_{L^2_{t,x}(G^{l_2}_{\beta}\times\mathbb{R}^2)}\big) \big(\big\| \|P_{\xi(G^i_{\alpha}),i-5\leq \cdot\leq i+5}\vec{u}\|_{l^{2}}\cdot \|P_{\xi(t),\leq l_2}\|_{l^{2}} \big\|_{L^2_{t,x}(G^{i}_{\alpha}\times \mathbb{R}^2 )}\big)\\
\lesssim & \big(\sup_{G^{l_2}_{\beta}\subset G^{i}_{\alpha};N(G^{l_2}_{\beta})\leq \ep_3^{1/2}2^{l_{2}-5}} \big\|\|\vec{v}^{l_2}_{\beta}\|_{l^{2}}\cdot \|P_{\xi(t),l_2}\vec{u}\|_{l^{2}} \big\|_{L^2_{t,x}(G^{l_2}_{\beta}\times\mathbb{R}^2)}\big) \|P_{\xi(G^i_{\alpha}),i-5\leq \cdot\leq i+5}\vec{u}\|_{U^2_{\triangle}(l^{2};G^i_{\alpha})} (1+\|\vec{u}\|^2_{\tilde{X}_{i}(G_{\alpha}^{i}\times\mathbb{R}^2)}).
\end{split}
\end{equation*}
By Proposition \ref{pr-y5.5}, $U^2_{\triangle}(l^{2})\subset V^2_{\triangle}(l^{2})\subset U^3_{\triangle}(l^{2})\subset L^{3}_t L^{6}_x l^{2}$ and $N(G^{l_2}_{\beta})\leq \ep_3^{1/2}2^{l_{2}-5}$,
\begin{equation*}
\begin{split}
&\big\|\|\vec{v}^{l_2}_{\beta}\|_{l^{2}}\cdot \|P_{\xi(t),l_2}\vec{u}\|_{l^{2}} \big\|_{L^2_{t,x}(G^{l_2}_{\beta}\times\mathbb{R}^2)}\\
\lesssim& \|\|\vec{v}^{l_2}_{\beta}\|_{l^{2}}\cdot \|P_{\xi(t),l_2}\vec{u}\|_{l^{2}} \big\|^{1/2}_{L^3_{t}L^{3/2}_{x}(G^{l_2}_{\beta}\times\mathbb{R}^2)} \|\vec{v}^{l_2}_{\beta}\|^{1/2}_{L^3_{t}L^{6}_{x}l^{2}(G^{l_2}_{\beta}\times\mathbb{R}^2)} \|P_{\xi(t),l_2}\vec{u}\|^{1/2}_{L^3_{t}L^{6}_{x}l^{2}(G^{l_2}_{\beta}\times\mathbb{R}^2)}\\
\lesssim& 2^{(l_2-i)/6}\|\vec{u}\|_{\tilde{Y}_i(G^{i}_{\alpha})}.
\end{split}
\end{equation*}
Therefore,
\begin{equation*}
  \eqref{eq-y5.46}\lesssim \|P_{\xi(G^i_{\alpha}),i-5\leq \cdot\leq i+5}\vec{u}\|_{U^2_{\triangle}(l^{2};G^i_{\alpha})} \|\vec{u}\|_{\tilde{Y}_i(G^{i}_{\alpha})} (1+\|\vec{u}\|^2_{\tilde{X}_{i}(G_{\alpha}^{i}\times\mathbb{R}^2)}).
\end{equation*}

Finally we estimate \eqref{eq-y5.47}. Take $\vec{v}_0=\{v_{0,j}\}_{j\in \mathbb{Z}}$ with $\hat{v}_{0,j}$ supported on
$\{\xi:2^{i-2}\leq |\xi - \xi(G_{\alpha}^{i})|\leq 2^{i+2} \}$ for every $j\in \mathbb{Z}$ and $\|\vec{v}_0\|_{L^2_xh^{0}}=1$, by H\"{o}lder and Proposition \ref{pr-y5.4},
\begin{equation}\label{eq-y5.95}
\begin{split}
 &\big\|\int_{G^{l_2}_{\beta}} e^{-it\triangle} P_{\xi( G_{\alpha}^{i}), i-2 \leq \cdot \leq i+2} \big( \sum^{\rightarrow}\limits_{\mathcal{R}(j)} P_{\xi(t), l_2} u_{j_1} P_{\xi(t), \leq l_2} \bar{u}_{j_2} P_{\xi( G_{\alpha}^{i}), i-5\leq\cdot\leq i+5} u_{j_3}\big)dt \big\|_{L_x^2l^{2}}\\
 \lesssim&\sup_{\|\vec{v}_0\|_{L^2_xh^{0}}=1} \big\| \sum_{j}e^{it\triangle}v_{0,j} \sum\limits_{\mathcal{R}(j)} P_{\xi(t), l_2} u_{j_1} P_{\xi(t), \leq l_2} \bar{u}_{j_2} P_{\xi( G_{\alpha}^{i}), i-5\leq\cdot\leq i+5} u_{j_3}\big\|_{L^1_{t,x}(G^{l_2}_{\beta}\times\mathbb{R}^2)}\\
 \lesssim&\sup_{\|\vec{v}_0\|_{L^2_xh^{0}}=1} \big\| \|e^{it\triangle}\vec{v}_{0}\|_{l^{2}}  \|P_{\xi(t), l_2} \vec{u}\|_{l^{2}} \|P_{\xi(t), \leq l_2} \vec{u}\|_{l^{2}} \|P_{\xi( G_{\alpha}^{i}), i-5\leq\cdot\leq i+5} \vec{u}\|_{l^{2}}\big\|_{L^1_{t,x}(G^{l_2}_{\beta}\times\mathbb{R}^2)}\\
 \lesssim&2^{(l_2-i)/2}\| P_{\xi(G^{l_2}_{\beta}),l_2-2\leq\cdot\leq l_2+2} \vec{u}\|_{U^2_{\triangle}(l^{2};G^{l_2}_{\beta})} \big\|\|P_{\xi(t), \leq l_2} \vec{u}\|_{l^{2}} \cdot \|P_{\xi( G_{\alpha}^{i}), i-5\leq\cdot\leq i+5} \vec{u}\|_{l^{2}}\big\|_{L^2_{t,x}(G^{l_2}_{\beta}\times\mathbb{R}^2)}.
\end{split}
\end{equation}
Therefore, by Cauchy-Schwartz inequality,
\begin{equation}\label{eq-y5.96}
\eqref{eq-y5.47}\lesssim \|\vec{u}\|_{\tilde{Y}_i(G^{i}_{\alpha})} \big( \sum_{0\leq l_2\leq i-10}\big\|\|P_{\xi(t), \leq l_2} \vec{u}\|_{l^{2}} \cdot \|P_{\xi( G_{\alpha}^{i}), i-5\leq\cdot\leq i+5} \vec{u}\|_{l^{2}}\big\|^2_{L^2_{t,x}(G^{i}_{\alpha}\times\mathbb{R}^2)}\big)^{1/2}.
\end{equation}
Theorem \ref{th-y5.14} will follow from the final bilinear estimate.
\begin{theorem}[Third bilinear Strichartz estimate]\label{th-y5.19}
\begin{equation}\label{eq-y5.97}
\sum_{0\leq l_2\leq i-10}\big\|\|P_{\xi(t), \leq l_2} \vec{u}\|_{l^{2}} \cdot \|e^{it\triangle}\vec{v}_0\|_{l^{2}}\big\|^2_{L^2_{t,x}(G^{i}_{\alpha}\times\mathbb{R}^2)}\lesssim \|\vec{v}_0\|^2_{L^2_xl^2}(1+\|\vec{u}\|^6_{\tilde{X}_i(G^{i}_{\alpha})}).
\end{equation}
\end{theorem}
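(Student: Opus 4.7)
Following the blueprint of the proofs of Theorems~\ref{th-y5.16} and \ref{th-y5.17}, the plan is to apply the interaction Morawetz estimate of \cite{FL} to the pair $(\vec v(t),\vec w_{l_2}(t))$ with $\vec v(t)=e^{it\Delta}\vec v_0$ and $\vec w_{l_2}(t):=P_{\xi(t),\leq l_2}\vec u(t)$, but run on the large interval $G^i_\alpha$ rather than on a matched-scale piece $G^{l_2}_\beta$. This choice is crucial: if one instead summed Theorem~\ref{th-y5.17} over $G^{l_2}_\beta\subset G^i_\alpha$ one would pick up a multiplicity $2^{i-l_2}$ that is not summable in $l_2$. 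Schematically, the interaction Morawetz on $G^i_\alpha$ will produce, for each fixed $l_2$,
\begin{equation*}
\bigl\|\|\vec v\|_{l^2}\|\vec w_{l_2}\|_{l^2}\bigr\|^2_{L^2_{t,x}(G^i_\alpha\times\mathbb{R}^2)}\lesssim \|\vec v_0\|^2_{L^2l^2}\|\vec w_{l_2}\|^2_{L^\infty_tL^2_xl^2}+|\mathcal E^\xi_{l_2}|+|\mathcal E^F_{l_2}|,
\end{equation*}
with $\mathcal E^\xi_{l_2}$ absorbing the commutator $[\partial_t,P_{\xi(t),\leq l_2}]$ generated by the time-varying frequency cutoff and $\mathcal E^F_{l_2}$ absorbing the projected nonlinearity $P_{\xi(t),\leq l_2}\mathbf F(\vec u)$.

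\textbf{Error control and summation in $l_2$.} The commutator error $\mathcal E^\xi_{l_2}$ is treated exactly as in the proofs of the preceding two bilinear theorems: \eqref{eq-y5.1} gives $|\xi'(t)|\lesssim \ep_1^{-1/2}N(t)^3$, and then \eqref{eq-y5.34}, \eqref{eq-y5.2} together with mass conservation combine to dominate $\sum_{l_2=0}^{i-10}|\mathcal E^\xi_{l_2}|$ by $\ep_2^2\|\vec v_0\|^2_{L^2l^2}$, just as in \eqref{eq-y5.66}. For $\mathcal E^F_{l_2}$, the symmetric rewriting \eqref{1.1a} together with Proposition~\ref{prop3.1'} gives the pointwise bound $\|\mathbf F(\vec u)\|_{l^2}\lesssim \|\vec u\|^3_{l^2}$; after pairing against the Morawetz weight via Hardy--Littlewood--Sobolev and H\"older, each dyadic factor of $\vec u$ produced in the expansion is converted into a factor of $\|\vec u\|_{\tilde X_i(G^i_\alpha)}$ through Lemma~\ref{le-y5.7}. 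The outer sum over $l_2$ is then performed by writing $P_{\xi(t),\leq l_2}=\sum_{l_1\leq l_2}P_{\xi(t),l_1}$, swapping the order of summation, and invoking the Littlewood--Paley square function (Lemma~\ref{le-y2.1}); the orthogonality of the moving-frequency pieces collapses this sum without a logarithmic loss and produces the factor $1+\|\vec u\|^6_{\tilde X_i(G^i_\alpha)}$ on the right-hand side.

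\textbf{Main obstacle.} The principal difficulty is performing the $l_2$-summation without picking up a logarithm in $i$ or a factor of $2^{i-l_2}$. The remedy is twofold: first, absorb the mass term $\|\vec w_{l_2}\|^2_{L^\infty L^2l^2}$ into a scale-independent boundary contribution of the Morawetz identity, which is available because the frequency support of $\vec v_0$ already selects the scale $2^i$ and thus renders the boundary term effectively single-scale; second, exploit the orthogonality of $\{P_{\xi(t),l_1}\vec u\}_{l_1}$ throughout the estimate of $\mathcal E^F_{l_2}$ in order to trade the nested sum $\sum_{l_2}\sum_{l_1\leq l_2}$ for a single Littlewood--Paley square function of $\vec u$. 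A secondary technical subtlety is that the commutator between the time-varying projector $P_{\xi(t),\leq l_2}$ and the interaction Morawetz weight contains a term of the form $\xi'(t)\cdot\nabla_\xi$ acting on the symbol, which is handled via \eqref{eq-y5.1}--\eqref{eq-y5.2} in exactly the same spirit as in the control of \eqref{eq-y5.45} and \eqref{eq-y5.66}.
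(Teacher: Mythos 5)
The paper itself does not supply a written proof of Theorem~\ref{th-y5.19}; it simply remarks that it ``is similar'' to the proof of Theorem~\ref{th-y5.17}. Your top-level strategy --- running the Planchon--Vega interaction Morawetz identity for the pair $(e^{it\triangle}\vec v_0,\,P_{\xi(t),\leq l_2}\vec u)$ directly on $G^i_\alpha$ for each $l_2$ and then summing in $l_2$ --- is the right reading of that remark, and your opening observation (that summing \eqref{eq-y5.67} over the $\approx 2^{i-l_2}$ subintervals $G^{l_2}_\beta\subset G^i_\alpha$ costs an unsummable $2^{i-l_2}$) is a correct diagnosis of why a cheap reduction to Theorem~\ref{th-y5.17} fails. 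The treatment of the boundary contribution (mass conservation gives $|M(t)|\lesssim 2^i\|\vec v_0\|^2_{L^2_xl^2}$, hence a per-$l_2$ contribution $\lesssim 2^{l_2-i}\|\vec v_0\|^2_{L^2_xl^2}$ that is geometrically summable) and of the commutator error $\mathcal E^\xi_{l_2}$ via \eqref{eq-y5.1}, \eqref{eq-y5.2}, \eqref{eq-y5.34} is also consistent with what is done in \eqref{eq-y5.66} and in the proof of Lemma~\ref{le-y5.18}.

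However, your treatment of the nonlinear error $\mathcal E^F_{l_2}$ is too coarse to close the argument, and this is where the real content lies. You invoke only the crude bound $\|\mathbf F(\vec u)\|_{l^2}\lesssim\|\vec u\|^3_{l^2}$ from Proposition~\ref{prop3.1'}, but the proof of Lemma~\ref{le-y5.18} shows that this is not enough: the identity \eqref{eq-y5.85'}, namely $\sum_{j'}\mathrm{Im}\big[\bar w_{j'}\sum_{\mathcal R(j')}w_{j'_1}\bar w_{j'_2}w_{j'_3}\big]=0$, is what kills the all-low-frequency contribution $F_{0,j'}$; the remaining pieces $F_{1,j'},\dots,F_{4,j'}$ each contain at least one factor of $P_{\xi(t),>l_2-5}\vec u$, and this is what produces the $2^{(l_2-i)/c}$-type gain needed for the $l_2$-sum. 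Dropping this cancellation and replacing it with a pointwise product bound loses the decay entirely. Moreover, your appeal to ``Littlewood--Paley orthogonality'' to collapse $\sum_{l_2}\sum_{l_1\leq l_2}$ without a logarithm is not substantiated: Lemma~\ref{le-y2.1} is an $L^p$ square-function estimate and does not directly control a nested $l_2$-sum of $L^2_{t,x}$ bilinear quantities. In fact, several contributions in the proof of Lemma~\ref{le-y5.18} --- notably the analogues of \eqref{eq-y5.131}--\eqref{eq-y5.132} --- come out of the Hölder/HLS allocation with \emph{no} visible decay in $l_2$ (the $2^{-l_2}$ from $2^{l_2-2i}\cdot 2^{2i}\cdot 2^{-2l_2}$ is exactly cancelled by the Bernstein bound $\|\vec u^l\|^2_{L^\infty_t L^4_xl^2}\lesssim 2^{l_2}$), so running the identical Hölder allocation on $G^i_\alpha$ and summing naively in $l_2$ would produce a factor of $i$. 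Getting rid of that logarithm requires a genuinely different allocation of norms (trading mass-conservation bounds on low-frequency pieces for $\tilde X_i$-norm bounds via Lemma~\ref{le-y5.7} in the right places), and this reallocation is precisely where the extra two powers of $\|\vec u\|_{\tilde X_i(G^i_\alpha)}$ in the final bound $(1+\|\vec u\|^6_{\tilde X_i(G^i_\alpha)})$ come from; your proposal neither performs this reallocation nor explains the jump from the exponent $4$ in Theorem~\ref{th-y5.17} to $6$ in Theorem~\ref{th-y5.19}. As written, the proposal outlines the correct framework but leaves the central technical step --- summing the Lemma~\ref{le-y5.18}-type error in $l_2$ without logarithmic loss --- as an unjustified assertion.
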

Indeed, if \eqref{eq-y5.97} holds, Applying \eqref{eq-y5.97} to atoms of $P_{\xi( G_{\alpha}^{i}), i-5\leq\cdot\leq i+5} \vec{u}$, we have
\begin{equation*}
\eqref{eq-y5.47}\lesssim \|\vec{u}\|_{\tilde{Y}_i(G^{i}_{\alpha})} \|P_{\xi(G^i_{\alpha}),i-5\leq\cdot\leq i+5}\vec{u}\|_{U^2_{\triangle}(l^{2};G^i_{\alpha})}(1+\|\vec{u}\|^3_{\tilde{X}_i(G^{i}_{\alpha})}).
\end{equation*}

Theorem \ref{th-y5.19} then implies Theorem \ref{th-y5.14}, which together with Theorem \ref{th-y5.13} implies Theorem \ref{th-y5.10}, which  in turn implies Theorem \ref{th-y6.2}.

\end{proof}

\section{Proof of three bilinear Strichartz estimates}
In this section, we pay our debt from last section and complete the proofs of three bilinear Strichartz estimates. Actually, the proofs of three bilinear Strichartz estimates are all dependent on the interaction Morawetz estimates, thus are similar. The big differences among their proofs are caused by the time interval. Compared with Theorem \ref{th-y5.17} and Theorem \ref{th-y5.19},  Theorem \ref{th-y5.16}'s time interval is small, this makes the proof much easier. While the proofs of Theorem \ref{th-y5.17} and Theorem \ref{th-y5.19} are similar, so we just give the proof of Theorem \ref{th-y5.17} below.

\begin{proof}[Proof of Theorem \ref{th-y5.17}]Let $\vec{v}=\{v_j\}_{j\in \mathbb{Z}}$, $v_j=e^{it\triangle}v_{0,j}$ and $\vec{w}=P_{\xi(t),\leq l_2} \vec{u}$, $w_{j'}=P_{\xi(t),\leq l_2} u_{j'}$. Then  $\vec{v}$ and  $\vec{w}$ solve equations
\begin{equation*}
  i\partial_t v_j+\triangle v_j=0,
\end{equation*}
and
\begin{equation*}
  i\partial_t w_{j'}+\triangle w_{j'}=\sum_{\mathcal{R}(j')}w_{j'_1}\bar{w}_{j'_2}w_{j'_3} +N_{1,j'}+N_{2,j'}=\sum_{\mathcal{R}(j')}w_{j'_1}\bar{w}_{j'_2}w_{j'_3} +N_{j'},
\end{equation*}
where $N_{1,j'}=P_{\xi(t),\leq l_2}\big(\sum_{\mathcal{R}(j')}u_{j'_1}\bar{u}_{j'_2}u_{j'_3}\big)- \sum_{\mathcal{R}(j')}w_{j'_1}\bar{w}_{j'_2}w_{j'_3}$ and $N_{2,j'}=(\frac{d}{dt}P_{\xi(t),\leq l_2})u_{j'}$ with $\frac{d}{dt}P_{\xi(t),\leq l_2}$ given by Fourier multiplier
\begin{equation}\label{eq-yfm1}
-\nabla \phi(\frac{\xi-\xi(t)}{2^{l_2}})\cdot \frac{\xi'(t)}{2^{l_2}}.
\end{equation}

Let
\begin{equation}\label{eq-y5.52}
\begin{split}
M(t)=\sum_{j,j'\in \mathbb{Z}} \bigg(&\int_{\mathbb{R}^2} \int_{\mathbb{R}^2} |w_{j'}(t,y)|^2 \frac{(x-y)}{|x-y|}\cdot Im[\bar{v}_j\nabla v_j](t,x)dxdy\\
 +& \int_{\mathbb{R}^2} \int_{\mathbb{R}^2} |v_{j}(t,y)|^2 \frac{(x-y)}{|x-y|}\cdot Im[\bar{w}_{j'}\nabla w_{j'}](t,x)dxdy \bigg).
\end{split}
\end{equation}
Following \cite{FL} and \cite{D}, we can calculate
\begin{align}\nonumber
      &\sum_{j,j'\in \mathbb{Z}}   \int_{G_{\beta}^{l_2}} \int_{\mathbb{R}^2}|\bar{w}_{j'}v_j|^2(t,x)dxdt \lesssim 2^{l_2-2i}\sup_{t\in G_{\beta}^{l_2}}|M(t)|   \\ \label{eq-y5.68}
    + &2^{l_2-2i} \big| \sum_{j,j'\in \mathbb{Z}}   \int_{G_{\beta}^{l_2}} \int_{\mathbb{R}^2} \int_{\mathbb{R}^2} |v_j(t,y)|^2 \frac{(x-y)}{|x-y|}\cdot Im[\bar{N}_{j'} (\nabla- i\xi(t))w_{j'}](t,x)dxdydt \big|\\ \label{eq-y5.69}
    + & 2^{l_2-2i}\big| \sum_{j,j'\in \mathbb{Z}}   \int_{G_{\beta}^{l_2}} \int_{\mathbb{R}^2} \int_{\mathbb{R}^2} |v_j(t,y)|^2 \frac{(x-y)}{|x-y|}\cdot Im[\bar{w}_{j'} (\nabla- i\xi(t))N_{j'}](t,x)dxdydt \big|\\ \label{eq-y5.70}
    + & 2^{l_2-2i}\big| \sum_{j,j'\in \mathbb{Z}}   \int_{G_{\beta}^{l_2}} \int_{\mathbb{R}^2} \int_{\mathbb{R}^2} Im[\bar{v}_{j} (\nabla- i\xi(t))v_{j}](t,x) \frac{(x-y)}{|x-y|}\cdot Im[\bar{w}_{j'}N_{j'}](t,y)dxdydt \big|.
\end{align}
First,
\begin{equation}\label{eq-y5.71}
\begin{split}
|M(t)|=&\big|\sum_{j,j'\in \mathbb{Z}}   \big(\int_{\mathbb{R}^2} \int_{\mathbb{R}^2} |w_{j'}(t,y)|^2 \frac{(x-y)}{|x-y|}\cdot Im[\bar{v}_j(\nabla-i\xi(t)) v_j](t,x)dxdy\\
 & \quad \quad \quad \quad+\int_{\mathbb{R}^2} \int_{\mathbb{R}^2} |v_{j}(t,y)|^2 \frac{(x-y)}{|x-y|}\cdot Im[\bar{w}_{j'}(\nabla-i\xi(t)) w_{j'}](t,x)dxdy \big)\big|\\
 \lesssim & 2^i \|\vec{w}\|^2_{L_t^{\infty}L^2_xl^2} \|\vec{v}_0\|^2_{L^2_xl^2}\lesssim 2^i \|\vec{v}_0\|^2_{L^2_xl^2}.
\end{split}
\end{equation}
So $2^{l_2-2i}\sup_{t\in G_{\beta}^{l_2}}|M(t)|$ is bounded by the right-hand side of \eqref{eq-y5.67}.\\
Next by \eqref{eq-y5.1}, \eqref{eq-y5.33}, Bernstein inequality, conservation of mass and Strichartz estimate,
\begin{equation*}
\begin{split}
|\eqref{eq-y5.68}|\, \lesssim &2^{l_2-2i} \|\vec{N}_1\|_{L^{4/3}_{t,x}l^{2}(G_{\beta}^{l_2}\times \mathbb{R}^2)} \|(\nabla-i\xi(t)) \vec{w}\|_{L^4_{t,x}l^{2}(G_{\beta}^{l_2}\times \mathbb{R}^2)} \|\vec{v}\|^2_{L^{\infty}_t L^2_x l^{2}(G_{\beta}^{l_2}\times \mathbb{R}^2)}+ 2^{-2i}\|\vec{v}\|^2_{L^{\infty}_t L_x^2 l^{2}(G_{\beta}^{l_2}\times\mathbb{R}^2)}\\
&\quad\quad\times \big(\sum_{j'}\int_{G_{\beta}^{l_2}} |\xi'(t)| \|P_{\xi(t),l_2-3 \leq \cdot\leq l_2+3} u_{j'}\|_{L_x^2(\mathbb{R}^2)} \|(\nabla-i\xi(t))P_{\xi(t),\leq l_2} u_{j'}\|_{L_x^2(\mathbb{R}^2)} dt\big)\\
\lesssim& 2^{l_2-2i} \|\vec{N}_1\|_{L^{4/3}_{t,x}l^{2}(G_{\beta}^{l_2}\times \mathbb{R}^2)} \|(\nabla-i\xi(t)) \vec{w}\|_{L^4_{t,x}l^{2}(G_{\beta}^{l_2}\times \mathbb{R}^2)} \|\vec{v}_0\|^2_{L^2_x l^{2}(\mathbb{R}^2\times \mathbb{Z})} + 2^{2l_2-2i}\|\vec{v}_0\|^2_{L_x^2 l^{2}(\mathbb{R}^2\times \mathbb{Z})}.
\end{split}
\end{equation*}
Let $m(t;\xi)=\frac{\xi-\xi(t)}{2^{l_1}}\phi(\frac{\xi-\xi(t)}{2^{l_1}})$, $m^{\vee}(t;x)=\int m(t;\xi) e^{ix\xi}d\xi$. Then by Minkowski inequality, Young's inequality, $\sup_{t}\int |m^{\vee}(t;x)|dx\lesssim1$ and \eqref{eq-y5.9}, we have
\begin{equation}\label{eq-y5.72'}
\begin{split}
&\|(\nabla-i\xi(t)) \vec{w}\|_{L^4_{t,x}l^{2}(G_{\beta}^{l_2}\times \mathbb{R}^2)}\\
\lesssim&\sum_{0\leq l_1\leq l_2}\|(\nabla-i\xi(t))P_{\xi(t),l_1} \vec{u}\|_{L^4_{t,x}l^{2}(G_{\beta}^{l_2}\times \mathbb{R}^2)}\\
=&\sum_{0\leq l_1\leq l_2} 2^{l_1} \|(\sum_{j'}|\int m^{\vee}(x-y) P_{\xi(t),l_1}u_{j'}(y)dy|^2 )^{1/2}\|_{L^4_{t,x}(G_{\beta}^{l_2}\times \mathbb{R}^2)}\\
\lesssim & \sum_{0\leq l_1\leq l_2} 2^{l_1} \|\int |m^{\vee}(x-y)| \|P_{\xi(t),l_1}\vec{u}\|_{l^{2}}(y)dy\|_{L^4_{t,x}(G_{\beta}^{l_2}\times \mathbb{R}^2)}\\
\lesssim & \sum_{0\leq l_1\leq l_2} 2^{l_1} \|P_{\xi(t),l_1}\vec{u}\|_{L^4_{t,x}l^{2}(G_{\beta}^{l_2}\times \mathbb{R}^2)}\\
\lesssim & \sum_{0\leq l_1\leq l_2} 2^{l_1} 2^{(l_2-l_1)/4}\|\vec{u}\|_{\tilde{X}_{l_2}(G_{\beta}^{l_2})}\lesssim 2^{l_2}\|\vec{u}\|_{\tilde{X}_{i}(G_{\alpha}^{i})}.
\end{split}
\end{equation}
Therefore,
\begin{equation}\label{eq-y5.72}
\begin{split}
|\eqref{eq-y5.68}| \lesssim 2^{2l_2-2i}\|\vec{v}_0\|^2_{L^2_x l^{2}(\mathbb{R}^2\times \mathbb{Z})} \|\vec{u}\|_{\tilde{X}_{i}(G_{\alpha}^{i})} \|\vec{N}_1\|_{L^{4/3}_{t,x}l^{2}(G_{\beta}^{l_2}\times \mathbb{R}^2)}  + 2^{2l_2-2i}\|\vec{v}_0\|^2_{L_x^2 l^{2}(\mathbb{R}^2\times \mathbb{Z})}.
\end{split}
\end{equation}
Split $u_{j'} =u^{h}_{j'} + u^l_{j'}$, where $u^l_{j'}=P_{\xi(t),\leq l_2-5}u_{j'}$. Denote $\vec{u}^l=\{u^l_{j'}\}_{j'\in \mathbb{Z}}$ and $\vec{u}^h=\{u^h_{j'}\}_{j'\in \mathbb{Z}}$. By this decomposition we rewrite $N_{1,j'}$ as follows
\begin{align}\nonumber
N_{1,j'}=&P_{\xi(t),\leq l_2}\big(\sum_{\mathcal{R}(j')}u_{j'_1}\bar{u}_{j'_2}u_{j'_3}\big)- \sum_{\mathcal{R}(j')}P_{\xi(t),\leq l_2} u_{j'_1}P_{\xi(t),\leq l_2} \bar{u}_{j'_2} P_{\xi(t),\leq l_2} u_{j'_3}\\ \label{eq-y5.73}
=& P_{\xi(t),\leq l_2}\big(\sum_{\mathcal{R}(j')}u^l_{j'_1}\bar{u}^l_{j'_2}u^l_{j'_3}\big)- \sum_{\mathcal{R}(j')}P_{\xi(t),\leq l_2} u^l_{j'_1}P_{\xi(t),\leq l_2} \bar{u}^l_{j'_2} P_{\xi(t),\leq l_2} u^l_{j'_3}\\ \label{eq-y5.74}
&+2\left(P_{\xi(t),\leq l_2}\big(\sum_{\mathcal{R}(j')}u^l_{j'_1}\bar{u}^l_{j'_2}u^h_{j'_3}\big)- \sum_{\mathcal{R}(j')}P_{\xi(t),\leq l_2} u^l_{j'_1}P_{\xi(t),\leq l_2} \bar{u}^l_{j'_2} P_{\xi(t),\leq l_2} u^h_{j'_3}\right)\\ \label{eq-y5.75}
&+P_{\xi(t),\leq l_2}\big(\sum_{\mathcal{R}(j')}u^l_{j'_1}\bar{u}^h_{j'_2}u^l_{j'_3}\big)- \sum_{\mathcal{R}(j')}P_{\xi(t),\leq l_2} u^l_{j'_1}P_{\xi(t),\leq l_2} \bar{u}^h_{j'_2} P_{\xi(t),\leq l_2} u^l_{j'_3}\\ \label{eq-y5.76}
&+O \left(P_{\xi(t),\leq l_2}\big(\sum_{\mathcal{R}(j')}u^h_{j'_1}\bar{u}^h_{j'_2}u_{j'_3}\big)- \sum_{\mathcal{R}(j')}P_{\xi(t),\leq l_2} u^h_{j'_1}P_{\xi(t),\leq l_2} \bar{u}^h_{j'_2} P_{\xi(t),\leq l_2} u_{j'_3}\right).
\end{align}where the ``$O$" in \eqref{eq-y5.76} represents there are two high frequency factors in this term,  may be  $u^h_{j'_1}$ and $\bar{u}^h_{j'_2}$, or $\bar{u}^h_{j'_2}$ and $u^h_{j'_3}$, or $u^h_{j'_1}$ and $u^h_{j'_3}$. However, their estimates are the same.

First we notice that \eqref{eq-y5.73}=0. For \eqref{eq-y5.74}, we can easily see $\hat{u}^h_{j'_3}$ is supported on $\{\sigma: |\sigma-\xi(t)|\leq 2^{l_2+10}\}$. Thus
\begin{equation}\label{eq-y5.77}
\begin{split}
&P_{\xi(t),\leq l_2}\big(\sum_{\mathcal{R}(j')}u^l_{j'_1}\bar{u}^l_{j'_2}u^h_{j'_3}\big)- \sum_{\mathcal{R}(j')}P_{\xi(t),\leq l_2} u^l_{j'_1}P_{\xi(t),\leq l_2} \bar{u}^l_{j'_2} P_{\xi(t),\leq l_2} u^h_{j'_3}\\
=&\sum_{\mathcal{R}(j')}\big( P_{\xi(t),\leq l_2}(e^{-ix\xi(t)}u^l_{j'_1}e^{ix\xi(t)}\bar{u}^l_{j'_2}u^h_{j'_3})- e^{-ix\xi(t)}P_{\xi(t),\leq l_2}u^l_{j'_1}e^{ix\xi(t)}P_{\xi(t),\leq l_2} \bar{u}^l_{j'_2} P_{\xi(t),\leq l_2} u^h_{j'_3}\big)\\
=&\sum_{\mathcal{R}(j')}\iiint e^{i\xi x} [(\phi(\frac{\xi-\xi(t)}{2^{l_2}})-\phi(\frac{\sigma-\xi(t)}{2^{l_2}})) \phi(\frac{\xi-\eta}{2^{l_2}}) \phi(\frac{\eta-\sigma}{2^{l_2}}) \phi(\frac{\sigma-\xi(t)}{2^{l_2+10}})] \\
&\qquad\times \hat{u}^l_{j'_1}(\xi-\eta +\xi(t)) \bar{\hat{u}}^l_{j'_2}(\sigma-\eta+\xi(t)) \hat{u}^h_{j'_3}(\sigma) d\sigma d\eta d\xi\\
=&\sum_{\mathcal{R}(j')}\iiint e^{i\xi x} [(\phi(\frac{\xi-\xi(t)}{2^{l_2}})-\phi(\frac{\sigma-\xi(t)}{2^{l_2}})) \phi(\frac{\xi-\eta}{2^{l_2}}) \phi(\frac{\eta-\sigma}{2^{l_2}}) \phi(\frac{\sigma-\xi(t)}{2^{l_2+10}})] \\
&\qquad\times \int e^{-iy(\xi-\eta +\xi(t))}u^l_{j'_1}(y)dy \int e^{-iz(\eta-\sigma-\xi(t))}\bar{u}^l_{j'_2}(z)dz \int e^{-iw\sigma} u^h_{j'_3}(w)dw d\sigma d\eta d\xi\\
=&\sum_{l_1\leq l_2}\sum_{\mathcal{R}(j')}\iiint e^{-iy\xi(t)}P_{\xi(t), l_1}u^l_{j'_1}(y)e^{iz\xi(t)}P_{\xi(t),\leq l_1} \bar{u}^l_{j'_2}(z) u^h_{j'_3}(w)\iiint e^{i\xi (x-y)+i\eta(y-z)+i\sigma(z-w)}  \\
&\times [(\phi(\frac{\xi-\xi(t)}{2^{l_2}})-\phi(\frac{\sigma-\xi(t)}{2^{l_2}}))\phi(\frac{\sigma-\xi(t)}{2^{l_2+10}}) \psi_{l_1}(\xi-\eta) \phi(\frac{\eta-\sigma}{2^{l_1}})]d\sigma d\eta d\xi dydzdw\\
+&\sum_{l_1\leq l_2}\sum_{\mathcal{R}(j')}\iiint e^{-iy\xi(t)}P_{\xi(t),\leq l_1}u^l_{j'_1}(y)e^{iz\xi(t)}P_{\xi(t), l_1} \bar{u}^l_{j'_2}(z) u^h_{j'_3}(w)\iiint e^{i\xi (x-y)+i\eta(y-z)+i\sigma(z-w)}  \\
&\times [(\phi(\frac{\xi-\xi(t)}{2^{l_2}})-\phi(\frac{\sigma-\xi(t)}{2^{l_2}}))\phi(\frac{\sigma-\xi(t)}{2^{l_2+10}}) \psi_{l_1}(\eta-\sigma) \phi(\frac{\xi-\eta}{2^{l_1}}) ]d\sigma d\eta d\xi dydzdw\\
=:&\eqref{eq-y5.77}(a)+\eqref{eq-y5.77}(b).
\end{split}
\end{equation}The estimates of \eqref{eq-y5.77}(a) and \eqref{eq-y5.77}(b) are the same, so we only estimate \eqref{eq-y5.77}(a).
\begin{equation*}
\begin{split}
&\eqref{eq-y5.77}(a)\\
=:&\sum_{l_1\leq l_2}\sum_{\mathcal{R}(j')}\iiint e^{-iy\xi(t)}P_{\xi(t), l_1}u^l_{j'_1}(y) e^{iz\xi(t)}P_{\xi(t),\leq l_1}\bar{u}^l_{j'_2}(z) u^h_{j'_3}(w)K(t;x-y,y-z,z-w)dydzdw\\
=&\sum_{l_1\leq l_2}\sum_{\mathcal{R}(j')}\iiint K(t;y,z,w) \\
&\times e^{-i(x-y)\xi(t)}P_{\xi(t), l_1}u^l_{j'_1}(x-y) e^{i(x-y-z)\xi(t)}P_{\xi(t),\leq l_1}\bar{u}^l_{j'_2}(x-y-z) u^h_{j'_3}(x-y-z-w)dydzdw,
\end{split}
\end{equation*}
where
\begin{equation*}
\begin{split}
&K(t;y,z,w)\\
=&\iiint e^{i\xi y+i\eta z+i\sigma w} [(\phi(\frac{\xi-\xi(t)}{2^{l_2}})-\phi(\frac{\sigma-\xi(t)}{2^{l_2}}))\phi(\frac{\sigma-\xi(t)}{2^{l_2+10}}) \psi_{l_1}(\xi-\eta) \phi(\frac{\eta-\sigma}{2^{l_1}}) ]d\xi d\eta d\sigma.
\end{split}
\end{equation*}
Notice that $|\xi-\eta|\sim 2^{l_1}$ and $|\eta-\sigma|\lesssim 2^{l_1}$, then by the fundamental theorem of calculus, we have
\begin{equation}\label{eq-y5.78'}
|\phi(\frac{\xi-\xi(t)}{2^{l_2}})-\phi(\frac{\sigma-\xi(t)}{2^{l_2}})|\lesssim 2^{-l_2}|\xi-\sigma|\lesssim 2^{-l_2}[|\xi-\eta|+|\eta-\sigma|]\lesssim 2^{l_1-l_2}.
\end{equation}
So by scaling we can easily see
\begin{equation}\label{eq-y5.78}
 \sup_t\int|K(t;y,z,w)|dydzdw\lesssim 2^{l_1-l_2}.
\end{equation}
Therefore, by Minkowski inequality, H\"{o}lder inequality, \eqref{eq-y5.78}, Lemma \ref{le-y5.7} and conservation of mass,
\begin{equation}\label{eq-y5.79}
\begin{split}
&\|P_{\xi(t),\leq l_2}\big(\sum^{\rightarrow}_{\mathcal{R}(j')}u^l_{j'_1}\bar{u}^l_{j'_2}u^h_{j'_3}\big)- \sum^{\rightarrow}_{\mathcal{R}(j')}P_{\xi(t),\leq l_2} u^l_{j'_1}P_{\xi(t),\leq l_2} \bar{u}^l_{j'_2} P_{\xi(t),\leq l_2} u^h_{j'_3}\|_{L^{4/3}_{t,x}l^{2}(G_{\beta}^{l_2}\times \mathbb{R}^2)}\\
\lesssim &\sum_{l_1\leq l_2}\Big\|\iiint |K(t;y,z,w)| \\
&\times \big\|\|P_{\xi(t), l_1}\vec{u}^l\|_{l^{2}}(x-y) \|P_{\xi(t),\leq l_1}\vec{u}^l\|_{l^{2}}(x-y-z) \|\vec{u}^h\|_{l^{2}}(x-y-z-w)\big\|_{L^{4/3}_x}dydzdw\Big\|_{L^{4/3}_t}\\
\lesssim &\sum_{l_1\leq l_2} 2^{l_1-l_2} \|P_{\xi(t),\leq l_1}\vec{u}^l\|_{L_t^{\infty}L^2_xl^2} \|P_{\xi(t), l_1}\vec{u}^l\|_{L_t^{8/3}L^{8}_xl^2} \|\vec{u}^h\|_{L_t^{8/3}L^{8}_xl^2}\\
\lesssim&\|\vec{u}\|^2_{\tilde{X}_{i}(G_{\alpha}^{i})}.
\end{split}
\end{equation}
We can similarly estimate \eqref{eq-y5.75}, there is no need to repeat the arguments here. \\
Finally let's take \eqref{eq-y5.76}.
\begin{equation*}
\begin{split}
&\Big\|O \big(\sum^{\rightarrow}_{\mathcal{R}(j')}P_{\xi(t),\leq l_2}(u^h_{j'_1}\bar{u}^h_{j'_2}u_{j'_3})- P_{\xi(t),\leq l_2} u^h_{j'_1}P_{\xi(t),\leq l_2} \bar{u}^h_{j'_2} P_{\xi(t),\leq l_2} u_{j'_3}\big)\Big\|_{L^{4/3}_{t,x}l^{2}(G_{\beta}^{l_2}\times \mathbb{R}^2)}\\
\lesssim&\|\vec{u}\|_{L_t^{\infty}L^2_xl^2} \|\vec{u}^h\|_{L_t^{8/3}L^{8}_xl^2} \|\vec{u}^h\|_{L_t^{8/3}L^{8}_xl^2}\\
\lesssim&\|\vec{u}\|^2_{\tilde{X}_{i}(G_{\alpha}^{i})}.
\end{split}
\end{equation*}Therefore,
\begin{equation}\label{eq-y5.80'}
\|\vec{N}_1\|_{L^{4/3}_{t,x}l^{2}(G_{\beta}^{l_2}\times \mathbb{R}^2)}\lesssim \|\vec{u}\|^2_{\tilde{X}_{i}(G_{\alpha}^{i})},
\end{equation} and
\begin{equation}\label{eq-y5.80}
\begin{split}
|\eqref{eq-y5.68}| \lesssim 2^{2l_2-2i}\|\vec{v}_0\|^2_{L^2_x l^{2}(\mathbb{R}^2\times \mathbb{Z})} (\|\vec{u}\|^3_{\tilde{X}_{i}(G_{\alpha}^{i})} +1).
\end{split}
\end{equation}
The right-hand side of this term is clearly bounded by the right-hand side of \eqref{eq-y5.67}.

Now we turn to \eqref{eq-y5.69}, integrating by parts in space, we have
\begin{equation}\label{eq-y5.81}
\begin{split}
 \eqref{eq-y5.69}\lesssim & \eqref{eq-y5.68} +2^{l_2-2i}\big|\sum_{j,j'\in \mathbb{Z}}   \int_{G_{\beta}^{l_2}}\int\int |v_{j}(t,y)|^2 \frac{1}{|x-y|} Re[\bar{w}_{j'}N_{j'}]dxdydt\big|.
\end{split}
\end{equation}
By  Strichartz estimate \eqref{eq-y2.1}, \eqref{eq-y5.80'}, \eqref{eq-y5.1}, \eqref{eq-y5.33}, Bernstein inequality and conservation of mass, we have
\begin{equation}\label{eq-y5.82}
\begin{split}
&2^{l_2-2i}\big|\sum_{j,j'\in \mathbb{Z}}   \int_{G_{\beta}^{l_2}}\int\int |v_{j}(t,y)|^2 \frac{1}{|x-y|} Re[\bar{w}_{j'}N_{j'}]dxdydt\big|\\
\lesssim& 2^{l_2-2i} \|\vec{v}_0\|^2_{L_x^2 l^{2}(\mathbb{R}^2\times \mathbb{Z})} \big(2^{-l_2/2}\int_{G_{\beta}^{l_2}} |\xi'(t)| \|P_{\xi(t),l_2-3 \leq \cdot\leq l_2+3} \vec{u}\|_{L_x^2l^{2}(\mathbb{R}^2\times \mathbb{Z})} \|(\nabla-i\xi(t))^{1/2}P_{\xi(t),\leq l_2} \vec{u}\|_{L_x^2l^{2}(\mathbb{R}^2\times \mathbb{Z})} dt\big)\\
&+2^{l_2-2i} \|\vec{v}\|^2_{L^4_t L^4_x l^{2}(G_{\beta}^{l_2}\times\mathbb{R}^2)} \|\vec{N}_1\|_{L^{4/3}_tL^{4/3}_xl^2(G_{\beta}^{l_2}\times\mathbb{R}^2)} \|\vec{w}\|_{L^{\infty}_{t,x}l^{2}(G_{\beta}^{l_2}\times\mathbb{R}^2)}\\
\lesssim& 2^{2l_2-2i}\|\vec{v}_0\|^2_{L^2_x l^{2}(\mathbb{R}^2\times \mathbb{Z})} (\|\vec{u}\|^3_{\tilde{X}_{i}(G_{\alpha}^{i})} +1).
\end{split}
\end{equation}
Therefore,
\begin{equation}\label{eq-y5.82'}
\eqref{eq-y5.69}\lesssim 2^{2l_2-2i}\|\vec{v}_0\|^2_{L^2_x l^{2}(\mathbb{R}^2\times \mathbb{Z})} (\|\vec{u}\|^3_{\tilde{X}_{i}(G_{\alpha}^{i})} +1).
\end{equation}
The term \eqref{eq-y5.70} is much more difficult to estimate, we give its estimate though the following lemma.
\begin{lemma}\label{le-y5.18}
\begin{equation}\label{eq-y5.83}
\eqref{eq-y5.70}\lesssim \|\vec{v}_0\|^2_{L^2_x l^{2}(\mathbb{R}^2\times \mathbb{Z})} (\|\vec{u}\|^4_{\tilde{X}_{i}(G_{\alpha}^{i})} +1).
\end{equation}
\end{lemma}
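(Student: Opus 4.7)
My plan is to split $\vec{N}=\vec{N}_1+\vec{N}_2$ and handle the two pieces by complementary arguments. Since $\bigl|\tfrac{x-y}{|x-y|}\bigr|=1$, the double spatial integral in \eqref{eq-y5.70} is bounded by a product of $L^1_x$ norms; summing over $j,j'$ via Cauchy--Schwarz, and using $\|\vec{v}\|_{L^2_xl^{2}}\|(\nabla-i\xi)\vec{v}\|_{L^2_xl^{2}}\lesssim 2^i\|\vec{v}_0\|_{L^2_xl^{2}}^2$ together with mass conservation on $\vec{w}$, one obtains the preliminary estimate
\begin{equation*}
\eqref{eq-y5.70} \;\lesssim\; 2^{l_2-i}\|\vec{v}_0\|_{L^2_xl^{2}}^2 \int_{G_\beta^{l_2}} \|\vec{N}(t)\|_{L^2_xl^{2}}\,dt.
\end{equation*}
For $\vec{N}_2$, the Fourier multiplier bound $\|\vec{N}_2\|_{L^2_xl^{2}}\lesssim|\xi'(t)|/2^{l_2}$, combined with $|\xi'(t)|\lesssim N(t)^3$ from \eqref{eq-y5.1} and the fact $\int_{G_\beta^{l_2}}N(t)^3\,dt\lesssim 2^{l_2}\epsilon_3$ (Definition \ref{de-y5.1} and \eqref{eq-y5.33}), gives $\int_{G_\beta^{l_2}}\|\vec{N}_2\|_{L^2_xl^{2}}\,dt\lesssim\epsilon_3$, so the $\vec{N}_2$ contribution is absorbed into the ``$+1$'' on the right-hand side of \eqref{eq-y5.83}.

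For $\vec{N}_1$ the preliminary bound is not sharp enough, and I will instead exploit the local mass conservation law satisfied by $\vec{w}=P_{\xi(t),\leq l_2}\vec{u}$. Starting from $i\partial_t w_{j'}+\Delta w_{j'}=F_{j'}(\vec{w})+N_{1,j'}+N_{2,j'}$ and the pointwise identity $\sum_{j'}\mathrm{Im}\,[\bar{w}_{j'}F_{j'}(\vec{w})]=0$ (which holds because reindexing $j'\leftrightarrow j_1$, $j_2\leftrightarrow j_3$ preserves the resonance set $\mathcal{R}$ and conjugates the monomial $\bar{w}_{j'}w_{j_1}\bar{w}_{j_2}w_{j_3}$), one obtains
\begin{equation*}
\sum_{j'}\mathrm{Im}\,[\bar{w}_{j'}(N_{1,j'}+N_{2,j'})]=\tfrac12\partial_t\rho+\nabla\cdot\vec{p},
\end{equation*}
where $\rho(y)=\sum_{j'}|w_{j'}(y)|^2$ and $\vec{p}(y)=\sum_{j'}\mathrm{Im}\,[\bar{w}_{j'}\nabla w_{j'}](y)$. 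Because this reduces the quartic-in-$u$ expression $\bar{w}\,N_1$ to a quadratic-in-$w$ density-plus-divergence, it is precisely this cancellation that opens the door to the bound $\|\vec{u}\|_{\tilde{X}_i}^4+1$.

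Substituting the identity into the $\vec{N}_1$ contribution and subtracting the already-handled $\vec{N}_2$ part, I am left with a \emph{divergence piece} and a \emph{time-derivative piece}. For the divergence piece, integration by parts in $y$ produces the kernel $|\nabla_y\tfrac{x-y}{|x-y|}|\lesssim|x-y|^{-1}$, and the two-dimensional Hardy--Littlewood--Sobolev inequality reduces matters to $\int\|F\|_{L^{4/3}_x}\|\vec{p}\|_{L^{4/3}_x}\,dt$; this is controlled by pairing Bernstein at frequency $2^i$ with the admissible Strichartz pair $L^8_tL^{8/3}_x$ for $\vec{v}$, and by invoking Lemma \ref{le-y5.7} together with the decomposition $\nabla=(\nabla-i\xi(t))+i\xi(t)$ (to maintain Galilean invariance) for $\vec{p}$. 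For the time-derivative piece, integration by parts in $t$ yields a boundary term $[\,\int F\cdot\tfrac{x-y}{|x-y|}\rho\,dxdy\,]_{t_1}^{t_2}$ bounded by $\sup_t 2^{l_2-2i}\|F\|_{L^1_x}\|\rho\|_{L^1_y}\lesssim\|\vec{v}_0\|^2$, and an interior term involving $\partial_tF$ which I rewrite as the divergence of a stress-tensor-like quantity (using the free evolution $\partial_tv_j=i\Delta v_j$, up to the $\xi'(t)$-correction handled as in the $\vec{N}_2$ step), then integrate by parts once more in $x$ to reduce to another HLS-type bound analogous to the divergence piece.

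The main obstacle will be the detailed bookkeeping in the divergence piece: crude Bernstein bounds on $\vec{w}=P_{\xi(t),\leq l_2}\vec{u}$ will produce uncontrollable $|G_\beta^{l_2}|$-dependent factors when integrated in time, so I must instead invoke the first bilinear Strichartz estimate (Theorem \ref{th-y5.16}) on frequency-localized pieces of $\vec{w}$ and use the gain from the long-time Strichartz $\tilde{X}_i$-norm to close the time integral uniformly in $|G_\beta^{l_2}|$. This careful balancing--together with the cubic cancellation from the conservation identity--is what yields the clean power $\|\vec{u}\|_{\tilde{X}_i}^4$ on the right-hand side.
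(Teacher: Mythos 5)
Your conservation-law identity is correct (and the key pointwise cancellation $\sum_{j'}\mathrm{Im}[\bar{w}_{j'}F_{j'}(\vec{w})]=0$ is exactly the paper's \eqref{eq-y5.85'}), your treatment of the $\vec{N}_2$ contribution is sound, and the boundary term you produce from integrating $\partial_t\rho$ by parts in time is indeed controlled by $2^{l_2-i}\|\vec{v}_0\|^2_{L^2_xl^2}$. However, the route you propose for the two remaining pieces has a genuine gap: neither the divergence piece $\nabla\cdot\vec{p}$ nor the interior piece of $\partial_t\rho$ closes with the estimates you outline, and the difficulty is structural rather than a matter of ``bookkeeping.''

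Rewriting $\sum_{j'}\mathrm{Im}[\bar{w}_{j'}N_{1,j'}]$ as $\tfrac12\partial_t\rho+\nabla\cdot\vec{p}-\sum_{j'}\mathrm{Im}[\bar{w}_{j'}N_{2,j'}]$ trades the cubic-in-$\vec{u}$ structure (on which the paper's decomposition $F_0+\cdots+F_4$ is based) for quantities $\rho,\vec{p}$ that are quadratic in $\vec{w}=P_{\xi(t),\leq l_2}\vec{u}$ and hence have Fourier support entirely in $\{|\xi|\lesssim 2^{l_2+1}\}$. This destroys the high-frequency structure the paper crucially exploits: for $F_1$ (one high factor) the Fourier support $\{|\xi|\gtrsim 2^{l_2}\}$ lets $\partial_y/\Delta_y$ gain a factor $2^{-l_2}$ as in \eqref{eq-y5.87}, and for $F_2$ (two high factors) the space-time resonance kernel gives a factor $\sim 2^{-2l_2}$ as in \eqref{eq-y5.128}. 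After you integrate by parts in $y$ (for $\nabla\cdot\vec{p}$) or twice by parts (in $t$ then $x$, for $\partial_t\rho$ using \eqref{eq-y5.129}), the resulting Hardy--Littlewood--Sobolev estimates carry prefactors of size $2^{l_2-2i}\cdot 2^i\cdot 2^{l_2}=2^{2l_2-i}$ (divergence) and $2^{l_2-2i}\cdot 2^{2i}=2^{l_2}$ (interior), which are of order $2^{l_2}$ when $l_2\sim i-10$. To close, the remaining time integral over $G^{l_2}_\beta$ would have to supply a gain of $2^{-l_2}$, but the quantities involved, such as $\int_t\|\vec{v}\|_{L^{8/3}_x l^2}^2\|\vec{w}\|^2_{L^{8/3}_y l^2}\,dt$ or $\|\nabla\vec{w}\|_{L^{3/2}_tL^3_x}$, pair $\vec{v}$ and $\vec{w}$ at \emph{different} spatial points $x\neq y$, so neither Theorem \ref{th-y5.16} (a pointwise-product bilinear estimate on a single $J_l$) nor the $\tilde{X}_i$-Strichartz bounds of Lemma \ref{le-y5.7} (which only reach admissible and dual-admissible exponents) produce the required factor. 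Indeed the norm $\|\nabla\vec{w}\|_{L^{3/2}_tL^3_x(G^{l_2}_\beta)}$ is the midpoint interpolant of $\|\nabla\vec{w}\|_{L^1_tL^2_x}$ and $\|\nabla\vec{w}\|_{L^3_tL^6_x}$, and the former scales with $|G^{l_2}_\beta|$, which is unbounded. The paper avoids precisely this by not discarding the frequency decomposition: the $2^{-l_2}$ or $2^{-2l_2}$ gains come from the $F_1,F_2$ high-frequency structure, not from a time-integral gain, and your reformulation removes the very structure that produces them.
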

\begin{proof} Recall
\begin{equation*}
\begin{split}
&\eqref{eq-y5.70}\\
=& 2^{l_2-2i}\big| \sum_{j,j'\in \mathbb{Z}}   \int_{G_{\beta}^{l_2}} \int_{\mathbb{R}^2} \int_{\mathbb{R}^2} Im[\bar{v}_{j} (\nabla- i\xi(t))v_{j}](t,x) \frac{(x-y)}{|x-y|}\cdot Im[\bar{w}_{j'}N_{j'}](t,y)dxdydt \big|,
\end{split}
\end{equation*}
and
$$N_{j'}=N_{1,j'}+N_{2,j'}.$$
First for $N_{2,j'}$ part, by  Bernstein inequality and conservation of mass, we have
\begin{equation}\label{eq-y5.84}
\begin{split}
 & 2^{l_2-2i} \big| \sum_{j,j'\in \mathbb{Z}}   \int_{G_{\beta}^{l_2}} \int_{\mathbb{R}^2} \int_{\mathbb{R}^2} Im[\bar{v}_{j} (\nabla- i\xi(t))v_{j}](t,x) \frac{(x-y)}{|x-y|}\cdot Im[\bar{w}_{j'}N_{2,j'}](t,y)dxdydt \big|\\
 \lesssim& \|\vec{v}_0\|^2_{L_x^2 l^{2}(\mathbb{R}^2\times \mathbb{Z})} \big(2^{-i-l_2}\int_{G_{\beta}^{l_2}} |\xi'(t)| \|P_{\xi(t),l_2-3 \leq \cdot\leq l_2+3} \vec{u}\|_{L_x^2l^{2}(\mathbb{R}^2\times \mathbb{Z})} \|(\nabla-i\xi(t))P_{\xi(t),\leq l_2} \vec{u}\|_{L_x^2l^{2}(\mathbb{R}^2\times \mathbb{Z})} dt\big)\\
 &+ \|\vec{v}_0\|^2_{L_x^2 l^{2}(\mathbb{R}^2\times \mathbb{Z})} \big(2^{-i-\frac{l_2}{2}}\int_{G_{\beta}^{l_2}} |\xi'(t)| \|P_{\xi(t),l_2-3 \leq \cdot\leq l_2+3} \vec{u}\|_{L_x^2l^{2}(\mathbb{R}^2\times \mathbb{Z})} \|(\nabla-i\xi(t))^{1/2}P_{\xi(t),\leq l_2} \vec{u}\|_{L_x^2l^{2}(\mathbb{R}^2\times \mathbb{Z})} dt\big)\\
 \lesssim& 2^{l_2-i}\|\vec{v}_0\|^2_{L_x^2 l^{2}(\mathbb{R}^2\times \mathbb{Z})}\lesssim \|\vec{v}_0\|^2_{L_x^2 l^{2}(\mathbb{R}^2\times \mathbb{Z})}.
\end{split}
\end{equation}
Next for $N_{1,j'}$ part, note that
\begin{equation}\label{eq-y5.85'}
\begin{split}
&Im\Big[\sum_{j'} \bar{w}_{j'} \sum_{\mathcal{R}(j')}w_{j'_1}\bar{w}_{j'_2}w_{j'_3}\Big]\\
&=\frac{1}{2}\sum\limits_{\substack{ j'_1+j'_3 = j'+j'_2,\\ |j'_1|^2+|j'_3|^2 = |j'|^2+|j'_2|^2.}} Im\big[\bar{w}_{j'} w_{j'_1}\bar{w}_{j'_2}w_{j'_3}+w_{j'} \bar{w}_{j'_1} w_{j'_2} \bar{w}_{j'_3}\big]\\
&=0.
\end{split}
\end{equation}Therefore,
\begin{equation*}
\begin{split}
\sum_{j'} Im\big[ \bar{w}_{j'} N_{1,j'}\big]&=Im\Big[\sum_{j'} \bar{w}_{j'} P_{\xi(t),\leq l_2}\big(\sum_{\mathcal{R}(j')}u_{j'_1}\bar{u}_{j'_2}u_{j'_3}\big)\Big]-Im\Big[ \sum_{j'} \bar{w}_{j'} \sum_{\mathcal{R}(j')}w_{j'_1}\bar{w}_{j'_2}w_{j'_3}\Big]\\
&=Im\Big[\sum_{j'} \bar{w}_{j'} P_{\xi(t),\leq l_2}\big(\sum_{\mathcal{R}(j')}u_{j'_1}\bar{u}_{j'_2}u_{j'_3}\big)\Big].
\end{split}
\end{equation*}
We partition $u_{j'} = u_{j'}^h + u_{j'}^l$, where $u_{j'}^l = P_{\xi(t),\leq l_2 -5} u_{j'}$. Decompose
\begin{equation}\label{eq-y5.85}
\begin{split}
\sum_{j'} Im\big[ \bar{w}_{j'} N_{1,j'}\big]=&\sum_{j'} Im\Big[\bar{w}_{j'} P_{\xi(t),\leq l_2}\big(\sum_{\mathcal{R}(j')}u_{j'_1}\bar{u}_{j'_2}u_{j'_3}\big)\Big]\\
=&\sum_{j'}\big[ F_{0,j'}(t,y)+F_{1,j'}(t,y)+F_{2,j'}(t,y)+F_{3,j'}(t,y)+F_{4,j'}(t,y)\big].
\end{split}
\end{equation}
where $F_{0,j'}$ consists of the terms in $Im[\bar{w}_{j'} P_{\xi(t),\leq l_2}\big(\sum_{\mathcal{R}(j')}u_{j'_1}\bar{u}_{j'_2}u_{j'_3}\big)]$ with no $u_{j'}^h$ terms and four $u_{j'}^l$ terms, $F_{1,j'}$ has one $u_{j'}^h$ term and three $u_{j'}^l$ terms, and so on.

It follows from \eqref{eq-y5.85'}, $\sum_{j'} F_{0,j'}(t,y)=0$. Next by Bernstein inequality, \eqref{eq-y2.6} and  Lemma \ref{le-y5.7}, we have
\begin{equation}\label{eq-y5.86}
\begin{split}
& 2^{l_2-2i} \big| \sum_{j,j'\in \mathbb{Z}}   \int_{G_{\beta}^{l_2}} \int_{\mathbb{R}^2} \int_{\mathbb{R}^2} Im[\bar{v}_{j} (\nabla- i\xi(t))v_{j}](t,x) \frac{(x-y)}{|x-y|}\cdot (F_{3,j'}+F_{4,j'})(t,y)dxdydt \big|\\
\lesssim&2^{l_2-i}\|\vec{v}_0\|^2_{L^2_xl^2} \|\vec{u}^h\|^3_{L^3_{t}L^6_{x}l^{2}(G_{\beta}^{l_2}\times \mathbb{R}^2)} \|\vec{u}\|_{L^{\infty}_{t}L^2_{x}l^{2}(G_{\beta}^{l_2}\times \mathbb{R}^2)}\\
\lesssim&2^{l_2-i}\|\vec{v}_0\|^2_{L^2_xl^2} \|\vec{u}\|^3_{\tilde{X}_i(G^i_{\alpha})}.
\end{split}
\end{equation}Now we turn to $F_{1,j'}$,
\begin{equation}\label{eq-y5.86'}
\begin{split}
  &\sum_{j'}F_{1,j'}\\
  =&\sum_{j'}Im\Big[(P_{\xi(t),\leq l_2}\bar{u}^h_{j'}) P_{\xi(t),\leq l_2} \big(\sum_{\mathcal{R}(j')}u^l_{j'_1}\bar{u}^l_{j'_2}u^l_{j'_3}\big) +\bar{u}^l_{j'}P_{\xi(t),\leq l_2}\big(\sum_{\mathcal{R}(j')}u^l_{j'_1}\bar{u}^h_{j'_2}u^l_{j'_3} + 2\sum_{\mathcal{R}(j')}u^l_{j'_1}\bar{u}^l_{j'_2}u^h_{j'_3}\big)\Big] \\
  =& \sum_{j'} Im\big[(P_{\xi(t),\leq l_2}P_{\xi(t),\leq l_2-2}\bar{u}^h_{j'}) P_{\xi(t),\leq l_2} \big(\sum_{\mathcal{R}(j')}u^l_{j'_1}\bar{u}^l_{j'_2}u^l_{j'_3}\big)\\
  &+\bar{u}^l_{j'}P_{\xi(t),\leq l_2}\big(\sum_{\mathcal{R}(j')}u^l_{j'_1}(P_{\xi(t),\leq l_2-2}\bar{u}^h_{j'_2})u^l_{j'_3} + 2\sum_{\mathcal{R}(j')}u^l_{j'_1}\bar{u}^l_{j'_2}(P_{\xi(t),\leq l_2-2}u^h_{j'_3})\big)\big]\\
  &+\sum_{j'} Im\big[(P_{\xi(t),\leq l_2}P_{\xi(t),\geq l_2-2}\bar{u}^h_{j'}) P_{\xi(t),\leq l_2} \big(\sum_{\mathcal{R}(j')}u^l_{j'_1}\bar{u}^l_{j'_2}u^l_{j'_3}\big)\\
  &+\bar{u}^l_{j'}P_{\xi(t),\leq l_2}\big(\sum_{\mathcal{R}(j')}u^l_{j'_1} (P_{\xi(t),\geq l_2-2}\bar{u}^h_{j'_2}) u^l_{j'_3} + 2\sum_{\mathcal{R}(j')}u^l_{j'_1} \bar{u}^l_{j'_2} (P_{\xi(t),\geq l_2-2}u^h_{j'_3})\big)\big]\\
  =&2\sum\limits_{\substack{ j'_1+j'_3 = j'+j'_2,\\ |j'_1|^2+|j'_3|^2 = |j'|^2+|j'_2|^2.}} Im\Big[(P_{\xi(t),\leq l_2}\bar{u}^h_{j'}) u^l_{j'_1} \bar{u}^l_{j'_2} u^l_{j'_3}+(P_{\xi(t),\leq l_2}u^h_{j'})
  \bar{u}^l_{j'_1} u^l_{j'_2} \bar{u}^l_{j'_3} \Big]\\
  &+\sum_{j'} Im\big[(P_{\xi(t),\leq l_2}P_{\xi(t),\geq l_2-2}\bar{u}^h_{j'})P_{\xi(t),\leq l_2} \big(\sum_{\mathcal{R}(j')}u^l_{j'_1}\bar{u}^l_{j'_2}u^l_{j'_3}\big)\\
  &+\bar{u}^l_{j'}P_{\xi(t),\leq l_2}\big(\sum_{\mathcal{R}(j')}u^l_{j'_1}(P_{\xi(t),\geq l_2-2}\bar{u}^h_{j'_2})u^l_{j'_3} + 2\sum_{\mathcal{R}(j')}u^l_{j'_1}\bar{u}^l_{j'_2}(P_{\xi(t),\geq l_2-2}u^h_{j'_3})\big)\big]\\
  =&\sum_{j'} Im\big[(P_{\xi(t),\leq l_2}P_{\xi(t),\geq l_2-2}\bar{u}^h_{j'})P_{\xi(t),\leq l_2} \big(\sum_{\mathcal{R}(j')}u^l_{j'_1}\bar{u}^l_{j'_2}u^l_{j'_3}\big)\\
  &+\bar{u}^l_{j'}P_{\xi(t),\leq l_2}\big(\sum_{\mathcal{R}(j')}u^l_{j'_1}(P_{\xi(t),\geq l_2-2}\bar{u}^h_{j'_2})u^l_{j'_3} + 2\sum_{\mathcal{R}(j')}u^l_{j'_1}\bar{u}^l_{j'_2}(P_{\xi(t),\geq l_2-2}u^h_{j'_3})\big)\big].
\end{split}
\end{equation}
This implies that the Fourier support of $\sum_{j'}F_{1,j'}(t,y)$ is on $|\xi|\geq 2^{l_2-4}$ (notice again the Fourier support is not $|\xi-\xi(t)|\geq 2^{l_2-4}$), Integrating by parts about the spatial variable $y$, then using Hardy-Littlewood-Sobolev inequality,  Bernstein inequality,  Strichartz estimate and \eqref{eq-y5.9'}, we have
\begin{equation}\label{eq-y5.87}
\begin{split}
& 2^{l_2-2i} \big| \sum_{j\in \mathbb{Z}}   \int_{G_{\beta}^{l_2}} \int_{\mathbb{R}^2} \int_{\mathbb{R}^2} Im[\bar{v}_{j} (\nabla- i\xi(t))v_{j}](t,x) \frac{(x-y)}{|x-y|}\cdot \big[\frac{\triangle_y}{\triangle_y}(\sum_{j'}F_{1,j'})\big](t,y)dxdydt \big|\\
\lesssim& 2^{l_2-2i} \int_{G_{\beta}^{l_2}} \int_{\mathbb{R}^2} \int_{\mathbb{R}^2} \big|\sum_{j\in \mathbb{Z}} [\bar{v}_{j} (\nabla- i\xi(t))v_{j}]\big|(t,x) \frac{1}{|x-y|}\cdot \big|\frac{\partial_y}{\triangle_y}(\sum_{j'}F_{1,j'})\big|(t,y)dxdydt\\
\lesssim& 2^{l_2-2i}\|\vec{v}\|_{L^3_tL^{6}_xl^2(G_{\beta}^{l_2}\times\mathbb{R}^2)} \|(\nabla- i\xi(t))\vec{v}\|_{L^{\infty}_tL^{2}_xh^{0}(G_{\beta}^{l_2}\times\mathbb{R}^2)} \big\|\frac{\partial_y}{\triangle_y}(\sum_{j'}F_{1,j'})\big\|_{L^{3/2}_tL^{6/5}_x(G_{\beta}^{l_2}\times\mathbb{R}^2)}\\
\lesssim&2^{-i}\|\vec{v}_0\|^2_{L^2_xl^2} \|\vec{u}^l\|^3_{L^9_{t}L^{9/2}_{x}l^{2}(G_{\beta}^{l_2}\times \mathbb{R}^2)} \|\vec{u}^h\|_{L^{3}_{t}L^6_{x}l^{2}(G_{\beta}^{l_2}\times \mathbb{R}^2)}\\
\lesssim&2^{-i}\|\vec{v}_0\|^2_{L^2_xl^2} \|\vec{u}^l\|^3_{L^9_{t}L^{9/2}_{x}l^{2}(G_{\beta}^{l_2}\times \mathbb{R}^2)} \|\vec{u}\|_{\tilde{X}_i(G^i_{\alpha})}.
\end{split}
\end{equation}
However, by Bernstein inequality and \eqref{eq-y5.9},
\begin{equation}\label{eq-y5.87'}
\begin{split}
 \|\vec{u}^l\|_{L^9_{t}L^{9/2}_{x}l^{2}(G_{\beta}^{l_2}\times \mathbb{R}^2)} \lesssim\sum_{0\leq l_1\leq l_2} 2^{l_1/3} \|P_{\xi(t),l_1}\vec{u}\|_{L^9_{t}L^{18/7}_{x}l^{2}(G_{\beta}^{l_2}\times \mathbb{R}^2)}\lesssim  2^{l_2/3} \|\vec{u}\|_{\tilde{X}_i(G^i_{\alpha})}.
\end{split}
\end{equation}
Therefore,
\begin{equation}\label{eq-y5.88'}
  \eqref{eq-y5.87}\lesssim 2^{l_2-i}\|\vec{v}_0\|^2_{L^2_xl^2} \|\vec{u}\|^4_{\tilde{X}_i(G^i_{\alpha})}.
\end{equation}
Finally let's consider $F_{2,j'}$. We want to prove
\begin{equation}\label{eq-y5.88}
\begin{split}
&2^{l_2-2i} \big| \sum_{j,j'\in \mathbb{Z}}   \int_{G_{\beta}^{l_2}} \int_{\mathbb{R}^2} \int_{\mathbb{R}^2} Im[\bar{v}_{j} (\nabla- i\xi(t))v_{j}](t,x) \frac{(x-y)}{|x-y|}\cdot F_{2,j'}(t,y)dxdydt \big|\\
&\lesssim \|\vec{v}_0\|^2_{L^2_x l^{2}(\mathbb{R}^2\times \mathbb{Z})} (\|\vec{u}\|^4_{\tilde{X}_{i}(G_{\alpha}^{i})} +1).
\end{split}
\end{equation}
As in the analysis of $F_{1,j'}$, integrating by parts, by Bernstein inequality and \eqref{eq-y5.9'}, we have
\begin{equation}\label{eq-y5.89}
\begin{split}
& 2^{l_2-2i} \big| \sum_{j\in \mathbb{Z}}   \int_{G_{\beta}^{l_2}} \int_{\mathbb{R}^2} \int_{\mathbb{R}^2} Im[\bar{v}_{j} (\nabla- i\xi(t))v_{j}](t,x) \frac{(x-y)}{|x-y|}\cdot \big[\frac{\triangle_y}{\triangle_y} (\sum_{j'} P_{\geq l_2-10}F_{2,j'})\big](t,y)dxdydt \big|\\
\lesssim& 2^{l_2-2i}\|\vec{v}\|_{L^3_tL^{6}_xl^2(G_{\beta}^{l_2}\times\mathbb{R}^2)} \|(\nabla- i\xi(t))\vec{v}\|_{L^{\infty}_tL^{2}_xl^2(G_{\beta}^{l_2}\times\mathbb{R}^2)} \big\|\frac{\partial_y}{\triangle_y}(\sum_{j'} P_{\geq l_2-10}F_{2,j'})\big\|_{L^{3/2}_tL^{6/5}_x(G_{\beta}^{l_2}\times\mathbb{R}^2)}\\
\lesssim&2^{-i}\|\vec{v}_0\|^2_{L^2_xl^2} \|\vec{u}^l\|^2_{L^{\infty}_{t}L^{4}_{x}l^{2}(G_{\beta}^{l_2}\times \mathbb{R}^2)} \|\vec{u}^h\|^2_{L^{3}_{t}L^6_{x}l^{2}(G_{\beta}^{l_2}\times \mathbb{R}^2)}\\
\lesssim&2^{l_2-i}\|\vec{v}_0\|^2_{L^2_xl^2} \|\vec{u}\|^2_{\tilde{X}_i(G^i_{\alpha})}.
\end{split}
\end{equation}
Next, expanding out $F_{2,j'}$,
\begin{equation}\label{eq-y5.90}
\begin{split}
F_{2,j'}(t,y)=& Im\big[ 2\bar{u}^l_{j'} P_{\xi(t),\leq l_2} \big(\sum_{\mathcal{R}(j')}u^h_{j'_1}\bar{u}^h_{j'_2}u^l_{j'_3}\big) + \bar{u}^l_{j'} P_{\xi(t),\leq l_2} \big(\sum_{\mathcal{R}(j')}u^h_{j'_1}\bar{u}^l_{j'_2}u^h_{j'_3}\big)\\
&+ 2(P_{\xi(t),\leq l_2}\bar{u}^h_{j'}) P_{\xi(t),\leq l_2} \big(\sum_{\mathcal{R}(j')}u^l_{j'_1}\bar{u}^l_{j'_2}u^h_{j'_3}\big) + (P_{\xi(t),\leq l_2}\bar{u}^h_{j'}) P_{\xi(t),\leq l_2} \big(\sum_{\mathcal{R}(j')}u^l_{j'_1}\bar{u}^h_{j'_2}u^l_{j'_3}\big)\big].
\end{split}
\end{equation}
Observe that
\begin{equation*}
\begin{split}
&Im \Big[\sum_{j'}\sum_{\mathcal{R}(j')} (P_{\xi(t),\leq l_2}\bar{u}^h_{j'})  u^l_{j'_1} \bar{u}^l_{j'_2} (P_{\xi(t),\leq l_2}u^h_{j'_3})\Big]\\
=&Im \Big[\sum\limits_{\substack{ j'_1+j'_3 = j'+j'_2,\\ |j'_1|^2+|j'_3|^2 = |j'|^2+|j'_2|^2.}}  (P_{\xi(t),\leq l_2}\bar{u}^h_{j'})  u^l_{j'_1} \bar{u}^l_{j'_2} (P_{\xi(t),\leq l_2}u^h_{j'_3})\Big]\\
=&\frac{1}{2}\sum\limits_{\substack{ j'_1+j'_3 = j'+j'_2,\\ |j'_1|^2+|j'_3|^2 = |j'|^2+|j'_2|^2.}} Im\Big[(P_{\xi(t),\leq l_2}\bar{u}^h_{j'})  u^l_{j'_1} \bar{u}^l_{j'_2} (P_{\xi(t),\leq l_2}u^h_{j'_3}) + (P_{\xi(t),\leq l_2}u^h_{j'})  \bar{u}^l_{j'_1} u^l_{j'_2} (P_{\xi(t),\leq l_2}\bar{u}^h_{j'_3}) \Big]\\
=&0.
\end{split}
\end{equation*}
Similarly,
\begin{equation*}
 Im \Big[\sum_{j'}\sum_{\mathcal{R}(j')} \bar{u}^l_{j'} u^l_{j'_3} P_{\leq l_2}(u^h_{j'_1}\bar{u}^h_{j'_2})\Big]=0
\end{equation*}
Therefore,
\begin{equation}\label{eq-y5.91}
\begin{split}
&2Im\big[\sum_{j'}  \bar{u}^l_{j'} P_{\xi(t),\leq l_2} \big(\sum_{\mathcal{R}(j')}u^h_{j'_1}\bar{u}^h_{j'_2}u^l_{j'_3}\big) + \sum_{j'}  (P_{\xi(t),\leq l_2}\bar{u}^h_{j'}) P_{\xi(t),\leq l_2} \big(\sum_{\mathcal{R}(j')}u^l_{j'_1}\bar{u}^l_{j'_2}u^h_{j'_3}\big)\big]\\
=&2Im\big[\sum_{j'}\sum_{\mathcal{R}(j')} \big(\bar{u}^l_{j'}P_{\xi(t),\leq l_2}(u^h_{j'_1}\bar{u}^h_{j'_2}u^l_{j'_3}) -\bar{u}^l_{j'} u^l_{j'_3} (P_{\leq l_2} (u^h_{j'_1}\bar{u}^h_{j'_2}))\big)\big]\\
&+2Im\big[\sum_{j'}\sum_{\mathcal{R}(j')} \big((P_{\xi(t),\leq l_2}\bar{u}^h_{j'})P_{\xi(t),\leq l_2} (u^l_{j'_1}\bar{u}^l_{j'_2}u^h_{j'_3}) - (P_{\xi(t),\leq l_2}\bar{u}^h_{j'})  u^l_{j'_1} \bar{u}^l_{j'_2} (P_{\xi(t),\leq l_2}u^h_{j'_3}) \big)\big]\\
=:&\eqref{eq-y5.91}(a)+\eqref{eq-y5.91}(b).
\end{split}
\end{equation}
By \eqref{eq-y5.77}, \eqref{eq-y5.78}, Lemma \ref{le-y5.7} and conservation of mass, \eqref{eq-y5.91}(b) can be estimated as following
\begin{equation}\label{eq-y5.92}
\begin{split}
& 2^{l_2-i}\|\eqref{eq-y5.91}(b)\|_{L^1_{t,x}(G_{\beta}^{l_2}\times \mathbb{R}^2)}\\
\lesssim& 2^{l_2-i}\|\vec{u}^h\|_{L^4_{t,x}l^{2}(G_{\beta}^{l_2}\times \mathbb{R}^2)} \|\sum^{\rightarrow}_{\mathcal{R}(j')} P_{\xi(t),\leq l_2}(u^l_{j'_1}\bar{u}^l_{j'_2}u^h_{j'_3})- \sum^{\rightarrow}_{\mathcal{R}(j')} u^l_{j'_1}\bar{u}^l_{j'_2} P_{\xi(t),\leq l_2} u^h_{j'_3}\|_{L^{4/3}_{t,x}l^{2}(G_{\beta}^{l_2}\times \mathbb{R}^2)}\\
\lesssim &2^{l_2-i}\|\vec{u}\|_{\tilde{X}_{i}(G_{\alpha}^{i})} \sum_{l_1\leq l_2}\Big\|\iiint |K(t;y,z,w)| \\
&\times \big\|\|P_{\xi(t), l_1}\vec{u}^l\|_{l^{2}}(x-y) \|P_{\xi(t),\leq l_1}\vec{u}^l\|_{l^{2}}(x-y-z) \|\vec{u}^h\|_{l^{2}}(x-y-z-w)\big\|_{L^{4/3}_x}dydzdw\Big\|_{L^{4/3}_t}\\
\lesssim &2^{l_2-i}\|\vec{u}\|_{\tilde{X}_{i}(G_{\alpha}^{i})} \sum_{l_1\leq l_2} 2^{l_1-l_2} \|P_{\xi(t),\leq l_1}\vec{u}^l\|_{L_t^{\infty}L^2_xl^2} \|P_{\xi(t), l_1}\vec{u}^l\|_{L_t^{8/3}L^{8}_xl^2} \|\vec{u}^h\|_{L_t^{8/3}L^{8}_xl^2}\\
\lesssim&2^{l_2-i}\|\vec{u}\|^3_{\tilde{X}_{i}(G_{\alpha}^{i})}.
\end{split}
\end{equation}

Next we turn to \eqref{eq-y5.91}(a). As \eqref{eq-y5.77} and \eqref{eq-y5.78'}, the fundamental Theorem of calculus gives
\begin{equation}\label{eq-y5.92'}
|\phi(\frac{\xi_1+\xi_2-\xi(t)}{2^{l_2}})-\phi(\frac{\xi_1}{2^{l_2}})|\lesssim 2^{-l_2}|\xi_2-\xi(t)|.
\end{equation}
So by Lemma \ref{le-y5.7} and conservation of mass, we have
\begin{equation}\label{eq-y5.92b}
\begin{split}
& 2^{l_2-i}\|\eqref{eq-y5.91}(a)\|_{L^1_{t,x}(G_{\beta}^{l_2}\times \mathbb{R}^2)}\\
\lesssim& 2^{l_2-i}\|\vec{u}^l\|_{L^{\infty}_{x}L^2_xl^2(G_{\beta}^{l_2}\times \mathbb{R}^2)} \|\sum^{\rightarrow}_{\mathcal{R}(j')}P_{\xi(t),\leq l_2}(u^h_{j'_1}\bar{u}^h_{j'_2}u^l_{j'_3}) - \sum^{\rightarrow}_{\mathcal{R}(j')} (P_{\leq l_2} (u^h_{j'_1}\bar{u}^h_{j'_2})u^l_{j'_3})\|_{L^{1}_{t} L^2_x l^{2}(G_{\beta}^{l_2}\times \mathbb{R}^2)}\\
\lesssim &2^{-i}\|\vec{u}^h\|^2_{L_t^{3}L^{6}_xl^2(G_{\beta}^{l_2}\times \mathbb{R}^2)} \sum_{l_1\leq l_2} 2^{l_1} \|P_{\xi(t), l_1}\vec{u}^l\|_{L_t^{3}L^6_xl^2(G_{\beta}^{l_2}\times \mathbb{R}^2)}\\
\lesssim&2^{l_2-i}\|\vec{u}\|^3_{\tilde{X}_{i}(G_{\alpha}^{i})}.
\end{split}
\end{equation}
Now let's deal with the rest terms in \eqref{eq-y5.90}.
\begin{equation}\label{eq-y5.92c}
\begin{split}
&Im\big[\sum_{j'}\sum_{\mathcal{R}(j')}  \bar{u}^l_{j'} P_{\xi(t),\leq l_2} \big(u^h_{j'_1}\bar{u}^l_{j'_2}u^h_{j'_3}\big)+ \sum_{j'}\sum_{\mathcal{R}(j')} (P_{\xi(t),\leq l_2}\bar{u}^h_{j'}) P_{\xi(t),\leq l_2} \big(u^l_{j'_1}\bar{u}^h_{j'_2}u^l_{j'_3}\big)\big]\\
=&\sum_{j'}\sum_{\mathcal{R}(j')} Im\big[\bar{u}^l_{j'} P_{\xi(t),\leq l_2} \big(u^h_{j'_1}\bar{u}^l_{j'_2}u^h_{j'_3}\big) + (P_{\xi(t),\leq l_2}\bar{u}^h_{j'}) u^l_{j'_1}(P_{\xi(t),\leq l_2}\bar{u}^h_{j'_2})u^l_{j'_3}\big]\\
&+\sum_{j'} \sum_{\mathcal{R}(j')} Im\big[(P_{\xi(t),\leq l_2}\bar{u}^h_{j'}) P_{\xi(t),\leq l_2} (u^l_{j'_1}\bar{u}^h_{j'_2}u^l_{j'_3})- (P_{\xi(t),\leq l_2}\bar{u}^h_{j'}) u^l_{j'_1}(P_{\xi(t),\leq l_2}\bar{u}^h_{j'_2})u^l_{j'_3} \big]\\
=:&\eqref{eq-y5.92c}(a)+\eqref{eq-y5.92c}(b).
\end{split}
\end{equation}
Similarly to \eqref{eq-y5.92}, we can get
\begin{equation}\label{eq-y5.92d}
2^{l_2-i}\|\eqref{eq-y5.92c}(b)\|_{L^1_{t,x}(G_{\beta}^{l_2}\times \mathbb{R}^2)}\lesssim2^{l_2-i}\|\vec{u}\|^3_{\tilde{X}_{i}(G_{\alpha}^{i})}.
\end{equation}
Therefore, in order to obtain \eqref{eq-y5.88}, which in turn accomplishes the proof of Lemma \ref{le-y5.18}, it remains to estimate \eqref{eq-y5.92c}(a).
\begin{equation}\label{eq-y5.115}
\begin{split}
&2^{l_2-2i}\int_{G^{l_2}_{\beta}}\int\int \sum_jIm[\bar{v}_j(\nabla-i\xi(t))v_j](t,x) \frac{(x-y)}{|x-y|}\\
&\times\sum_{j'}\sum_{\mathcal{R}(j')}P_{\leq l_2-10} Im\big[\bar{u}^l_{j'} P_{\xi(t),\leq l_2} (u^h_{j'_1}\bar{u}^l_{j'_2}u^h_{j'_3}) + (P_{\xi(t),\leq l_2}\bar{u}^h_{j'}) u^l_{j'_1}(P_{\xi(t),\leq l_2}\bar{u}^h_{j'_2})u^l_{j'_3}\big](t,y)dxdydt\\
=&2^{l_2-2i}\int_{G^{l_2}_{\beta}}\int\int \sum_jIm[\bar{v}_j(\nabla-i\xi(t))v_j](t,x) \frac{(x-y)}{|x-y|}\\
&\times\sum_{j'}\sum_{\mathcal{R}(j')}P_{\leq l_2-10} Im\big[\bar{u}^l_{j'} u^h_{j'_1}\bar{u}^l_{j'_2}u^h_{j'_3} + (P_{\xi(t),\leq l_2}\bar{u}^h_{j'}) u^l_{j'_1}(P_{\xi(t),\leq l_2}\bar{u}^h_{j'_2})u^l_{j'_3}\big](t,y)dxdydt\\
=&2^{l_2-2i}\int_{G^{l_2}_{\beta}}\int\int \sum_jIm[\bar{v}_j(\nabla-i\xi(t))v_j](t,x) \frac{(x-y)}{|x-y|}\\
&\times\sum_{j'}\sum_{\mathcal{R}(j')}P_{\leq l_2-10} Im\big[(\bar{u}^l_{j'_1}\bar{u}^l_{j'_3}) (u^h_{j'}u^h_{j'_2} - (P_{\xi(t),\leq l_2}u^h_{j'}) (P_{\xi(t),\leq l_2}u^h_{j'_2}))\big](t,y)dxdydt\\
\end{split}
\end{equation}
After Galilean transformation,
\begin{equation*}\label{eq-y5.118}
\begin{split}
\eqref{eq-y5.115}=&2^{l_2-2i}\int_{G^{l_2}_{\beta}}\int\int \sum_jIm[\bar{v}_j(\nabla-i\xi(t))v_j](t,x+2t\xi(G^{l_2}_{\beta})) \frac{(x-y)}{|x-y|}\\
\times&\sum_{j'}\sum_{\mathcal{R}(j')}P_{\leq l_2-10} Im\big[(\bar{u}^l_{j'_1}\bar{u}^l_{j'_3}) (u^h_{j'}u^h_{j'_2} - (P_{\xi(t),\leq l_2}u^h_{j'}) (P_{\xi(t),\leq l_2}u^h_{j'_2}))\big](t,y+2t\xi(G^{l_2}_{\beta}))dxdydt
\end{split}
\end{equation*}
Next observe that
\begin{equation}\label{eq-y5.116}
Im[\bar{v}_j(\nabla-i\xi(t))v_j](t,x+2t\xi(G^{l_2}_{\beta}))= Im[(\overline{e^{-ix\cdot\xi(G^{l_2}_{\beta})}v_j}) (\nabla-i\xi(t)+i\xi(G^{l_2}_{\beta})) e^{-ix\cdot\xi(G^{l_2}_{\beta})}v_j](t,x+2t\xi(G^{l_2}_{\beta})),
\end{equation}
and
\begin{equation}\label{eq-y5.117}
\begin{split}
&Im[(\bar{u}^l_{j'_1}\bar{u}^l_{j'_3})u^h_{j'}u^h_{j'_2}](t,y+2t\xi(G^{l_2}_{\beta}))\\
=& Im\big[(\overline{P_{\xi(t)-\xi(G^{l_2}_{\beta}),\leq l_2-5}e^{-iy\cdot\xi(G^{l_2}_{\beta})}u_{j'_1}}) (\overline{P_{\xi(t)-\xi(G^{l_2}_{\beta}),\leq l_2-5}e^{-iy\cdot\xi(G^{l_2}_{\beta})}u_{j'_3}})\\
\times &(P_{\xi(t)-\xi(G^{l_2}_{\beta}),\geq l_2-5}e^{-iy\cdot\xi(G^{l_2}_{\beta})}u_{j'}) (P_{\xi(t)-\xi(G^{l_2}_{\beta}),\geq l_2-5}e^{-iy\cdot\xi(G^{l_2}_{\beta})}u_{j'_2})\big](t,y+2t\xi(G^{l_2}_{\beta})).
\end{split}
\end{equation}
Similarly we can perform computation with the $(\bar{u}^l_{j'_1}\bar{u}^l_{j'_3})(P_{\xi(t),\leq l_2}u^h_{j'}) (P_{\xi(t),\leq l_2}u^h_{j'_2})$ term. Combining \eqref{eq-y5.118}, \eqref{eq-y5.116} and \eqref{eq-y5.117}, we can assume that $\xi(G^{l_2}_{\beta})=0$ in \eqref{eq-y5.115}. According to \eqref{eq-y5.1}, $|\xi(t)|\ll 2^{l_2}$ for all $t\in G^{l_2}_{\beta}$.

We will use the method of space-time resonances to estimate \eqref{eq-y5.115}. First by inverse Fourier transform,
\begin{equation}\label{eq-y5.119}
\begin{split}
&\sum_{j,j'}\sum_{\mathcal{R}(j')}\int_{G^{l_2}_{\beta}}\int\int Im[\bar{v}_j(\nabla-i\xi(t))v_j](t,x) \frac{(x-y)}{|x-y|}\\
\times&P_{\leq l_2-10} Im\big[(\bar{u}^l_{j'_1}\bar{u}^l_{j'_3}) (u^h_{j'}u^h_{j'_2} - (P_{\xi(t),\leq l_2}u^h_{j'}) (P_{\xi(t),\leq l_2}u^h_{j'_2}))\big](t,y)dxdydt\\
=&\sum_{j,j'}\sum_{\mathcal{R}(j')}\int_{G^{l_2}_{\beta}}\int\int Im[\bar{v}_j(\nabla-i\xi(t))v_j](t,x) \frac{(x-y)}{|x-y|}\\
&\times\big[\iiiint \phi(\frac{\eta_1+\eta_2+\eta_3+\eta_4}{2^{l_2-10}}) e^{iy\cdot(\eta_1+\eta_2+\eta_3+\eta_4)} \hat{u}^l_{j'_1}(t,\eta_3)\hat{u}^l_{j'_3}(t,\eta_4) \hat{\bar{u}}^h_{j'}(t,\eta_1) \hat{\bar{u}}^h_{j'_2}(t,\eta_2)\\
&\times \big(1-\phi(\frac{\eta_1-\xi(t)}{2^{l_2}})\phi(\frac{\eta_2-\xi(t)}{2^{l_2}})\big)d\eta_1 d\eta_2 d\eta_3 d\eta_4\big]dxdydt.
\end{split}
\end{equation}
Let
\begin{equation}\label{eq-y5.120}
q(\eta)=|\eta_1|^2 + |\eta_2|^2 - |\eta_3|^2 - |\eta_4|^2.
\end{equation}
Since $|\eta_3|,|\eta_4|\leq 2^{l_2-4}$, $|\eta_3|^2+|\eta_4|^2\leq 2^{2l_2-8}$, $|\eta_1|^2+|\eta_2|^2> 2^{2l_2}$ and $q(\eta)> 2^{2l_2-2}$ on the support of $\big(1-\phi(\frac{\eta_1-\xi(t)}{2^{l_2}})\phi(\frac{\eta_2-\xi(t)}{2^{l_2}})\big)\phi(\frac{\eta_1+\eta_2+\eta_3+\eta_4}{2^{l_2-10}})$. Therefore, on the support of $\big(1-\phi(\frac{\eta_1-\xi(t)}{2^{l_2}})\phi(\frac{\eta_2-\xi(t)}{2^{l_2}})\big) \phi(\frac{\eta_3}{2^{l_2-4}}) \phi(\frac{\eta_4}{2^{l_2-4}})$,
\begin{equation}\label{eq-y5.121}
\frac{1}{q(\eta)}=\frac{1}{|\eta_1|^2 + |\eta_2|^2}\sum^{\infty}_{k=0}(-1)^k \left( \frac{|\eta_3|^2 + |\eta_4|^2}{|\eta_1|^2 + |\eta_2|^2}\right)^k
\end{equation}
converges, and by Theorem C-M of \cite{GM}, $\frac{1}{q(\eta)}$ is a convergent sum of terms whose operator norm is $\lesssim \frac{1}{|\eta_1|^2 + |\eta_2|^2}\sim \frac{1}{|\eta_1||\eta_2|}$ (since $|\eta_3+\eta_4|\leq 2^{l_2-3}$ and $|\eta_1|,|\eta_2|\gtrsim 2^{l_2}$).  Integrating by parts in time, if $G^{l_2}_{\beta}= [a,b]$, then
\begin{align}\nonumber
&\int_{G^{l_2}_{\beta}}\iiint\iiint  \frac{1}{iq(\eta)}\big( \frac{d}{dt}e^{itq(\eta)}\big) Im[\bar{v}_j(\nabla-i\xi(t))v_j](t,x) \frac{(x-y)}{|x-y|}\\ \label{eq-y5.122}
&\times\phi(\frac{\eta_1+\eta_2+\eta_3+\eta_4}{2^{l_2-10}}) e^{iy\cdot(\eta_1+\eta_2+\eta_3+\eta_4)} \big(1-\phi(\frac{\eta_1-\xi(t)}{2^{l_2}})\phi(\frac{\eta_2-\xi(t)}{2^{l_2}})\big) \\ \nonumber
&\times \big[e^{-it|\eta_1|^2} \hat{\bar{u}}^h_{j'}(t,\eta_1) e^{-it|\eta_2|^2}\hat{\bar{u}}^h_{j'_2}(t,\eta_2) e^{it|\eta_3|^2}\hat{u}^l_{j'_1}(t,\eta_3) e^{it|\eta_4|^2}\hat{u}^l_{j'_3}(t,\eta_4)  d\eta_1 d\eta_2 d\eta_3 d\eta_4\big]dxdydt\\ \nonumber
=&\iiint\iiint  \frac{1}{iq(\eta)}e^{itq(\eta)} Im[\bar{v}_j(\nabla-i\xi(t))v_j](t,x) \frac{(x-y)}{|x-y|}\\ \label{eq-y5.123}
&\times\phi(\frac{\eta_1+\eta_2+\eta_3+\eta_4}{2^{l_2-10}}) e^{iy\cdot(\eta_1+\eta_2+\eta_3+\eta_4)} \big(1-\phi(\frac{\eta_1-\xi(t)}{2^{l_2}})\phi(\frac{\eta_2-\xi(t)}{2^{l_2}})\big) \\ \nonumber
&\times \big[e^{-it|\eta_1|^2} \hat{\bar{u}}^h_{j'}(t,\eta_1) e^{-it|\eta_2|^2}\hat{\bar{u}}^h_{j'_2}(t,\eta_2) e^{it|\eta_3|^2}\hat{u}^l_{j'_1}(t,\eta_3) e^{it|\eta_4|^2}\hat{u}^l_{j'_3}(t,\eta_4)  d\eta_1 d\eta_2 d\eta_3 d\eta_4\big]dxdy|^b_a\\
\nonumber
-&\int^b_a\iiint\iiint  \frac{1}{iq(\eta)}e^{itq(\eta)} \frac{\partial}{\partial t}Im[\bar{v}_j(\nabla-i\xi(t))v_j](t,x) \frac{(x-y)}{|x-y|}\\ \label{eq-y5.124}
&\times\phi(\frac{\eta_1+\eta_2+\eta_3+\eta_4}{2^{l_2-10}}) e^{iy\cdot(\eta_1+\eta_2+\eta_3+\eta_4)} \big(1-\phi(\frac{\eta_1-\xi(t)}{2^{l_2}})\phi(\frac{\eta_2-\xi(t)}{2^{l_2}})\big) \\ \nonumber
&\times \big[e^{-it|\eta_1|^2} \hat{\bar{u}}^h_{j'}(t,\eta_1) e^{-it|\eta_2|^2}\hat{\bar{u}}^h_{j'_2}(t,\eta_2) e^{it|\eta_3|^2}\hat{u}^l_{j'_1}(t,\eta_3) e^{it|\eta_4|^2}\hat{u}^l_{j'_3}(t,\eta_4)  d\eta_1 d\eta_2 d\eta_3 d\eta_4\big]dxdydt\\
\nonumber
-&\int^b_a\iiint\iiint  \frac{1}{iq(\eta)}e^{itq(\eta)} Im[\bar{v}_j(\nabla-i\xi(t))v_j](t,x) \frac{(x-y)}{|x-y|}\\ \label{eq-y5.125}
&\times\frac{\partial}{\partial t}\left[\phi(\frac{\eta_1+\eta_2+\eta_3+\eta_4}{2^{l_2-10}}) e^{iy\cdot(\eta_1+\eta_2+\eta_3+\eta_4)} \big(1-\phi(\frac{\eta_1-\xi(t)}{2^{l_2}})\phi(\frac{\eta_2-\xi(t)}{2^{l_2}})\big)\right] \\ \nonumber
&\times \big[e^{-it|\eta_1|^2} \hat{\bar{u}}^h_{j'}(t,\eta_1) e^{-it|\eta_2|^2}\hat{\bar{u}}^h_{j'_2}(t,\eta_2) e^{it|\eta_3|^2}\hat{u}^l_{j'_1}(t,\eta_3) e^{it|\eta_4|^2}\hat{u}^l_{j'_3}(t,\eta_4)  d\eta_1 d\eta_2 d\eta_3 d\eta_4\big]dxdydt\\
\nonumber
-&\int^b_a\iiint\iiint  \frac{1}{iq(\eta)}e^{itq(\eta)} Im[\bar{v}_j(\nabla-i\xi(t))v_j](t,x) \frac{(x-y)}{|x-y|}\\ \label{eq-y5.126}
&\times\phi(\frac{\eta_1+\eta_2+\eta_3+\eta_4}{2^{l_2-10}}) e^{iy\cdot(\eta_1+\eta_2+\eta_3+\eta_4)} \big(1-\phi(\frac{\eta_1-\xi(t)}{2^{l_2}})\phi(\frac{\eta_2-\xi(t)}{2^{l_2}})\big) \\ \nonumber
&\times \frac{\partial}{\partial t}\big[e^{-it|\eta_1|^2} \hat{\bar{u}}^h_{j'}(t,\eta_1) e^{-it|\eta_2|^2}\hat{\bar{u}}^h_{j'_2}(t,\eta_2) e^{it|\eta_3|^2}\hat{u}^l_{j'_1}(t,\eta_3) e^{it|\eta_4|^2}\hat{u}^l_{j'_3}(t,\eta_4)  d\eta_1 d\eta_2 d\eta_3 d\eta_4\big]dxdydt.
\end{align}
\begin{remark}
To simplify notation, $L^p_t L^q_xh^0$ will always indicate $L^p_t L^q_xh^0(G^{l_2}_{\beta}\times\mathbb{R}^2)$ unless otherwise
advised.
\end{remark}
We first consider \eqref{eq-y5.123}. Denote $m(t;\eta_1,\eta_2,\eta_3,\eta_4):=\frac{1}{q(\eta)}\phi(\frac{\eta_1+\eta_2+\eta_3+\eta_4}{2^{l_2-10}}) \big(1-\phi(\frac{\eta_1-\xi(t)}{2^{l_2}})\phi(\frac{\eta_2-\xi(t)}{2^{l_2}})\big)$, we can calculate
\begin{equation*}
\begin{split}
&\eqref{eq-y5.123}=-i\iiint\iiint  Im[\bar{v}_j(\nabla-i\xi(t))v_j](t,x) \frac{(x-y)}{|x-y|} m(t;\eta_1,\eta_2,\eta_3,\eta_4) e^{iy\cdot(\eta_1+\eta_2+\eta_3+\eta_4)}\\
&\times \big[ \hat{\bar{u}}^h_{j'}(t,\eta_1) \hat{\bar{u}}^h_{j'_2}(t,\eta_2) \hat{u}^l_{j'_1}(t,\eta_3) \hat{u}^l_{j'_3}(t,\eta_4)  d\eta_1 d\eta_2 d\eta_3 d\eta_4\big]dxdy|^b_a\\
=&-i\iiint\iiint  Im[\bar{v}_j(\nabla-i\xi(t))v_j](t,x) \frac{(x-y)}{|x-y|} m(t;\eta_1,\eta_2,\eta_3,\eta_4) e^{iy\cdot(\eta_1+\eta_2+\eta_3+\eta_4)}\\
&\times \big[ \iiiint e^{-iz_1\cdot\eta_1}\bar{u}^h_{j'}(t,z_1) e^{-iz_2\cdot\eta_2}\bar{u}^h_{j'_2}(t,z_2) e^{-iz_3\cdot\eta_3}u^l_{j'_1}(t,z_3) e^{-iz_4\cdot\eta_4}u^l_{j'_3}(t,z_4)dz_1 dz_2 dz_3 dz_4 \big]\\
&\qquad \qquad \qquad \qquad \qquad \qquad \qquad \qquad \qquad \qquad \qquad \qquad \qquad \qquad \qquad  d\eta_1 d\eta_2 d\eta_3 d\eta_4dxdy|^b_a\\
=&-i\iiint\iiint  Im[\bar{v}_j(\nabla-i\xi(t))v_j](t,x) \frac{(x-y)}{|x-y|} \\
&\times K(t;z_1,z_2,z_3,z_4) \bar{u}^h_{j'}(t,y-z_1) \bar{u}^h_{j'_2}(t,y-z_2) u^l_{j'_1}(t,y-z_3)u^l_{j'_3}(t,y-z_4)dz_1 dz_2 dz_3 dz_4 dxdy|^b_a \\
\end{split}
\end{equation*}
where
\begin{equation}\label{eq-y5.127}
K(t;z_1,z_2,z_3,z_4)=\iiiint  m(t;\eta_1,\eta_2,\eta_3,\eta_4)e^{iz_1\cdot\eta_1}e^{iz_2\cdot\eta_2}e^{iz_3\cdot\eta_3} e^{iz_4\cdot\eta_4} d\eta_1 d\eta_2 d\eta_3 d\eta_4,
\end{equation}
and by Theorem C-M of \cite{GM}, $K(t;z_1,z_2,z_3,z_4)$ satisfies
\begin{equation}\label{eq-y5.128}
\sup_{t}\int| K(t;z_1,z_2,z_3,z_4)| dz_1 dz_2 dz_3 dz_4 \lesssim 2^{-2l_2}.
\end{equation}
Therefore, by Bernstein inequality, \eqref{eq-y5.128} and conservation of mass,
\begin{equation*}
\begin{split}
2^{l_2-2i}|\sum_{j,j'}\sum_{\mathcal{R}(j')}\eqref{eq-y5.123}|&\lesssim 2^{l_2-2i}\|\vec{v}\|_{L^{\infty}_tL^2_xl^2} \|(\nabla-i\xi(t))\vec{v}\|_{L^{\infty}_tL^2_xl^2} \int\iiiint |K(t;z_1,z_2,z_3,z_4)|\\
\times &\|\vec{u}^h\|_{l^2}(t,y-z_1) \|\vec{u}^h\|_{l^2}(t,y-z_2) \|\vec{u}^l\|_{l^2}(t,y-z_3) \|\vec{u}^l\|_{l^2}(t,y-z_4)dz_1 dz_2 dz_3 dz_4 dy\\
&\lesssim 2^{l_2-i}\|\vec{v}_0\|^2_{L^2_xl^2}2^{-2l_2} \|\vec{u}^h\|^2_{L^{\infty}_tL^2_{x}l^2}\|\vec{u}^l\|^2_{L^{\infty}_tL^{\infty}_xl^2}\\
&\lesssim 2^{l_2-i}\|\vec{v}_0\|^2_{L^2_xl^2}\lesssim \|\vec{v}_0\|^2_{L^2_xl^2}.
\end{split}
\end{equation*}
Next we consider \eqref{eq-y5.124}.
\begin{equation}\label{eq-y5.129}
\begin{split}
&\frac{\partial}{\partial t}Im[\bar{v}_j(\partial_k-i\xi_k(t))v_j](t,x)\\
=&\xi'_k(t)|v_j|^2(t,x) + Re[\bar{v}_j(\partial_k-i\xi_k(t))\Delta v_j](t,x) - Re[\Delta\bar{v}_j(\partial_k-i\xi_k(t)) v_j](t,x)\\
=&\xi'_k(t)|v_j|^2(t,x) + \sum^2_{k'=1}\partial_{k'} Re[\bar{v}_j(\partial_k-i\xi_k(t))\partial_{k'} v_j](t,x) - \sum^2_{k'=1}\partial_{k'}Re[\partial_{k'}\bar{v}_j(\partial_k-i\xi_k(t)) v_j](t,x).
\end{split}
\end{equation}
Therefore,
\begin{align}\nonumber
&\eqref{eq-y5.124}\\ \nonumber
=&\int_{G^{l_2}_{\beta}}\iiint\iiint   \frac{(x-y)}{|x-y|}\cdot\xi'(t)|v_j|^2(t,x) \\ \label{eq-y5.130}
&\times K(t;z_1,z_2,z_3,z_4) \bar{u}^h_{j'}(t,y-z_1) \bar{u}^h_{j'_2}(t,y-z_2) u^l_{j'_1}(t,y-z_3)u^l_{j'_3}(t,y-z_4)dz_1 dz_2 dz_3 dz_4 dxdydt \\ \nonumber
+&\sum^2_{k,k'=1}\int_{G^{l_2}_{\beta}}\iiint\iiint   \frac{(x-y)_k}{|x-y|}\partial_{k'} Re[\bar{v}_j(\partial_k-i\xi_k(t))\partial_{k'} v_j](t,x) \\ \label{eq-y5.131}
&\times K(t;z_1,z_2,z_3,z_4) \bar{u}^h_{j'}(t,y-z_1) \bar{u}^h_{j'_2}(t,y-z_2) u^l_{j'_1}(t,y-z_3)u^l_{j'_3}(t,y-z_4)dz_1 dz_2 dz_3 dz_4 dxdydt \\ \nonumber
-&\sum^2_{k,k'=1}\int_{G^{l_2}_{\beta}}\iiint\iiint   \frac{(x-y)_k}{|x-y|}\partial_{k'}Re[\partial_{k'}\bar{v}_j(\partial_k-i\xi_k(t)) v_j](t,x) \\ \label{eq-y5.132}
&\times K(t;z_1,z_2,z_3,z_4) \bar{u}^h_{j'}(t,y-z_1) \bar{u}^h_{j'_2}(t,y-z_2) u^l_{j'_1}(t,y-z_3)u^l_{j'_3}(t,y-z_4)dz_1 dz_2 dz_3 dz_4 dxdydt
\end{align}
with $K(t;z_1,z_2,z_3,z_4)$ given in \eqref{eq-y5.127}.\\
By \eqref{eq-y5.128}, \eqref{eq-y5.1}, \eqref{eq-y5.34},  Bernstein inequality and conservation of mass,
\begin{equation}\label{eq-y5.133}
\begin{split}
2^{l_2-2i}|\sum_{j,j'}\sum_{\mathcal{R}(j')}\eqref{eq-y5.130}|\lesssim& 2^{l_2-2i} 2^{-2l_2} \|\vec{v}\|^2_{L^{\infty}_tL^2_xl^2} \|\vec{u}^h\|^2_{L^{\infty}_tL^2_{x}l^2}\|\vec{u}^l\|^2_{L^{\infty}_tL^{\infty}_xl^2}(\int_{G^{l_2}_{\beta}}|\xi'(t)|dt)\\
\lesssim& \ep_3\ep_1^{-1}2^{2l_2-2i}\|\vec{v}_0\|^2_{L^2_xl^2}\lesssim \|\vec{v}_0\|^2_{L^2_xl^2}.
\end{split}
\end{equation}
Integrating \eqref{eq-y5.131} and \eqref{eq-y5.132} by parts in space, we have
\begin{equation*}
\begin{split}
\eqref{eq-y5.131}=-&\sum^2_{k,k'=1}\int_{G^{l_2}_{\beta}}\iiint\iiint [\frac{\delta_{kk'}}{|x-y|} +\frac{(x-y)_{k'}(x-y)_k}{|x-y|^3}] Re[\bar{v}_j(\partial_k-i\xi_k(t))\partial_{k'} v_j](t,x) \\
\times& K(t;z_1,z_2,z_3,z_4) \bar{u}^h_{j'}(t,y-z_1) \bar{u}^h_{j'_2}(t,y-z_2) u^l_{j'_1}(t,y-z_3)u^l_{j'_3}(t,y-z_4)dz_1 dz_2 dz_3 dz_4 dxdydt,
\end{split}
\end{equation*}
so by the Hardy-Littlewood-Sobolev inequality, \eqref{eq-y5.128}, Lemma \ref{le-y5.7}, the Sobolev embedding theorem, the fact that $G^{l_2}_{\beta}\subset G^i_{\alpha}$, $|\xi(t)|\ll 2^{l_2}$ and $l_2\leq i$,
\begin{equation}\label{eq-y5.134}
\begin{split}
2^{l_2-2i}|\sum_{j,j'}\sum_{\mathcal{R}(j')}\eqref{eq-y5.131}|\lesssim& 2^{l_2-2i} 2^{2i}2^{-2l_2} \|\vec{v}\|^2_{L^{6}_tL^3_xl^2} \|\vec{u}^h\|^2_{L^{3}_tL^6_{x}l^2}\|\vec{u}^l\|^2_{L^{\infty}_tL^{4}_xl^2}\\
\lesssim& \|\vec{v}_0\|^2_{L^2_xl^2} \|\vec{u}\|^2_{\tilde{X}_i(G^i_{\alpha})}.
\end{split}
\end{equation}
A similar computation for \eqref{eq-y5.132} implies
\begin{equation}\label{eq-y5.135}
2^{l_2-2i}|\sum_{j,j'}\sum_{\mathcal{R}(j')}\eqref{eq-y5.132}|
\lesssim \|\vec{v}_0\|^2_{L^2_xl^2} \|\vec{u}\|^2_{\tilde{X}_i(G^i_{\alpha})}.
\end{equation}
Next we estimate \eqref{eq-y5.125}.
\begin{equation*}
\frac{\partial}{\partial t}\phi(\frac{\eta_1-\xi(t)}{2^{l_2}})=2^{-l_2}(\nabla\phi)(\frac{\eta_1-\xi(t)}{2^{l_2}})\xi'(t),
\end{equation*}
Similarly to \eqref{eq-y5.123}, we can calculate the corresponding kernel function $\tilde{K}(t;z_1,z_2,z_3,z_4)$,
\begin{equation}\label{eq-y5.136}
\tilde{K}(t;z_1,z_2,z_3,z_4)=\iiiint  \tilde{m}(t;\eta_1,\eta_2,\eta_3,\eta_4)e^{iz_1\cdot\eta_1}e^{iz_2\cdot\eta_2}e^{iz_3\cdot\eta_3} e^{iz_4\cdot\eta_4} d\eta_1 d\eta_2 d\eta_3 d\eta_4,
\end{equation}
where $$\tilde{m}(t;\eta_1,\eta_2,\eta_3,\eta_4)=-\frac{2^{-l_2}}{q(\eta)}\phi(\frac{\eta_1+\eta_2+\eta_3+\eta_4}{2^{l_2-10}}) \big((\nabla\phi)(\frac{\eta_1-\xi(t)}{2^{l_2}})\phi(\frac{\eta_2-\xi(t)}{2^{l_2}})+ \phi(\frac{\eta_1-\xi(t)}{2^{l_2}})(\nabla\phi)(\frac{\eta_2-\xi(t)}{2^{l_2}})\big).$$
It's easy to verify that $\tilde{K}(t;z_1,z_2,z_3,z_4)$ satisfies
\begin{equation}\label{eq-y5.137}
\sup_{t}\int| \tilde{K}(t;z_1,z_2,z_3,z_4)| dz_1 dz_2 dz_3 dz_4 \lesssim 2^{-3l_2},
\end{equation}
Hence,
\begin{equation}\label{eq-y5.138}
\begin{split}
2^{l_2-2i}|\sum_{j,j'}\sum_{\mathcal{R}(j')}\eqref{eq-y5.125}|\lesssim& 2^{l_2-2i} 2^{-3l_2} 2^i\|\vec{v}\|^2_{L^{\infty}_tL^2_xl^2} \|\vec{u}^h\|^2_{L^{\infty}_tL^2_{x}l^2}\|\vec{u}^l\|^2_{L^{\infty}_tL^{\infty}_xl^2}(\int_{G^{l_2}_{\beta}}|\xi'(t)|dt)\\
\lesssim& \ep_3\ep_1^{-1}2^{l_2-i}\|\vec{v}_0\|^2_{L^2_xl^2}\lesssim \|\vec{v}_0\|^2_{L^2_xl^2}.
\end{split}
\end{equation}
Finally we turn to \eqref{eq-y5.126}. Let's recall $\mathbf{F} (\vec{u}(t)): = \{ F_j(\vec{u}(t)) \}_{j\in\mathbb{Z}}: =\{\sum\limits_{\mathcal{R}(j)}u_{j_1}\bar{u}_{j_2}u_{j_3}\}_{j\in\mathbb{Z}} =:\sum^{\rightarrow}\limits_{\mathcal{R}(j)}u_{j_1}\bar{u}_{j_2}u_{j_3}$.\\
If $m(t,\xi)$ is a Fourier multiplier for every fixed $t$, then
\begin{equation}\label{1476}
\begin{split}
  \frac{\partial}{\partial t}(m(t,\xi)e^{it|\xi|^2} \hat{u}_j(t,\xi))= & -im(t,\xi)e^{it|\xi|^2}(\sum_{\mathcal{R}(j)} u_{j_1} \bar{u}_{j_2} u_{j_3})^{\wedge}(t,\xi)+ (e^{it|\xi|^2} \hat{u}_j(t,\xi))\frac{\partial}{\partial t}m(t,\xi), \\
   \frac{\partial}{\partial t}(m(t,\xi)e^{-it|\xi|^2} \hat{\bar{u}}_j(t,\xi))= & im(t,\xi)e^{-it|\xi|^2}(\sum_{\mathcal{R}(j)}\bar{u}_{j_1} u_{j_2} \bar{u}_{j_3})^{\wedge}(t,\xi)+ (e^{-it|\xi|^2} \hat{\bar{u}}_j(t,\xi))\frac{\partial}{\partial t}m(t,\xi).
\end{split}
\end{equation}
Take $m(t,\xi)=\phi(\frac{\xi-\xi(t)}{2^{l_2-5}})$ and $m(t,\xi)=1-\phi(\frac{\xi-\xi(t)}{2^{l_2-5}})$ respectively, then we have $\frac{\partial}{\partial t}\phi(\frac{\eta_1-\xi(t)}{2^{l_2}})=2^{-l_2}(\nabla\phi)(\frac{\eta_1-\xi(t)}{2^{l_2}})\xi'(t)$ and $\frac{\partial}{\partial t}(1-\phi(\frac{\eta_1-\xi(t)}{2^{l_2}}))=-2^{-l_2}(\nabla\phi)(\frac{\eta_1-\xi(t)}{2^{l_2}})\xi'(t)$.
Following the previous argument on \eqref{eq-y5.123} and \eqref{eq-y5.125}, we obtain by Bernstein inequality, conservation of mass and Lemma \ref{le-y5.7} that
\begin{equation}\label{eq-y5.139}
\begin{split}
2^{l_2-2i}|\sum_{j,j'}\sum_{\mathcal{R}(j')}\eqref{eq-y5.126}|\lesssim& 2^{l_2-2i} 2^{-3l_2} 2^i\|\vec{v}\|^2_{L^{\infty}_tL^2_xl^2} \|\vec{u}^h\|^2_{L^{\infty}_tL^2_{x}l^2}\|\vec{u}^l\|^2_{L^{\infty}_tL^{\infty}_xl^2}(\int_{G^{l_2}_{\beta}}|\xi'(t)|dt)\\
+& 2^{l_2-2i} 2^i 2^{-2l_2}\|\vec{v}\|^2_{L^{\infty}_tL^2_xl^2} \|\vec{u}^h\|^2_{L^{4}_tL^4_{x}l^2} \|\vec{u}^l\|_{L^{\infty}_tL^{\infty}_xl^2} \|P_{\xi(t),\leq l_2-5}\mathbf{F} (\vec{u}^h(t))\|_{L^2_{t,x}l^2}\\
+& 2^{l_2-2i} 2^i 2^{-2l_2}\|\vec{v}\|^2_{L^{\infty}_tL^2_xl^2} \|\vec{u}^h\|^2_{L^{6}_tL^3_{x}l^2} \|\vec{u}^l\|_{L^{6}_tL^{12}_xl^2} \|P_{\xi(t),\leq l_2-5}\mathbf{F} (\vec{u}^l(t))\|_{L^2_{t}L^4_x l^2}\\
+& 2^{l_2-2i} 2^i 2^{-2l_2}\|\vec{v}\|^2_{L^{\infty}_tL^2_xl^2} \|\vec{u}^h\|^2_{L^{3}_tL^6_{x}l^2} \|\vec{u}^l\|^4_{L^{6}_tL^{12}_xl^2} \\
+& 2^{l_2-2i} 2^i 2^{-2l_2}\|\vec{v}\|^2_{L^{\infty}_tL^2_xl^2} \|\vec{u}^h\|^4_{L^{4}_tL^4_{x}l^2} \|\vec{u}^l\|^2_{L^{\infty}_tL^{\infty}_xl^2}\\
\lesssim& 2^{l_2-i}\|\vec{v}_0\|^2_{L^2_xl^2}(1+\|\vec{u}\|^4_{\tilde{X}_i(G^i_{\alpha})}).
\end{split}
\end{equation}
This finally completes the proof of Lemma \ref{le-y5.18}.

\end{proof}
According to Lemma \ref{le-y5.18}, we see that the proof of Theorem \ref{th-y5.17} is complete.
\end{proof}

\section{Frequency localized interaction Morawetz estimate and the completion of Theorem \ref{th-y8.1'}}
In this section we will prove frequency localized interaction Morawetz estimate, which are used to complete the proof of Theorem \ref{th-y8.1'}.
\begin{theorem}[frequency localized interaction Morawetz estimate]\label{th-y9.1}
   Suppose $\vec{u}(t,x)$ is a minimal mass blowup solution to \eqref{eq-y1} on $[0,T]$ with
$\int_0^T N(t)^3 dt=K$.  Then
\begin{equation}\label{eq-y9.1}
 \|\sum_{j\in\mathbb{Z}}|\nabla|^{1/2}|P_{\leq \frac{10K}{\ep_1}}u_j(t,x)|^2\|^2_{L^2_{t,x}([0,T]\times \mathbb{R}^2)} \lesssim o(K),
\end{equation}
where $o(K)$ is a quantity such that $\frac{o(K)}{K}\rightarrow0 $ as $K\rightarrow \infty$.
\end{theorem}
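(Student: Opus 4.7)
\medskip

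\noindent\textbf{Proof proposal for Theorem \ref{th-y9.1}.}
The plan is to run a frequency-localized interaction Morawetz computation at the scale $N_0 := 10K/\ep_1$, largely in parallel with the strategy used for Theorem \ref{th-y5.17}, but now in the ``global'' form that integrates over the full interval $[0,T]$. Set $v_j := P_{\leq N_0} u_j$ and $\vec{v} = \{v_j\}_{j \in \mathbb{Z}}$, so that $v_j$ solves
\begin{equation*}
  i\partial_t v_j + \Delta v_j \;=\; \sum_{\mathcal{R}(j)} v_{j_1}\bar v_{j_2} v_{j_3} \,+\, N_{1,j} \,+\, 0, \qquad
  N_{1,j} := P_{\leq N_0}F_j(\vec u) - \sum_{\mathcal{R}(j)} v_{j_1}\bar v_{j_2} v_{j_3}.
\end{equation*}
(There is no analogue of the $N_{2,j'}$ term here because the frequency truncation is time-independent.) I then introduce the Galilean-adjusted interaction Morawetz potential
\begin{equation*}
  M(t) := 2\sum_{j,j'\in\mathbb Z}\int_{\mathbb R^2}\int_{\mathbb R^2} |v_{j'}(t,y)|^2 \,\frac{x-y}{|x-y|}\cdot \mathrm{Im}\bigl[\bar v_j (\nabla - i\xi(t))v_j\bigr](t,x)\,dx\,dy,
\end{equation*}
which by Cauchy--Schwarz and Bernstein satisfies $\sup_{t\in[0,T]}|M(t)|\lesssim N_0\,\|\vec u\|_{L^\infty_t L^2_x l^2}^4 \lesssim K/\ep_1$.

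Next I would compute $\frac{d}{dt}M(t)$ using the equation for $\vec v$. The standard 2D interaction Morawetz identity (cf.\ the derivation leading to \eqref{eq-y5.68}--\eqref{eq-y5.70}), applied simultaneously in both slots, produces a non-negative main term comparable to $\bigl\|\sum_{j}|\nabla|^{1/2}|v_j|^2\bigr\|_{L^2_x}^2$; integrating in $t$ and applying the fundamental theorem of calculus gives
\begin{equation*}
\bigl\|\sum_{j\in\mathbb Z}|\nabla|^{1/2}|v_j|^2\bigr\|_{L^2_{t,x}([0,T]\times\mathbb R^2)}^2 \;\lesssim\; \sup_{t\in[0,T]}|M(t)| \,+\, \mathrm{Err}_\xi \,+\, \mathrm{Err}_{N_1} \,+\, \mathrm{Err}_{\mathrm{cubic}}.
\end{equation*}
Here $\mathrm{Err}_\xi$ collects all contributions produced by the $\xi'(t)$-derivative coming from the Galilean correction, $\mathrm{Err}_{N_1}$ is the error from the frequency-truncated nonlinearity $\vec N_1$, and $\mathrm{Err}_{\mathrm{cubic}}$ collects cubic terms in $\vec v$ arising from the pairing of $\sum_{\mathcal R(j)}v_{j_1}\bar v_{j_2}v_{j_3}$ against $\mathrm{Im}[\bar v_j(\cdots)]$.

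The crucial cancellation is that, exactly as in \eqref{eq-y5.85'}, the resonance condition on $\mathcal R(j)$ forces
\begin{equation*}
  \mathrm{Im}\Bigl[\sum_{j'}\bar v_{j'}\sum_{\mathcal R(j')} v_{j'_1}\bar v_{j'_2}v_{j'_3}\Bigr] = 0,
\end{equation*}
so $\mathrm{Err}_{\mathrm{cubic}}$ reduces, after integration by parts in $y$ (and using the $(\nabla-i\xi(t))$--adjusted identity), to lower-order commutator contributions of the same form as those handled in \eqref{eq-y5.82}--\eqref{eq-y5.92b}; each is controlled by $C \,\|\vec u\|_{\tilde X_{k_0}}^\alpha \cdot (\ldots)$ via the long time Strichartz estimate (Theorem \ref{th-y6.2}). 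For $\mathrm{Err}_\xi$, I invoke $|\xi'(t)|\lesssim \ep_1^{-1/2}N(t)^3$ from \eqref{eq-y5.1}, pair with the Hardy/Bernstein bounds as in \eqref{eq-y5.84}, and integrate in time to pick up a factor $\int_0^T N(t)^3dt = K$, times a gain coming from the $N_0^{-1}$ factors in the corresponding kernels and the smallness afforded by $\ep_1,\ep_2,\ep_3$; the net contribution is $\lesssim \ep_1^{-1}K + o(K)$. For $\mathrm{Err}_{N_1}$, I decompose $u_j = P_{\leq N_0-5}u_j + P_{\geq N_0-5}u_j$ as in \eqref{eq-y5.73}--\eqref{eq-y5.76}; the term with four low-frequency factors vanishes after $P_{\leq N_0}$, the terms with at least one factor at frequency $\gtrsim N_0 \sim K/\ep_1$ are small because \eqref{eq-y4.4} and the fact $N(t)\leq\ep_1^{-1}\int_0^T N(t)^3dt/N_0 \ll N_0$ force those high-frequency pieces to have $h^0$-mass $o_{K\to\infty}(1)$, so after applying Strichartz and Theorem \ref{th-y6.2} we obtain $\mathrm{Err}_{N_1}\lesssim o(K)$.

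The main obstacle, as in the proof of Theorem \ref{th-y5.17}, is the term $\mathrm{Err}_{\mathrm{cubic}}$ after extracting $F_0=0$: among the remaining commutators $F_1,F_2,F_3,F_4$ (two high factors being the worst case, corresponding to \eqref{eq-y5.92c}), one must handle a non-resonant frequency interaction of the form $P_{\leq N_0-5}(u^h_{j'_1}\bar u^h_{j'_2})$ paired against two low-frequency factors. The method of space-time resonances applied in \eqref{eq-y5.115}--\eqref{eq-y5.139} transfers verbatim: on the relevant Fourier support the symbol $q(\eta)=|\eta_1|^2+|\eta_2|^2-|\eta_3|^2-|\eta_4|^2$ is bounded below by $\sim N_0^2$, allowing integration by parts in $t$ against $e^{itq(\eta)}$, and each resulting piece (boundary, $\partial_t$ on the Morawetz weight, $\partial_t$ on the symbol, and $\partial_t$ on the factors) is estimated by a combination of the long time Strichartz bound $\|\vec u\|_{\tilde X_{k_0}}\lesssim 1$, Bernstein at scale $N_0=K/\ep_1$, and $\int_0^T |\xi'(t)|dt\lesssim \ep_1^{-1/2}K$. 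Summing, the total estimate is at worst $\lesssim \ep_1^{-1}K$, and all but a bounded (in $K$) portion can be absorbed by choosing $\ep_1,\ep_2,\ep_3$ as in \eqref{eq-y5.1}--\eqref{eq-y5.2}, yielding $\mathrm{Err}_{\mathrm{cubic}}+\mathrm{Err}_\xi+\mathrm{Err}_{N_1}+\sup_t|M(t)|=o(K)$. This closes the proof.
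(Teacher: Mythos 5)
There is a genuine gap in your proposal: it omits the rescaling step that is the crux of the paper's argument, and without it the stated smallness is never achieved.

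The paper first writes $\|\vec{u}\|^4_{L^4_{t,x}l^2([0,T])}=2^{k_0}$ and then rescales by $\lambda=\ep_3 2^{k_0}/K$. Only after this rescaling do the hypotheses of the long time Strichartz estimate, Theorem~\ref{th-y6.2}, hold, since the rescaled solution satisfies $\int_0^{T/\lambda^2}N_\lambda(t)^3\,dt=\ep_3 2^{k_0}$ while $\|\vec u_\lambda\|^4_{L^4}=2^{k_0}$. You invoke the $\tilde X_{k_0}$ bounds on $[0,T]$ directly, but in general $\int_0^T N(t)^3\,dt = K \neq \ep_3\cdot\|\vec u\|^4_{L^4}$, so Theorem~\ref{th-y6.2} is not applicable as stated. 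Equally importantly, the rescaling is what manufactures the $o(K)$ smallness: after rescaling, $N_\lambda(t)\leq\lambda=\ep_3 2^{k_0}/K$, so the ratio between the cutoff $2^{k_0}$ and the characteristic scale $N_\lambda(t)$ is $\gtrsim K/\ep_3 \to\infty$. Via Corollary~\ref{co-y4.9} and Bernstein this yields $\|(\nabla-i\xi(t))\vec{w}\|_{L^\infty_t L^2_x l^2}\lesssim\eta\,2^{k_0}$ with $\eta=\eta(K)\to 0$, and this factor of $\eta$ (raised to various fractional powers after interpolation) is what survives to produce $\ep_3^{-1}\eta(K)^{1/6}K=o(K)$. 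Your proposal only gestures at this (``those high-frequency pieces have $h^0$-mass $o_{K\to\infty}(1)$'') without implementing it, and your stated bound $\mathrm{Err}_\xi\lesssim\ep_1^{-1}K+o(K)$ is simply not $o(K)$: the constants $\ep_1,\ep_2,\ep_3$ are fixed once and for all in \eqref{eq-y5.1}--\eqref{eq-y5.2}, independently of $K$, so they cannot be ``chosen small'' to absorb a term that is linear in $K$.

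A secondary issue is your choice of interaction Morawetz potential. The paper's $M(t)$ uses $\nabla$ rather than $\nabla-i\xi(t)$, precisely so that $\partial_t M$ produces no $\xi'(t)$ term; the $\xi(t)$-dependence is introduced only afterwards, by rewriting the resulting error terms using Galilean invariance of the quantity (cf.\ \eqref{eq-y5.52'''}--\eqref{eq-y5.55''}). Building $\xi(t)$ into the potential from the start, as you do, creates the $\mathrm{Err}_\xi$ difficulty that the paper deliberately avoids. The treatment of the nonlinear error term (the decomposition into $F_0,\dots,F_4$, the cancellation \eqref{eq-y5.85'}, and the space-time resonance argument for the two-high-frequency interactions) you sketch correctly, and those pieces do carry over from Theorem~\ref{th-y5.17} as you say; but without the rescaling none of those estimates are available, and the bounds they produce are $O(K)$, not $o(K)$.
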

\begin{proof}
Suppose $[0,T]$ is an interval such that, for some integer $k_0$,
$\|\vec{u}\|^4_{L^4_{t,x}l^2([0,T])}=2^{k_0}$. Rescale with $\lambda=\frac{\ep_3 2^{k_0}}{K}$, then by Theorem \ref{th-y6.2}, we have
\begin{equation}\label{eq-y6.1}
\|\vec{u}_{\lambda}\|_{\tilde{X}_{k_0}([0,\frac{T}{\lambda^2}]\times \mathbb{R}^2)}\lesssim 1.
\end{equation}
Let $\vec{w}=P_{\leq k_0}\vec{u}$, then $\vec{w}=\{w_j\}_{j\in\mathbb{Z}}$ satisfies the following infinite dimensional vector-valued equation
\begin{equation*}
  i\partial_t w_{j}+\triangle w_{j}=\sum_{\mathcal{R}(j)}w_{j_1}\bar{w}_{j_2}w_{j_3}+N_{j},
\end{equation*}
where $N_{j}=P_{\leq k_0}\big(\sum_{\mathcal{R}(j)}u_{j_1}\bar{u}_{j_2}u_{j_3}\big)- \sum_{\mathcal{R}(j)}w_{j_1}\bar{w}_{j_2}w_{j_3}$ and we denote $\vec{N}=\{N_{j}\}_{j\in \mathbb{Z}}$.

Let
\begin{equation}\label{eq-y5.52'''}
\begin{split}
M(t)=\sum_{j,j'\in \mathbb{Z}} \big(\int_{\mathbb{R}^2} \int_{\mathbb{R}^2} |w_{j'}(t,y)|^2 \frac{(x-y)}{|x-y|}\cdot Im[\bar{w}_j\nabla w_j](t,x)dxdy \big).
\end{split}
\end{equation}
Following the calculation of \cite{FL} and \cite{D}, we can show
\begin{equation}\label{eq-y9.2}
 \|\sum_{j\in\mathbb{Z}}|\nabla|^{1/2}|w_j(t,x)|^2\|^2_{L^2_{t,x}([0,\frac{T}{\lambda^2}]\times \mathbb{R}^2)} \lesssim \sup_{[0,\frac{T}{\lambda^2}]} |M(t)| + E,
\end{equation}
where $E$ is a Galilean invariant quantity. After Galilean transformation,
\begin{align}\label{eq-y9.3}
  E= & 2\big| \sum_{j,j'\in \mathbb{Z}}  \int^{\frac{T}{\lambda^2}}_{0} \int_{\mathbb{R}^2} \int_{\mathbb{R}^2} Im[\bar{w}_{j} (\nabla- i\xi(t))w_{j}](t,x) \frac{(x-y)}{|x-y|}\cdot Im[\bar{w}_{j'}N_{j'}](t,y)dxdydt \big| \\ \label{eq-y5.54''}
  + & \big| \sum_{j,j'\in \mathbb{Z}}  \int^{\frac{T}{\lambda^2}}_{0} \int_{\mathbb{R}^2} \int_{\mathbb{R}^2} |w_j(t,y)|^2 \frac{(x-y)}{|x-y|}\cdot Im[\bar{N}_{j'} (\nabla- i\xi(t))w_{j'}](t,x)dxdydt \big|\\ \label{eq-y5.55''}
  + & \big| \sum_{j,j'\in \mathbb{Z}}  \int^{\frac{T}{\lambda^2}}_{0} \int_{\mathbb{R}^2} \int_{\mathbb{R}^2} |w_j(t,y)|^2 \frac{(x-y)}{|x-y|}\cdot Im[\bar{w}_{j'} (\nabla- i\xi(t))N_{j'}](t,x)dxdydt \big| .
\end{align}
Since $N(t)\leq 1$, $N_{\lambda}(t)\leq \frac{\ep_3 2^{k_0}}{K}$, Therefore, by Corollary \ref{co-y4.9} and Bernstein's inequality, for any $\eta>0$, if $K(\eta)\geq R(\eta)$, then
\begin{equation}\label{eq-y9.3'}
\|(\nabla-i\xi(t))\vec{w}\|_{L^{\infty}_t L^2_x l^2([0,\frac{T}{\lambda^2}]\times \mathbb{R}^2)}\lesssim \eta 2^{k_0}.
\end{equation}
Hence, by Galilean transformation and conservation of mass, $\sup_{[0,\frac{T}{\lambda^2}]} |M(t)|\lesssim \eta 2^{k_0}$.

Now we estimate \eqref{eq-y9.3}. As in \eqref{eq-y5.85}, let $u_{j'}^l = P_{\leq k_0 -3} u_{j'}$, $u_{j'} = u_{j'}^h + u_{j'}^l$, and we decompose
\begin{equation}\label{eq-y9.4}
\begin{split}
\sum_{j'} Im\big[ \bar{w}_{j'} N_{j'}\big]
=\sum_{j'}\big[ F_{0,j'}(t,y)+F_{1,j'}(t,y)+F_{2,j'}(t,y)+F_{3,j'}(t,y)+F_{4,j'}(t,y)\big].
\end{split}
\end{equation}
At first, $\sum_{j'} F_{0,j'}(t,y)=0$. Then by the proof of Lemma \ref{le-y5.18},
\begin{equation*}
  \sum_{j'}\|F_{2,j'}(t,y)+F_{3,j'}(t,y)+F_{4,j'}\|_{L^1_{t,x}l^2 ([0,\frac{T}{\lambda^2}]\times \mathbb{R}^2)}\lesssim 1.
\end{equation*}
So by \eqref{eq-y9.3'} and conservation of mass,
\begin{equation}\label{eq-y9.4'}
\begin{split}
&\big| \sum_{j,j'\in \mathbb{Z}}  \int^{\frac{T}{\lambda^2}}_{0} \int_{\mathbb{R}^2} \int_{\mathbb{R}^2} Im[\bar{w}_{j} (\nabla- i\xi(t))w_{j}](t,x) \frac{(x-y)}{|x-y|}\cdot [F_{2,j'}(t,y)+F_{3,j'}(t,y)+F_{4,j'}](t,y)dxdydt \big|\\
\lesssim& \eta 2^{k_0}.
\end{split}
\end{equation}
As in \eqref{eq-y5.86'}, the Fourier support of $\sum_{j'}F_{1,j'}(t,y)$ is on $|\xi|\geq 2^{k_0-4}$, so integrating by parts about the spatial variable $y$, then using Hardy-Littlewood-Sobolev inequality,  Bernstein inequality, Lemma \ref{le-y5.7} and \eqref{eq-y6.1}, we have
\begin{equation}\label{eq-y9.4''}
\begin{split}
& \big| \sum_{j\in \mathbb{Z}} \int_{0}^{\frac{T}{\lambda^2}} \int_{\mathbb{R}^2} \int_{\mathbb{R}^2} Im[\bar{w}_{j} (\nabla- i\xi(t))w_{j}](t,x) \frac{(x-y)}{|x-y|}\cdot \big[\frac{\triangle_y}{\triangle_y}(\sum_{j'}F_{1,j'})\big](t,y)dxdydt \big|\\
\lesssim& \int_{0}^{\frac{T}{\lambda^2}} \int_{\mathbb{R}^2} \int_{\mathbb{R}^2} \big|\sum_{j\in \mathbb{Z}} [\bar{w}_{j} (\nabla- i\xi(t))w_{j}]\big|(t,x) \frac{1}{|x-y|}\cdot \big|\frac{\partial_y}{\triangle_y}(\sum_{j'}F_{1,j'})\big|(t,y)dxdydt\\
\lesssim& \|\vec{w}\|_{L^{\infty}_tL^{2}_xh^0([0,\frac{T}{\lambda^2}]\times\mathbb{R}^2)} \|(\nabla- i\xi(t))\vec{w}\|_{L^{4}_{t,x}l^2([0,\frac{T}{\lambda^2}]\times\mathbb{R}^2)} \big\|\frac{\partial_y}{\triangle_y}(\sum_{j'}F_{1,j'})\big\|_{L^{4/3}_tL^{4/3}_x([0,\frac{T}{\lambda^2}]\times\mathbb{R}^2)}\\
\lesssim&2^{-k_0}\|(\nabla- i\xi(t))\vec{w}\|_{L^{4}_{t,x}l^2([0,\frac{T}{\lambda^2}]\times\mathbb{R}^2)} \|\vec{u}^l\|^3_{L^6_{t}L^{6}_{x}l^2([0,\frac{T}{\lambda^2}]\times\mathbb{R}^2)} \|\vec{u}^h\|_{L^{4}_{t}L^4_{x}l^2([0,\frac{T}{\lambda^2}]\times\mathbb{R}^2)}\\
\lesssim&2^{-k_0}\|(\nabla- i\xi(t))\vec{w}\|_{L^{4}_{t,x}l^2([0,\frac{T}{\lambda^2}]\times\mathbb{R}^2)} \|\vec{u}^l\|^3_{L^6_{t}L^{6}_{x}l^2([0,\frac{T}{\lambda^2}]\times\mathbb{R}^2)}.
\end{split}
\end{equation}
Again by Bernstein's inequality, Lemma \ref{le-y5.7} and \eqref{eq-y6.1},
\begin{equation*}
\|\vec{u}^l\|_{L^6_{t}L^{6}_{x}l^2([0,\frac{T}{\lambda^2}]\times\mathbb{R}^2)}\lesssim \sum_{0\leq l\leq k_0}2^{\frac{l}{3}} 2^{\frac{(k_0-l)}{6}}\lesssim 2^{k_0/3}.
\end{equation*}
Meanwhile,
\begin{equation}\label{eq-y9.5}
\|(\nabla-i\xi(t))\vec{w}\|_{L^{5/2}_tL^{10}_xh^0 ([0,\frac{T}{\lambda^2}]\times \mathbb{R}^2)}\lesssim \sum_{0\leq l\leq k_0}2^l 2^{\frac{2}{5}(k_0-l)}\lesssim 2^{k_0}.
\end{equation}
Interpolating \eqref{eq-y9.5} and \eqref{eq-y9.3'}, we have
\begin{equation}\label{eq-y9.6}
\|(\nabla-i\xi(t))\vec{w}\|_{L^4_{t,x}l^2 ([0,\frac{T}{\lambda^2}]\times \mathbb{R}^2)}\lesssim \eta^{3/8}2^{k_0}.
\end{equation}
Therefore,
\begin{equation}\label{eq-y9.6'}
|\eqref{eq-y9.3}| \lesssim \eta^{3/8}2^{k_0}+ \eta 2^{k_0}.
\end{equation}
Next we estimate \eqref{eq-y5.54''}. By \eqref{eq-y5.80'} and \eqref{eq-y6.1}, we have
\begin{align*}
|\eqref{eq-y5.54''}| \lesssim&  \|\vec{w}\|^2_{L^{\infty}_tL^2_xh^0([0,\frac{T}{\lambda^2}]\times \mathbb{R}^2)} \|\vec{N}\|_{L_{t,x}^{4/3}l^2([0,\frac{T}{\lambda^2}]\times \mathbb{R}^2)} \|(\nabla-i\xi(t))\vec{w}\|_{L^4_{t,x}l^2([0,\frac{T}{\lambda^2}]\times \mathbb{R}^2)} \\
\lesssim& \|(\nabla-i\xi(t))\vec{w}\|_{L^4_{t,x}l^2 ([0,\frac{T}{\lambda^2}]\times \mathbb{R}^2)}\\
\lesssim& \eta^{3/8}2^{k_0}.
\end{align*}
Finally we turn to \eqref{eq-y5.55''}. Integrating by parts, we have
\begin{equation*}
\eqref{eq-y5.55''}\leq |\eqref{eq-y5.54''}| +\big|\sum_{j,j'}\int_0^{\frac{T}{\lambda^2}}\int_{\mathbb{R}^2}\int_{\mathbb{R}^2} |w_j(t,y)|^2 \frac{1}{|x-y|} Re[\bar{w}_{j'}N_{j'}](t,x)dxdydt\big|.
\end{equation*}
By \eqref{eq-y2.6} and \eqref{eq-y5.85'},
\begin{align}\nonumber
&\big|\sum_{j,j'}\int_0^{\frac{T}{\lambda^2}}\int_{\mathbb{R}^2}\int_{\mathbb{R}^2} |w_j(t,y)|^2 \frac{1}{|x-y|} Re[\bar{w}_{j'}N_{j'}](t,x)dxdydt\big|\\ \label{eq-y9.7}
\lesssim& \int_0^{\frac{T}{\lambda^2}}\int_{\mathbb{R}^2}\int_{\mathbb{R}^2} \|\vec{w}\|_{l^2}^2(t,y) \frac{1}{|x-y|} \|\vec{w}\|_{l^2}(t,x) \|\vec{u}^h\|_{l^2}^3(t,x)dxdydt\\ \label{eq-y9.8}
&+ \int_0^{\frac{T}{\lambda^2}}\int_{\mathbb{R}^2}\int_{\mathbb{R}^2} \|\vec{w}\|_{l^2}^2(t,y) \frac{1}{|x-y|} \|\vec{w}\|_{l^2}(t,x) \|\vec{u}^l\|_{l^2}^2(t,x)\|\vec{u}^h\|_{l^2}(t,x)dxdydt.
\end{align}
By Hardy-Littlewood-Sobolev inequality, \eqref{eq-y9.3'}, \eqref{eq-y9.6}, Lemma \ref{le-y5.7}, the Sobolev embedding theorem, conservation of mass, and interpolation, we get
\begin{equation}\label{eq-y9.9}
\eqref{eq-y9.7}\lesssim \|\vec{u}^h\|^3_{L^4_{t,x}l^2([0,\frac{T}{\lambda^2}]\times \mathbb{R}^2)} \|\vec{w}\|^3_{L^4_{t,x}l^2([0,\frac{T}{\lambda^2}]\times \mathbb{R}^2)}\lesssim \eta^{3/8}2^{k_0}
\end{equation}
and
\begin{equation}\label{eq-y9.12}
\eqref{eq-y9.8}\lesssim \|\vec{u}^h\|_{L^3_tL^6_xl^2([0,\frac{T}{\lambda^2}]\times \mathbb{R}^2)} \|\vec{w}\|^3_{L^9_{t}L^{90/29}_xl^2([0,\frac{T}{\lambda^2}]\times \mathbb{R}^2)} \|\vec{u}^l\|^2_{L^6_{t}L^{60/11}_xl^2([0,\frac{T}{\lambda^2}]\times \mathbb{R}^2)}\lesssim \eta^{1/6}2^{k_0}.
\end{equation}
Therefore, by \eqref{eq-y9.2}
\begin{equation}\label{eq-y9.13}
 \|\sum_{j\in\mathbb{Z}}|\nabla|^{1/2}|w_j(t,x)|^2\|^2_{L^2_{t,x}([0,\frac{T}{\lambda^2}]\times \mathbb{R}^2)} \lesssim \eta^{1/6}2^{k_0},
\end{equation}
Undoing the scaling $u_j(t,x)\mapsto \lambda u_j(\lambda^2 t, \lambda x)$,  $\lambda=\frac{\ep_3 2^{k_0}}{K}$, we have
\begin{equation}\label{eq-y9.14}
 \|\sum_{j\in\mathbb{Z}}|\nabla|^{1/2}|P_{\leq \ep_1^{-1}K}u_j(t,x)|^2\|^2_{L^2_{t,x}([0,\frac{T}{\lambda^2}]\times \mathbb{R}^2)} \lesssim \ep_3^{-1}\eta(K)^{1/6}K.
\end{equation}
This proves Theorem \ref{th-y9.1}.
\end{proof}
Now we are ready to complete the proof of Theorem \ref{th-y8.1'}.
\begin{theorem}\label{th-y9.2}
   There does not exist a minimal mass blowup solution to \eqref{eq-y1}.
\end{theorem}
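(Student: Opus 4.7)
The plan is a standard rigidity argument. Suppose for contradiction that a minimal mass blowup solution $\vec u$ as produced by Theorem~\ref{th-y4.3} exists: $M(\vec u)=m_0>0$, $\vec u$ is almost periodic modulo $G$, and $\|\vec u\|_{L^4_{t,x}l^2}=+\infty$ in both time directions. Restricting to the forward lifespan and exploiting the $G$-action (dilations, together with the estimates on $N(t)$ in Theorem~\ref{th-y4.5}), we may assume $N(t)\le 1$ on $[0,\infty)$, placing us in the setting where Theorems~\ref{th-y6.2} and \ref{th-y9.1} apply. Following Dodson \cite{D}, the proof splits into two cases according to the behaviour of $K(T):=\int_0^T N(t)^3\,dt$ as $T\to\infty$: the \emph{quasi-soliton} case $K(T)\to\infty$, and the \emph{rapid frequency cascade} case $\sup_T K(T)<\infty$.

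In the quasi-soliton case, I would pit Theorem~\ref{th-y9.1} against a matching lower bound. Theorem~\ref{th-y9.1} supplies
\[
\Big\|\sum_{j\in\mathbb{Z}}|\nabla|^{1/2}|P_{\leq 10K/\ep_1}u_j|^2\Big\|_{L^2_{t,x}([0,T]\times\mathbb{R}^2)}^2 \lesssim o(K).
\]
For the matching lower bound, I would invoke Corollary~\ref{co-y4.9}: $\vec u(t)$ is essentially localized in a physical ball of radius $1/N(t)$ about $x(t)$ and a frequency ball of radius $N(t)$ about $\xi(t)$. Since $N(t)\le 1\le 10K/\ep_1$ on $[0,T]$, the cutoff $P_{\leq 10K/\ep_1}$ retains essentially all of the mass $m_0$. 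After normalising $\xi(t)\equiv 0$ by a Galilean boost and using mass conservation together with Bernstein's inequality, this yields the pointwise-in-time estimate
\[
\Big\| |\nabla|^{1/2}\!\!\sum_{j\in\mathbb{Z}}|P_{\leq 10K/\ep_1}u_j(t,\cdot)|^2\Big\|_{L^2_x}^2 \gtrsim_{m_0} N(t)^3,
\]
which integrates to a lower bound $\gtrsim_{m_0} K$. Comparing with the upper bound and sending $K\to\infty$ produces the contradiction $K\lesssim o(K)$.

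In the rapid frequency cascade case, Theorem~\ref{th-y4.5}(4) combined with $\|\vec u\|_{L^4_{t,x}l^2([0,\infty))}=+\infty$ forces $\int_0^\infty N(t)^2\,dt=+\infty$, which, together with $\int_0^\infty N(t)^3\,dt<\infty$, produces a sequence $t_n\to\infty$ with $N(t_n)\to 0$. I would then upgrade the regularity of $\vec u$: using Theorem~\ref{th-y6.2} and the tail control \eqref{eq-y5.2}, a Duhamel-plus-Bernstein bootstrap in the spirit of \cite{D} shows that $P_{\geq N}\vec u$ decays in $N$ faster than any polynomial, so $\vec u(t)\in H^s_x h^1$ uniformly in $t$ for every $s\ge 0$. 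In particular $\vec u\in L^\infty_t H^1_x h^1$ and the energy $E(\vec u)$ is conserved. Evaluating at $t_n$ and using $N(t_n)\to 0$ together with Corollary~\ref{co-y4.9} and Bernstein, $\|\nabla\vec u(t_n)\|_{L^2_xl^2}\to 0$; since the nonlinear term in $E$ is nonnegative, this forces $E(\vec u)\equiv 0$ and hence $\nabla\vec u\equiv 0$, contradicting $M(\vec u)=m_0>0$.

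The main obstacle is the pointwise-in-time lower bound in the quasi-soliton case \emph{at the $l^2$ level in $j$}: Proposition~\ref{pr-y4.8} and Corollary~\ref{co-y4.9} are stated at the $h^1$ level, but Theorem~\ref{th-y9.1} controls only the $l^2$ Morawetz quantity. Obtaining the $l^2$ lower bound requires either reworking Corollary~\ref{co-y4.7} without the $\langle j\rangle$ weight, or extracting $l^2$ content via Cauchy--Schwarz combined with the nonlinear symmetrisation \eqref{1.1a}; this is the same delicate asymmetry that forced the choice of $l^2$ norms in $j$ throughout Sections~5 and 6. The cascade case is more routine provided one iterates the persistence of regularity from Theorem~\ref{co6.533} with sufficient care.
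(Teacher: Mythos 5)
Your proposal follows essentially the same two-scenario rigidity argument as the paper, and both cases rest on the same key ingredients. Two remarks, one reassuring and one comparative.

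The obstacle you flag in the quasi-soliton case---that Proposition~\ref{pr-y4.8} and Corollary~\ref{co-y4.9} are stated at the $h^1$ level while Theorem~\ref{th-y9.1} controls only the $l^2$ Morawetz quantity---is not actually an obstacle. The almost-periodicity tightness bounds carry the weight $\langle j\rangle^2$, and since $\langle j\rangle^2 \ge 1$ they trivially dominate the corresponding unweighted bounds; in particular $\sum_{|j|>K}\|u_j\|_{L^2}^2 \le \sum_{|j|>K}\langle j\rangle^2\|u_j\|_{L^2}^2 < \eta$ and similarly for the spatial and frequency tails, so the $l^2$ concentration needed to underbid $\|P_{\le 10K/\ep_1}\vec u(t)\|_{L^2_xl^2}$ by $m_0/2$ comes for free. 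With that in place your pointwise lower bound $\||\nabla|^{1/2}\sum_j|P_{\le 10K/\ep_1}u_j(t)|^2\|_{L^2_x}^2\gtrsim_{m_0}N(t)^3$, obtained by H\"older on the ball $|x-x(t)|\lesssim 1/N(t)$ and the 2D Sobolev embedding $\dot H^{1/2}\hookrightarrow L^4$, is exactly the chain the paper uses, and integrating against $dt$ gives $K\lesssim o(K)$.

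In the rapid frequency cascade case you arrive at the same conclusion by a slightly different finish. The paper, after deriving additional regularity ($L^\infty_t\dot H^3_xh^0$ suffices) and hence $\|(\nabla-i\xi(t))\vec u(t)\|_{L^2_xl^2}\to0$, runs a mass-concentration estimate at a fixed time $t_0$: H\"older on the concentration ball together with the Gagliardo--Nirenberg bound $\int(\sum_j|u_j|^2)^2\lesssim E(\vec u)$ (here using the algebraic identity \eqref{1.1a} to show the quartic term in the energy controls $\int(\sum_j|u_j|^2)^2$) gives $m_0\le CE^{1/2}R/N(0)+m_0/1000$, which collapses to $m_0=0$. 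You instead pass directly to the conserved energy: the Gagliardo--Nirenberg bound shows the quartic term in $E$ is $\lesssim m_0\|\nabla\vec u(t_n)\|^2_{L^2_xl^2}\to0$, so $E(\vec u)\to0$, hence $E\equiv0$ by conservation; since both terms in $E$ are nonnegative this forces $\nabla\vec u\equiv0$ and then $\vec u\equiv0$, contradicting $m_0>0$. The two finishes are equivalent, and yours has the small virtue of not needing to track the ratio $R/N(0)$ at the chosen time; both lean on exactly the same additional-regularity input. Your appeal to Theorem~\ref{th-y4.5}(4) to extract $t_n$ with $N(t_n)\to0$ is redundant but harmless---already $\int_0^\infty N(t)^3\,dt<\infty$ together with forward global existence forces $\liminf_{t\to\infty}N(t)=0$.
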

\begin{proof}It  suffices to exclude two scenarios separately.\\
\textbf{Case 1.} Rapid frequency cascade: $\int_0^{\infty} N(t)^3 dt <\infty$. In this case, we can repeat the process as section 5 of \cite{D} and follow the arguments in Section 6 in that paper to obtain an additional regularity of a minimal mass blowup solution to \eqref{eq-y1}, that is, $\|\vec{u}(t,x)\|_{L^{\infty}_t \dot{H}^{3}_xh^0([0,\infty)\times \mathbb{R}^2)} \lesssim_{m_0} (\int^{\infty}_0 N(t)^3 dt)^3$, which together with the definition of almost periodic solution yields
$$\|e^{-ix\cdot\xi(t)}\vec{u}\|_{\dot{H}^{1}_xh^0}\lesssim N(t)C(\eta(t))+\eta(t)^{1/2},\quad \eta(t)\rightarrow 0.$$
Since $\lim_{t\rightarrow\infty}N(t)=0$, this implies $\lim_{t\rightarrow\infty}\|e^{-ix\cdot\xi(t)}\vec{u}\|_{\dot{H}^{1}_xh^0}=0$. So for any $\ep>0$, there exists $t_0>0$ (by Galilean transformation we may take $t_0=0$), such that $\|e^{-ix\cdot\xi(t_0)}\vec{u}(t_0)\|_{\dot{H}^{1}_xh^0}<\ep$. \\
Notice that
$$E(\vec{u}(t))=\frac{1}{2}\int_{\mathbb{R}^{2}}\sum\limits_{j\in\mathbb{Z}}|\nabla u_{j}(t,x)|^{2}\mathrm{d}x+\frac{1}{4}\int_{\mathbb{R}^{2}}\sum\limits_{\substack{j_0,j_1,j_2,j_3\in \mathbb{Z},\\ j_1-j_2+j_3 = j_0,\\ |j_1|^2-|j_2|^2 +|j_3|^2 = |j_0|^2.}} \bar{u}_{j_0}u_{j_1} \bar{u}_{j_2} u_{j_3}\mathrm{d}x=E(\vec{u}(0)),$$
by Minkowski inequality and sharp Gagliardo-Nirenberg inequality, we can calculate
\begin{equation*}
\begin{split}
\int_{\mathbb{R}^{2}}\sum\limits_{\substack{j_0,j_1,j_2,j_3\in \mathbb{Z},\\ j_1-j_2+j_3 = j_0,\\ |j_1|^2-|j_2|^2 +|j_3|^2 = |j_0|^2.}} \bar{u}_{j_0}u_{j_1} \bar{u}_{j_2} u_{j_3}\mathrm{d}x \sim& \int_{\mathbb{R}^{2}}\big( \sum_{j}|u_j|^2\big)^2\mathrm{d}x\lesssim  \big(\sum_{j} \|u_j\|^2_{L^4_{x}(\mathbb{R}^{2})}\big)^2\\
\lesssim & \big(\sum_{j} \|u_j\|_{L^2_{x}(\mathbb{R}^{2})} \|\nabla u_j\|_{L^2_{x}(\mathbb{R}^{2})}\big)^2\\
\lesssim & (\sum_{j} \|u_j\|^2_{L^2_{x}(\mathbb{R}^{2})}) (\sum_{j}\|\nabla u_j\|^2_{L^2_{x}(\mathbb{R}^{2})}).
\end{split}
\end{equation*}Therefore, $E(\vec{u}(t))=E(\vec{u}(0))\lesssim \|\vec{u}(0)\|^2_{\dot{H}^{1}_xh^0}< \ep^2$.\\
However, by H\"{o}lder inequality,
\begin{align*}
  \sum_{j\in\mathbb{Z}}\int|u_j(0,x)|^2dx \leq& \sum_{j\in\mathbb{Z}}\int_{|x-x(0)|\leq \frac{C\big(\frac{\sum_j\|u_j(0)\|^2_{L^2}}{1000}\big)}{N(0)}}|u_j(0,x)|^2dx +\frac{\sum_j\|u_j(0)\|^2_{L^2}}{1000}\\
   \leq& C[\int_{\mathbb{R}^{2}}\big( \sum_{j}|u_j(0,x)|^2\big)^2dx]^{1/2} \frac{C\big(\frac{\sum_j\|u_j(0)\|^2_{L^2}}{1000}\big)}{N(0)} +\frac{\sum_j\|u_j(0)\|^2_{L^2}}{1000} \\
   \leq& CE(\vec{u}(0))^{1/2}\frac{C\big(\frac{\sum_j\|u_j(0)\|^2_{L^2}}{1000}\big)}{N(0)} +\frac{\sum_j\|u_j(0)\|^2_{L^2}}{1000}.
\end{align*}
Choose $\ep$ sufficiently small such that
$$C \ep \frac{C\big(\frac{\sum_j\|u_j(0)\|^2_{L^2}}{1000}\big)}{N(0)}<\frac{\sum_j\|u_j(0)\|^2_{L^2}}{100}.$$
This implies $\sum_{j\in\mathbb{Z}}\int|u_j(0,x)|^2dx<\frac{\sum_{j\in\mathbb{Z}}\int|u_j(0,x)|^2dx}{100}$, which can't happen unless $\sum_{j\in\mathbb{Z}}\int|u_j(0,x)|^2dx=0$, so $\int|u_j(0,x)|^2dx=0$ for every $j\in\mathbb{Z}$, this infers $\|\vec{u}(0)\|_{L^2_xh^1}=0$. This excludes rapid frequency cascade scenario.

\noindent\textbf{Case 2.} Quasi-soliton: $\int_0^{\infty} N(t)^3 dt =\infty$. In this case, we denote $Iu_j=P_{\leq10\ep_1^{-1}K}u_j$. By frequency localized interaction Morawetz estimate,
$$\|\sum_{j\in\mathbb{Z}}|\nabla|^{1/2}|Iu_j(t,x)|^2\|^2_{L^2_{t,x}([0,T]\times \mathbb{R}^2)} \lesssim o(K),$$
where recalling $\int^T_0 N(t)^3 dt=K$.
By H\"{o}lder inequality and the Sobolev embedding, we have
\begin{align*}
&\sum_{j\in\mathbb{Z}}\int_{|x-x(t)|\leq \frac{C\big(\frac{\sum_j\|u_j\|^2_{L^2}}{1000}\big)}{N(t)}}|Iu_j(t,x)|^2dx \\
\lesssim& \big(\frac{C\big(\frac{\sum_j\|u_j\|^2_{L^2}}{1000}\big)}{N(t)}\big)^{3/2} \|\sum_j|Iu_j(t)|^2\|_{L^4_x}\\
\lesssim&\big(\frac{C\big(\frac{\sum_j\|u_j\|^2_{L^2}}{1000}\big)}{N(t)}\big)^{3/2} \|\sum_{j\in\mathbb{Z}}|\nabla|^{1/2}|Iu_j(t,x)|^2\|_{L^2_{x}}.
\end{align*}
Now for $K>C\big(\frac{\sum_j\|u_j\|^2_{L^2}}{1000}\big)$, by Proposition \ref{pr-y4.8}, we have
$$\frac{\sum_j\|u_j\|^2_{L^2}}{2}<\sum_{j\in\mathbb{Z}}\int_{|x-x(t)|\leq \frac{C\big(\frac{\sum_j\|u_j\|^2_{L^2}}{1000}\big)}{N(t)}}|Iu_j(t,x)|^2dx.$$
Therefore,
\begin{align*}
&\big(\sum_j\|u_j\|^2_{L^2}\big)^2 K\sim \big(\sum_j\|u_j\|^2_{L^2}\big)^2\int_0^T N(t)^3dt\\
\lesssim &\int_0^T N(t)^3 \left(\sum_{j\in\mathbb{Z}}\int_{|x-x(t)|\leq \frac{C\big(\frac{\sum_j\|u_j\|^2_{L^2}}{1000}\big)}{N(t)}}|Iu_j(t,x)|^2dx\right)^2 dt\\ \lesssim&\|\sum_{j\in\mathbb{Z}}|\nabla|^{1/2}|Iu_j(t,x)|^2\|^2_{L^2_{t,x}([0,T]\times \mathbb{R}^2)}\lesssim o(K).
\end{align*}
Combined with the mass conservation, this gives a contradiction for $K$ sufficiently large. Therefore, the proof of theorem
\ref{th-y9.2} is complete.
\end{proof}

\section*{Acknowledgments}
The first author thanks Dr. Ze Li and Dr. Xing Cheng for helpful discussions.


\end{document}